\newcommand{\CC}{\mathbb{C}} % to denote the set of complex numbers
\newcommand{\FF}{\mathbb{F}} % to denote a field
\newcommand{\NN}{\mathbb{N}} % to denote the set of natural numbers
\newcommand{\QQ}{\mathbb{Q}} % to denote the set of rational numbers
\newcommand{\RR}{\mathbb{R}} % to denote the set of real numbers
\newcommand{\ZZ}{\mathbb{Z}} % to denote the set of integers
\newcommand{\TT}{\mathbb{T}} 
\newcommand{\fm}{\mathfrak{m}} % to denote a maximal ideal of a ring
\newcommand{\Hom}{\textrm{Hom}} % the set of homomorphisms of
\newcommand{\End}{\textrm{End}} % the set of endomorphisms of
\newcommand{\Aut}{\textrm{Aut}} % 
\newcommand{\im}{\textrm{Im}} % image of some homomorphism
\newcommand{\Auts}{\boldsymbol{Aut}} % the automorphism scheme
\newcommand{\Ends}{\boldsymbol{End}} % the endomorphism scheme
\newcommand{\id}{\textrm{id}} % identity map of
\newcommand{\lie}{\textrm{Lie}\,} % Lie algebra of
\newcommand{\ul}{\underline} % to add underline
\newcommand{\spec}{\textrm{Spec}\,} % affine scheme of a ring
\newcommand{\ilim}{\mathop{\varinjlim}\limits} % to denote direct limit or colimit
\newcommand{\plim}{\mathop{\varprojlim}\limits} % to denote inverse limit or limit
\theoremstyle{plain}
\newtheorem{thm}{Theorem}[chapter]
\newtheorem{prop}[thm]{Proposition}
\newtheorem{lemma}[thm]{Lemma}
\newtheorem{cor}[thm]{Corollary}
\theoremstyle{definition}
\newtheorem{defn}[thm]{Definition}
\newtheorem{rmk}[thm]{Remark}
\newtheorem{exam}[thm]{Example}
\theoremstyle{definition}
\newtheorem*{definition}{Definition}
\newtheorem*{exam*}{Example}
\newtheorem*{rmk*}{Remark}
\newtheorem*{fact*}{Fact}
\renewcommand*\@makechapterhead[1]{%
  \vspace*{0.75in}%
  {\parindent \z@ \raggedright \normalfont
    \ifnum \c@secnumdepth >\m@ne
        \Large\bfseries \@chapapp\space \thechapter
        \par\nobreak
        \vskip 10\p@
    \fi
    \interlinepenalty\@M
    \Large \bfseries #1\par\nobreak
    \vskip 20\p@
  }}
\renewcommand*\@makeschapterhead[1]{%
  \vspace*{0.75in}%
  {\parindent \z@ \raggedright
    \normalfont
    \interlinepenalty\@M
    \Large \bfseries  #1\par\nobreak
    \vskip 20\p@
  }}
\begin{document}

\title{Canonical Barsotti--Tate Groups of Finite Level}

\author{Zeyu (Ding) Ding \\ SUNY Binghamton  \\ Binghamton, NY 13902 \\ ding@math.binghamton.edu}

\date{}

\maketitle

\chapter*{Abstract}

Let $k$ be an algebraically closed field of characteristic $p>0$. Let $c,d\in \NN$ be such that $h=c+d>0$. Let $H$ be a $p$-divisible group of codimension $c$ and dimension $d$ over $k$. For $m\in\NN^\ast$ let $H[p^m]=\ker([p^m]:H\rightarrow H)$. It is a finite commutative group scheme over $k$ of $p$ power order, called a Barsotti-Tate group of level $m$. 

\iffalse
A well-known result of Dieudonn\'e says that the category of $p$-divisible groups over $k$ is anti-equivalent to the category of Dieudonn\'e modules over $k$. Let $W(k)$ be the ring of Witt vectors over $k$ and $\sigma$ its Frobenius automorphism. A Dieudonn\'e module over $k$ is a pair $(M,\varphi)$, where $M$ is a free $W(k)$-module of finite and $\varphi$ is a $\sigma$-linear endomorphism of $M$, such that $pM\subseteq \varphi(M)$. 
\fi

We study a particular type of $p$-divisible groups $H_\pi$, where $\pi$ is a permutation on the set $\{1,2,\dots,h\}$. Let $(M,\varphi_\pi)$ be the Dieudonn\'e module of $H_\pi$. Each $H_\pi$ is uniquely determined by $H_\pi[p]$ and by the fact that there exists a maximal torus $T$ of $GL_M$ whose Lie algebra is normalized by $\varphi_\pi$ in a natural way. Moreover, if $H$ is a $p$-divisible group of codimension $c$ and dimension $d$ over $k$, then $H[p]\cong H_\pi[p]$ for some permutation $\pi$. We call these $H_\pi$ canonical lifts of Barsotti--Tate groups of level $1$. We obtain new formulas of combinatorial nature for the dimension of $\Auts(H_\pi[p^m])$ and for the number of connected components of $\Ends(H_\pi[p^m])$.

%\newpage

\tableofcontents

%\mainmatter % change page numbers to regular numbers, resets the counter

\chapter*{Introduction}
\addcontentsline{toc}{chapter}{Introduction}

Let $k$ be an algebraically closed field of characteristic $p>0$. Let $c,d\in \NN$ be such that $h:=c+d>0$. Recall that 

\begin{defn}
A {\it $p$-divisible group} over $k$ of {\it height} $h$ is an inductive system $H=(G_n,i_n)_{n\geq0}$, where $G_n$ is a finite group scheme over $k$ of order $p^{nh}$, such that the sequences $0 \rightarrow G_n \xrightarrow{i_n} G_{n+1} \xrightarrow{p^n} G_{n+1}$ are exact for all $n\geq0$. 

Each $G_m=H[p^m]:=\ker([p^m]:H\rightarrow H)$ is called a {\em Barsotti--Tate group of level $m$}. 

\end{defn}

An important tool for studying $p$-divisible groups is the Dieudonn\'{e} theory. Let $W(k)$ be the ring of Witt vectors over $k$. Let $\sigma$ be the Frobenius of $W(k)$. A {\it Dieudonn\'{e} module} over $k$ is a pair $(M,\varphi)$ where M is a free $W(k)$-module of finite rank and $\varphi$ is a $\sigma$-linear endomorphism such that $pM\subseteq \varphi(M)\subseteq M$. 
It is well known that the category of $p$-divisible groups over $k$ is anti-equivalent to the category of Dieudonn\'e modules over $k$. Let $(M,\varphi)$ be the Dieudonn\'e module of $H$. The  \emph{dimension} of $H$ is $\dim_kM/\varphi(M)$, the \emph{codimension} of $H$ is $\dim_k\varphi(M)/pM$.

Let $H$ be a $p$-divisible group over $k$ of codimension $c$ and dimension $d$. Its height is $h$. It is well-known that $H$ is determined by some finite truncation $H[p^m]$ of sufficiently large level $m$. This allows us to associate to $H$ two numerical invariants: the {\em isomorphism number} $n_H$ is the least level $m$ such that $H[p^m]$ determines $H$ up to isomorphism, and the {\em isogeny cutoff} $b_H$ is the least level $m$ such that $H[p^m]$ determines $H$ up to isogeny. More preciously, we have

\begin{defn}
	Let $H$ be a $p$-divisible group  over $k$ of dimension $d$ and codimension $c$. The \emph{isomorphism number $n_H$} (resp. the \emph{isogeny cutoff $b_H$}) is the smallest nonnegative integer $m$ with the following property: If $H'$ is a $p$-divisible group over $k$ of dimension $d$ and codimension $c$ such that $H'[p^m]$ is isomorphic to $H[p^m]$, then $H'$ is isomorphic (resp. isogenous) to $H$.
\end{defn}
 For the existence of $n_H$, we refer to \cite[Chapter III, Section 3]{Manin:formalgroups}, \cite[Theorem 3]{Traverso:pisa}, \cite[Theorem 1]{Traverso:pdivisible}, \cite[Corollary 1.7]{Oort:foliation} or \cite[Corollary 1.3]{CBP}. Traverso's truncation conjecture \cite{Traverso:specializations} predicts that $n_H \leq \textrm{min}\,\{c,d\}$. This prediction turned out to be true only for certain values of $c$ and $d$ and other special cases. Such cases were first proved in \cite[Corollary 3.2]{supersingular} and \cite[Theorem 1.5.2]{Vasiu:reconstr}. Recently, E. Lau, M. Nicole and A. Vasiu have found that $n_H \leq \lfloor \frac{2cd}{c+d} \rfloor$ and shown that this is optimal if the Dieudonn\'e module corresponding to $H$ is isoclinic (means that Newton slopes of the Dieudonn\'e module are all equal); see \cite[Theorem 1.4, Proposition 9.16]{traversosolved}. They even proved a stronger statement that $n_H \leq \lfloor 2\nu(c) \rfloor$ where $\nu$ is the Newton polygon of $H$. Their results disprove Traverso's conjecture as $\lfloor \frac{2cd}{c+d} \rfloor$ is in general greater than $\textrm{min}\,\{c,d\}$. 

Thus to classify $p$-divisible groups $H$ over $k$, one is led to consider four basic questions.

	(1) Determine the set $N_{c,d}$ of possible values of $n_H$.
		
	(2) For $1\leq m\leq N_{c,d}$ classify all Barsotti--Tate groups of level $m$ over $k$.
	
	(3) Find invariants that go up under specializations.
	
	(4) Find principles that govern group actions whose orbits parametrize the isomorphism classes of Barsotti--Tate groups of level $m$ over $k$.

Let $\Auts(H[p^m])$ be the automorphism group scheme of $H[p^m]$ over $k$ and let $\gamma_H(m):=\dim(\Auts(H[p^m]))$. The importance of the number $\gamma_H(m)$ stems out from the following three main facts (cf. \cite{GV1}):

	(i) They are codimension of the versal level $m$ strata.
	
	(ii) They can compute the isomorphism number $n_H$.
	
	(iii) They are a main source of invariants that go up under specializations.

We call $(\gamma_H(m))_{m\in\NN}$ the {\em centralizing sequence} of $H$ and we call $s_H:=\gamma_H(n_H)$ the {\em specializing heigh}t of $H$.

The classification of Barsotti--Tate groups of level $1$ over $k$ is well-known. In the unpublished manuscript \cite{Kraft} H. Kraft showed that fixing $h:=c+d$, there are only finitely many such group schemes up to isomorphism. This result was re-obtained independently by Oort. Together with Ekedahl he used it to define and study a stratification of the moduli space of principally polarized abelian varieties over $k$. Their results can be found in \cite{Oort:stramoduli}.

A. Vasiu provided another way to parameterize Barsotti--Tate groups of level $1$ over $k$ .  Let $c,d$ and $h$ be as before. Let $\pi$ be a permutation of $J=\{1,2,\hdots,h\}$. Let $H_\pi$  be the $p$-divisible group whose Dieudonn\'{e} module is $(M, \varphi_\pi)$, where
\[
\varphi_\pi(e_i) = \left\{ 
\begin{array}{l l}
pe_{\pi(i)} & \quad 1\leq i\leq d,\\
e_{\pi(i)} & \quad d<i\leq r.\\
\end{array} \right.
\]
Then we have the following:

\begin{thm}[Vasiu]
Let $H$ be a $p$-divisible group of codimension $c$ and dimension $d$ over $k$.
Then $H[p]\cong H_\pi[p]$ for some permutation $\pi$.
\end{thm}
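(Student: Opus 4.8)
The plan is to pass to Dieudonné theory modulo $p$ and to exhibit a basis of the reduced module in which both semilinear operators act by generalized permutation matrices. Let $(M,\varphi)$ be the Dieudonné module of $H$, put $\bar M=M/pM$, let $F\colon\bar M\to\bar M$ be the $\sigma$-linear operator induced by $\varphi$, and let $V\colon\bar M\to\bar M$ be the $\sigma^{-1}$-linear operator induced by $p\varphi^{-1}$ (well defined since $pM\subseteq\varphi(M)$). Then $(\bar M,F,V)$ is the mod-$p$ Dieudonné module of the Barsotti--Tate group of level $1$ attached to $H[p]$: it satisfies $FV=VF=0$, $\ker F=\im(V)$, $\ker V=\im(F)$, $\dim_k\ker F=d$, $\dim_k\ker V=c$, and $H[p]$ is recovered from it. A direct computation gives the analogous triple for $H_\pi[p]$: it is free on $e_1,\dots,e_h$ with $Fe_i=0$ for $i\le d$, $Fe_i=e_{\pi(i)}$ for $i>d$, and $Ve_j=e_{\pi^{-1}(j)}$ if $\pi^{-1}(j)\le d$ while $Ve_j=0$ otherwise. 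So it suffices to produce a $k$-basis $(e_i)_{1\le i\le h}$ of $\bar M$ in which \emph{both} $F$ and $V$ send every basis vector to $0$ or to another basis vector; one then labels the $d$ basis vectors killed by $F$ by $1,\dots,d$ and the other $c$ by $d+1,\dots,h$, defines $\pi$ on $\{d+1,\dots,h\}$ by $Fe_i=e_{\pi(i)}$, and extends $\pi$ to $\{1,\dots,d\}$ using that, by $\ker V=\im F$ and $\im V=\ker F$, the operator $V$ restricts to a bijection from the basis vectors not lying in $\im(F)$ onto the basis vectors killed by $F$. With these labels the triple $(\bar M,F,V)$ is literally that of $H_\pi[p]$, so $H[p]\cong H_\pi[p]$.

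The existence of such a simultaneously monomial basis is the content of the classification of Barsotti--Tate groups of level $1$ over an algebraically closed field due to Kraft \cite{Kraft} and Oort \cite{Oort:stramoduli}; I will indicate how to obtain it so that the normal form appears directly in this packaging. First, Fitting's lemma applied twice decomposes $(\bar M,F,V)$. Since $FV=VF=0$, the subspace $\bigcap_n F^n(\bar M)$ lies in $\im(F)=\ker V$, so $V$ vanishes on it while $F$ is bijective on it, and the complementary Fitting summand $\ker(F^N)$ (for $N\gg0$) is stable under $F$ and under $V$ because $F^NV=V^NF=0$. Repeating the argument with $V$ in place of $F$ on $\ker(F^N)$ yields $(\bar M,F,V)=\bar M_1\oplus\bar M_2\oplus\bar M_3$ with $F$ bijective and $V=0$ on $\bar M_1$, with $V$ bijective and $F=0$ on $\bar M_2$, and with $F$ and $V$ both nilpotent on $\bar M_3$. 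On $\bar M_1$ and $\bar M_2$ a theorem of Lang (a bijective $\sigma^{\pm1}$-linear operator on a finite-dimensional vector space over an algebraically closed field admits a basis of fixed vectors) yields a monomial basis at once, corresponding to the fixed points of $\pi$.

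It remains to treat $\bar M_3$, where $F$ and $V$ are nilpotent; here I would induct on $\dim\bar M_3$, splitting off one indecomposable summand at a time. If $\bar M_3\ne0$, nilpotence together with $\ker F=\im V$ and $\ker V=\im F$ forces $\im(F)+\im(V)$ to be a proper subspace, so one may choose $v\notin\im(F)+\im(V)$. Running the $F$-chain $v,Fv,F^2v,\dots$ and the $V$-chain $v,Vv,V^2v,\dots$ to their (finite) ends, and then continuing from each end---an end lies in $\ker F=\im V$ or in $\ker V=\im F$, hence has a predecessor under $V$ or $F$---one traces out, inside the finite-dimensional $\bar M_3$, a finite closed word in $F$ and $V$; the vectors so produced form a basis of an $F$- and $V$-stable subspace $C$ on which $F$ and $V$ act monomially, that is $C\cong H_{\pi_C}[p]$ for a permutation $\pi_C$ whose cycle structure records the word. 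The construction also exhibits an $F$-, $V$-stable complement, so induction finishes $\bar M_3$. Finally a direct sum $H_{\pi'}[p]\oplus H_{\pi''}[p]$ is itself of the form $H_\pi[p]$ (re-index so that the basis vectors killed by $F$ come first), so reassembling $\bar M_1,\bar M_2,\bar M_3$ produces a single permutation $\pi$.

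The main obstacle is the last step: making precise that in the doubly nilpotent case the $F$- and $V$-orbits of a generic vector close up into a monomial subobject possessing a complement of the same kind. This is exactly where the finiteness in the Kraft--Oort classification enters, and it can alternatively be quoted as a black box; given that classification and its explicit normal forms, everything above is the bookkeeping that translates those normal forms into the permutations $\pi$ and verifies that every pair $(c,d)$ is attained.
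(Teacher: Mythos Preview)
The paper does not supply its own proof of this statement. In the introduction it is attributed to Vasiu, and in Chapter~4 it is obtained in one line from Kraft's classification theorem (stated there without proof): once one knows that every $BT_1$ is a direct sum of circular-word modules, encoding the word as a permutation $\pi$ is immediate. So there is no detailed argument in the paper to compare against.

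Your proposal goes further than the paper: rather than citing Kraft's theorem, you sketch its proof in the permutation language. The Fitting decomposition $\bar M=\bar M_1\oplus\bar M_2\oplus\bar M_3$ and the appeal to Lang's theorem on the \'etale and multiplicative pieces are correct and standard. Your claim that $\im F+\im V$ is proper in the doubly nilpotent piece is also correct: if $\bar M_3=\im F+\im V$ then applying $F$ (which kills $\im V=\ker F$) gives $\im F=\im F^2$, hence $F=0$ by nilpotence, whence $\im V=\ker F=\bar M_3$, contradicting nilpotence of $V$. The bookkeeping that turns a simultaneously monomial basis into a permutation $\pi\in S_h$ is done correctly.

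The one substantive step you leave open is exactly the one you flag: in the local--local piece, showing that the $F$/$V$-chain traced from a vector outside $\im F+\im V$ closes up into a linearly independent cycle and that the resulting submodule has an $(F,V)$-stable complement. This is precisely the content of Kraft's theorem, and your offer to quote it as a black box matches what the paper itself does. So your proposal is correct and, where it differs from the paper, more explicit.
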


The $H_\pi$'s are called the  {\em canonical lifts} of Barsotti--Tate groups of level $1$ over $k$. Each $H_\pi$ is uniquely determined by $H_\pi[p]$ and by the fact that there exists a maximal torus $T$ of $GL_M$ such that $\varphi_\pi(\lie(T))=\lie(T)$. Hence $\varphi_\pi$ acts on $\End(M)[\frac{1}{p}]$ and $\lie(T)\subseteq \End(M)\subseteq \End(M)[\frac{1}{p}]$; cf. \cite[Corollary 11.1(d)]{Vasiu:modpshimura}. It turns out that some of the invariants of $H_\pi[p^m]$ can be calculated using the permutation $\pi$. To describe this, we use the following notations. 

Let $J_+=\{(i,j)\in J^2| i\leq d < j\}$, $J_0=\{(i,j)\in J^2| i,j\leq d \textrm{ or } i,j>d\}$ and $J_-=\{(i,j)\in J^2| j\leq d < i\}$, as illustrated by the following diagram 

\[ \left( \begin{array}{c:c} J_0 &  J_+ \\ \hdashline
J_- & J_0 \end{array} \right). \] 

For each pair $(i,j)\in J^2$, we assign a number $\varepsilon_{i,j}\in \{-1,0,1\}$ according to the rule \[\varepsilon_{i,j}=\left\{
\begin{array}{ll}
1 &\textrm{if } (i,j)\in J_+,\\
0 &\textrm{if } (i,j)\in J_0,\\
-1 &\textrm{if } (i,j)\in J_-.
\end{array}
\right.\]

Let $\mathcal{O}=((i_1,j_1), \dots, (i_l,j_l))$ be an orbit of $(\pi,\pi)$ on $J^2$. For simplicity we write $\varepsilon_s=\varepsilon_{i_s,j_s}$. Let $\varepsilon_\mathcal{O}=(\varepsilon_1,\dots,\varepsilon_l)$.
We make the following definition.

\begin{defn}
	Let $\mathcal{O}=((i_1,j_1), (i_2,j_2), \dots, (i_l,j_l))$ be an orbit and $\varepsilon_{\mathcal{O}}=(\varepsilon_1,\varepsilon_2,\dots,\varepsilon_l)$ as before. Let $n\in \NN^\ast$. A segment $<\varepsilon_s,\varepsilon_{s+1},\dots,\varepsilon_{t}>$ of $\varepsilon_{\mathcal{O}}$ is called a \emph{free linear segment of level $n$} if it satisfies the following conditions.
	\begin{itemize}
		\item We have $\varepsilon_s=-1, \varepsilon_{t}=1$.
		\item We have $\sum_{i=s}^t \varepsilon_{i} = 0$.
		\item For all $s\leq j<t$ , we have $-n\leq \sum_{i=s}^j \varepsilon_{i} < 0$.
		\item There exists a $j_0$, $s\leq j_0<t$, such that $\sum_{i=s}^{j_0} \varepsilon_{i}=-n$.
	\end{itemize}
	We write $a_n(\mathcal{O})$ for the number of free linear segments of level $n$ in $\varepsilon_{\mathcal{O}}$.
\end{defn}

\begin{defn}
	Let $\mathcal{O}=((i_1,j_1), (i_2,j_2), \dots, (i_l,j_l))$ be an orbit and $\varepsilon_{\mathcal{O}}=(\varepsilon_1,\varepsilon_2,\dots,\varepsilon_l)$ as before. Let $n\geq 0$. Then $\mathcal{O}$ is called a \emph{circular orbit of level $n$} if $\varepsilon_\mathcal{O}$ satisfies the following conditions.
	\begin{itemize}
		\item $\sum_{i=1}^l \varepsilon_i = 0.$
		
		\item For all $u,v\in \{1,2,\dots,l\}$, $ |\sum_{i=u}^v \varepsilon_i| \leq n$.
		
		\item There exist $u,v \in \{1,2,\dots,l\}$ such that $ |\sum_{i=u}^v \varepsilon_i| = n$.
	\end{itemize}
	We write $\mathcal{C}_\pi(n)$ for the set of circular orbits of level $n$.
\end{defn}

Our main results are

\begin{thm}
	Let $\pi$, $H_\pi$ be as before, then
	\[\gamma_{H_\pi}(m)=\sum_{\mathcal{O}}\sum_{n=1}^{m}a_n(\mathcal{O}).\]
	\qed
\end{thm}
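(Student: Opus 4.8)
The plan is to realize $\Auts(H_\pi[p^m])$, up to dimension, as a product of elementary group schemes indexed by the orbits of $(\pi,\pi)$ on $J^2$, and then to carry out the dimension count on each factor combinatorially. First I would recall the standard Dieudonn\'e-theoretic description of the automorphism scheme of a truncated Barsotti--Tate group over $k$: writing $(M,\varphi)$ for the Dieudonn\'e module of $H$ and $\vartheta:=p\varphi^{-1}$ for the Verschiebung, $\Auts(H[p^m])$ is the group scheme over $k$ whose $R$-points are the $W_m(R)$-linear automorphisms $g$ of $(M/p^mM)\otimes_{W_m(k)}W_m(R)$ commuting with the reductions mod $p^m$ of both $\varphi$ and $\vartheta$. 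It is open in the scheme $\Ends(H[p^m])$ cut out, inside the Weil restriction $\mathrm{Res}_{W_m(k)/k}(\End_{W_m(k)}(M/p^mM))$, by the same two commutation conditions; being open and containing $\id$, and being homogeneous, it satisfies $\gamma_H(m)=\dim_{\id}\Ends(H[p^m])$.

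Next I would make this explicit for $H=H_\pi$. Since $\varphi_\pi(e_i)=p^{[i\le d]}e_{\pi(i)}$ is monomial, writing $g=\sum_{i,j}g_{ij}E_{ij}$ in the standard basis of $\End(M)$ the commutation with $\varphi$ reads $p^{[j\le d]}g_{\pi(i)\pi(j)}=p^{[i\le d]}F(g_{ij})$ for all $(i,j)$, and the commutation with $\vartheta$ reads $p^{[i>d]}g_{\pi(i)\pi(j)}=p^{[j>d]}F(g_{ij})$, where $F$ is the Witt-vector Frobenius; a short check shows these two collapse to the single relation $p^{[\varepsilon_{ij}=-1]}g_{\pi(i)\pi(j)}=p^{[\varepsilon_{ij}=1]}F(g_{ij})$, which depends only on $\varepsilon_{ij}$ and couples only the coordinates $g_{ij},g_{\pi(i)\pi(j)}$ of a single $(\pi,\pi)$-orbit. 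Hence $\Ends(H_\pi[p^m])\cong\prod_{\mathcal O}X_{\mathcal O,m}$, where for an orbit $\mathcal O=((i_1,j_1),\dots,(i_l,j_l))$ the factor $X_{\mathcal O,m}$ is the closed subscheme of $(\mathrm{Res}_{W_m(k)/k}\GG_a)^l$, with coordinates $g_s:=g_{i_sj_s}$ and $\varepsilon_s:=\varepsilon_{i_sj_s}$, defined by $p^{[\varepsilon_s=-1]}g_{s+1}=p^{[\varepsilon_s=1]}F(g_s)$ for $s\in\ZZ/l\ZZ$. One checks that each $X_{\mathcal O,m}$ is equidimensional, so $\gamma_{H_\pi}(m)=\sum_{\mathcal O}\dim X_{\mathcal O,m}$, and the theorem is reduced to the purely combinatorial identity $\dim X_{\mathcal O,m}=\sum_{n=1}^m a_n(\mathcal O)$ for each orbit $\mathcal O$.

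This last identity is the heart of the argument and the step I expect to be the main obstacle. I would analyze $(X_{\mathcal O,m})_{\mathrm{red}}$ by transporting the relations around the cycle: a step with $\varepsilon_s=0$ forces $g_{s+1}=F(g_s)$; a step with $\varepsilon_s=1$ forces $g_{s+1}=pF(g_s)$; a step with $\varepsilon_s=-1$, i.e. $pg_{s+1}=F(g_s)$, pins $g_{s+1}$ down only modulo $\ker(p\colon W_m\to W_m)$, whose reduced part $V^{m-1}W_m\cong\GG_a$ contributes one potentially free direction, while in the opposite direction the same relation imposes a constraint on $g_s$. Following the partial sums $P_s=\varepsilon_1+\dots+\varepsilon_s$ --- the heights of the associated lattice walk --- one sees that, along a descent below a given level, the number of bottom Witt coordinates of the deepest $g_s$ that stay free is controlled by how far that descent dips, truncated at $m$; after closing up the cycle, the free direction born at a step $\varepsilon_s=-1$ survives exactly when that step initiates a descent of $\varepsilon_{\mathcal O}$ that returns to its starting level without touching it in between and dips at most $m$ below it. Such descents are precisely the free linear segments of level at most $m$ (the level being the depth of the dip), so the number of surviving free directions, i.e. $\dim X_{\mathcal O,m}$, equals $\sum_{n=1}^m a_n(\mathcal O)$. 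The points needing care are the Witt-vector computations over a reduced base (identifying $\ker(p)$, the image of $F$, and the relevant subquotients of $W_m(R)$) and the bookkeeping that matches surviving free directions with free linear segments --- including nested descents and wrap-around ones, and the way the $p^m$-truncation kills anything strictly deeper than $m$; I expect the cleanest route is an induction on $l$ that peels off an innermost descent at each stage.
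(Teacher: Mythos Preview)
Your approach is essentially the same as the paper's: both reduce to an orbit-by-orbit analysis of the commutation equations over $W_m$, and count free parameters by tracking how the $\varepsilon$-pattern shifts Witt coordinates between levels. The paper packages this bookkeeping as explicit diagrams $\Gamma_{\mathcal O}$ with one row per Witt component (worked out in full for $m=1,2$, sketched for $m=3$, then asserted in general), whereas you phrase it as a lattice-walk with partial sums $P_s$; these are the same combinatorics in different clothing, and your ``surviving descents'' are exactly the paper's ``free linear graphs''.

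One technical point to tighten: the scheme you write down --- $W_m(R)$-linear automorphisms of $(M/p^mM)\otimes W_m(R)$ commuting with $\varphi_m$ and $\vartheta_m$ --- is what the paper calls $\Auts(H[p^m])_{\mathrm{crys}}$, not $\Auts(H[p^m])$. These are not the same scheme; they are related by finite homomorphisms that induce bijections on $k$-points, and the equality $\dim\Auts(H[p^m])=\dim\Auts(H[p^m])_{\mathrm{crys}}=\gamma_H(m)$ is a genuine input (the paper invokes \cite[Theorem~5]{Vasiu:levelm}). You should cite this rather than present the crystalline description as the definition of $\Auts(H[p^m])$. A second small point: the paper's derivation of the orbit equations goes through Lemma~\ref{keylemma} (lifting $\xi_m$ to $\xi:M\to M$ with $\varphi_\pi(\xi)-\xi\in p^m\End(M)$) and the auxiliary variables $\underline{x}_{ij}$ via $g_{ij}=p^{\mu_{ij}}\underline{x}_{ij}$, which is why their relation looks superficially different from your $p^{[\varepsilon_{ij}=-1]}g_{\pi(i)\pi(j)}=p^{[\varepsilon_{ij}=1]}F(g_{ij})$; after unwinding they agree, and your observation that the $\varphi$- and $\vartheta$-conditions together collapse to this single relation is exactly what makes the orbit decomposition clean.
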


\begin{thm}
	Let $\pi$, $H_\pi$ be as before, then the number of connected components of $\Ends(H_\pi[p^m])$ is $p^{c_m}$ where $$c_m=\sum_{n=0}^{m-1}\sum_{\mathcal{O}\in\mathcal{C}_\pi(n)}(m-n)|\mathcal{O}|.$$
	\qed
\end{thm}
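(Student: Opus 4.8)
The plan is to reduce the count of connected components $\#\pi_0(\Ends(H_\pi[p^m]))$ to a combinatorial sum over the orbits $\mathcal{O}$ of $(\pi,\pi)$ on $J^2$, by decomposing the endomorphism scheme accordingly and then analyzing each orbit by hand. Let $E_{ij}\in\End(M)$ be the elementary endomorphism with $e_j\mapsto e_i$ and $e_l\mapsto 0$ for $l\ne j$. A direct computation from the formula defining $\varphi_\pi$ gives $\varphi_\pi\,E_{ij}\,\varphi_\pi^{-1}=p^{\varepsilon_{i,j}}E_{\pi(i),\pi(j)}$, so the $\sigma$-linear operator $x\mapsto\varphi_\pi x\varphi_\pi^{-1}$ on $\End(M)[\tfrac1p]$ permutes the lines $W(k)E_{ij}$ along the orbits of $(\pi,\pi)$, rescaling by the powers of $p$ recorded by the $\varepsilon$'s. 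Hence $\End(M)=\bigoplus_{\mathcal{O}}N_{\mathcal{O}}$ with $N_{\mathcal{O}}=\bigoplus_{(i,j)\in\mathcal{O}}W(k)E_{ij}$ a cyclic $\varphi_\pi$-stable lattice of rank $|\mathcal{O}|$. Using the Dieudonn\'e description of $\Ends(H_\pi[p^m])$ — whose $R$-valued points are the $W_m(R)$-linear endomorphisms of $(M/p^mM)\otimes_{W_m(k)}W_m(R)$ commuting with the Frobenius $\varphi_\pi$ and the Verschiebung $V$ — together with the fact that $\varphi_\pi,V$ act on $\End(M)$ compatibly with this grading, one gets an isomorphism of $k$-group schemes $\Ends(H_\pi[p^m])\cong\prod_{\mathcal{O}}\mathcal{E}_{\mathcal{O}}$, where $\mathcal{E}_{\mathcal{O}}$ depends only on the cyclic datum $\varepsilon_{\mathcal{O}}$ and on $m$. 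It therefore suffices to prove $\#\pi_0(\mathcal{E}_{\mathcal{O}})=p^{(m-n)|\mathcal{O}|}$ when $\mathcal{O}$ is a circular orbit of level $n<m$, and $\#\pi_0(\mathcal{E}_{\mathcal{O}})=1$ otherwise.

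Next I would make the equations of $\mathcal{E}_{\mathcal{O}}$ explicit. Fix a cyclic ordering $\mathcal{O}=((i_1,j_1),\dots,(i_l,j_l))$, write $c_s$ for the coordinate of $E_{i_s,j_s}$ and $\varepsilon_s=\varepsilon_{i_s,j_s}$. Unwinding the two conditions $x\varphi_\pi=\varphi_\pi x$ and $xV=Vx$ — after the elementary bookkeeping with the powers of $p$ coming from the dimension pattern $\delta_i=\varepsilon\text{-weights}$ — shows that $\mathcal{E}_{\mathcal{O}}$ is the closed subgroup scheme of the $l$-fold product of the length-$m$ Witt scheme $\mathbb{W}_m$ defined, cyclically in $s\in\ZZ/l$, by $c_{s+1}=\sigma(c_s)$ if $\varepsilon_s=0$, by $c_{s+1}=p\,\sigma(c_s)$ if $\varepsilon_s=1$, and by $\sigma(c_s)=p\,c_{s+1}$ if $\varepsilon_s=-1$, where $\sigma$ now denotes the Witt-vector Frobenius. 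If $\sum_s\varepsilon_s\ne0$ — equivalently, $\mathcal{O}$ is not a circular orbit of any level — then composing the $l$ equations around the cycle and iterating forces the reduced subscheme of $\mathcal{E}_{\mathcal{O}}$ to be an affine space (or a point), so $\mathcal{E}_{\mathcal{O}}$ is geometrically connected and contributes the factor $1$.

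Now suppose $\sum_s\varepsilon_s=0$, so $\mathcal{O}$ is circular of some level $n\ge0$; after a cyclic relabelling and a harmless overall rescaling of the lattice we may assume the partial sums $S_s=\varepsilon_1+\dots+\varepsilon_{s-1}$ satisfy $0=S_1=\min_sS_s$ and $\max_sS_s=n$. Pushing the equations forward along the ascending runs forces $c_s\in p^{S_s}W_m(R)$; working over a perfect base ring (enough to determine $\pi_0$), composing once around the cycle shows that the constraint at a position $s_\ast$ with $S_{s_\ast}=n$ is $p^{n}\bigl(\sigma^{l}(c_{s_\ast})-c_{s_\ast}\bigr)=0$, i.e. $\sigma^{l}(c_{s_\ast})\equiv c_{s_\ast}\pmod{p^{m-n}}$, while the remaining $c_s$ are determined by $c_{s_\ast}$ up to connected (affine-space and Witt-vector) factors. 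For $n<m$ this identifies $\pi_0(\mathcal{E}_{\mathcal{O}})$ with the constant étale group scheme attached to $W_{m-n}(\FF_{p^{l}})$, of order $p^{(m-n)l}$; for $n\ge m$ there is no congruence condition and $\mathcal{E}_{\mathcal{O}}$ is connected. Multiplying over all orbits gives
\[
\#\pi_0\bigl(\Ends(H_\pi[p^m])\bigr)=\prod_{n=0}^{m-1}\ \prod_{\mathcal{O}\in\mathcal{C}_\pi(n)}p^{(m-n)|\mathcal{O}|}=p^{c_m},\qquad c_m=\sum_{n=0}^{m-1}\ \sum_{\mathcal{O}\in\mathcal{C}_\pi(n)}(m-n)|\mathcal{O}|,
\]
which is the asserted formula.

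The hard part is the circular case: one must follow precisely the $p$-divisibilities imposed on the $c_s$ as one traverses the alternating ascending and descending runs of $\varepsilon_{\mathcal{O}}$, and prove that the ``bottleneck'' is exactly the maximal partial sum $n$ — so that precisely $n$ Witt levels, and no fewer regardless of the finer shape of $\varepsilon_{\mathcal{O}}$ (e.g. of how many separate peaks of height $n$ occur), are lost from the étale component group — and one must check that the non-reduced and positive-dimensional parts of $\mathcal{E}_{\mathcal{O}}$ contribute only connected factors. This analysis is parallel to, but complementary to, the one underlying the dimension formula $\gamma_{H_\pi}(m)=\sum_{\mathcal{O}}\sum_{n=1}^m a_n(\mathcal{O})$: there the free linear segments of $\varepsilon_{\mathcal{O}}$ produce the affine-space directions in $\mathcal{E}_{\mathcal{O}}$, whereas here the circular orbits produce its étale components.
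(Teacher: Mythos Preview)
Your proposal is correct and follows essentially the same strategy as the paper: decompose $\End(M)$ into the $\varphi_\pi$-stable pieces $N_{\mathcal O}$ indexed by orbits of $(\pi,\pi)$ on $J^2$, write out the resulting system of Witt-vector congruences on each orbit, and read off the \'etale component group from the cyclic constraint. The paper carries this out by expanding each $\underline{x}_s\in W_m$ into its ghost components and drawing a two-dimensional graph $\Gamma_{\mathcal O}$ whose ``circular subgraphs'' (one per surviving Witt row) each contribute a factor $p^{|\mathcal O|}$; a circular orbit of level $n$ then has exactly $m-n$ such rows, giving $p^{(m-n)|\mathcal O|}$. Your version short-circuits the row-by-row diagrammatics by composing once around the cycle to obtain $\sigma^{l}(c_{s_\ast})\equiv c_{s_\ast}\pmod{p^{m-n}}$ and identifying $\pi_0(\mathcal E_{\mathcal O})$ with $W_{m-n}(\FF_{p^l})$ directly; this is the same computation packaged more structurally, and the two counts visibly agree.
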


The case $m=1$ is a special case of \cite[1.2. Basic Theorem A]{Vasiu:modpshimura}. 

The first three chapters give a (mostly) self-contained introduction to finite flat group schemes over $k$, the ring of Witt vector $W(k)$ over $k$ and the Dieudonn\'e theory. Our new results are proved in the last chapter.

\chapter{Group Schemes}\label{gs}

This chapter has essentially the character of a survey of group schemes. Its aim is to review basic definitions and related concepts within the framework of algebraic geometry, as well as to give a brief account of the fundamental result of P. Gabriel on the structure of finite commutative group schemes over perfect fields. At the end of this chapter, we will give the motivation and definition of a $p$-divisible group. We begin by introducing the categorical language of describing group structures in various categories.

%%%%%%%%%%%%%%%%%%%%%%%%%%%%%%%%%%%%%%%%%%%%%%%%%%%%%%%%%%%%%
%%%%%%%%%%%%%%%%%%%%%%%%%%%%%%%%%%%%%%%%%%%%%%%%%%%%%%%%%%%%%
%%%%%%%%%%%%%%%%%%%%%%%%%%%%%%%%%%%%%%%%%%%%%%%%%%%%%%%%%%%%%
%%%%%%%%%%%%%%%%%%%%%%%%%%%%%%%%%%%%%%%%%%%%%%%%%%%%%%%%%%%%%

\section{Groups in categories}\label{Groups}

Let $\mathcal{C}$ be a category and $S$ an object of $\mathcal{C}$. Let $\mathcal{C}_S$  be the category of $S$-objects. Its objects are pairs $(X,f)$ where $X$ is an object of $\mathcal{C}$ and $f:X\rightarrow S$ is a morphism in $\mathcal{C}$.  A morphism from the pair $(X,f)$ to the pair $(Y,g)$ is a morphism $h:X\rightarrow Y$ in $\mathcal{C}$ such that the diagram
\begin{displaymath}
\xymatrix
{
X \ar[rr]^{h}\ar[rd]_f & & Y \ar[ld]^g \\
& S  &
}
\end{displaymath}
commutes.

Sometimes it is useful to consider the following more general situation. Let $\Gamma\subset \text{Hom} (S,S)$ be a monoid with unit. That is to say, $\Gamma$ is a subset of the set of endomorphisms of $S$ such that: (1) it contains the identity morphism $1_S: S\rightarrow S$; (2) the composition of two endomorphisms in $\Gamma$ is again in $\Gamma$. Let $\sigma\in \Gamma$. A $\sigma$-morphism from  $(X,f)$ to $(Y,g)$ is a morphism $h:X\rightarrow Y$ in $\mathcal{C}$ such that the diagram
\begin{displaymath}
\xymatrix
{
X \ar[r]^{h}\ar[d]_f  & Y \ar[d]^g \\
S \ar[r]_\sigma &S
}
\end{displaymath}
commutes. The composition of a $\sigma$-morphism by a $\tau$-morphism is clearly a $\tau\circ\sigma$-morphism. Therefore, we can modify $\mathcal{C}_S$ by defining $\text{Hom}((X,f),(Y,g))$ to be the set of all $\sigma$-morphisms  from  $(X,f)$ to $(Y,g)$  for some $\sigma\in\Gamma$. Note that if $\Gamma$ contains only the identity morphism, then this generalization reduces to the previous one.

Suppose that $\mathcal{C}_S$ is a category with products. Let $(Z,h)=(X,f)\times (Y,g)$ be a product of $(X,f)$ and $(Y,g)$ in $\mathcal{C}_S$. Then the object $Z$ is called a fibre product of $X$ and $Y$ in the original category $\mathcal{C}$ and is denoted by $X\times_SY$. If for each object $S$ in $\mathcal{C}$ there is a product defined on $\mathcal{C}_S$, we say $\mathcal{C}$ is a category with fibre products.

An object $e$ is called a final object in $\mathcal{C}$ if for any object $X$ in $\mathcal{C}$ there is a unique morphism from $X$ to $e$. Since all final objects are canonically isomorphic, we may therefore consider the final object to be unique. If $e$ is a final object, the category $\mathcal{C}_e$ is equivalent to $\mathcal{C}$. The category $\mathcal{C}_S$ has a final object: the pair $(S,1_S)$.

In what follows we focus on concrete categories $\mathcal{C}$ that satisfy the following two axioms: (1) $\mathcal{C}$ is a category with products; (2) $\mathcal{C}$ has a final object $e$.

Let $X$ be an object in $\mathcal{C}$. In the formulation of the next definition we denote by $d:X\rightarrow X\times X$ the diagonal morphism, by $\epsilon: X\rightarrow e$ the unique morphism from $X$ to $e$, and by $s: X\times X\rightarrow X\times X$ the morphism that interchanges the two factors.

A \emph{group law} on $X$ is a morphism $c: X\times X \rightarrow X$ that satisfies the following three axioms:
\begin{itemize}
\item Axiom of associativity. The diagram
\begin{displaymath}
\xymatrix
{
X\times X\times X \ar[r]^{c\times1}\ar[d]_{1\times c}  & X\times X \ar[d]^c \\
X\times X \ar[r]_{c} & X
}
\end{displaymath}
commutes.

\item Axiom of a left unit. There is a morphism $\eta: e\rightarrow X$ such that the diagram
\begin{displaymath}
\xymatrix
{
X\times  X\ar[rr]^{(\eta\circ\epsilon)\times 1_X}  & & X\times X \ar[ld]^c \\
& X\ar[lu]^d &
}
\end{displaymath}
commutes.

\item Axiom of a left inverse. There is a morphism $a:X\rightarrow X$ such that the diagram
\begin{displaymath}
\xymatrix
{
X\times X \ar[r]^{a\times1}  & X\times X \ar[d]^c \\
X \ar[r]_{\eta\circ\epsilon}\ar[u]^d & X
}
\end{displaymath}
commutes.
\end{itemize}

One can formulate the axiom of a right unit and of a right inverse in the obvious way. It is not hard to check that the `left unit' morphism is also a `right unit', and is uniquely determined. Similarly, the `left inverse' morphism is also a `right inverse' and is unique.

The group law $c$ is said to be commutative or abelian if moreover it satisfies the following axiom:

\begin{itemize}
\item Axiom of commutativity. The diagram
\begin{displaymath}
\xymatrix
{
X\times  X\ar[rr]^{s}\ar[rd]_c  & & X\times X \ar[ld]^c \\
& X &
}
\end{displaymath}
commutes.
\end{itemize}

The object $X$ in the category $\mathcal{C}$ together with a group law $c$ defined on it is called a $\mathcal{C}$-group. It is called a commutative $\mathcal{C}$-group if $c$ is commutative. We often denote the $\mathcal{C}$-group simply by $X$ if the group law $c$ is subunderstood.

A morphism of $\mathcal{C}$-groups from $(X,c_X)$ to $(Y, c_Y)$ is a morphism $f:X\rightarrow Y$ such that the diagram

\begin{displaymath}
\xymatrix
{
X\times  X\ar[r]^{f\times f}\ar[d]_{c_X}  & Y\times Y \ar[d]^{c_Y} \\
X\ar[r]_f & Y
}
\end{displaymath}
commutes.

Starting from the axioms, it is not hard to show that $\mathcal{C}$-group morphisms commute not only with group laws, but also with the unit and inverse morphisms.

The class of $\mathcal{C}$-groups with these morphisms form a category. The class of commutative $\mathcal{C}$-groups form a full subcategory. For example, if $\mathcal{C}$ is the category of sets, topological spaces or analytic manifolds, then the corresponding $\mathcal{C}$-groups are represented by abstract groups, topological groups or Lie groups respectively.

%%%%%%%%%%%%%%%%%%%%%%%%%%%%%%%%%%%%%%%%%%%%%%%%%%%%%%%%%%%%%
%%%%%%%%%%%%%%%%%%%%%%%%%%%%%%%%%%%%%%%%%%%%%%%%%%%%%%%%%%%%%
%%%%%%%%%%%%%%%%%%%%%%%%%%%%%%%%%%%%%%%%%%%%%%%%%%%%%%%%%%%%%
%%%%%%%%%%%%%%%%%%%%%%%%%%%%%%%%%%%%%%%%%%%%%%%%%%%%%%%%%%%%%
\section{Schemes}

There are essentially two ways to define a scheme in modern algebraic geometry: (1) the geometric method in which one defines schemes to be topological spaces with structure sheaves of rings that are locally affine, a definition that is given in most books, cf. \cite[Chapter 2]{Hartshorne}; (2) the functorial method in which one defines schemes in terms of certain functors, cf. \cite[Chapter 1]{DG}. Both definitions are important. In the theory of algebraic groups however, the functorial point of view is often more beneficial for different group theoretical constructions. We will briefly recall the definition of schemes as functors in this section.

The following are the notations we will use for some categories in this section:

\begin{itemize}
\item $\mathcal{S}$: the category of sets.
\item $\mathcal{R}$: the category of rings. By a ring we will always mean a commutative ring with unit.
\item $\mathcal{R}_k$: the category of $k$-algebras.  Here $k$ will be some base field. A $k$-algebra is a ring $R$ with a ring homomorphism $k\rightarrow R$.
\item $\mathcal{G}$: the category of geometric spaces. A geometric space is a pair $(X, \mathcal{O}_X)$ where $X$ is a topological space and $\mathcal{O}_X$ is a sheaf of local rings on $X$ (i.e., $\mathcal{O}_X$ is a ring sheaf such that for each $x\in X$, the stalk $\mathcal{O}_{X,x}$ is a local ring). We often write $X$ instead of $(X, \mathcal{O}_X)$ if the structural ring sheaf $\mathcal{O}_X$ is implied.
\end{itemize}

%%%%%%%%%%%%%%%%%%%%%%%%%%%%%%%%%%%%%%%%%%%%%%%%%%%%%%%%%%%%%
%%%%%%%%%%%%%%%%%%%%%%%%%%%%%%%%%%%%%%%%%%%%%%%%%%%%%%%%%%%%%
\subsection{Affine schemes}

A {\it $\ZZ$-functor} is a (covariant) functor from $\mathcal{R}$ to $\mathcal{S}$. As a trivial but important example, the forgetful functor $O$ that takes $R\in \mathcal{R}$ to the underlying set $O(R)=R$ , is a $\ZZ$-functor.

The $\ZZ$ here refers to the base ring $\ZZ$, as every ring is a $\ZZ$-algebra in a unique way. Later on we will extend base ring from $\ZZ$ to $k$, and define $k$-functors in the same way.

\begin{exam}
Let $A\in \mathcal{R}$. Let $\textrm{Sp\,}A$ be the functor that takes $R\in \mathcal{R}$ to $\textrm{Sp\,}A(R)=\textrm{Hom}_\mathcal{R}(A,R)$, the set of ring homomorphisms from $A$ to $R$. If $R\xrightarrow{f} S$ is a ring homomorphism, $\textrm{Sp\,}A(f)$ takes $\phi\in\textrm{Sp\,}A(R)$ to $f\circ\phi\in\textrm{Sp\,}A(S)$. We call $\textrm{Sp\,}A$ the $\ZZ$-functor represented by $A$. This functor is sometimes denoted by $\text{Hom}_\mathcal{R}(A,\ast)$.
\end{exam}

\begin{exam}
Let $X\in\mathcal{G}$ be a geometric space. Define $S(X)$ to be the $\ZZ$-functor that takes a ring $R$ to $\textrm{Hom}_\mathcal{G}(\textrm{Spec\,}R, X)$, where $\textrm{Spec\,}R=(\textrm{Spec\,}R, \mathcal{O}_{\textrm{Spec\,}R})$ is the prime spectrum of $R$. In particular, $S(\textrm{Spec\,}A)(R)=\textrm{Hom}_\mathcal{G}(\textrm{Spec\,}R, \textrm{Spec\,}A)=\textrm{Hom}_\mathcal{R}(A,R)=\textrm{Sp\,}A(R)$.
\end{exam}

\begin{defn}
An affine scheme is a $\ZZ$-functor that is isomorphic to the functor $\textrm{Sp\,}A$ for some $A\in\mathcal{R}$.
\end{defn}

For example, $O$ is an affine scheme because $O(R)=R\cong\textrm{Sp\,} \ZZ[x](R)$.

%%%%%%%%%%%%%%%%%%%%%%%%%%%%%%%%%%%%%%%%%%%%%%%%%%%%%%%%%%%%%
%%%%%%%%%%%%%%%%%%%%%%%%%%%%%%%%%%%%%%%%%%%%%%%%%%%%%%%%%%%%%
\subsection{Schemes}

Loosely speaking, a scheme is a $\ZZ$-functor that has an open covering by affine schemes. To define this, first we need to define what it means for a subfunctor to be open.

Let $A\in\mathcal{R}$. Recall that as a topological space, $\textrm{Spec\,}A$ has a basis of closed sets of the form $V(I)=\{P \textrm{ a prime ideal of } A|I\subseteq P\}$ for some ideal $I$ of $A$. Let $D(I)$ be the open set $\textrm{Spec\,}R$\textbackslash$V(I)$.

If $\phi: A\rightarrow R$ is a morphism in $\mathcal{R}$, we have a continuous map $\tilde{\phi}: \textrm{Spec\,}R\rightarrow \textrm{Spec\,}A$, and it is easy to check that $\tilde{\phi}^{-1}(D(I))=D(R\phi(I))$. So $\tilde{\phi}$ factors through $D(I)$ if and only if $R\phi(I)=R$. As a result, we have $S(D(I))(R)=\textrm{Hom}_\mathcal{G}(\textrm{Spec\,}R, D(I))=\{\phi\in\text{Hom}_\mathcal{R}(A,R) | R\phi(I)=R\}$. We denote this functor by $(\textrm{Sp\,}A)_I$ and call it the subfunctor defined by $I$.

\begin{defn}
Let $X$ be a  $\ZZ$-functor and $Y$ a subfunctor of $X$. We say $Y$ is an open subfunctor of $X$ if for each $A\in \mathcal{R}$ and every morphism $f: \textrm{Sp\,}A\rightarrow X$, the subfunctor $f^{-1}(Y)$ of $\textrm{Sp\,}A$ is defined by some ideal $I$ of $A$.
\end{defn}

In particular, take $X=\textrm{Sp\,}A$ and $f=\text{id}$, one easily gets that the open subfunctors of $\textrm{Sp\,}A$ are exactly those of the form $(\textrm{Sp\,}A)_I$ for some ideal $I$.

\begin{exam}
Let $X\in\mathcal{G}$, let $Y$ be an open subspace of $X$.  Then $S(Y)$ is an open subfunctor of $S(X)$ because if $\alpha: \textrm{Spec\,}A\rightarrow X$ is an element of $S(X)(A)=\text{Hom}_\mathcal{G}(\textrm{Spec\,}A,X)$, $\alpha^{-1}(Y)$ is an open subset of $\textrm{Spec\,}A$ and therefore it is of the form $D(I)$ for some ideal $I$.
\end{exam}

\begin{defn}
Let $X$ be a $\ZZ$-functor. A family of subfunctors $(Y_i)_{i\in\Lambda}$ is said to cover $X$ if for any field $k$, we have $X(k)=\underset{i\in\Lambda}{\cup}Y_i(k)$.
\end{defn}

\begin{exam}
Let $X\in\mathcal{G}$, let $(Y_i)_{i\in\Lambda}$ be an (open) covering of $X$, then $(S(Y_i))_{i\in\Lambda}$ is an (open) covering of $S(X)$.  In particular, let $A\in\mathcal{R}$ and let $(f_i,x_i)_{i\in\Lambda}$ be a partition of unity in $A$. That is, $\Lambda$ is finite and $f_i, x_i$ are elements of $A$  such that $1=\underset{i\in\Lambda}{\sum}f_i x_i$. Then $(S(D(Af_i)))_{i\in\Lambda}$ is an open covering of $S(\text{Spec}A)=\textrm{Sp\,}A$.
\end{exam}

Next we shall need the notion of a local functor.

Let $X$ be a $\ZZ$-functor, $R\in\textrm{Ob}(\mathcal{R})$ and $1=\underset{i\in\Lambda}{\sum}f_i x_i$ a partition of unity in $R$. We associate to $R$ the sequence of maps
\begin{equation}\label{local}
X(R)\xrightarrow{f}\prod_i X(R_{f_i})\underset{h}{\overset{g}{\rightrightarrows}} \prod_{i,j}X(R_{f_if_j})\tag{$\ast$}
\end{equation}
defined as follows: if $\alpha_i$ (resp. $\alpha_{ji}$) denotes the canonical homomorphisms from $R$ to $R_{f_i}$ (resp. from $R_{f_i}$ to $R_{f_if_j}$), we set
\begin{align*}
\textrm{Pr}_i\circ f &= X(\alpha_i),\\
\textrm{Pr}_{i,j}\circ g &= X(\alpha_{ji})\circ \textrm{Pr}_i, \\
\textrm{Pr}_{i,j}\circ h &= X(\alpha_{ij})\circ \textrm{Pr}_j.
\end{align*}
where $\textrm{Pr}_i$ (resp.  $\textrm{Pr}_{i,j}$ ) is the projection from the first (resp. second) product to the $i$ (resp. $i,j$) factor.

\begin{defn}
A $\ZZ$-functor $X$ is said to be local if for each $R\in\mathcal{R}$ and each partition of unity $1=\underset{i\in\Lambda}{\sum}f_i x_i$ in $R$, the sequence \eqref{local} is exact.
\end{defn}

\begin{defn}
A $\ZZ$-functor is called a scheme if it is local and has a covering by affine open subfunctors.
\end{defn}

\begin{rmk}
Some readers are probably more familiar with the following definition.

\begin{itemize}
\item An affine scheme is a geometric space $(X,\mathcal{O}_X)$ which is isomorphic to the prime spectrum of some ring $A$.

\item A scheme is a geometric space $(X,\mathcal{O}_X)$ in which every point has an open neighborhood $U$ such that the topological space $U$, together with the restricted sheaf $\mathcal{O}_X|_U$ is an affine scheme.
\end{itemize}

Our definition is equivalent to this one in the following sense.

We have seen in a previous example that there is a functor $S$ from the category of geometric spaces to the category of $\ZZ$-functors. This functor $S$ has a left adjoint functor, usually denoted by $|\ast|$, called the geometric realization functor. Under these two, the $\ZZ$-functors that satisfy our definition of schemes correspond exactly to the geometric spaces that satisfy the above definition in the remark. Also, $\textrm{Sp}\,A$ corresponds to $\spec A$, the prime spectrum of the ring $A$, and we will not distinguish between these two.

\end{rmk}

\begin{exam}
Let $X\in\mathcal{G}$, then $S(X)$ is local. We have:
\begin{align*}
S(X)(R) &= \mathcal{G}(\textrm{Spec\,}R,X),\\
S(X)(R_{f_i}) &= \mathcal{G}(\textrm{Spec\,}R_{f_i},X)=\mathcal{G}(D(f_i),X),\\
S(X)(R_{f_if_j}) &= \mathcal{G}(\textrm{Spec\,}R_{f_if_j},X)=\mathcal{G}(D(f_i)\cap D(f_j),X)=\mathcal{G}(D(f_if_j),X).
\end{align*}
The exactness of \eqref{local} means that a morphism $\alpha: \textrm{Spec\,}R\rightarrow X$ is determined by its restrictions to the open sets $D(f_i)$ and that these restrictions satisfy the usual matching conditions. It follows that $S(X)$ is a scheme if $X$ is a scheme in terms of the definition in the remark.
\end{exam}

Let $k$ be a field. By replacing the category of rings $\mathcal{R}$ with the category of $k$-algebras $\mathcal{R}_k$, we can easily generalize the definitions above to $k$-functors, affine schemes over $k$ and schemes over $k$.

%%%%%%%%%%%%%%%%%%%%%%%%%%%%%%%%%%%%%%%%%%%%%%%%%%%%%%%%%%%%%
%%%%%%%%%%%%%%%%%%%%%%%%%%%%%%%%%%%%%%%%%%%%%%%%%%%%%%%%%%%%%
%%%%%%%%%%%%%%%%%%%%%%%%%%%%%%%%%%%%%%%%%%%%%%%%%%%%%%%%%%%%%
%%%%%%%%%%%%%%%%%%%%%%%%%%%%%%%%%%%%%%%%%%%%%%%%%%%%%%%%%%%%%
\section{Affine group schemes}

From now on we shall fix a base field $k$.

\begin{defn}
	An \emph{affine (resp. affine commutative) group scheme} over $k$ (or over $\spec k$) is an affine scheme $G$ over $k$ with a group law (resp. commutative group law).
\end{defn}	

Unless otherwise stated, all group schemes in this paper are assumed to be affine and commutative.

\begin{defn}
	Let $G=\spec A$ be a group scheme over $k$. We say $G$ is {\it finite} if $A$ is a finite dimensional $k$-vector space . We say $G$ is {\it algebraic} if $A$ is a finitely generated $k$-algebra.
\end{defn}	 

\begin{defn}\label{schemedim}
	Let $G=\spec A$ be an algebraic group scheme over $k$. The \emph{dimension} of $G$, denoted by $\dim G$, is the Krull dimension of $A$.
\end{defn}

When $A$ is an integral domain, this is equal to the transcendence degree of the field of fractions of $A$ over $k$.	
	
\begin{defn}
	Let $G=\spec A$ be a finite group scheme over $k$. The \emph{order} (or \emph{rank}) of $G$, denoted by $|G|$, is the dimension of $A$ as a $k$-vector space.
\end{defn}	

The significance of the order is shown in the following 

\begin{thm}[Deligne]
	Let $G=\spec A$ be a finite group scheme over $k$. Let $m=|G|$. Then $[m]$ annihilates $G$, i.e., repeating the group law $m$ times gives a form vanishing identically on the scheme (the neutral element).
\end{thm}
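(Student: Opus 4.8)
The plan is to reduce to the case where $k$ is algebraically closed (hence $A$ sits inside $\prod A\otimes_k \bar k$ after base change, and the formation of the multiplication-by-$m$ morphism commutes with base change), and then to split the finite commutative group scheme $G$ into its connected--\'etale pieces. By the structure theory recalled earlier in this chapter, over a perfect field $G$ fits into a canonical exact sequence $0\to G^0\to G\to G^{\text{\'et}}\to 0$, and dualizing and reapplying this, $G$ decomposes (up to the usual subtleties) so that it suffices to treat separately (a) the \'etale case and (b) the connected case, the orders multiplying along the extension. In fact the cleanest organization is to prove the four-way decomposition $G\cong G_{rr}\times G_{rm}\times G_{mr}\times G_{mm}$ (\'etale--\'etale, \'etale--multiplicative type, local--\'etale, local--local), and handle each factor, using that $|G|$ is the product of the orders of the factors and that it suffices to kill each factor by its own order (since each order divides $m=|G|$).

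The key steps, in order, would be: first, the \'etale case: after base change to $\bar k$ an \'etale group scheme is just a finite abstract abelian group, and Lagrange's theorem gives that $[|G|]$ is the zero endomorphism. Second, the multiplicative-type case: by Cartier duality this is dual to the \'etale case, and the multiplication-by-$m$ map dualizes to the multiplication-by-$m$ map on the dual, so it vanishes there as well. Third — the substantive case — the local-local (infinitesimal, connected with connected dual) case: here one uses that $A$ is a local Artinian $k$-algebra with residue field $k$, so that $G$ has a filtration by closed subgroup schemes with successive quotients isomorphic to subgroups of $\mathbb{G}_a$, i.e.\ to $\alpha_p = \spec k[x]/(x^p)$; equivalently, use the connected Dieudonn\'e-module / finite Frobenius picture to see that $G$ is built by finitely many extensions from copies of $\alpha_p$, and that $[p]$ kills $\alpha_p$ while $p^{\text{(length)}}$ divides $|G|=p^{\text{(length)}}$. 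Running the group law through the filtration, a suitable power of $p$ dividing $|G|$ annihilates $G$. Finally, assemble: $m=|G|$ is divisible by the order of each factor, and an endomorphism that is $0$ on each factor of a product is $0$, so $[m]=0$ on $G$.

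The main obstacle is the local-local case. Everything else is Lagrange's theorem plus Cartier duality, but for infinitesimal group schemes there is no "elements" argument available, so one genuinely needs the filtration by $\alpha_p$'s (or an argument via the augmentation ideal: if $I=\ker(\epsilon)$ then $I$ is nilpotent, say $I^N=0$, and one shows the comultiplication composed $m$ times lands in $I^{\otimes}$ in a way forcing vanishing once $m$ is large enough relative to $|G|$). Getting the bookkeeping right — that the exponent produced by the filtration actually divides $|G|$ rather than merely being bounded by some function of it — is where care is needed; this is exactly where one invokes that the order is multiplicative in short exact sequences, a fact established in the structure-theory section, so that the length of the $\alpha_p$-filtration equals $\log_p|G|$ on the local-local part.
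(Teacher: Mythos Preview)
The paper does not prove this; it cites \cite[p.~4]{TO}, where Deligne's original argument appears. That argument is quite different from yours: it is a short, direct Hopf-algebra computation via Cartier duality and a norm (determinant) trick, valid over any base ring, with no structure theory or case analysis.

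Your route via the four-fold decomposition and composition series is correct in outline but heavier. In this paper's organization it is also forward-referencing: the decomposition over perfect fields is developed only in Sections~1.4--1.6, and the fact that $\boldsymbol{\alpha}_p$ is the unique simple local-local object---which your filtration argument needs---is asserted in the table at the end of Section~1.6 but really justified via the Dieudonn\'e theory of Chapter~3. None of this depends on Deligne's theorem, so there is no circularity, but the exposition would have to be reorganized. Your approach also loses generality (it requires passing to an algebraically closed field to get the decomposition and the list of simples), whereas Deligne's runs over any base. Finally, your parenthetical alternative for the local-local case (``$I^N=0$ forces vanishing once $m$ is large enough'') is not a proof as stated: nilpotence of the augmentation ideal yields annihilation by \emph{some} integer, not by $|G|$ on the nose, so that sketch still owes exactly the filtration bookkeeping you describe afterward.
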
	
\begin{proof}
	\cite[p.~4]{TO}
\end{proof}

Group schemes, algebraic group schemes and finite group schemes over $k$ all form categories with fibre products and final object $\spec k$. 

Let $G=\spec A$ be an affine group scheme over $k$ (not necessarily commutative). According to Yoneda's lemma, to give a group law $\pi_G: G\times G\rightarrow G$ is the same as to give a $k$-algebra homomorphism $\Delta: A\rightarrow A\otimes_k A$ satisfying the following dual axioms:

\begin{itemize}

\item The diagram

\begin{displaymath}\tag{Coassociativity}
    \xymatrix{
A\otimes A \otimes A   & A\otimes A \ar[l]_-{\Delta \otimes 1_A}\\
              A\otimes A \ar[u]^{1_A \otimes \Delta} & A\ar[l]^{\Delta}\ar[u]_{\Delta}
}
\end{displaymath}
commutes.

\item There is a $k$-algebra homomorphism $\varepsilon: A\rightarrow k$ such the diagram

\begin{displaymath}\tag{Left Counit}
    \xymatrix{
k\otimes_k A   & A\otimes_k A \ar[l]_-{\varepsilon \otimes 1_A}\\
& A\ar[u]_{\Delta} \ar[ul]^\cong
}
\end{displaymath}
commutes.

\item There is a $k$-algebra homomorphism $s: A\rightarrow A$ such the diagram

\begin{displaymath}\tag{Left Coinverse}
    \xymatrix{
A  & A\otimes_k A \ar[l]_{s\otimes 1_A}\\
k \ar[u] & A\ar[l]^\varepsilon \ar[u]_{\Delta}
}
\end{displaymath}
commutes.
\end{itemize}

This simple observation leads to the following definition:

\begin{defn}
A {\it Hopf algebra} over $k$ (or a \emph{Hopf} $k$-algebra) is a $k$-algebra $A$ with $k$-algebra homomorphisms $\Delta: A\rightarrow A\otimes_kA$, $\varepsilon:  A \rightarrow k$ and $S: A\rightarrow A$ satisfying the above three axioms.
\end{defn}
Hopf algebras are also called {\it bialgebras} in some books.

From the above discussion, we have an anti-equivalence between the category of affine group schemes over $k$ and the category of Hopf $k$-algebras. Here are some first examples of affine group schemes over $k$.

\begin{exam}\label{aaa}

1. Consider the functor $\boldsymbol{SL_n}$ that takes a $ k$-algebra $R$ to $\boldsymbol{SL_n}(R)$, the set of $n$ by $n$ matrices with entries in $R$ and determinant $1$. Usual matrix multiplication makes $\boldsymbol{SL_n}$ into an affine group scheme over $k$. This group  is represented by 
$$k[x_{ij}|1\leq i,j\leq n]/(\det(x_{ij})_{1\leq i,j\leq n}-1)$$, with
\begin{align*}
\Delta(x_{ij}) &= \sum_{k=1}^n x_{ik}\otimes x_{kj},\\
\varepsilon(x_{ij}) &= \delta_{ij},\\
S(x_{ij}) &= (-1)^{i+j}\text{det}(x_{st})_{s\neq j, t\neq i}.
\end{align*}

2. Similarly, one can define the group of invertible matrices $\boldsymbol{GL_n}$, the group of upper triangular invertible matrices $\boldsymbol{T_n}$ and the group of upper triangular unipotent matrices $\boldsymbol{U_n}$. Their names are fairly self-explanatory. They are represented by
\begin{align*}
&k[x_{ij},\frac{1}{\det(x_{ij})_{1\leq i,j\leq n}}| 1\leq i,j\leq n], \\
&k[x_{ij}, \frac{1}{x_{11}\cdots x_{nn}}|1\leq i,j\leq n], \\
&k[x_{ij}| 1\leq j< i \leq n],
\end{align*}
respectively.  In particular, $\boldsymbol{GL_1}$ is an important group and is usually denoted by $\boldsymbol{G_m}$. It is represented by \[k[x,y]/(xy-1) = k[x, \frac{1}{x}]\] with
\begin{align*}
\Delta(x) &=x\otimes x,\\
\varepsilon (x) &= 1,\\
S(x) &=\frac{1}{x}.
\end{align*}

3. The functor $\boldsymbol{\mu_n}$ that takes $R$ to $\boldsymbol{\mu_n}(R)=\{x\in R | x^n =1 \}$, the group of $n$-th roots of unity in $R$, is represented by $k[x]/(x^n-1)$ with
\begin{align*}
\Delta(x)&=x\otimes x,\\
\varepsilon(x) &= 1,\\
S(x) &=\frac{1}{x}.
\end{align*}

4. If char($k$)=$p>0$, let $\boldsymbol{\alpha_p}$ be the functor that takes $R$ to $\boldsymbol{\alpha_p}(R)=\{x\in R | x^p = 0\}$. It is a group under addition. This functor is represented by $k[x]/(x^p)$ with
\begin{align*}
\Delta(x) &=x\otimes 1 + 1\otimes x,\\
\varepsilon(x) &= 0,\\
S(x) &=-x.
\end{align*}

\end{exam}

\begin{exam}[Diagonalizable Group schemes]

The anti-equivalence of categories discussed above allows us to define affine group schemes by constructing Hopf algebras. Let $M$ be an abelian group,  let $k[M]$ be the  group algebra ($k$-vector space with basis the elements of $M$, multiplication induced by that of $M$ on elements of the basis). For $m\in M$ let $\Delta(m)=m\otimes m, \varepsilon(m)=1, S(m)=m^{-1}$. This does give a Hopf algebra since the required identities are satisfied on the elements of the basis. The corresponding group schemes are called {\it diagonalizable group schemes}.

For example, $\boldsymbol{G_m}$ and $\boldsymbol{\mu_n}$ defined in ~\ref{aaa} are diagonalizable group schemes. It is not hard to see that $\boldsymbol{G_m}$ corresponds to the infinite cyclic group $\ZZ$, and $\boldsymbol{\mu_n}$ corresponds to the finite cyclic group $\ZZ/n\ZZ$. Actually, we have the following simple fact.

\begin{fact*}
Let $G=\spec A$ be an algebraic group scheme over $k$. If $G$ is diagonalizable and $k$ is algebraically closed, then $G$ is a finite product of copies of $\boldsymbol{G_m}$ and various $\boldsymbol{\mu_n}$.
\end{fact*}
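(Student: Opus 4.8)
Let $G=\spec A$ be an algebraic group scheme over an algebraically closed field $k$. If $G$ is diagonalizable, then $G$ is a finite product of copies of $\boldsymbol{G_m}$ and various $\boldsymbol{\mu_n}$.

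The plan is to exploit the anti-equivalence between affine group schemes over $k$ and Hopf $k$-algebras, together with the explicit description of diagonalizable group schemes as those coming from group algebras $k[M]$ of abelian groups $M$. First I would observe that $G$ being diagonalizable means $A\cong k[M]$ as a Hopf algebra for some abelian group $M$, with the comultiplication, counit and antipode acting on basis elements by $\Delta(m)=m\otimes m$, $\varepsilon(m)=1$, $S(m)=m^{-1}$. The hypothesis that $G$ is algebraic means $A=k[M]$ is a finitely generated $k$-algebra; since the elements of $M$ form a $k$-basis of $A$ and are invertible with $\Delta(m)=m\otimes m$, a finite generating set of $A$ as a $k$-algebra produces a finite generating set of $M$ as a group. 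Hence $M$ is a finitely generated abelian group.

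Next I would apply the structure theorem for finitely generated abelian groups: $M\cong \ZZ^r\oplus \ZZ/n_1\ZZ\oplus\cdots\oplus\ZZ/n_s\ZZ$ for some $r,s\in\NN$ and integers $n_i\geq 2$. Then I would use the fact that the group algebra construction $M\mapsto k[M]$ turns direct sums of abelian groups into tensor products of Hopf algebras: $k[M_1\oplus M_2]\cong k[M_1]\otimes_k k[M_2]$ as Hopf algebras, because a basis of the left side is the set of pairs $(m_1,m_2)$ and the comultiplication is diagonal in the same way on both sides. On the geometric side, tensor product of Hopf algebras corresponds to product of group schemes, so $G\cong G_1\times\cdots$ accordingly. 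It remains to identify the factors: by the remarks following Example~\ref{aaa}, $k[\ZZ]$ corresponds to $\boldsymbol{G_m}$ and $k[\ZZ/n\ZZ]$ corresponds to $\boldsymbol{\mu_n}$. Assembling these gives $G\cong \boldsymbol{G_m}^r\times\boldsymbol{\mu_{n_1}}\times\cdots\times\boldsymbol{\mu_{n_s}}$, a finite product of the desired form.

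The main obstacle, and the only place where algebraic closedness of $k$ is actually needed, is in making sure that $A\cong k[M]$ as a Hopf algebra with $M$ abelian — that is, that "diagonalizable" as used here really does mean "isomorphic to a group algebra Hopf algebra" and that no subtler form occurs; over a non-closed field one would have to contend with forms of $\boldsymbol{\mu_n}$ and non-split tori, which is exactly why the hypothesis is imposed. Given the way the excerpt has defined diagonalizable group schemes (namely, directly as those arising from $k[M]$), this obstacle largely dissolves and the argument is essentially the translation of the structure theorem for finitely generated abelian groups through the anti-equivalence; the one genuine check is the reduction from "$A$ finitely generated as a $k$-algebra" to "$M$ finitely generated as a group," and the compatibility of $k[-]$ with direct sums and tensor products, both of which are routine diagram chases on basis elements.
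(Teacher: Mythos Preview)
The paper states this as a \texttt{Fact*} inside the example on diagonalizable group schemes and gives no proof at all; there is nothing to compare against. Your argument is the standard one and is correct: with the paper's definition of ``diagonalizable'' as $A\cong k[M]$, the algebraic hypothesis forces $M$ to be finitely generated (your linear-independence argument shows $M$ is even finitely generated as a monoid), the structure theorem for finitely generated abelian groups applies, and the functor $M\mapsto k[M]$ carries direct sums to tensor products of Hopf algebras, hence to products of group schemes, with the cyclic pieces matching $\boldsymbol{G_m}$ and $\boldsymbol{\mu_n}$ as the paper already noted.

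Your closing remark is also on point: with the paper's definition of diagonalizable (literally $A\cong k[M]$ as a Hopf algebra), the hypothesis that $k$ be algebraically closed is never used. The hypothesis would matter under the more common definition in the literature (e.g., $G$ is a closed subgroup scheme of some $\boldsymbol{G_m}^n$, or $G_{\bar k}$ is a product of $\boldsymbol{G_m}$'s and $\boldsymbol{\mu_n}$'s), where one must rule out nonsplit tori and twisted forms; the paper has sidestepped that by building the conclusion into the definition.
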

\end{exam}

\begin{exam}[Constant group schemes]

Let $\Gamma$ be a finite group. In general, the functor that assigns $\Gamma$ to every $k$-algebra $R$ is not an affine group scheme. However, something very close to this functor can be made to be representable, hence the name of constant group schemes.

Let $A=k^\Gamma$ be the set of all functions from $\Gamma$ to $k$. The addition and multiplication of $k$ make $A$ into a ring in the obvious way. Furthermore, given $\sigma\in\Gamma$ let $e_\sigma$ be the function that is $1$ on $\sigma$ and $0$ elsewhere, then $\{e_\sigma\}$ is a basis of $A$ over $k$.
As a ring $k^\Gamma \cong k\times \cdots \times k$ and $\{e_\sigma\}$ is a set of orthogonal idempotents. Suppose $R$ is a $k$-algebra with no nontrivial idempotents, than a homomorphism $\varphi: A\rightarrow R$ must send one $e_\sigma$ to $1$ and all others to $0$, and thus can be identified naturally with $\sigma$. In other words, we have $\spec A(R)\cong \Gamma$ as sets.

Define $\Delta(e_\rho)=\sum_{\sigma \delta = \rho} (e_\sigma \otimes e_\delta)$, $S(e_\sigma)=e_{\sigma^{-1}}$ and $\varepsilon(e_\sigma)$ be $1$ if $\sigma$ is the identity and $0$ otherwise. This gives us a Hopf algebra structure on $A$ for which the induced group structure matches with the multiplication in $\Gamma$. The group scheme defined above is called constant group scheme associated to $\Gamma$ and is denoted by $\underline{\Gamma}$.

\end{exam}

Next, we make the construction of duals explicit for finite commutative groups schemes.

Let $G=\spec A$ be a finite commutative group scheme over $k$. Let $A^\vee=\textrm{Hom}_k(A,k)$ be the space of linear forms on A. From $A$ we have the following list of maps defining its Hopf $k$- algebra structure:

\begin{flalign*}
\qquad \Delta &: A\rightarrow A\otimes_k A, &\text{comultiplication}\\
\qquad \varepsilon &: A\rightarrow k, &\textrm{counit}\\
\qquad s &: A\rightarrow A, &\text{coinverse}\\
\qquad m &: A\otimes_k A \rightarrow A,&\text{multiplication in $A$}\\
\phi &: k\rightarrow A. &\text{$k$-algebra structure map}
\end{flalign*}

Passing to  the duals, we get a similar list of maps:

\begin{flalign*}
m^\ast &: A^\vee\rightarrow A^\vee\otimes_k A^\vee, &\\
\phi^\ast &: A^\vee\rightarrow k, &\\
s^\ast &: A^\vee \rightarrow A^\vee, &\\
\Delta^\ast &: A^\vee\otimes_k A^\vee \rightarrow A^\vee, &\\
\varepsilon^\ast &: k\rightarrow A^\vee. &
\end{flalign*}

Naturally one would ask the following question: is $A^\vee$ with the above list of maps also a Hopf $k$-algebra? For this we have the following:
\begin{thm}{ (Cartier Duality)}
Let $G=\spec A$ be a finite commutative group scheme over $k$. Then $A^\vee$ also represents a finite commutative group scheme over $k$ to be denoted by $G^\vee$.
\end{thm}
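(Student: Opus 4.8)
The plan is to exploit the fact that the contravariant functor $V \mapsto V^\vee = \textrm{Hom}_k(V,k)$ on finite-dimensional $k$-vector spaces is a duality: it is exact, it is involutive (the canonical map $V \to (V^\vee)^\vee$ is an isomorphism), and, crucially, it is monoidal, i.e. for finite-dimensional $V,W$ the canonical map $V^\vee \otimes_k W^\vee \to (V \otimes_k W)^\vee$ is an isomorphism. This last isomorphism is the only place where the finiteness hypothesis on $G$ enters, and it is the linchpin of the argument: without it the dual $m^\ast$ of the multiplication $m: A \otimes_k A \to A$ would merely be a map $A^\vee \to (A \otimes_k A)^\vee$ rather than a comultiplication $A^\vee \to A^\vee \otimes_k A^\vee$. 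So the first step is to set up this monoidal/involutive formalism cleanly, and then to record the five structure maps on $A^\vee$ obtained by dualizing and composing with the monoidal isomorphism: the comultiplication $m^\ast$, the counit $\phi^\ast$, the antipode $s^\ast$, the multiplication $\Delta^\ast$, and the unit $\varepsilon^\ast$, exactly as listed in the statement. Since $\dim_k A^\vee = \dim_k A < \infty$, once $A^\vee$ is shown to be a commutative Hopf $k$-algebra, $G^\vee := \spec A^\vee$ will automatically be a finite group scheme.

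Next I would verify the Hopf algebra axioms for $A^\vee$ one diagram at a time, observing that each is obtained from a Hopf algebra axiom for $A$ by applying $(-)^\vee$, which reverses every arrow and interchanges $m \leftrightarrow \Delta$ and $\phi \leftrightarrow \varepsilon$ and sends $s \mapsto s^\ast$, while the associativity and unit constraints of $\otimes_k$ are compatible with the monoidal isomorphism. Concretely: associativity of $m$ dualizes to coassociativity of $m^\ast$; the unit axiom for $(m,\phi)$ to the counit axiom for $(m^\ast,\phi^\ast)$; coassociativity of $\Delta$ to associativity of $\Delta^\ast$; the counit axiom for $(\Delta,\varepsilon)$ to the unit axiom for $(\Delta^\ast,\varepsilon^\ast)$; the statement that $\Delta$ and $\varepsilon$ are $k$-algebra homomorphisms (the bialgebra compatibility) to the statement that $m^\ast$ and $\phi^\ast$ are compatible as maps of coalgebras with $(\Delta^\ast,\varepsilon^\ast)$; and the two antipode identities for $s$ to the two antipode identities for $s^\ast$. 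This is routine diagram-chasing once the monoidal isomorphism is in hand.

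Finally I would address commutativity. Because $A$ is a commutative ring, $m \circ \tau = m$ with $\tau$ the flip, and dualizing gives $\tau^\ast \circ m^\ast = m^\ast$, i.e. the coalgebra $A^\vee$ is cocommutative, which is precisely the condition (via Yoneda, as in the discussion preceding the definition of Hopf algebra) that the group law on $G^\vee$ induced by $m^\ast$ is commutative. Symmetrically, because $G$ is a commutative group scheme, $\Delta$ is cocommutative, so $\tau \circ \Delta = \Delta$, and dualizing gives $\Delta^\ast \circ \tau^\ast = \Delta^\ast$; hence $A^\vee$ is a commutative $k$-algebra and $\spec A^\vee$ is genuinely an affine scheme in the sense used here. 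The main obstacle is not any single computation but arranging the involutive and monoidal structure of $(-)^\vee$ so transparently that all of these axiom diagrams transform correctly; after that the verifications are mechanical.
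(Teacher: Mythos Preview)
Your proposal is correct and follows essentially the same approach as the paper: dualize the five structure maps using the monoidal isomorphism $A^\vee\otimes_k A^\vee\cong(A\otimes_k A)^\vee$ (valid by finiteness) and then verify the commutative Hopf-algebra axioms for $A^\vee$ by dualizing the corresponding diagrams for $A$. The only difference is one of emphasis: the paper singles out the verification that $s^\ast$ is a $k$-algebra homomorphism---which amounts to $(ab)^{-1}=a^{-1}b^{-1}$ and hence requires $G$ commutative---whereas you locate the role of the commutativity hypothesis in the cocommutativity of $\Delta$ making $\Delta^\ast$ a commutative multiplication; these are equivalent ways of pinpointing the same obstruction.
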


We call $A^\vee$ the (linear) dual of the Hopf $k$-algebra $A$, and $G^\vee=\spec A^\vee$ the (Cartier) dual of $G$.

\begin{proof}

We need to verify that $A^\vee$ with the above maps satisfies the axioms of a commutative Hopf $k$-algebra. We shall present here the verification of only one axiom to show that the commutativity assumption on $\spec A$ is truly needed.

We will check that $s^\ast$ is a $k$-algebra homomorphism. Thus we have to check that we have a commutative diagram (note that the multiplication in $A^\vee$ is given by $\Delta^\ast$)

\begin{displaymath}
    \xymatrix{A^\vee \otimes A^\vee \ar[r]^-{\Delta^\ast} \ar[d]_{s^\ast\otimes s^\ast} & A^\vee \ar[d]^{s^\ast}\\
              A^\vee \otimes A^\vee \ar[r]_-{\Delta^\ast} & A^\vee }
\end{displaymath}

which is equivalent to

\begin{displaymath}
    \xymatrix{A \otimes A   & A \ar[l]_-{\Delta}\\
              A \otimes A \ar[u]^{s \otimes s} & A\ar[l]^-{\Delta}\ar[u]_{s}. }
\end{displaymath}

In terms of the group law it means that we have a commutative diagram

\begin{displaymath}
    \xymatrix{G \times G \ar[r]^{\pi} \ar[d]_{(\tau,\tau)} & G \ar[d]^{\tau} \\
              G \times G \ar[r]^{\pi} & G, }
\end{displaymath}
i.e., for each $R$ and $a,b\in G(R)$, we have $(ab)^{-1} = a^{-1}b^{-1}$. This is true only when $G$ is commutative.

\end{proof}

\begin{exam}

Let $\Gamma$ is a finite abelian group. From the two examples above, we have the diagonalizable group scheme given by $k[\Gamma]$ and the constant group scheme given by $k^{\Gamma}$. These are dual to each other. By definition $k[\Gamma]^\vee$ is the set of all $k$-linear maps from $k[\Gamma]$ to $k$. Since $\Gamma$ is a basis for $k[\Gamma]$, we have a dual basis $\{\sigma^\ast | \sigma \in  \Gamma \}$ of $k[\Gamma]^\vee$.

The multiplication on $k[\Gamma]^\vee$ is given by $\Delta^\ast$. Thus
\[ \sigma^\ast \delta^\ast = \Delta^\ast(\sigma^\ast \otimes \delta^\ast) = (\sigma^\ast \otimes \delta^\ast)\Delta.\]

So for any $\rho \in \Gamma$, we have
\[ \sigma^\ast \delta^\ast (\rho) = (\sigma^\ast \otimes \delta^\ast)\Delta (\rho) = (\sigma^\ast \otimes \delta^\ast)(\rho \otimes \rho) = \sigma^\ast(\rho) \delta^\ast(\rho) \]
\[ =\left \{ \begin{array}{ll}
0 & \text{if}\ \sigma^\ast \neq \delta^\ast, \\
\sigma^\ast(\rho) & \text{if}\ \sigma^\ast = \delta^\ast. \\
\end{array}\right.
\]

So $\{\sigma^\ast | \sigma \in  \Gamma \}$ is a set of orthogonal idempotents of $k[\Gamma]^\vee$ and $k[\Gamma]^\vee$ is isomorphic to $k^{\Gamma}$ as $k$-algebras under the map $\sigma^\ast \mapsto e_\sigma$.

Similarly by looking into the comultiplication one can show that $k[\Gamma]^\vee$ is isomorphic to $k^{\Gamma}$ as Hopf $k$-algebras. This proves that the dual of a diagonalizable group scheme is a constant group scheme.

In particular, $(\underline{\mathbb{Z}/n\mathbb{Z}})^\vee=\boldsymbol{\mu_n}$.
\end{exam}

We end this section with the notion of a closed subgroup.

Let $G=\spec A$. A group scheme $H$ is called a {\it closed subgroup} of $G$ if $H=\spec A/I$ for some ideal $I$ of $A$. Intuitively, this means $H$ is defined by polynomial equations defining $G$ plus some additional ones corresponding to the ideal $I$. If one chooses additional equations randomly, the result cannot be expected to form a group. Thus the ideal $I$ has to satisfy certain conditions.

First, homomorphisms that factor through $A/I$ must be closed under multiplication. If $f, g: A\rightarrow R$ are two morphisms vanishing on $I$, their product $g\cdot h=(g,h)\Delta$ in $\spec A(R)$ must also vanish on $I$. This means that $\Delta(I)$ is mapped to $0$ under the epimorphism $A\otimes_kA\rightarrow A/I\otimes_kA/I$, hence is contained in $I\otimes_kA+A\otimes_kI$. Also, we need $s(I)\subseteq I$ since if $g$ is in $\spec A(R)$, $g^{-1}=g\circ s$ must also be in it. Finally, we need $\varepsilon (I)=0$ since the unit must be in $\spec A(R)$. Ideals $I$ of $A$ satisfying these three conditions are called {\it Hopf ideals} of $A$. If $I$ is a Hopf ideal, then $A/I$ with the induced maps of $\Delta, \varepsilon, s$ on the quotients is as well a Hopf $k$-algebra.

For example, the kernel $I_A$ of $\varepsilon: A\rightarrow k$ is a Hopf ideal, called the augmentation ideal of $A$. It corresponds to the trivial subgroup.

An important source of examples of closed subgroups is the kernels of homomorphisms. Let $G=\spec A$, $H=\spec B$, let $\Phi: G\rightarrow H$ be a homomorphism and $\phi: B\rightarrow A$ be the corresponding Hopf $k$-algebra homomorphism. For any $k$-algebra $R$, let $\textrm{Ker}(\Phi) (R)$ be the kernel of $\Phi(R): G(R)\rightarrow H(R)$. The elements of $\textrm{Ker}(\Phi) (R)$ can be described as pairs in $G(R)\times \spec k(R)$ having the same image in $H(R)$, i.e., $\textrm{Ker}(\Phi)=G\times_H\spec k$. Thus $\textrm{Ker}(\Phi)$ is represented by $A\otimes_Bk$, where $A$ and $k$ are viewed as $B$-algebra via the maps $\phi: B\rightarrow A$ and $\varepsilon_B: B\rightarrow k$. Tensoring the exact sequence $I_B\rightarrow B\xrightarrow{\varepsilon_B}k$ with $A$ over $B$, we see that $\textrm{Ker}(\Phi)$ is represented by $A/I_BA$. In particular, $\textrm{Ker}(\Phi)$ is a closed subgroup of $G$ which is a normal subgroup.

\begin{exam} Recall that $\boldsymbol{G_m}$ is represented by $k[x,\frac{1}{x}]$ and $\varepsilon(x)=1$. Thus $\varepsilon$ is essentially the evaluation map and its kernel is generated by $x-1$. Now consider the squaring homomorphism $(\ )^2: \boldsymbol{G_m}\rightarrow \boldsymbol{G_m}$. Here we have $A=k[x,\frac{1}{x}]$ and $B=k[y,\frac{1}{y}]$, and the squaring map corresponds to the Hopf $k$-algebra homomorphism $k[y,\frac{1}{y}]\rightarrow k[x,\frac{1}{x}]$ sending $y$ to $x^2$. Therefore, we have $I_BA=(x^2-1)A$ and $A/I_BA=A/(x^2-1)A$, which is the same as $k[x]/(x^2-1)$. Thus we conclude that the kernel is $\boldsymbol{\mu_2}$, which is what we naturally expected.

\end{exam}

%%%%%%%%%%%%%%%%%%%%%%%%%%%%%%%%%%%%%%%%%%%%%%%%%%%%%%%%%%%%%
%%%%%%%%%%%%%%%%%%%%%%%%%%%%%%%%%%%%%%%%%%%%%%%%%%%%%%%%%%%%%
%%%%%%%%%%%%%%%%%%%%%%%%%%%%%%%%%%%%%%%%%%%%%%%%%%%%%%%%%%%%%
%%%%%%%%%%%%%%%%%%%%%%%%%%%%%%%%%%%%%%%%%%%%%%%%%%%%%%%%%%%%%
\section{\'{E}tale and connected group schemes}

There are some important types of finite group schemes that we need to discuss before we can jump into the structure theorem of general finite group schemes over $k$. Recall that $\spec A$ is connected if $A$ has no non-trivial idempotents. A more subtle question arise when we take base extension into account. For example, take $\boldsymbol{\mu_3}$ which is represented by$A=k[x]/(x^3-1)$. If $k=\RR$, then $\spec A$ consists of two points, reflecting the decomposition $x^3-1=(x-1)(x^2+x+1)$. However, if we extend $k$ from $\RR$ to $\CC$, then $x^3-1$ splits completely; therefore $\boldsymbol{\mu_3}$ is isomorphic to the constant group scheme $\ZZ/3\ZZ$, and we get three connected components. Extensions of the base field may result in additional idempotents. Thus we need something that can detect potential idempotents. This can be handled using separable algebras, which generalize the notion of separable field extensions.

\begin{thm}\label{separablealgebra}
Let $\bar{k}$ be an algebraic closure of $k$, let $k_s$ be the separable closure of $k$ in $\bar{k}$, and let $A$ be a finite dimensional $k$-algebra. Then the following five statements are equivalent:

(1) the ring $A\otimes_k \bar{k}$ is reduced.

(2) $A\otimes_k\bar{k}\cong\bar{k}\times\bar{k}\times\cdots\times\bar{k}$.

(3) the number of $k$-algebra homomorphisms $A\rightarrow \bar{k}$ is equal to the dimension of $A$ over $k$.

(4) $A$ is a product of separable field extensions of $k$.

(5) $A\otimes_kk_s\cong k_s\times k_s\times\cdots\times k_s$.

If $k$ is perfect, these statements are also equivalent to the following one

(6) the ring $A$ is reduced.
\end{thm}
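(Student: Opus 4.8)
The plan is to reduce the statement to the case where $A$ is a finite field extension of $k$ and then to run a short cycle of implications, using only classical field theory as input.

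First I would reduce to a single field. Since $A$ is a finite-dimensional commutative $k$-algebra it is Artinian, so it is a finite product $A\cong\prod_i A_i$ of local Artinian $k$-algebras, and a reduced local Artinian ring is a field (its maximal ideal is nilpotent, hence zero). Each of (1)--(5) forces $A$ to be reduced: for (1), (2), (5) this is because $\bar{k}$ and $k_s$ are free $k$-modules with $k$ as a direct summand, so $A\hookrightarrow A\otimes_k\bar{k}$ and $A\hookrightarrow A\otimes_k k_s$; for (4) it is immediate; and for (3) it follows from the classical bound $\#\Hom_k(L,\bar{k})\le[L:k]$ for finite field extensions $L$ applied to the factors of $A_{\mathrm{red}}$, which gives $\#\Hom_k(A,\bar{k})=\#\Hom_k(A_{\mathrm{red}},\bar{k})\le\dim_k A_{\mathrm{red}}$, strictly less than $\dim_k A$ unless $A$ is reduced. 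So in every case $A$ is a finite product of finite field extensions $L_i$ of $k$. Because $\otimes_k$ commutes with finite products, because a finite product of rings is reduced (resp.\ isomorphic to a power of $\bar{k}$, resp.\ of $k_s$) if and only if each factor is, and because $\bar{k}$ and $k_s$ are connected so that a $k$-algebra homomorphism $\prod L_i\to\bar{k}$ factors through exactly one $L_i$, each of (1)--(5) holds for $A$ if and only if it holds for every $L_i$. This reduces the theorem to $A=L$ a finite extension of $k$.

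For the field case I would establish $(4)\Rightarrow(2)\Rightarrow(1)\Rightarrow(4)$, $(2)\Leftrightarrow(3)$, and $(4)\Rightarrow(5)\Rightarrow(2)$. For $(4)\Rightarrow(2)$: by the primitive element theorem $L=k(\alpha)$ with $\alpha$ separable, its minimal polynomial $f$ has $[L:k]$ distinct roots in $\bar{k}$, and the Chinese remainder theorem gives $L\otimes_k\bar{k}\cong\bar{k}[x]/(f)\cong\bar{k}^{[L:k]}$. $(2)\Rightarrow(1)$ is trivial. For $(1)\Rightarrow(4)$ I argue by contraposition: if $L/k$ is inseparable then $\chara k=p>0$ and some $\alpha\in L$ has minimal polynomial $g$ with a repeated root, so $k(\alpha)\otimes_k\bar{k}\cong\bar{k}[x]/(g)$ is non-reduced, and it injects into $L\otimes_k\bar{k}$ by flatness of $\bar{k}$ over $k$, contradicting (1). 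For $(2)\Leftrightarrow(3)$ I use the adjunction identifying $k$-algebra homomorphisms $L\to\bar{k}$ with $\bar{k}$-algebra homomorphisms $L\otimes_k\bar{k}\to\bar{k}$: if $L\otimes_k\bar{k}\cong\bar{k}^n$ then the latter set consists exactly of the $n=\dim_{\bar{k}}(L\otimes_k\bar{k})=[L:k]$ coordinate projections, which is (3); conversely, given (3) with homomorphisms $\tau_1,\dots,\tau_n$, Dedekind's lemma on linear independence of characters shows the $\bar{k}$-algebra map $L\otimes_k\bar{k}\to\bar{k}^n$, $\ell\otimes c\mapsto(\tau_i(\ell)c)_i$, is surjective, hence an isomorphism by equality of dimensions, which is (2). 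Finally $(4)\Rightarrow(5)$ is the same computation as $(4)\Rightarrow(2)$, noting that the roots of the separable polynomial $f$ already lie in $k_s$, and $(5)\Rightarrow(2)$ follows by applying $-\otimes_{k_s}\bar{k}$.

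It remains to handle (6) when $k$ is perfect. Here $(1)\Rightarrow(6)$ holds over any field, again because $A\hookrightarrow A\otimes_k\bar{k}$; and $(6)\Rightarrow(1)$ over a perfect $k$, because then $A$ reduced forces $A$ to be a product of finite field extensions of $k$, each of which is separable (one of the standard characterizations of a perfect field), so (4) holds. The field-theoretic ingredients --- the primitive element theorem, the fact that a separable polynomial has distinct roots all lying in $k_s$, and Dedekind's independence of characters --- are classical and I would simply cite them; the step that demands the most care is the reduction to a single field, which rests on the flatness and connectedness of $\bar{k}$ and $k_s$ over $k$ and on keeping the two fields $\bar{k}$ and $k_s$ in their proper roles in conditions (2) and (5).
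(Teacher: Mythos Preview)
Your proof is correct. The paper does not actually prove this theorem: it simply cites \cite[p.~47]{Waterhouse} and moves on. Your argument supplies what the citation stands in for, and the route you take --- reduce to a single finite field extension via the Artinian decomposition, then run the short cycle $(4)\Rightarrow(2)\Rightarrow(1)\Rightarrow(4)$, $(2)\Leftrightarrow(3)$, $(4)\Rightarrow(5)\Rightarrow(2)$ --- is essentially the standard one found in Waterhouse. The reduction step is handled cleanly: you correctly observe that each of (1)--(5) already forces $A$ to be reduced (the argument for (3) via the inequality $\#\Hom_k(A,\bar{k})\le\dim_k A_{\mathrm{red}}$ is the only nontrivial case), and that all five conditions respect finite products, so nothing is lost in passing to a single field factor. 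The field-theoretic inputs you invoke (primitive element, CRT, Dedekind independence, flatness of field extensions) are exactly the right ones and are used in the right places.
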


\begin{proof} \cite[p.~47]{Waterhouse}
\end{proof}

\begin{defn}
	A finite dimensional $k$-algebra $A$ satisfying the equivalent conditions (1) to (5) of Theorem \ref{separablealgebra} is called a {\it separable} $k$-algebra. 
\end{defn}

\begin{cor}
Sub-algebras, quotients, products and tensor products of separable $k$-algebras are separable.
\end{cor}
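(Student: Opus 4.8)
The plan is to reduce all four assertions to the base-change characterization of separability provided by Theorem~\ref{separablealgebra}, specifically conditions (1) and (2): a finite dimensional $k$-algebra $A$ is separable if and only if $A\otimes_k\bar k$ is reduced, equivalently $A\otimes_k\bar k\cong\bar k\times\cdots\times\bar k$. In each of the four cases the finite dimensionality over $k$ is immediate — a subalgebra or quotient of a finite dimensional $k$-algebra is finite dimensional, and a finite product or a tensor product of finite dimensional $k$-algebras is finite dimensional — so the only thing to verify is reducedness after applying $-\otimes_k\bar k$. Throughout I would use that $\bar k$ is free, hence flat, over $k$, so that $-\otimes_k\bar k$ is exact and commutes with the formation of subalgebras, quotients, and finite products.

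First I would treat the three easy cases. If $B\subseteq A$ is a subalgebra with $A$ separable, flatness gives an injection $B\otimes_k\bar k\hookrightarrow A\otimes_k\bar k$; since $A\otimes_k\bar k$ is reduced and a subring of a reduced ring is reduced, $B$ is separable. If $A_1,A_2$ are separable, then $(A_1\times A_2)\otimes_k\bar k\cong(A_1\otimes_k\bar k)\times(A_2\otimes_k\bar k)$ is a product of two reduced rings, hence reduced. If $A/I$ is a quotient of a separable $A$, then by (2) we may write $A\otimes_k\bar k\cong\bar k^{\,n}$, and $(A/I)\otimes_k\bar k\cong(A\otimes_k\bar k)/(I\otimes_k\bar k)$ is a quotient of $\bar k^{\,n}$; since every ideal of a finite product of fields is the product of a subset of the factors, such a quotient is again a product of copies of $\bar k$, in particular reduced, so $A/I$ is separable. (Alternatively, for products and quotients one can argue directly from condition (4): writing $A\cong\prod_i L_i$ with each $L_i/k$ a separable field extension, every quotient of $A$ is $\prod_{i\in S}L_i$, and a finite product of such algebras is again of this form.)

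The tensor product is the one case that genuinely requires passing to $\bar k$, since reducedness of $k$-algebras is not preserved under $\otimes_k$ in general. Given separable $A_1,A_2$, write $A_i\otimes_k\bar k\cong\bar k^{\,n_i}$ via (2); then
\[
(A_1\otimes_k A_2)\otimes_k\bar k\;\cong\;(A_1\otimes_k\bar k)\otimes_{\bar k}(A_2\otimes_k\bar k)\;\cong\;\bar k^{\,n_1}\otimes_{\bar k}\bar k^{\,n_2}\;\cong\;\bar k^{\,n_1 n_2},
\]
which is reduced, so $A_1\otimes_k A_2$ is separable. The only real obstacle is conceptual rather than technical: one must resist checking reducedness over $k$ for the quotient and tensor-product cases and instead exploit that over the algebraically closed field $\bar k$ a separable algebra becomes split, i.e. isomorphic to a finite product of copies of $\bar k$, after which all four statements are routine.
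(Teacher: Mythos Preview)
Your proof is correct and follows essentially the same strategy as the paper: use condition~(1) of Theorem~\ref{separablealgebra} (reducedness of $A\otimes_k\bar k$) for subalgebras, and condition~(2) (the split form $A\otimes_k\bar k\cong\bar k^{\,n}$) for quotients, products, and tensor products. The paper's proof is terser---it only spells out the subalgebra case and then says ``using (2) \ldots\ we can prove the case of quotients, products and tensor products similarly''---so your write-up is effectively a fleshed-out version of the same argument.
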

\begin{proof}
If $B$ is a sub-algebra of $A$, $B\otimes_k\bar{k}$ is a sub-algebra of $A\otimes_k\bar{k}$. If $A\otimes_k\bar{k}$ is reduced, so is $B\otimes_k\bar{k}$. Hence $B$ is separable.

Using (2) of Theorem \ref{separablealgebra}, we can prove the case of quotients, products and tensor products  similarly.
\end{proof}

\begin{defn}
A finite group scheme $G=\spec A$ is called {\it \'{e}tale} if $A$ is separable.
\end{defn}

Let $A$ be a finitely generated $k$-algebra. If $B$ is a separable sub-algebra of $A$, then $B\otimes_k\bar{k}$ is a separable sub-algebra of $A\otimes_k\bar{k}$. Since it is spanned by idempotents, its dimension is bounded by the number of connected components of $\spec A\otimes_k\bar{k}$, which is finite. Also, if $B_1$ and $B_2$ are two separable sub-algebras of $A$, then the composite $B_1B_2$ is also separable, since it is a quotient of $B_1\otimes_kB_2$. Hence there exists a largest separable sub-algebra inside $A$. We denote this by $\pi_0A$.

\begin{thm}
Let $k\subseteq K$ be fields, $A$, $B$ finitely generated $k$-algebras.

(1) $\pi_0A\otimes_kK=\pi_0(A\otimes_kK)$.

(2) $\pi_0(A\times B)=\pi_0A\times\pi_0B$.

(3) $\pi_0(A\otimes_k B)=\pi_0A\otimes_k\pi_0B$.
\end{thm}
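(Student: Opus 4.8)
The plan is to reduce all three statements to a single dimension formula. For a finitely generated $k$-algebra $A$ write $n(A)$ for the number of connected components of $\spec(A\otimes_k\bar k)$, a finite number. The key lemma will be that $\dim_k\pi_0 A=n(A)$. One inequality is already implicit in the paragraph preceding the statement: $\pi_0 A\otimes_k\bar k$ is a separable $\bar k$-algebra, hence spanned by orthogonal idempotents; if $\dim_k\pi_0 A=r$, these may be taken to be $r$ nonzero orthogonal idempotents of $A\otimes_k\bar k$ summing to $1$, and they split $\spec(A\otimes_k\bar k)$ into $r$ nonempty clopen pieces, so $r\le n(A)$.

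For the reverse inequality $\dim_k\pi_0 A\ge n(A)$ I would produce a separable $k$-subalgebra of $A$ of dimension $n(A)$ by descent. Over the separable closure $k_s$ of $k$, every connected component of $\spec(A\otimes_k k_s)$ is geometrically connected (a connected finite-type scheme over a separably closed field remains connected under any field extension), so $\spec(A\otimes_k k_s)$ has exactly $n(A)$ components; their idempotents $f_1,\dots,f_{n(A)}$ already lie in $A\otimes_k k'$ for some finite Galois subextension $k\subseteq k'\subseteq k_s$, and $G=\mathrm{Gal}(k'/k)$ permutes them. Then $B'=k'f_1\oplus\cdots\oplus k'f_{n(A)}\cong(k')^{n(A)}$ is a $G$-stable étale $k'$-subalgebra of $A\otimes_k k'$, so by Galois descent its fixed ring $B=(B')^{G}$ is a $k$-subalgebra of $A$ with $B\otimes_k k'\cong B'$; in particular $B$ is a separable $k$-algebra of dimension $n(A)$, which forces $\dim_k\pi_0 A\ge n(A)$.

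Granting the key lemma, $(1)$ follows. First $\pi_0 A\otimes_k K$ is separable over $K$: fixing an embedding $\bar k\hookrightarrow\overline K$ and tensoring $\pi_0 A\otimes_k\bar k\cong\bar k^{\,n(A)}$ with $\overline K$ over $\bar k$ gives $(\pi_0 A\otimes_k K)\otimes_K\overline K\cong\overline K^{\,n(A)}$, which is condition (2) of Theorem~\ref{separablealgebra} over $K$. Since $K$ is $k$-flat, $\pi_0 A\hookrightarrow A$ stays injective after $-\otimes_k K$, so $\pi_0 A\otimes_k K$ is a separable $K$-subalgebra of $A\otimes_k K$ and hence $\pi_0 A\otimes_k K\subseteq\pi_0(A\otimes_k K)$. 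Finally, by the key lemma over $K$, $\dim_K\pi_0(A\otimes_k K)$ is the number of components of $\spec\big((A\otimes_k\bar k)\otimes_{\bar k}\overline K\big)$, which equals $n(A)$ because each component of $\spec(A\otimes_k\bar k)$ is geometrically connected over $\bar k$; thus both sides of the inclusion have $K$-dimension $n(A)=\dim_k\pi_0 A$, and an injection of equidimensional finite-dimensional $K$-vector spaces is an isomorphism.

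Statement $(3)$ is the same argument with $n(A\otimes_k B)=n(A)\,n(B)$ in place of stability of $n$: $\pi_0 A\otimes_k\pi_0 B$ is separable (a tensor product of separable algebras, by the Corollary after Theorem~\ref{separablealgebra}) and a subalgebra of $A\otimes_k B$, hence contained in $\pi_0(A\otimes_k B)$, and the two have equal $k$-dimension because over the algebraically closed field $\bar k$ the product of two connected finite-type schemes is connected, so $\spec(A_{\bar k}\otimes_{\bar k}B_{\bar k})$ has $n(A)n(B)$ components. Statement $(2)$ needs no geometry at all: $\pi_0 A\times\pi_0 B$ is separable (a product of separable algebras) and a subalgebra of $A\times B$, while for any separable subalgebra $C\subseteq A\times B$ the projections $p_A(C)$ and $p_B(C)$ are separable quotients of $C$, so $C\subseteq p_A(C)\times p_B(C)\subseteq\pi_0 A\times\pi_0 B$. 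The one genuinely hard point is the lower bound in the key lemma, i.e. the two geometric facts invoked above — connectedness of components over a separably closed field, and their stability under extension of an algebraically closed field — after which everything reduces to the formal bookkeeping just carried out.
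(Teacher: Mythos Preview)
The paper does not actually prove this theorem: its entire proof is the citation \cite[p.~49--50]{Waterhouse}. So there is no argument in the paper to compare your proposal against, and you have in fact supplied more than the paper does.

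Your proof is correct. The organizing principle---reducing everything to the identity $\dim_k\pi_0 A=n(A)$ and then matching dimensions---is clean, and the Galois-descent production of a separable subalgebra of dimension $n(A)$ is the right way to get the nontrivial inequality. The two geometric inputs you flag (connected finite-type schemes over a separably closed field are geometrically connected; a fiber product of connected finite-type schemes over an algebraically closed field is connected) are standard and exactly what is needed. Your argument for (2) is particularly nice since it avoids the key lemma entirely. Waterhouse's treatment, which the paper defers to, is somewhat more hands-on with idempotents and less overtly geometric, but the underlying content is the same: one must identify $\pi_0 A$ with the algebra of ``geometric'' idempotents that descend to $k$, and your descent argument does precisely this.
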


\begin{proof}
\cite[p.~49-50]{Waterhouse}
\end{proof}

If $G=\spec A$, we write $\pi_0G=\spec \pi_0A$. Each idempotent of $A$ is in $\pi_0A$. There may also be nontrivial fields in $\pi_0A$, but since $\pi_0A\otimes_k\bar{k}\cong\bar{k}\times\bar{k}\times\cdots\times\bar{k}$, these fields reflect potential idempotents, which yields connected components of $X$ after performing base extension. The above theorem shows that $\pi_0A$ indeed captures all such potential idempotents.

\begin{thm}
Let $G=\spec A$ be an affine algebraic group scheme. Then the following four statements are equivalent:

(1) $\pi_0G$ is trivial (i.e., $\pi_0A=k$).

(2) $\spec A$ is connected.

(3) $\spec A$ is irreducible.

(4) $A/\textrm{nil}(A)$ is an integral domain.
\end{thm}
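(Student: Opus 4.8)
The plan is to organize the four conditions into three blocks: the purely ring-theoretic equivalence $(3)\Leftrightarrow(4)$, the equivalence $(1)\Leftrightarrow(2)$, which uses only the identity section of a group scheme, and the implication $(2)\Rightarrow(3)$, where the geometry of algebraic groups genuinely enters; together with the trivial $(3)\Rightarrow(2)$ this closes the cycle.

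For $(3)\Leftrightarrow(4)$ one simply recalls that for any ring $R$, $\spec R$ is irreducible iff $\textrm{nil}(R)$ is prime, i.e. iff $R/\textrm{nil}(R)$ is an integral domain; apply this to $R=A$. For $(1)\Rightarrow(2)$: every idempotent of $A$ lies in $\pi_0A$, so $\pi_0A=k$ forces the only idempotents of $A$ to be $0$ and $1$, whence $\spec A$ is connected. For $(2)\Rightarrow(1)$: if $\spec A$ is connected then $A$ has no nontrivial idempotents; being a finite-dimensional separable $k$-algebra, $\pi_0A$ is a product of separable field extensions of $k$ by Theorem~\ref{separablealgebra}, and absence of nontrivial idempotents forces a single factor, so $\pi_0A=K$ is a separable field extension of $k$. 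The counit $\varepsilon\colon A\to k$ restricts to a nonzero $k$-algebra homomorphism $K\to k$, which is injective since $K$ is a field; hence $\dim_k K\le 1$ and $K=k$. (This step genuinely needs the group structure: for a nontrivial separable extension $K/k$, $\spec K$ is connected yet $\pi_0=K\neq k$, so connectedness alone is not enough.)

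The substance is $(2)\Rightarrow(3)$, which I would carry out in two moves. First, reduce to $k$ algebraically closed: since $\pi_0$ is compatible with base change, the formula $\pi_0A\otimes_k\bar k=\pi_0(A\otimes_k\bar k)$ recalled above gives $\pi_0A=k\iff\pi_0(A\otimes_k\bar k)=\bar k$, so by $(1)\Leftrightarrow(2)$ applied over both $k$ and $\bar k$, $\spec A$ is connected iff $\spec(A\otimes_k\bar k)$ is. As $\spec(A\otimes_k\bar k)\to\spec A$ is continuous and surjective (faithful flatness of $\bar k/k$), proving $\spec(A\otimes_k\bar k)$ irreducible yields the irreducibility of $\spec A$ as a continuous image. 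Second, over $\bar k$, put $G=\spec(A\otimes_k\bar k)$, let $X_1,\dots,X_n$ be its (finitely many, $A$ being Noetherian) irreducible components, and set $S=\bigcup_{X_i\neq X_j}(X_i\cap X_j)$, a closed subset. If some component $X$ satisfied $X\subseteq S$, then $X$, being irreducible, would lie in some $X_i\cap X_j$ with $X_i\neq X_j$, forcing $X=X_i=X_j$ — absurd; hence no component lies in $S$, so taking a closed point in $X_1\setminus S$ gives a closed point of $G$ lying on a single component. By homogeneity — for $x,y\in G(\bar k)$ the left translation $t_{yx^{-1}}$ is an automorphism of $G$ carrying $x$ to $y$ and permuting the components — \emph{every} closed point of $G$ lies on a unique component. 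Therefore distinct $X_i,X_j$ are disjoint (their intersection is a closed subset containing no closed point, hence empty in the Jacobson scheme $G$), so each $X_i$ is open and closed, and connectedness of $G$ forces $n=1$, i.e. $G$ is irreducible.

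Finally $(3)\Rightarrow(2)$ is immediate since an irreducible space is connected, closing the loop. The main obstacle is $(2)\Rightarrow(3)$: apart from the formal steps, the genuinely geometric input is the homogeneity-plus-dimension argument over $\bar k$, and one must notice that the reduction to the algebraically closed case is licensed precisely by $(1)\Leftrightarrow(2)$ together with the base-change behaviour of $\pi_0$ — i.e. by the (non-obvious) fact that connectedness of a group scheme is a geometric property, which fails for schemes in general.
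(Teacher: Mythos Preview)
The paper does not give its own proof; it simply cites \cite[p.~51]{Waterhouse}. Your argument is correct and is essentially the standard one found there: the ring-theoretic equivalence $(3)\Leftrightarrow(4)$, the idempotent bookkeeping for $(1)\Leftrightarrow(2)$ (with the nice observation that the counit forces the single separable field factor of $\pi_0A$ down to $k$), and the homogeneity argument over $\bar k$ for $(2)\Rightarrow(3)$, passing back to $k$ via surjectivity of $\spec(A\otimes_k\bar k)\to\spec A$.

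One small remark: your reduction step uses the already-proved $(1)\Leftrightarrow(2)$ together with base-change of $\pi_0$ to transport connectedness to $\bar k$, which is exactly right and worth emphasizing, since (as you note) connectedness of a bare scheme need not be geometric. Everything else is clean; there is nothing to correct.
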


\begin{proof}
\cite[p.~51]{Waterhouse}
\end{proof}

\begin{defn}
An affine algebraic group scheme $G$ is said to be {\it connected} if it satisfies anyone of the above four equivalent conditions.
\end{defn}

Let $G=\spec A$ be an algebraic group scheme over $k$. Since the image of a separable $k$-algebra under any $k$-algebra homomorphism must again be separable, $\Delta$ must map $\pi_0A$ into $\pi_0(A\otimes_kA)=\pi_0A\otimes_k\pi_0A$. Similarly, $s$ maps $\pi_0A$ into $\pi_0A$. Thus, $\pi_0A$ with $\Delta, \varepsilon, s$ of $A$ restricted to $\pi_0A$ is a Hopf sub-algebra of $A$, or equivalently, $\pi_0G$ is an \'{e}tale group scheme. Note that the augmentation ideal $I_{\pi_0A}=I_A\cap\pi_0A$.

Let $H=\spec B$ be an \'{e}tale group scheme. Each $k$-homomorphism $B\rightarrow A$ has image in $\pi_0A$ hence can be decomposed as $B\rightarrow \pi_0A\hookrightarrow A$. Correspondingly, any homomorphism $G\rightarrow H$ factors through $\pi_0G$ as $G\rightarrow \pi_0G\rightarrow H$.

Let $G^0$ be the kernel of $G\rightarrow \pi_0G$. By the discussion at the end of Section 1.3, we know that it is a closed normal subgroup represented by $A/(I_A\cap\pi_0A)A$. Using a set of orthogonal idempotents $\{f_i\}$ corresponding to the decomposition of $\pi_0A$ into fields, we can write $A=\oplus f_iA$. The map $\varepsilon: A\rightarrow k$ must send exactly one $f_i$, say $f_0$, to 1, and all other $f_i$'s to $0$. Set $A^0=f_0A$. Then $\pi_0(A^0)=k$, $I_A\cap\pi_0A$ is generated by $1-f_0$, and the quotient representing $G^0$ is just $A^0$. To sum up, we have:

\begin{thm}
	Let $G=\spec A$ be an algebraic group scheme over $k$. Then $\pi_0A$ represents an \'{e}tale group $\pi_0G$. The kernel $G^0$ of $G\rightarrow \pi_0G$ is a connected closed normal subgroup of $G$ represented by the factor of $A$ on which $\varepsilon$ is nonzero.
	
\end{thm}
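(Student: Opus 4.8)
The plan is to assemble the statement from the three pieces already developed in this section: (a) $\pi_0 A$ is a Hopf sub-algebra, so $\pi_0 G$ is an étale group scheme; (b) $G^0$, the kernel of $G \to \pi_0 G$, is a closed normal subgroup by the kernel construction of Section 1.3; (c) concretely $G^0$ is represented by $A^0 = f_0 A$, where $f_0$ is the idempotent on which $\varepsilon$ does not vanish, and this algebra is connected. Each of these has essentially been argued in the paragraphs preceding the theorem, so the ``proof'' is mostly a matter of collecting and citing the relevant facts in order.

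First I would recall that, since the image of a separable $k$-algebra under a $k$-algebra homomorphism is separable, $\Delta$ carries $\pi_0 A$ into $\pi_0(A \otimes_k A) = \pi_0 A \otimes_k \pi_0 A$ (using part (3) of the theorem on $\pi_0$) and $s$ carries $\pi_0 A$ into $\pi_0 A$; together with the restriction of $\varepsilon$ this equips $\pi_0 A$ with the structure of a Hopf sub-algebra of $A$. Hence $\pi_0 G = \spec \pi_0 A$ is a group scheme, and it is étale by the definition of étale (its coordinate ring is separable). Next, $G^0 := \ker(G \to \pi_0 G)$ is, by the general description of kernels at the end of Section 1.3, a closed normal subgroup of $G$, represented by $A/(I_{\pi_0 A})A$ where $I_{\pi_0 A} = I_A \cap \pi_0 A$ is the augmentation ideal of $\pi_0 A$.

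It then remains to identify this quotient concretely and to see that it is connected. Write $\pi_0 A = \prod_i K_i$ as a product of separable field extensions of $k$, with corresponding orthogonal idempotents $\{f_i\}$ summing to $1$, so that $A = \bigoplus_i f_i A$. The counit $\varepsilon \colon A \to k$ restricted to $\pi_0 A$ is a $k$-algebra homomorphism to the field $k$, so it kills all but one factor; say $\varepsilon(f_0) = 1$ and $\varepsilon(f_i) = 0$ for $i \neq 0$. Then $I_A \cap \pi_0 A$ is generated by $1 - f_0$, so $(I_A \cap \pi_0 A)A = (1-f_0)A = \bigoplus_{i \neq 0} f_i A$, and hence $A/(I_A \cap \pi_0 A)A \cong f_0 A =: A^0$. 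Finally one checks $\pi_0(A^0) = k$: indeed $f_0 A$ is a direct factor of $A$, so $\pi_0(f_0 A)$ is the corresponding direct factor $f_0 \pi_0 A = K_0$ of $\pi_0 A$, and $\varepsilon$ identifies $K_0$ with $k$ since $\varepsilon(f_0) = 1$ forces $K_0 = k$; thus $G^0$ is connected by the equivalent characterizations of connectedness.

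The only mild subtlety — the part I would be most careful about — is the compatibility check that $\Delta(\pi_0 A) \subseteq \pi_0 A \otimes_k \pi_0 A$ and $s(\pi_0 A) \subseteq \pi_0 A$, since this is what makes $\pi_0 G$ a group scheme in the first place; it rests on the functoriality statement $\pi_0(A \otimes_k A) = \pi_0 A \otimes_k \pi_0 A$, which is exactly part (3) of the earlier theorem on $\pi_0$, together with the elementary fact that homomorphic images of separable algebras are separable (a consequence of the corollary on sub-algebras, quotients and tensor products of separable algebras). Everything else is bookkeeping with idempotents.
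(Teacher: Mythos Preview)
Your proposal is correct and follows essentially the same approach as the paper: the argument is precisely the content of the paragraphs immediately preceding the theorem statement, which you have accurately reconstructed and organized into steps (a)--(c). The only minor addition you make is the explicit justification that $K_0 = k$ (the paper just asserts $\pi_0(A^0)=k$), but this is a harmless elaboration of the same idea.
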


This $G^0$ is called the {\it connected component} of $G$.

%%%%%%%%%%%%%%%%%%%%%%%%%%%%%%%%%%%%%%%%%%%%%%%%%%%%%%%%%%%%%
%%%%%%%%%%%%%%%%%%%%%%%%%%%%%%%%%%%%%%%%%%%%%%%%%%%%%%%%%%%%%
%%%%%%%%%%%%%%%%%%%%%%%%%%%%%%%%%%%%%%%%%%%%%%%%%%%%%%%%%%%%%
%%%%%%%%%%%%%%%%%%%%%%%%%%%%%%%%%%%%%%%%%%%%%%%%%%%%%%%%%%%%%

\section{Finite group schemes over perfect fields}

Now assume $k$ is perfect.

\begin{lemma} Let $A$ be a finitely generated $k$-algebra. If $A/\textrm{nil}(A)$ is separable, then $\pi_0A\cong A/\textrm{nil}(A)$.
\end{lemma}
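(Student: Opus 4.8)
The plan is to show that the canonical quotient $q\colon A\to A/\mathrm{nil}(A)$ restricts to an isomorphism $\pi_0A\xrightarrow{\ \sim\ }A/\mathrm{nil}(A)$; write $\bar A=A/\mathrm{nil}(A)$ and $N=\mathrm{nil}(A)$. First I would dispose of injectivity. Since $\pi_0A$ is a separable $k$-algebra, $\pi_0A\otimes_k\bar k$ is reduced, hence $\pi_0A$ itself is reduced, and therefore $\pi_0A\cap N\subseteq\mathrm{nil}(\pi_0A)=0$. Thus $q|_{\pi_0A}$ is injective, and the whole content of the lemma is surjectivity.

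For surjectivity I would exhibit a $k$-algebra section of $q$ whose image is a separable subalgebra of $A$, which then lies in $\pi_0A$ by maximality. Since $\bar A$ is separable it is finite dimensional over $k$ and, $k$ being perfect, a finite product $K_1\times\cdots\times K_r$ of finite separable field extensions of $k$; its orthogonal idempotents lift to orthogonal idempotents of $A$ because $N$ is a nil ideal, which reduces matters to the case $\bar A=K$, a single finite separable extension. Writing $K=k[x]/(f)$ with $f$ separable irreducible (primitive element theorem), I pick $a\in A$ lifting the class of $x$, so that $f(a)\in N$ and $f'(a)$, being a unit modulo $N$, is a unit in $A$. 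A Newton iteration $a\mapsto a-f(a)/f'(a)$, justified by the formal identity $f(a-h)=f(a)-f'(a)h+h^2g(a,h)$, replaces $f(a)$ by an element of $(f(a))^2A$ at each step, so after finitely many steps (as $f(a)$ is nilpotent) it terminates at some $a'$ with $f(a')=0$; then $k[a']\subseteq A$ is a copy of $K$, is separable, and is carried isomorphically onto $\bar A$ by $q$. Hence $k[a']\subseteq\pi_0A$, so $q|_{\pi_0A}$ is onto, and together with injectivity this finishes the proof. A slicker alternative for this last step is a dimension count: by the base-change statement for $\pi_0$ one has $\pi_0A\otimes_k\bar k=\pi_0(A\otimes_k\bar k)$, whose $\bar k$-dimension is the number of connected components of $\spec(A\otimes_k\bar k)$; nilpotents do not affect the underlying space, so this equals the number of connected components of $\spec(\bar A\otimes_k\bar k)$, which is $\dim_k\bar A$ because $\bar A\otimes_k\bar k\cong\bar k\times\cdots\times\bar k$. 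Then $\dim_k\pi_0A=\dim_k\bar A$, and an injection between $k$-vector spaces of equal finite dimension is an isomorphism.

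The main obstacle is surjectivity, i.e.\ lifting the separable algebra $\bar A$ back into $A$ along the nil extension $A\to\bar A$. The idempotent-lifting reduction to the field case is routine; the substantive point in the direct approach is checking that $f'(a)$ is invertible in $A$ and that the Newton iteration stabilizes precisely because $N$ consists of nilpotents, while in the dimension-count approach the substantive point is the identification of $\dim_{\bar k}\pi_0(A\otimes_k\bar k)$ with the number of connected components of $\spec(A\otimes_k\bar k)$.
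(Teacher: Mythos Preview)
Your proof is correct. Both routes you give---the Hensel/Newton lifting and the dimension count---are valid, and you have correctly identified that injectivity is immediate (separable implies reduced) while surjectivity is the only real content. The paper itself does not prove this lemma; it simply cites \cite[p.~52]{Waterhouse}, so there is nothing in the paper to compare against line by line.

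A couple of minor remarks. First, the clause ``$k$ being perfect'' in your decomposition of $\bar A$ into separable field extensions is unnecessary: that decomposition is part of the definition of a separable algebra (condition (4) in the paper's Theorem~\ref{separablealgebra}) and holds over any base field. Perfection of $k$ is used in the paper only to guarantee that $A/\mathrm{nil}(A)$ is separable in the first place when $A$ is finite-dimensional, not for the lemma itself. Second, in your dimension-count alternative you implicitly use that $\mathrm{nil}(A)\otimes_k\bar k=\mathrm{nil}(A\otimes_k\bar k)$, which follows here because the quotient $\bar A\otimes_k\bar k$ is reduced by separability of $\bar A$; this is worth making explicit since it is the step that ties the connected components of $\spec(A\otimes_k\bar k)$ to those of $\spec(\bar A\otimes_k\bar k)$.
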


\begin{proof}
\cite[p.~52]{Waterhouse}
\end{proof}

\begin{thm}\label{decomp}
 Let $G$ be a finite group scheme over a perfect field. Then $G$ is the semi-direct product of $G^0$ and $\pi_0G$.
\end{thm}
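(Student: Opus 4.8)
The plan is to produce a \emph{homomorphic} section of the projection $p\colon G\to\pi_{0}G$ whose kernel is $G^{0}$ (constructed in the previous section), and then run the standard splitting argument on functors of points. Throughout write $G=\spec A$, so $A$ is a finite-dimensional Hopf $k$-algebra and $k$ is perfect.

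First I would locate the section. Since $k$ is perfect and $A/\textrm{nil}(A)$ is a finite-dimensional reduced $k$-algebra, it is separable by Theorem \ref{separablealgebra}(6), so the preceding Lemma gives $\pi_{0}A\cong A/\textrm{nil}(A)$; concretely the composite $q|_{\pi_{0}A}\colon\pi_{0}A\hookrightarrow A\twoheadrightarrow A/\textrm{nil}(A)$ is injective (as $\pi_{0}A$ is reduced, $\pi_{0}A\cap\textrm{nil}(A)=0$) and hence an isomorphism by the dimension count supplied by the Lemma. Next I would check that $\textrm{nil}(A)$ is a Hopf ideal, so that $q\colon A\to A/\textrm{nil}(A)$ is a homomorphism of Hopf algebras: the conditions $\varepsilon(\textrm{nil}(A))=0$ and $s(\textrm{nil}(A))\subseteq\textrm{nil}(A)$ are immediate, and for the comultiplication condition one uses
\[
\textrm{nil}(A\otimes_{k}A)=\textrm{nil}(A)\otimes_{k}A+A\otimes_{k}\textrm{nil}(A),
\]
whose nontrivial inclusion $\subseteq$ holds because the quotient of $A\otimes_{k}A$ by the right-hand side is $(A/\textrm{nil}(A))\otimes_{k}(A/\textrm{nil}(A))$, which is separable by the Corollary on separable algebras, hence reduced; since $\Delta$ sends nilpotents to nilpotents this yields $\Delta(\textrm{nil}(A))\subseteq\textrm{nil}(A)\otimes_{k}A+A\otimes_{k}\textrm{nil}(A)$. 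Dually $q$ gives a closed immersion of group schemes $\iota\colon\spec(A/\textrm{nil}(A))\hookrightarrow G$, and precomposing with the isomorphism $\pi_{0}G\xrightarrow{\sim}\spec(A/\textrm{nil}(A))$ dual to $q|_{\pi_{0}A}$ produces $\sigma\colon\pi_{0}G\to G$; a diagram chase on the Hopf-algebra maps (the composite $\pi_{0}A\hookrightarrow A\xrightarrow{q}A/\textrm{nil}(A)$ is precisely $q|_{\pi_{0}A}$) gives $p\circ\sigma=\mathrm{id}_{\pi_{0}G}$.

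Then I would split the sequence. Define morphisms of schemes $\phi\colon G^{0}\times\pi_{0}G\to G$ by $(x,y)\mapsto x\cdot\sigma(y)$ and $\psi\colon G\to G^{0}\times\pi_{0}G$ by $g\mapsto\bigl(g\cdot\sigma(p(g))^{-1},\,p(g)\bigr)$, both assembled from the group law, the inverse, $p$ and $\sigma$; the first coordinate of $\psi$ indeed lands in $G^{0}=\ker p$ because $p\circ\sigma=\mathrm{id}$. Using $p\circ\sigma=\mathrm{id}$ and $G^{0}=\ker p$ one checks directly that $\phi\circ\psi=\mathrm{id}_{G}$ and $\psi\circ\phi=\mathrm{id}_{G^{0}\times\pi_{0}G}$, so $\phi$ is an isomorphism of schemes. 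Since all group schemes here are commutative, the group law $m_{G}\colon G\times G\to G$ is itself a homomorphism, whence $\phi$ is a homomorphism and therefore an isomorphism of group schemes, giving $G\cong G^{0}\times\pi_{0}G$ (for commutative $G$ the semidirect product collapses to the direct one); in the general case the same scheme isomorphism transports the group law of $G$ to the semidirect-product law on $G^{0}\rtimes\pi_{0}G$, with $\sigma(\pi_{0}G)$ acting on the normal subgroup $G^{0}$ by conjugation.

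The main obstacle is concentrated in the single step that $\textrm{nil}(A)$ is a Hopf ideal; once that is known the rest is formal. This is exactly the place where perfectness of $k$ is indispensable: it is needed to know that $A/\textrm{nil}(A)$ is separable, hence that $(A/\textrm{nil}(A))^{\otimes 2}$ is reduced, hence that $\textrm{nil}(A\otimes_{k}A)=\textrm{nil}(A)\otimes_{k}A+A\otimes_{k}\textrm{nil}(A)$. Over an imperfect field this last identity, and with it the existence of the homomorphic section, can genuinely fail, so isolating and justifying this point is the heart of the proof.
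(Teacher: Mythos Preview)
Your proof is correct and follows essentially the same approach as the paper: the key step in both is showing that $\textrm{nil}(A)$ is a Hopf ideal by using that $(A/\textrm{nil}(A))^{\otimes 2}$ is separable (hence reduced) over a perfect field, so that $\spec(A/\textrm{nil}(A))$ is a closed subgroup mapping isomorphically to $\pi_0G$. Your write-up is simply more explicit than the paper's, spelling out the other Hopf-ideal conditions and the functor-of-points splitting that the paper leaves implicit.
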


\begin{proof}
Since $k$ is perfect, we know from Theorem ~\ref{separablealgebra} that a $k$-algebra is separable if and only if it is reduced.

Let $G=\spec A$. Then $A/\textrm{nil}(A)$ is separable. Further, $A/\textrm{nil}(A)\otimes_kA/\textrm{nil}(A)$ is separable, hence reduced. Therefore, the $k$-homomorphism
\[A\xrightarrow{\Delta} A\otimes_kA\rightarrow A/\textrm{nil}(A)\otimes_kA/\textrm{nil}(A)\] vanishs on $\textrm{nil}(A)$, hence factors through $A/\textrm{nil}(A)$. Thus $A/\textrm{nil}(A)$ defines a closed subgroup $H$ of $G$. By the above lemma this subgroup is isomorphic to $\pi_0G$ via the composite homomorphism $H\rightarrow G\rightarrow \pi_0G$.

\end{proof}

If $G$ is commutative, then the semi-product of Theorem \ref{decomp} is a direct product. If moreover $G$ is \'{e}tale, then the decomposition of  $G^\vee =G^{\vee 0}\times \pi_0G^\vee$ induces a dual decomposition $G\cong G^{\vee\vee}=(G^{\vee 0})^\vee\times (\pi_0G^\vee)^\vee$. The two factors are \'{e}tale groups (being subgroups of $G$) with a connected dual and an \'{e}tale dual respectively. Similarly, if $G$ is connected, it factors into a product of two connected groups with a connected dual and an \'{e}tale dual respectively. Thus for a general finite commutative group $G$, we have a four-fold decomposition.

\begin{cor}\label{decomppsition}
A finite group scheme over a perfect field $k$ splits canonically into four direct factors of the following disjoint type:

(1) \'{e}tale with \'{e}tale dual,

(2) \'{e}tale with connected dual,

(3) connected with \'{e}tale dual,

(4) connected with connected dual.
\end{cor}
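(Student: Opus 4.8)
The plan is to iterate the decomposition of Theorem~\ref{decomp} together with Cartier duality. Since by convention every group scheme here is commutative, the remark following Theorem~\ref{decomp} gives a canonical splitting $G \cong G^0 \times \pi_0 G$ with $G^0$ connected and $\pi_0 G$ \'etale. Before iterating I would record two auxiliary facts. First, Cartier duality is compatible with direct products: if $G \cong G_1 \times G_2$ then $G^\vee \cong G_1^\vee \times G_2^\vee$, because the Hopf algebra of $G_1\times G_2$ is $A_1 \otimes_k A_2$ and the linear dual of a tensor product of finite-dimensional spaces is the tensor product of the duals, compatibly with comultiplication, counit and coinverse. Second, the properties ``\'etale'' and ``connected'' pass to direct factors: a direct factor $A_1$ of a separable algebra $A_1 \otimes_k A_2$ is a sub-algebra of it, hence separable by the Corollary on sub-algebras of separable algebras; and since $\pi_0(A_1 \otimes_k A_2) = \pi_0 A_1 \otimes_k \pi_0 A_2$, if the left side equals $k$ then each $\pi_0 A_i = k$ (the dimension is multiplicative and each $\pi_0 A_i \supseteq k$).

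Next I would apply Theorem~\ref{decomp} to the dual of each factor and dualize back. Writing $(G^0)^\vee \cong \bigl((G^0)^\vee\bigr)^0 \times \pi_0\bigl((G^0)^\vee\bigr)$ and dualizing, using the first auxiliary fact, we get $G^0 \cong G^{0,c} \times G^{0,e}$ where $G^{0,c} := \bigl(((G^0)^\vee)^0\bigr)^\vee$ has dual $((G^0)^\vee)^0$, which is connected, and $G^{0,e} := \bigl(\pi_0((G^0)^\vee)\bigr)^\vee$ has dual $\pi_0((G^0)^\vee)$, which is \'etale; by the second auxiliary fact both summands remain connected, being direct factors of the connected group $G^0$. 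Symmetrically, $\pi_0 G \cong (\pi_0 G)^c \times (\pi_0 G)^e$ with both summands \'etale, the first with connected dual and the second with \'etale dual. Assembling the two steps, $G \cong (\pi_0 G)^e \times (\pi_0 G)^c \times G^{0,e} \times G^{0,c}$, and these four factors are of types (1), (2), (3), (4) respectively.

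Finally I would check disjointness, which also yields canonicity. A group scheme that is both \'etale and connected has Hopf algebra $A$ with $\pi_0 A = A$ and $\pi_0 A = k$, hence $A = k$; applying this to a group and to its dual shows a nonzero finite commutative group scheme belongs to at most one of the four classes. For canonicity, note that $G \mapsto G^0$, $G \mapsto \pi_0 G$ (identified with the canonical complementary subgroup $\cong G_{\mathrm{red}}$ in the proof of Theorem~\ref{decomp}) and $G \mapsto G^\vee$ are all functorial, so the four factors above are obtained from $G$ by composing canonical operations and the whole decomposition is intrinsic (one can further argue, exactly as for $G^0$ being the largest connected subgroup of $G$, that each factor is the largest sub-group scheme of $G$ of its type).

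The step I expect to need the most care is not any single computation but the bookkeeping: one must be sure that after dualizing a connected (resp.\ \'etale) group, decomposing its dual via Theorem~\ref{decomp}, and dualizing back, the resulting summands are \emph{still} connected (resp.\ \'etale) --- this is precisely where the second auxiliary fact is used --- and that the two splittings, the $G^0/\pi_0 G$ one and the dual one, are independent enough to combine into a single fourfold product. Everything else is a direct translation of Theorem~\ref{decomp} and the established properties of $\pi_0$.
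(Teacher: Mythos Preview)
Your proposal is correct and follows essentially the same route as the paper: split $G$ as $G^0\times\pi_0G$ via Theorem~\ref{decomp}, apply the same splitting to the Cartier dual of each piece and dualize back, and then observe that direct factors of an \'etale (resp.\ connected) group remain \'etale (resp.\ connected). The paper's argument is terser---it simply notes that the resulting factors are \'etale ``being subgroups of $G$'' and handles the connected case ``similarly''---whereas you spell out the two auxiliary facts and the canonicity/disjointness verification, but the underlying strategy is identical.
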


%%%%%%%%%%%%%%%%%%%%%%%%%%%%%%%%%%%%%%%%%%%%%%%%%%%%%%%%%%%%%
%%%%%%%%%%%%%%%%%%%%%%%%%%%%%%%%%%%%%%%%%%%%%%%%%%%%%%%%%%%%%
%%%%%%%%%%%%%%%%%%%%%%%%%%%%%%%%%%%%%%%%%%%%%%%%%%%%%%%%%%%%%
%%%%%%%%%%%%%%%%%%%%%%%%%%%%%%%%%%%%%%%%%%%%%%%%%%%%%%%%%%%%%
\section{Finite group schemes over perfect fields of positive characteristic}

Let $X$ be a scheme over $\mathbb{F}_p$. The \emph{absolute Frobenius morphism} $\sigma_X: X\rightarrow X$ is the identity on points and the map $a\mapsto a^p$ on sections. Now suppose $k$ is a perfect field of positive characteristic $p$. For any scheme $X$ over $\spec{k}$ define $X^{(p)}$ as the fibre product and $F_X$ as the induced morphism in the following commutative diagram:

\[
\xymatrix{
X \ar@/_/[ddr] \ar@/^/[drr]^{\sigma_X} \ar@{.>}[dr]|-{F_X}\\
&X ^{(p)} \ar[d] \ar[r] & X\ar[d]\\
&\spec k \ar[r]^\sigma &\spec k}
\]

This $F_X$ is called the \emph{relative Frobenius morphism} of $X$ over $\spec{k}$. Note that this $F_X$ is functorial. For any group scheme $G$ over $k$, the morphism $F_G: G\rightarrow G^{(p)}$ is a homomorphism and if $G$ is commutative, $(G^{\vee})^{(p)}=(G^{(p)})^{\vee}$.

For any finite group $G$ over $k$ and $F_{G^{\vee}}:G^\vee\rightarrow (G^\vee)^{(p)}=(G^{(p)})^\vee$, define $V_G=(F_{G^\vee}))^\vee: G^{(p)}\rightarrow G$. The homomorphism $V_G$ dual to $F_{G^\vee}$ is called the \emph{Verchiebung homomorphism} of $G$. This $V_G$ is also functorial and compatible with products and base extensions. In fact, we have:

\begin{thm}
	For any finite group scheme $G$ over $k$, 
	$$V_G\circ F_G=p\cdot id_G,$$
	$$ F_G\circ V_G=p\cdot id_{G^{(p)}}.$$
\end{thm}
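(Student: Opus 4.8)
The plan is to derive both identities from Cartier duality and the functoriality of the relative Frobenius, reducing the second to the first. Recall that the Cartier dual $G^\vee$ represents the functor $R\mapsto\Hom_{R\textrm{-gp}}((G)_R,(\boldsymbol{G_m})_R)$, so there is a pairing $\langle\,,\,\rangle\colon G(R)\times G^\vee(R)\to\boldsymbol{G_m}(R)$, natural in $R$ and in $G$ and a homomorphism in each variable, and the biduality map $G\to(G^\vee)^\vee$ is an isomorphism (this is the content of Cartier duality). Unwinding the definition $V_G=(F_{G^\vee})^\vee$ together with the identification $(G^\vee)^{(p)}=(G^{(p)})^\vee$, one obtains, for every $k$-algebra $R$, every $x\in G^{(p)}(R)$ and every $y\in G^\vee(R)$, the equality $\langle V_G(x),y\rangle=\langle x,F_{G^\vee}(y)\rangle$. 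Precomposing with $F_G$ gives $\langle V_G(F_G(x)),y\rangle=\langle F_G(x),F_{G^\vee}(y)\rangle$ for $x\in G(R)$.

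The heart of the matter is to evaluate the right-hand side. I would apply the functoriality of the relative Frobenius to the evaluation morphism $e_G\colon G\times_k G^\vee\to\boldsymbol{G_m}$. Since $F$ commutes with finite products and with the base change $\sigma\colon\spec k\to\spec k$, the naturality square of $F$ for $e_G$ identifies $(e_G)^{(p)}$ with the evaluation pairing $e_{G^{(p)}}$ of $G^{(p)}$ and $(G^{(p)})^\vee=(G^\vee)^{(p)}$, while $F_{\boldsymbol{G_m}}\colon\boldsymbol{G_m}\to(\boldsymbol{G_m})^{(p)}\cong\boldsymbol{G_m}$ is the $p$-power homomorphism $t\mapsto t^p$. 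Reading this square on $R$-points at $(x,y)$ yields $\langle F_G(x),F_{G^\vee}(y)\rangle=\langle x,y\rangle^p$. On the other hand, bi-multiplicativity of the pairing gives $\langle p\cdot x,y\rangle=\langle x,y\rangle^p$. Combining, $\langle V_G(F_G(x)),y\rangle=\langle p\cdot x,y\rangle$ for all $y\in G^\vee(R)$ and all $R$; since the biduality map is injective on $R$-points, this forces $V_G(F_G(x))=p\cdot x$. By the Yoneda lemma, $V_G\circ F_G=p\cdot\id_G$.

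For the second identity I would use the functoriality of $F$ applied to the morphism $V_G\colon G^{(p)}\to G$ itself: the naturality square gives $F_G\circ V_G=(V_G)^{(p)}\circ F_{G^{(p)}}$. As $V$ is compatible with base extension, $(V_G)^{(p)}=V_{G^{(p)}}$, and hence $F_G\circ V_G=V_{G^{(p)}}\circ F_{G^{(p)}}$, which equals $p\cdot\id_{G^{(p)}}$ by the first identity applied to $G^{(p)}$ in place of $G$.

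The step I expect to be the main obstacle is the identification of $(e_G)^{(p)}$ with the Cartier pairing $e_{G^{(p)}}$ inside the naturality square: this amounts to checking that the formation of $G^\vee$ and of the evaluation pairing commutes with the base change $\sigma$, compatibly with the already-recorded isomorphism $(G^\vee)^{(p)}\cong(G^{(p)})^\vee$. Everything else is formal once one grants that the Cartier pairing is perfect and bi-multiplicative. (One could instead avoid the pairing and compute on Hopf algebras, using that $p\cdot\id_G$ corresponds to $m^{(p-1)}\circ\Delta^{(p-1)}$, that $F_G$ corresponds to the $\sigma$-semilinear $p$-power map $A^{(p)}\to A$, and that $V_G$ corresponds to $\Delta^{(p-1)}\colon A\to(A^{\otimes p})^{\mathfrak{S}_p}$ followed by the canonical surjection onto $A^{(p)}$ with kernel the image of the norm $1+c+\cdots+c^{p-1}$; then $V_G\circ F_G=p\cdot\id_G$ drops out of $m^{(p-1)}(a^{\otimes p})=a^p$ and $m^{(p-1)}\circ(1+c+\cdots+c^{p-1})=p\cdot(\text{product})=0$. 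In that approach the obstacle is instead to verify that this description of $V_G$ really is the Cartier dual of $F_{G^\vee}$.)
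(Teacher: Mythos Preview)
The paper does not give its own proof of this statement; it simply cites \cite[p.~31]{Pink}. So there is no in-paper argument to compare against, only the reference.

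Your argument via the Cartier pairing is correct and complete. The reduction of the second identity to the first by naturality of Frobenius applied to $V_G$ is clean, and the main computation $\langle F_G(x),F_{G^\vee}(y)\rangle=\langle x,y\rangle^p$ follows exactly as you say from applying the naturality square of $F$ to the evaluation morphism and the fact that $F_{\boldsymbol{G_m}}$ is the $p$-th power map. The step you flag as delicate---that $(e_G)^{(p)}$ identifies with $e_{G^{(p)}}$---is genuine but routine: formation of $G^\vee=\spec A^\vee$ and of the evaluation pairing are defined by $k$-linear algebra on the Hopf algebra $A$, hence commute with the base change along $\sigma$, and this is precisely the content of the identification $(G^\vee)^{(p)}\cong(G^{(p)})^\vee$ already recorded in the paper. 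Your use of biduality to separate points is also fine once you observe that the equality $\langle V_GF_G(x),y\rangle=\langle px,y\rangle$ is natural in $R$, so it persists after base change and hence determines the same element of $(G^\vee)^\vee(R)$.

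For comparison: Pink's notes, which the paper cites, take the Hopf-algebra route you sketch in your parenthetical, constructing $V_G$ explicitly via the symmetrization/norm map on $A^{\otimes p}$ and then verifying the two identities by direct computation with $m^{(p-1)}$ and $\Delta^{(p-1)}$. Your pairing argument is more conceptual and avoids that computation, at the cost of invoking the perfectness of Cartier duality; Pink's approach is more self-contained but requires tracking the explicit description of $V_G$. Either is a legitimate proof; yours is arguably the more elegant once Cartier duality is in hand, which it is at this point in the paper.
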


\begin{proof}
	\cite[p.~31]{Pink}
\end{proof}

\begin{exam}
	
	\begin{itemize}
		\item $F_G$ and $V_G$ are zero for $G=\boldsymbol{\alpha_p}$.
		\item $F_G$ is zero and $V_G$ is an isomorphism for $G=\boldsymbol{\mu_p}$.
		\item $V_G$ is zero for $G=\mathbb{Z}/n\mathbb{Z}$.
	\end{itemize}	
\end{exam}

We have seen that when $k$ is a perfect field, $G=\spec A$ is \'{e}tale if and only if $A$ is reduced. Also, it is true that when $A$ is finite dimensional over $k$, $A$ has no nontrivial idempotents if and only it $A$ is local. We say $G$ is \emph{local} if $G=G^0$ and \emph{reduced} if $G=G_{\textrm{red}}$. We say $G$ is of x-y type if $G$ is x and $G^\vee$ is y, x,y$\in$\{local, reduced\}.
 Thus Corollary \ref{decomppsition} can be phrased as the following:
	
\begin{cor} There is a unique  and functorial decomposition of $G$ as
	$$G=G_{rr}\oplus G_{rl}\oplus G_{lr}\oplus G_{ll}.$$
	where the direct summands are of reduced-reduced, reduced-local, local-reduced, and local-local type respectively.
\end{cor}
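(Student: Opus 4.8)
The plan is to obtain this corollary as a restatement of Corollary~\ref{decomppsition} under the dictionary ``\'etale $=$ reduced'' and ``connected $=$ local'' valid over a perfect field of characteristic $p>0$. First I would record that dictionary. By Theorem~\ref{separablealgebra}, over the perfect field $k$ a finite-dimensional $k$-algebra is separable exactly when it is reduced; hence a finite group scheme $G=\spec A$ is \'etale iff $A$ is reduced, i.e. iff $G=G_{\textrm{red}}$, and is local iff it fails to be \'etale in the opposite extreme sense made precise just before the statement: a finite-dimensional $k$-algebra has no nontrivial idempotents iff it is local iff $\spec A$ is connected, so $G$ is connected iff $G=G^0$, i.e. iff $G$ is local. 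Since Cartier duality carries finite commutative group schemes to finite commutative group schemes, the phrases ``\'etale dual'' and ``connected dual'' appearing in Corollary~\ref{decomppsition} translate to ``$G^\vee$ reduced'' and ``$G^\vee$ local''. Under this translation the four disjoint types (1)--(4) of Corollary~\ref{decomppsition} become precisely the reduced-reduced, reduced-local, local-reduced and local-local types in the x-y notation.

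Next I would simply invoke Corollary~\ref{decomppsition}: it already supplies, for any finite commutative group scheme $G$ over the perfect field $k$, a canonical splitting into four direct factors of these four disjoint types, built by first splitting $G=G^0\times\pi_0 G$ (Theorem~\ref{decomp}, a direct product since $G$ is commutative) and then bisecting each factor by dualizing and re-applying Theorem~\ref{decomp} to $(G^0)^\vee$ and $(\pi_0 G)^\vee$. Relabelling the four factors by the x-y notation yields $G=G_{rr}\oplus G_{rl}\oplus G_{lr}\oplus G_{ll}$. The word ``disjoint'' is exactly the assertion that there are no nonzero homomorphisms between factors of two different types: for instance any homomorphism from a local group scheme to a reduced (\'etale) one factors through $\pi_0$ of the source (shown at the end of Section~1.4), which is trivial for a local group scheme, and the remaining cross-cases reduce to this one after applying Cartier duality to the homomorphism.

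Finally, for uniqueness and functoriality I would trace through the construction: $G^0=\ker(G\to\pi_0 G)$ and $\pi_0 G=\spec\pi_0 A$ are canonically attached to $G$, the subsequent bisections via $(G^0)^\vee$ and $(\pi_0 G)^\vee$ use only Cartier duality and $\pi_0/(-)^0$ again, hence are canonical, and any homomorphism $G\to H$ carries $G^0$ into $H^0$, induces a map $\pi_0 G\to\pi_0 H$, and commutes with Cartier duality, so it respects the whole four-fold refinement. The step I would flag as the main obstacle --- bookkeeping rather than a genuine difficulty --- is verifying that dualizing the two factors $G^0$ and $\pi_0 G$ and re-applying Theorem~\ref{decomp} really produces complementary direct summands whose Cartier duals have the claimed local/reduced type, i.e. that ``local/reduced'' and ``local/reduced dual'' are stable under the operations used; all the substantive content is already contained in Theorem~\ref{separablealgebra}, Theorem~\ref{decomp} and Cartier duality.
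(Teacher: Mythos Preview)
Your proposal is correct and matches the paper's approach exactly: the paper introduces this corollary with the sentence ``Thus Corollary~\ref{decomppsition} can be phrased as the following,'' treating it as a pure restatement under the dictionary \'etale $=$ reduced and connected $=$ local, with no further proof given. Your write-up supplies more detail on the dictionary and on functoriality than the paper does, but the underlying idea is identical.
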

	
The functoriality implies the fact that any homomorphism between groups of different types is zero. This decomposition is also compatible with base extension.

The $n$-th iterations of Frobenius and Verschiebung are defined as the compositions of homomorphisms
$$F_G^n: G\xrightarrow{F_G}G^{(p)}\xrightarrow{F_{G^{(p)}}}G^{(p^2)}\rightarrow\cdots\rightarrow G^{(p^n)},$$
$$V_G^n: G^{(p^n)}\rightarrow\cdots\rightarrow G^{(p^2)}\xrightarrow{V_{G^{(p)}}} G^{(p)}\xrightarrow{V_G}G.$$

We say $F_G$ (resp. $V_G$) is \emph{nilpotent} if $F_G^n=0$ (resp. $V_G^n=0$) for some $n\geq 0$.

\begin{thm}
	For affine commutative group schemes $G$ we have the following equivalences:
	\begin{itemize}
		\item $G$ is reduced-reduced $\Leftrightarrow$ both $F_G$ and $V_G$ are isomorphisms.
		\item $G$ is reduced-local $\Leftrightarrow$ $F_G$ is an isomorphism and $V_G$ is nilpotent.
		\item $G$ is local-reduced $\Leftrightarrow$ $F_G$ is nilpotent and $V_G$ is an isomorphism.
		\item $G$ is local-local $\Leftrightarrow$ both $F_G$ and $V_G$ are nilpotent.
	\end{itemize}
	Also, in the decomposition $$G=G_{rr}\oplus G_{rl}\oplus G_{lr}\oplus G_{ll},$$ the orders of the four groups are: prime to $p$ for $G_{rr}$, and a power of $p$ for $G_{rl}$, $G_{lr}$ and $G_{ll}$.
\end{thm}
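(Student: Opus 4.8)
The plan is to reduce all four equivalences to two basic facts about a single finite commutative group scheme $G=\spec A$ over the perfect field $k$, and then obtain everything else by Cartier duality together with the identity $V_G=(F_{G^\vee})^\vee$. \textbf{Step 1 (Frobenius versus the subring of $p$-th powers).} On coordinate rings $F_G^n\colon G\to G^{(p^n)}$ is the $k$-algebra homomorphism $\phi_n\colon A^{(p^n)}=A\otimes_{k,\sigma^n}k\to A$, $a\otimes\lambda\mapsto\lambda a^{p^n}$. Since $k$ is perfect every $\lambda\in k$ is a $p^n$-th power, and in characteristic $p$ the set $A^{p^n}:=\{b^{p^n}:b\in A\}$ is a subring of $A$; hence $\operatorname{im}\phi_n=A^{p^n}$. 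From this I extract two equivalences. \emph{First:} $G$ is local (equivalently $A$ is local) if and only if $F_G$ is nilpotent. Indeed a homomorphism of finite group schemes is trivial exactly when the associated algebra map has image $k$, so $F_G$ is nilpotent if and only if $A^{p^n}=k$ for some $n$; if $A$ is local the counit forces its residue field to be $k$, so $A=k\oplus\fm$ with $\fm$ nilpotent and $A^{p^n}=k$ once $\fm^{p^n}=0$, while conversely $A^{p^n}=k$ puts every idempotent $e=e^{p^n}$ in $k$, so $A$ has no nontrivial idempotents. \emph{Second:} $G$ is étale (equivalently, by Theorem~\ref{separablealgebra}, $A$ is reduced) if and only if $F_G$ is an isomorphism. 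Here $\dim_kA^{(p^n)}=\dim_kA$ and $b\mapsto b^{p^n}$ is a $\sigma^n$-semilinear endomorphism of the finite-dimensional $k$-vector space $A$, hence injective if and only if surjective; so $F_G$ is an isomorphism if and only if $A^p=A$, and by iteration this holds if and only if no nonzero element of $A$ has $p^n$-th power $0$, i.e. $A$ is reduced.

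\textbf{Step 2 (passing to the dual).} Cartier duality is a contravariant equivalence, so it sends isomorphisms to isomorphisms, and $V_G^n=(F_{G^\vee}^n)^\vee$; hence $V_G$ is an isomorphism (resp. nilpotent) exactly when $F_{G^\vee}$ is. Applying Step~1 to $G^\vee$ then gives: $V_G$ is an isomorphism if and only if $G^\vee$ is étale, and $V_G$ is nilpotent if and only if $G^\vee$ is local. Since "$G$ is of reduced--reduced type" means "$G$ reduced and $G^\vee$ reduced" (and likewise for the other three types, with reduced $=$ étale here), the four equivalences follow by combining Steps~1 and~2; for instance $G$ is reduced--local if and only if $G$ is étale and $G^\vee$ is local, i.e. if and only if $F_G$ is an isomorphism and $V_G$ is nilpotent.

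\textbf{Step 3 (the orders).} If $F_G$ is nilpotent, say $F_G^n=0$, embed $G$ into $\boldsymbol{GL_N}$ via its regular representation; by functoriality of the relative Frobenius the composite $G\hookrightarrow\boldsymbol{GL_N}\xrightarrow{F^n}\boldsymbol{GL_N}^{(p^n)}$ factors through $F_G^n=0$, so $G$ is a closed subgroup of $\ker(F^n_{\boldsymbol{GL_N}})$, a finite group scheme of order $p^{nN^2}$ (the order of the $n$-th Frobenius kernel of a smooth group of dimension $N^2$). Hence $|G|$ divides $p^{nN^2}$ and is a power of $p$; dually, if $V_G$ is nilpotent then $F_{G^\vee}$ is nilpotent and $|G|=|G^\vee|$ is a power of $p$. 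This disposes of $G_{rl}$ ($V$ nilpotent), $G_{lr}$ ($F$ nilpotent) and $G_{ll}$ ($F$ nilpotent). For $G_{rr}$, Steps~1--2 make $F$ and $V$ isomorphisms, so $[p]=V_G\circ F_G$ is an automorphism of $G_{rr}$; base-changing to $\bar k$, the étale group $G_{rr}\otimes_k\bar k$ is a constant group scheme $\underline{\Gamma}$ with $|\Gamma|=|G_{rr}|$, and multiplication by $p$ being bijective on the finite abelian group $\Gamma$ forces $p\nmid|\Gamma|$. Hence $|G_{rr}|$ is prime to $p$.

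The genuinely substantive input is Step~1 — in particular the identification $\operatorname{im}(F_G^n)=A^{p^n}$, which is exactly where perfectness of $k$ enters, together with the semilinear dichotomy "injective $\Leftrightarrow$ surjective'' — and, in Step~3, the fact that the $n$-th Frobenius kernel of a smooth group $W$ has order $p^{n\dim W}$, which one either proves by localizing at the identity and using a regular system of parameters or simply cites. Everything else is formal manipulation with Cartier duality and the four-fold decomposition already established.
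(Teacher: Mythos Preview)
Your proof is correct. The paper itself gives no argument here and simply cites Pink's lecture notes \cite[pp.~34--36]{Pink}, so there is no ``paper's own proof'' to compare strategies with; your write-up supplies exactly the kind of argument one finds in that reference. The reduction to the single claim ``$F_G$ is an isomorphism $\Leftrightarrow$ $A$ is reduced; $F_G$ is nilpotent $\Leftrightarrow$ $A$ is local'' via the identification $\operatorname{im}\phi_n=A^{p^n}$ (using perfectness of $k$), followed by Cartier duality to handle $V_G$, is the standard route and is carried out cleanly. The order computation in Step~3 is also fine; the one external input you flag --- that the $n$-th Frobenius kernel of a smooth affine group $W$ has order $p^{\,n\dim W}$ --- is indeed the only nontrivial citation needed, and your alternative of quoting it is appropriate at this level.
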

\begin{proof}
	\cite[~34-36]{Pink}
\end{proof}

\begin{exam}
	If $k=\bar{k}$, we have the following table for the category of finite group schemes over $k$:

\

\begin{tabular}{| c | c | c | c | c |}
  \hline                        
  Type & Order & F & V & Simple Group \\
\hline
  reduced-reduced & prime to p & isom & isom & $\ZZ/l\ZZ$, $\mu_l$, $l$ is a prime $\neq p$, order = $l$ \\
\hline
reduced-local & p power & isom & nilp & $\ZZ/p\ZZ$, order = $p$ \\
\hline
local-reduced & p power & nilp & isom & $\mu_p$, order = $p$ \\
\hline
 local-local & p power & nilp & nilp & $\alpha_p$, order = $p$ \\
\hline 
\end{tabular}
\end{exam}

%Let $k$ be a perfect field of characteristic $p>0$. Our goal is to classify finite (commutative) $k$-groups of $p$-power order using (semi-)linear algebraic data. Recall every such $k$-group can be decomposed as $$G==\underbrace{G_{\mathrm{red}, \mathrm{loc}} \times  G_{\mathrm{loc}, \mathrm{loc}}}_{\text{unipotent}}\times \underbrace{G_{\mathrm{loc},\mathrm{red}}}_{\text{multiplicative}}.$$

%Fact:$\End(\alpha_p)=k$

%%%%%%%%%%%%%%%%%%%%%%%%%%%%%%%%%%%%%%%%%%%%%%%%%%%%%%%%%%%%%%%%%%%
%%%%%%%%%%%%%%%%%%%%%%%%%%%%%%%%%%%%%%%%%%%%%%%%%%%%%%%%%%%%%%%%%%%
\section{$p$-Divisible groups}
Let $k$ be a field of characteristic $p>0$. The concept of  $p$-divisible groups over $k$ were historically first introduced by Barsotti under the name equi-dimensional hyperdomain, cf. \cite{Barsotti}. In a 1966 paper \cite{Tate}, Tate called them $p$-divisible groups. Later in 1970, Grothendieck referred to them as Barsotti--Tate groups in \cite{Groth}. They are characteristic $p$ analogues of points of order a power of $p$ on an abelian variety. 

To give the motivation of $p$-divisible groups, consider an elliptic curve $E/k$. When we study the arithmetic of $E$, there is an object of great importance: the Tate module. Recall that if $l$ a prime number, the $l$-adic Tate module of $E$ is 
$$T_l(E):=\varprojlim_n E[l^n](\bar{k})$$
where $E[l^n]$ is the kernel (a group scheme) of the multiplication map $[l^n]: E\rightarrow E$ and the transition maps are the multiplication maps $l: E[l^{n+1}]\rightarrow E[l^n]$.

Since each $E[l^n]$ is a $\ZZ/l^n\ZZ$-module, we see that the Tate module has a natural structure as a $\ZZ_l$-module. But more importantly, it is a $\ZZ_l$ representation of the Galois group $G_{\bar{k}/k}$ since  $G_{\bar{k}/k}$ acts compatibly on $E[l^n](\bar{k})$.

The Tate module contains a great amount of information about $E$. To illustrate this, we collect some nice results below.

To begin with, we have the following

\begin{thm}
	Let $k$ be a finite field or a number field. Let $E_1, E_2/k$ be two elliptic curves. Let $l$ be a prime not equal to the characteristic of $k$. Then the natural map
	$$\Hom_k(E_1,E_2)\otimes \ZZ_l\rightarrow \Hom_{G_{\bar{k}/k}}(T_l(E_1),T_l(E_2))$$
	is an isomorphism.
\end{thm}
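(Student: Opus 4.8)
The two halves of the statement, injectivity and surjectivity of the comparison map, have quite different flavours, so I would treat them separately; the surjectivity is the deep part, due to Tate over finite fields and to Faltings over number fields.

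\textbf{Injectivity.} First I would recall the standard fact that $\Hom_k(E_1,E_2)$ is a finitely generated free $\ZZ$-module: it is torsion-free because $[n]$ is an isogeny, and of finite rank because the degree form is positive definite. The key point is that $E_i[l^n]$ is by definition the kernel of $[l^n]\colon E_i\to E_i$, so any homomorphism $E_1\to E_2$ vanishing on $E_1[l^n]$ factors as $[l^n]$ followed by a homomorphism; hence the reduction map $\Hom_k(E_1,E_2)/l^n\to\Hom\bigl(E_1[l^n](\bar k),E_2[l^n](\bar k)\bigr)$ is injective, and its image consists of $G_{\bar k/k}$-equivariant maps since Galois acts on $\bar k$-points. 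Passing to the inverse limit over $n$, tensoring with $\ZZ_l$, and using that a finitely generated abelian group is $l$-adically separated, I would obtain the desired injection $\Hom_k(E_1,E_2)\otimes\ZZ_l\hookrightarrow\Hom_{G_{\bar k/k}}\bigl(T_l(E_1),T_l(E_2)\bigr)$.

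\textbf{Reduction for surjectivity.} Replacing the pair $(E_1,E_2)$ by $E:=E_1\times E_2$ and using that both $\End_k(E)$ and $\End_{G_{\bar k/k}}(T_l E)$ decompose into $2\times 2$ blocks indexed by the factors, it suffices to prove the theorem with $E_1=E_2$, i.e.\ that $\End_k(E)\otimes\ZZ_l\to\End_{G_{\bar k/k}}(T_l E)$ is an isomorphism; and for this it suffices, by a routine lattice argument, to prove the analogous statement after $\otimes_{\ZZ_l}\QQ_l$, with $V_l(E):=T_l(E)\otimes_{\ZZ_l}\QQ_l$. One inclusion being obvious, I would need: (a) $V_l(E)$ is a semisimple $\QQ_l[G_{\bar k/k}]$-module; (b) $\End_k(E)\otimes\QQ_l$ exhausts the commutant $\End_{G_{\bar k/k}}(V_l E)$. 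For (a) over a finite field $k=\FF_q$, the group $G_{\bar k/k}$ is topologically generated by the $q$-power Frobenius $\pi$, so semisimplicity reduces to $\pi$ acting semisimply on the $2$-dimensional space $V_l(E)$, which is clear since the characteristic polynomial of $\pi$ is either separable or forces $\pi$ to act as the scalar $\pm\sqrt q$. Over a number field, semisimplicity of $V_l(E)$ is itself a theorem of Faltings, which I would cite.

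\textbf{The core argument and the main obstacle.} For (b) I would run Tate's argument. Its engine is a finiteness statement: over a finite field there are only finitely many isomorphism classes of elliptic curves in a fixed isogeny class, while over a number field Faltings' finiteness theorem supplies the analogue for polarized abelian varieties of bounded dimension with good reduction outside a fixed finite set of primes, the control coming from the Faltings height. Granting this, Tate's reduction lemma from the theory of semisimple algebras shows that (b) follows once one produces, for every $G_{\bar k/k}$-stable subspace $W\subseteq V_l(E)$, an element of $\End_k(E)\otimes\QQ_l$ with image exactly $W$. To build such an element I would form the Galois-stable lattice $W\cap T_l(E)$ together with a family of intermediate Galois-stable lattices $Y_n$ between $l^nT_l(E)$ and $T_l(E)$ adapted to $W$; each $Y_n$ is the Tate module of an elliptic curve $E_n$ linked to $E$ by an isogeny of $l$-power degree, and the finiteness statement forces $E_m\cong E_n$ for infinitely many $m<n$; composing $E\to E_m\xrightarrow{\sim}E_n\to E$ and organizing the resulting endomorphisms produces the required element of $\End_k(E)\otimes\QQ_l$. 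The main obstacle is exactly this input (b): over finite fields it is the careful construction of the lattices $Y_n$ and the extraction of an honest endomorphism from the finiteness of the isogeny class, and over number fields it is the full strength of Faltings' finiteness and semisimplicity theorems, whose proofs rest on the Faltings height, semistable reduction and the theory of $p$-divisible groups — far beyond anything elementary.
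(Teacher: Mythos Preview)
The paper does not actually prove this theorem: it is stated as background motivation for $p$-divisible groups, and immediately after the statement the author simply writes ``This was proven by Tate for finite fields and by Faltings for number fields'' with citations to \cite{Tatefinite} and \cite{Faltings}. So there is no proof in the paper to compare against; the result is invoked, not established.

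Your proposal, by contrast, is an honest sketch of the Tate--Faltings argument, and the outline is essentially correct. The injectivity paragraph is standard and fine. For surjectivity your reduction to the endomorphism case and the appeal to semisimplicity plus Tate's lattice construction is the right architecture; you correctly flag that over number fields both semisimplicity and the finiteness input are themselves Faltings' theorems, so nothing here is self-contained. One small point: in the finite-field semisimplicity step, the dichotomy ``characteristic polynomial separable or $\pi$ acts as $\pm\sqrt{q}$'' is a bit glib when $E$ is a product $E_1\times E_2$ of dimension $>1$, since then $V_l(E)$ is $4$-dimensional and you need semisimplicity of Frobenius on each factor plus the direct-sum decomposition; but this is easily repaired. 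Given that the paper treats the result as a black box, your sketch already goes well beyond what was required.
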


This was proven by Tate (cf. \cite{Tatefinite} ) for finite fields and by Faltings for number fields (cf. \cite{Faltings}).  

Also, we have the following amazing theorem:

\begin{thm}[N\'eron-Ogg-Shafarevich]
	Let $k$ be a $p$-adic local field, $l\neq p$ a prime and $E/k$ an elliptic curve. Then $E$ has good reduction if and only if the $G_{\bar{k}/k}$-representation $T_l(E)$ is unramified.
\end{thm}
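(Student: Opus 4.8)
I would establish the two implications in turn.

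\emph{Good reduction $\Rightarrow T_l(E)$ unramified.} If $E$ extends to an abelian scheme $\mathcal{E}$ over $\mathcal{O}_k$, then because $l$ is invertible in $\mathcal{O}_k$ the isogeny $[l^n]\colon\mathcal{E}\to\mathcal{E}$ is finite \'etale, so $\mathcal{E}[l^n]=\ker([l^n])$ is a finite \'etale group scheme over $\mathcal{O}_k$. Pulled back to the strictly Henselian ring $\mathcal{O}_{k^{\mathrm{ur}}}$ it becomes constant, i.e.\ all of its geometric points are already rational over $k^{\mathrm{ur}}$; hence the inertia group $I_k=G_{\overline k/k^{\mathrm{ur}}}$ acts trivially on $E[l^n](\overline k)=\mathcal{E}[l^n](\overline k)$ for every $n$, and therefore on $T_l(E)=\varprojlim_n E[l^n](\overline k)$. (Equivalently: the reduction map to the special fibre $\widetilde E$ is injective on prime-to-$p$ torsion, its kernel sitting inside the formal group, on which $[l]$ is an automorphism as $l$ is a unit; a count of points shows it is bijective, and it is $I_k$-equivariant with $I_k$ acting trivially on the target.)

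\emph{$T_l(E)$ unramified $\Rightarrow$ good reduction.} This is the real content, and I would argue via the N\'eron model $\mathcal{E}$ of $E$ over $\mathcal{O}_k$, with special fibre $\mathcal{E}_s$. Smoothness of $\mathcal{E}$ makes the reduction map $E(k^{\mathrm{ur}})=\mathcal{E}(\mathcal{O}_{k^{\mathrm{ur}}})\to\mathcal{E}_s(\overline{\FF})$ surjective, with kernel the formal group $\widehat{\mathcal{E}}(\fm_{k^{\mathrm{ur}}})$, which is uniquely $l$-divisible because $l$ is a unit in $\mathcal{O}_{k^{\mathrm{ur}}}$. Hence reduction induces isomorphisms $E(k^{\mathrm{ur}})[l^n]\xrightarrow{\sim}\mathcal{E}_s(\overline{\FF})[l^n]$ for all $n$. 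But the hypothesis says exactly that $E[l^n](\overline k)=E[l^n](k^{\mathrm{ur}})=E(k^{\mathrm{ur}})[l^n]$, a group of order $l^{2n}$, so $\mathcal{E}_s(\overline{\FF})$ contains a copy of $(\ZZ/l^n\ZZ)^2$ for every $n$. Now $\mathcal{E}_s/\mathcal{E}_s^0$ is finite and, by Chevalley's theorem, the identity component $\mathcal{E}_s^0$ is an extension of an abelian variety $A$ by a linear group $L$ with $\dim A+\dim L=\dim E=1$; the options are $\dim A=1$, $L=0$; or $\mathcal{E}_s^0=\GG_m$; or $\mathcal{E}_s^0=\GG_a$. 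In the last two cases $\GG_a(\overline{\FF})$ has no nonzero $l$-power torsion (as $l$ is invertible in $\overline{\FF}$), $\GG_m(\overline{\FF})[l^n]\cong\ZZ/l^n\ZZ$, and the finite component group contributes only a bounded amount, so $\mathcal{E}_s(\overline{\FF})$ cannot contain $(\ZZ/l^n\ZZ)^2$ once $n$ is large — contradicting what we found. Therefore $\mathcal{E}_s^0=A$ is an elliptic curve, hence proper, so $\mathcal{E}_s=\mathcal{E}_s^0$, $\mathcal{E}$ is an abelian scheme over $\mathcal{O}_k$, and $E$ has good reduction.

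\emph{Where the difficulty lies.} The forward implication is formal once one has the structure of finite \'etale group schemes over Henselian local rings, available from this chapter. The substance is entirely in the converse, whose non-formal inputs are: the existence and smoothness of the N\'eron model; the identification of the kernel of reduction with the formal group; and Chevalley's structure theorem, combined with the elementary observation that $\GG_a$ and $\GG_m$ carry too little prime-to-$p$ torsion. Keeping careful track that the component group contributes only bounded $l$-power torsion is the one place needing care. An alternative that sidesteps the component-group bookkeeping is to first base change to a finite extension $k'/k$ over which $E$ has semistable reduction, then apply Grothendieck's criterion that semistable reduction is good precisely when the toric part of the special fibre is trivial.
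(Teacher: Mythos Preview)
The paper does not prove this theorem: it is stated without proof in Section~1.7 as one of several motivational results illustrating the power of the Tate module, and the surrounding text merely restates the conclusion (``$T_l(E)$ is able to detect the ability to lift $E$ to an elliptic curve over the ring of integers'') before moving on. So there is nothing to compare your argument against.

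That said, your proof is the standard one and is correct. The forward direction via \'etaleness of $[l^n]$ over $\mathcal{O}_k$ is clean. For the converse, the ingredients you list---existence and smoothness of the N\'eron model, identification of the kernel of reduction with the formal group and its unique $l$-divisibility, and the Chevalley decomposition of $\mathcal{E}_s^0$---are exactly what is needed, and your torsion count correctly rules out the additive and multiplicative cases. One small point worth making explicit: when you conclude ``$\mathcal{E}_s=\mathcal{E}_s^0$, $\mathcal{E}$ is an abelian scheme,'' the passage from ``$\mathcal{E}_s^0$ is an elliptic curve'' to ``$\mathcal{E}$ is proper over $\mathcal{O}_k$'' deserves a word (connectedness of the special fibre of the N\'eron model of an abelian variety with good reduction, or simply that the component group vanishes once $\mathcal{E}_s^0$ already accounts for all the $l$-power torsion). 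This is routine, but since you flagged the component-group bookkeeping as the delicate step, it is worth closing that loop.
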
	

Recall that $E/k$ has good reduction if and only if there is an elliptic curve $\mathcal{E}/\mathcal{O}_k$ whose generic fiber is $E$. Thus $T_l(E)$ is able to detect the ability to lift $E$ to an elliptic curve over the ring of integers.

Lastly, we have the following standard theorem of Tate:

\begin{thm}[Tate's isogeny theorem] Let $E_1,E_2/\FF_q$ be elliptic curves over finite fields and $l$ a prime coprime to $q$. Then $E_1$ and $E_2$ are isogenous if and only if $T_l(E_1)\cong T_l(E_2)$.	
\end{thm}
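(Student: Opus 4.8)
The plan is to deduce the theorem from the homomorphism theorem recorded above (which applies here because $\FF_q$ is a finite field and $l\neq p$), once the statement is read correctly: the invariant that really controls the isogeny class is the \emph{rational} Tate module $V_l(E):=T_l(E)\otimes_{\ZZ_l}\QQ_l$, so "$T_l(E_1)\cong T_l(E_2)$" should be understood as an isomorphism of $\QQ_l[G_{\bar{k}/k}]$-modules $V_l(E_1)\cong V_l(E_2)$ (equivalently, an isomorphism of the integral Tate modules after $\otimes_{\ZZ_l}\QQ_l$; the naive integral statement is too strong, since an isogeny class can contain several non-isomorphic Frobenius-stable lattices). The forward implication is then immediate: an isogeny $\phi\colon E_1\to E_2$ induces a $G_{\bar{k}/k}$-equivariant map $T_l(\phi)\colon T_l(E_1)\to T_l(E_2)$, and since $T_l(\hat{\phi})\circ T_l(\phi)$ equals multiplication by $\deg\phi$ on $T_l(E_1)$ (where $\hat{\phi}$ is the dual isogeny), tensoring with $\QQ_l$ turns $V_l(\phi)$ into an isomorphism $V_l(E_1)\cong V_l(E_2)$.

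For the converse, suppose $u\colon V_l(E_1)\xrightarrow{\sim} V_l(E_2)$ is a $\QQ_l[G_{\bar{k}/k}]$-isomorphism. Then $u$ is a nonzero element of $\Hom_{G_{\bar{k}/k}}(V_l(E_1),V_l(E_2))=\Hom_{G_{\bar{k}/k}}(T_l(E_1),T_l(E_2))\otimes_{\ZZ_l}\QQ_l$, which by the homomorphism theorem is isomorphic to $\Hom_k(E_1,E_2)\otimes_{\ZZ}\QQ_l$. Since $\Hom_k(E_1,E_2)$ is torsion-free, it must itself be nonzero; choose a nonzero $\phi\in\Hom_k(E_1,E_2)$. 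Being a nonconstant morphism of smooth projective curves, $\phi$ is surjective, and being a homomorphism of group schemes it has finite kernel, so $\phi$ is an isogeny and $E_1$ is isogenous to $E_2$.

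The main point is that all of the content has been pushed into the homomorphism theorem, which I am treating as a black box; granting it, the isogeny theorem is a purely formal consequence. The genuine obstacle — were one to insist on a self-contained proof over $\FF_q$ — is exactly that theorem: one must show every $G_{\bar{k}/k}$-equivariant map $T_l(E_1)\to T_l(E_2)$ arises from $\Hom_k(E_1,E_2)\otimes\ZZ_l$. Tate's proof of this proceeds by realizing Frobenius-stable $\ZZ_l$-sublattices of $V_l(E_i)$ as Tate modules of elliptic curves isogenous to the $E_i$, together with the semisimplicity of the Frobenius endomorphism on $V_l$; that reduction is where the real work lies, and is why the text cites this input rather than proving it.
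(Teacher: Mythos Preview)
The paper does not prove this theorem at all: it is stated without proof as one of several motivational results illustrating why Tate modules are important, immediately before the definition of $p$-divisible groups. So there is no ``paper's own proof'' to compare against.

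That said, your argument is correct and is exactly the standard deduction of the isogeny theorem from the homomorphism theorem that the paper records just above. Your caution about $T_l$ versus $V_l$ is well taken: the clean statement is for $V_l$, and indeed within a single isogeny class over $\FF_q$ one can have curves whose integral Tate modules at $l$ are non-isomorphic $\ZZ_l[\mathrm{Frob}]$-lattices (this happens precisely when $l$ divides the conductor of the endomorphism order of one curve but not the other). The paper is using the statement informally and does not dwell on this distinction. Your reduction---an isogeny gives $V_l(E_1)\cong V_l(E_2)$ via the dual-isogeny trick, and conversely a nonzero element of $\Hom_k(E_1,E_2)\otimes\QQ_l$ forces $\Hom_k(E_1,E_2)\neq 0$, hence an isogeny exists---is the right one, and you correctly identify that all the depth sits in the homomorphism theorem (semisimplicity of Frobenius and realization of Galois-stable lattices as Tate modules of isogenous curves).
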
	

Thus the Tate module captures the isogeny class of an elliptic curve over a finite field. Moreover, one can show that $T_l(E_1)\cong T_l(E_2)$ if and only if  $E_1(\FF_{p^r})=E_2(\FF_{q^r})$ for all $r\geq 1$. So the Tate module also captures the number of points of an elliptic curve over all finite fields.

Note that in above applications we require that $l\neq p=\textrm{char}(k)$. The $p$-adic Tate module, however, is in many respects not the right object to consider. For instance, let $E/\FF_q$ be an elliptic curve, the $p$-adic Tate module is extremely simple: if $E$ is supersingular then $T_p(E)=0$; if $E$ is ordinary then $T_p(E)$ is $\ZZ_p$ with the action of some character.

We have seen that for $l\neq p$, the $l$-adic Tate-module captures the full system of group schemes $E[l^n]$. The fact that this system can be encoded into a single $\ZZ_l$-module with Galois action is due to the fact that $E[l^n]$ is \'etale for every $n$. But for $E[p^n]$ it is never \'etale,  and as such, the geometric points do not capture much of the information. So we should really consider the full system of group schemes $E[p^n]$. It turns out that it is most convenient to put these into an inductive system, and in this way we arrive at the $p$-divisible group of an elliptic curve ( or abelian variety in general).

Let us now give the definition of a $p$-divisible group in a general setting.

\begin{definition} Let $p$ be a prime number and $h$ an integer $\geq 0$. A {\it $p$-divisible group} over $k$ of {\it height} $h$ is an inductive system $H=(G_n,i_n), n\geq0$, where $G_n$ is a finite group scheme over $k$ of order $p^{nh}$, such that the sequences
	$$0 \rightarrow G_n \xrightarrow{i_n} G_{n+1} \xrightarrow{p^n} G_{n+1}$$ are exact for all $n\geq0$.
	
	A {\it homomorphism} of $p$-divisible groups from $H=(G_n, i_n)$ to $H'=(G'_n,i'_n)$ is a collection of morphisms $(f_n:G_n\rightarrow G'_n)$ compatible with the structure of $p$-divisible groups: $f_{n+1}\circ i_n=i'_n\circ f_n$ for all $n\geq0$.
	
\end{definition}

Here are some basic properties of $p$-divisible groups over $k$.

(1) The group $G_m$ can be identified with the kernel of $G_{m+n}\xrightarrow{p^n}G_{m+n}$. Let $i_{m,n}: G_n\rightarrow G_{m+n}$ be the closed immersion $i_{n+m-1}\circ\cdots\circ i_{n+1}\circ i_n$. An easy induction shows that $G_{m+n}\xrightarrow{p^m}G_{m+n}$ can be factored uniquely through $i_{n,m}$ via a morphism $j_{m,n}: G_{m+n}\rightarrow G_n$. In other words, we have a commutative diagram
\begin{displaymath}
\xymatrix
{
	G_{m+n}\ar[rr]^{p^m}\ar[rd]_{j_{m,n}}  & & G_{m+n} \\
	& G_n\ar[ru]_{i_{n,m}}&.
}
\end{displaymath}

(2) The sequence $0\rightarrow G_m\xrightarrow{i_{m,n}} G_{m+n}\xrightarrow{j_{m,n}} G_n\rightarrow 0$ is exact: It is clearly left exact. But since orders are multiplicative in exact sequences, a consideration of orders shows that it is exact.

(3) Let $m=1$ in (2). We have an exact sequence 
$$0\rightarrow G_1\xrightarrow{i_{1,n}} G_{1+n}\xrightarrow{j_{1,n}} G_n\rightarrow 0.$$
Taking Cartier duals gives the dual exact sequence
$$0\rightarrow G_n^{\vee}\xrightarrow{j_{1,n}^{\vee}} G_{1+n}^{\vee}\xrightarrow{i_{1,n}^{\vee}} G_1^{\vee}\rightarrow 0.$$
The morphisms $j_{1,n}^\vee$ are the kernels of $G_{1+n}^\vee\xrightarrow{p^n} G_{1+n}^\vee$. Also, since duality preserves the order of finite groups, we get that $H^\vee=(G_n^\vee,j_{1,n}^\vee)$ is a $p$-divisible group of the same height $h$. This $H^\vee$ is called the \emph{dual} $p$-divisible group of $H$.

\begin{exam}
	Let $\textrm{char}(k)=p>0$. Let $A$ be an abelian variety of dimension $g$ over $k$. For $n\geq 0$, let $G_n=A[p^n]$ be the kernel of $[p^n]: A\xrightarrow{p^n}A$. It is well known that $G_n$ is a finite flat group scheme over $k$ of order $p^{2gn}$. They fit in exact sequences
	$$0 \rightarrow G_n \xrightarrow{i_n} G_{n+1} \xrightarrow{p^n} G_{n+1}$$
	where $G_n\xrightarrow{i_n} G_{n+1}$ is the natural inclusion. Therefore, we get a $p$-divisible group of height $2g$, called the $p$-divisible group of $A$, and denoted by $A[p^\infty]$.	
	
	In fact, if $k$ is algebraically closed, then each $p$-divisible group $H$ over $k$ is a direct summand of $A[p^\infty]$ for some abelian variety $A$ over $k$.
\end{exam}

\chapter{The Ring of Witt Vectors} 

The theory of Witt vectors, while mostly elementary, shows up in many areas of mathematics. There are two classical references for the basic theory of Witt vectors: some exercises in Lang's book \emph{Algebra} (cf. \cite[p.~330-332]{Lang}) and a section in Serre's book \emph{Local Fields} (cf. \cite[Chapter 2, Section 6]{Serre:localfields}). Loosely speaking, a Witt vector is an infinite sequence of elements of a commutative ring. Ernst Witt put a ring structure on the set of Witt vectors, creating a ring of characteristic zero from a ring of positive characteristic. For example, the ring of Witt vectors over the finite field $\FF_p$ is $\ZZ_p$, the ring of $p$-adic integers.

In this chapter, we attempt to give a self-contained introduction to the theory of Witt vectors. We will show how Witt vectors arise naturally in studying the structure of complete discrete valuation rings. We begin by reviewing basic results about discrete valuation rings.

\section{Absolute values and valuations} 
Let $K$ be a field. An absolute value $|\cdot|$ on $K$ is a real-valued function $x\mapsto |x|$ on $K$ satisfying the following three properties:

(1) $|x|\geq 0, \forall x\in K$; $|x|=0$ if and only if $x=0$.

(2) $|xy|=|x||y|, \forall x, y\in K$.

(3) $|x+y|\leq |x|+|y|, \forall x,y\in K$.

If instead of (3) the absolute value satisfies the stronger condition

(3)$^\ast$ $|x+y|\leq \text{max}\{|x|,|y|\}, \forall x,y\in K,$

then we say it is non-archimedean.

The absolute value that assigns $1$ to every $x\neq 0$ is called the trivial absolute value.

Let $K$ be a field with a non-trivial absolute value. We can define in the usual manner the notion of a Cauchy sequence: a sequence $(x_n)_{n\geq0}$ of elements in $K$ is a \emph{Cauchy sequence} if for every $\varepsilon>0$, there exists an integer $N$ such that for all $m,n>N$, $|x_m-x_n|<\varepsilon$.

We say $K$ is {\it complete} if every Cauchy sequence in $K$ converges.

It is a standard fact that every field with an absolute value can be made complete:

\begin{prop} \label{prop:completion}
Let $K$ be a field with a non-trivial absolute value $|\cdot|_K$. There exists a pair $(\hat{K},i)$ consisting of a field $\hat{K}$, complete under an absolute value $|\cdot|_{\hat{K}}$, and an embedding $i:K\rightarrow \hat{K}$ such that $|\cdot|_K$ is induced by $|\cdot|_{\hat{K}}$ and $i(K)$ is dense in $\hat{K}$. If $(\hat{K}',i')$ is another such pair, then there exists a unique isomorphism $\varphi:\hat{K}\rightarrow \hat{K}'$ preserving the absolute values, and making the following diagram commutative:

\begin{displaymath}
\xymatrix{
\hat{K} \ar[r]^\varphi & \hat{K}'\\
K\ar[u]^i\ar[ur]_{i'}.
}
\end{displaymath}

\end{prop}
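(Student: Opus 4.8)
The plan is to construct the completion $\hat K$ as a quotient of the ring of Cauchy sequences in $K$ and then verify the required properties. First I would let $\mathcal{C}$ denote the set of all Cauchy sequences $(x_n)_{n\geq 0}$ of elements of $K$. Using properties (1)--(3) of the absolute value, one checks that $\mathcal{C}$ is a commutative ring under componentwise addition and multiplication (the product of two Cauchy sequences is Cauchy because Cauchy sequences are bounded). Inside $\mathcal{C}$, let $\fm$ be the set of null sequences, i.e. those $(x_n)$ with $|x_n|_K \to 0$. One verifies $\fm$ is an ideal of $\mathcal{C}$, and in fact a maximal ideal: if $(x_n)\notin\fm$, then $x_n\neq 0$ for $n$ large (after discarding finitely many terms, which does not change the class), the sequence $(x_n^{-1})$ is again Cauchy, and $(x_n)(x_n^{-1})$ differs from the constant sequence $1$ by a null sequence. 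Hence $\hat K := \mathcal{C}/\fm$ is a field, and the map $i: K\to\hat K$ sending $a$ to the class of the constant sequence $(a,a,\dots)$ is a ring homomorphism, injective since $K$ is a field.

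Next I would define the absolute value on $\hat K$. Given a Cauchy sequence $(x_n)$, the real sequence $(|x_n|_K)$ is Cauchy in $\RR$ by the reverse triangle inequality $\bigl||x_m|_K - |x_n|_K\bigr| \leq |x_m - x_n|_K$, hence converges; set $|\,\overline{(x_n)}\,|_{\hat K} := \lim_n |x_n|_K$. This is well defined (adding a null sequence does not change the limit) and one checks properties (1)--(3) pass to the limit, with (1)'s nondegeneracy coming from the fact that a Cauchy sequence not in $\fm$ is bounded away from $0$ for $n$ large. By construction $|i(a)|_{\hat K} = |a|_K$. Density of $i(K)$ is immediate: the class of $(x_n)$ is the limit of the sequence $i(x_m)$ as $m\to\infty$, since $|\,\overline{(x_n)} - i(x_m)\,|_{\hat K} = \lim_n |x_n - x_m|_K$ can be made small by choosing $m$ large. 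The remaining point is completeness of $\hat K$: given a Cauchy sequence in $\hat K$, use density to approximate each term by an element of $i(K)$ within $1/m$, show the resulting sequence in $K$ is Cauchy, and check that its class in $\hat K$ is the desired limit — a standard diagonal-type argument.

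For the uniqueness statement, suppose $(\hat K', i')$ is another such pair. I would define $\varphi:\hat K\to\hat K'$ by sending $x = \lim i(a_m)$ (with $a_m\in K$) to $\lim i'(a_m)$; the latter limit exists because $i'(a_m)$ is Cauchy in the complete field $\hat K'$ (as $|i'(a_m) - i'(a_l)|_{\hat K'} = |a_m - a_l|_K \to 0$), and it is independent of the chosen approximating sequence. One checks $\varphi$ is a ring homomorphism preserving absolute values, hence injective; surjectivity follows from density of $i'(K)$ in $\hat K'$ together with completeness of $\hat K$. Commutativity of the triangle $\varphi\circ i = i'$ is immediate from the definition. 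Uniqueness of $\varphi$ follows because any absolute-value-preserving homomorphism compatible with $i, i'$ is forced to agree with our $\varphi$ on the dense subset $i(K)$, hence everywhere by continuity.

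The main obstacle, such as it is, is bookkeeping rather than conceptual: one must repeatedly pass to the limit in inequalities and check that discarding finitely many terms of a Cauchy sequence (needed to invert it, or to bound it away from $0$) does not affect its class modulo null sequences. The completeness argument for $\hat K$ is the one step where care is genuinely needed — one has to interleave an approximating sequence in $i(K)$ with the original Cauchy sequence in $\hat K$ and control both errors simultaneously — but it follows the familiar pattern of showing that the completion of a metric space is complete.
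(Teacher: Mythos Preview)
Your proposal is correct and follows essentially the same approach as the paper: construct $\hat K$ as the quotient of the ring of Cauchy sequences by the maximal ideal of null sequences, extend the absolute value by $\lim_n |x_n|_K$, and verify completeness by approximating each term of a Cauchy sequence in $\hat K$ by an element of $i(K)$ within $1/m$ and checking the resulting sequence in $K$ is Cauchy. You actually supply more detail than the paper does---in particular on the maximality of $\fm$ and on the uniqueness isomorphism $\varphi$, which the paper simply declares ``obvious''---but the underlying argument is the same.
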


\

\begin{proof} The uniqueness is obvious. The existence is also a well-known fact which we shall briefly recall here.

The Cauchy sequences in $K$ form a ring with addition and multiplication defined component-wise. One defines a null sequence to be a sequence $(x_n)_{n\geq0}$ such that $\underset{n\rightarrow\infty}{\lim} x_n=0$. The null sequences form a maximal ideal in the ring of Cauchy sequences.

We take $\hat{K}$ to be the residue field of Cauchy sequences modulo null sequences. We embed $K$ in $\hat{K}$ diagonally by sending $x\in K$ to the constant sequence $(x)_{n\geq 0}$ modulo the null sequences.

We extend the absolute value $|\cdot|_K$ to $\hat{K}$ by continuity. Let $\xi$ be an element of $\hat{K}$ and let $(x_n)_{n\geq0}$ be a representative of $\xi$. Define $|\xi|_{\hat{K}}=\underset{n\rightarrow \infty}{\lim}|x_n|_K$. It can be verified easily that $|\cdot|_{\hat{K}}$ is an absolute value which is independent of the choice of $(x_n)_{n\geq 0}$, and which induces $|\cdot|_K$ on $K$. Also $K$ is dense in $\hat{K}$.

Finally we verify that $\hat{K}$ is complete. Let $(\xi_n)_{n\geq0}$ be a Cauchy sequence in $\hat{K}$. Since $K$ is dense in $\hat{K}$, we can find for each $n$ an $x_n\in K$ such that $|\xi_n-i(x_n)|_{\hat{K}}<1/n$. By a standard $\frac{1}{3}\varepsilon$ argument, we see that $(x_n)_{n\geq0}$ is a Cauchy sequence in $K$. More precisely, for every $\varepsilon>0$, there exists an integer $N$ such that for all $m,n>N$, we have
\begin{align*}
&|\xi_m-i(x_m)|_{\hat{K}}<\frac{1}{m}<\frac{1}{3}\varepsilon;\\
&|\xi_n-i(x_n)|_{\hat{K}}<\frac{1}{n}<\frac{1}{3}\varepsilon;\\
&|i(x_m)-i(x_m)|_{\hat{K}}=|x_m-x_n|_K<\frac{1}{3}\varepsilon.
\end{align*}
Thus $|\xi_m-\xi_)|_{\hat{K}}\leq |\xi_m-i(x_m)|_{\hat{K}}+|i(x_m)-i(x_n)|_{\hat{K}}+|i(x_n)-\xi_n|_{\hat{K}}<\varepsilon$.

Let $\xi$ be the class of $(x_n)_{n\geq0}$ in $\hat{K}$. One can verify easily that $(\xi_n)$ converges to $\xi$.
\end{proof}
A pair $(\hat{K},i)$ as in the proposition is called a {\it completion} of $K$. The standard pair obtained by the previous construction is usually called {\it the completion} of $K$.

Now we review some basic facts about discrete valuation rings.

Let $R$ be a discrete valuation ring, $\pi$ a uniformizer of $R$, $\fm=\pi R$ the maximal ideal of $R$ and $k=R/\fm$ the residue field. If $x\neq 0$ is an element of $R$, we can write $x=u\pi^n$, with $n\in \NN$ and $u$ a unit of $R$. The integer $n$ does not depend on the choice of $\pi$; it is called the {\it valuation} of $x$ and denoted by $v(x)$.

Let $K$ be the field of fractions of $A$. Let $K^\ast=K\setminus\{0\}$ be the multiplicative group of non-zero elements of  $K$. Now for $x\in K^\ast$ we can write $x=u\pi^n$, with $u$ a unit of $A$ and $n\in \ZZ$. By defining $v(x)=n$ for $x\in K^\ast$ and $v(0)=+\infty$, we get a map $v: K\rightarrow \ZZ\cup\{+\infty\}$ with the following property:

(a) $v: K^\ast\rightarrow \ZZ$ is a surjective homomorphism.

(b) $\forall x, y \in K, v(x+y) \geq \text{min}\{v(x),v(y)\}$.

Note that $R=\{x\in K| v(x)\geq 0\}$ and $\fm=\{x\in K| v(x)>0\}$. Therefore we can begin with a field $K$ and with a map $v$ as above and recover the ring $R$ from them by the above recipe. More precisely:

\begin{prop}
Let $K$ be a field and let $v:K^\ast\rightarrow\ZZ$ be a homomorphism satisfying (a) and (b) above. Then the set $R=\{x\in K| v(x)\geq 0\}$ is a discrete valuation ring having $v$ as its associated valuation.
\end{prop}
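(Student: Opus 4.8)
The plan is to verify that $R$ satisfies the standard characterization of a discrete valuation ring: a local principal ideal domain, not a field, whose ideals are exactly the powers of the maximal ideal, and then to identify $v$ with the valuation so obtained.

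First I would check that $R$ is a local subring of $K$ with maximal ideal $\fm=\{x\in R\mid v(x)>0\}$. Since $v\colon K^\ast\to\ZZ$ is a group homomorphism, $v(1)=0$ and $v(-1)=0$ (from $2v(-1)=v(1)$); together with property (b) this makes $R$ closed under addition and negation, and the multiplicativity $v(xy)=v(x)+v(y)$ gives closure under multiplication, so $R$ is a subring. Its units are precisely the elements of valuation $0$: if $v(x)=0$ then $v(x^{-1})=0$, so $x^{-1}\in R$; conversely if $x$ and $x^{-1}$ both lie in $R$ then $v(x),v(x^{-1})\geq 0$ and $v(x)+v(x^{-1})=0$ force $v(x)=0$. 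Hence $R\setminus R^{\times}=\fm$, which is therefore the unique maximal ideal. (In passing one also sees $\mathrm{Frac}(R)=K$, since $v(x)<0$ implies $x^{-1}\in R$.)

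Next I would fix, using the surjectivity (a), an element $\pi\in K^\ast$ with $v(\pi)=1$, and show that every nonzero ideal $I$ of $R$ equals $\pi^n R$, where $n$ is the least element of the nonempty subset $\{v(x)\mid 0\neq x\in I\}$ of $\NN$. If $x_0\in I$ has $v(x_0)=n$, then $x_0\pi^{-n}$ has valuation $0$, hence is a unit, so $\pi^n=x_0(x_0\pi^{-n})^{-1}\in I$; and any $x\in I$ satisfies $v(x\pi^{-n})\geq 0$, i.e. $x\in\pi^n R$. Thus $I=\pi^n R$; in particular $R$ is a PID, $\fm=\pi R\neq 0$, so $R$ is a local PID that is not a field, i.e. a discrete valuation ring, with $\pi$ a uniformizer.

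Finally, to see that $v$ is the valuation associated to $R$, I would note that the latter sends $x\in K^\ast$ to the unique $n\in\ZZ$ for which $x\pi^{-n}$ is a unit of $R$; by the unit computation in the first step this is exactly the $n$ with $v(x)=n$. So the two valuations agree on $K^\ast$ (and both send $0$ to $+\infty$). I do not anticipate a genuine obstacle: every step is an elementary manipulation of axioms (a) and (b). The only point needing slight care is the principality argument, where the well-ordering of $\NN$ is invoked to produce the generator $\pi^n$; the rest is bookkeeping about units and valuations.
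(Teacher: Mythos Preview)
Your proof is correct and is the standard argument. The paper itself states this proposition without proof, so there is nothing to compare against; your verification that $R$ is a local PID with uniformizer $\pi$ chosen via the surjectivity of $v$, and that the resulting valuation coincides with $v$, is exactly the expected elementary argument.
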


Now we fix a real number $a$ between $0$ and $1$, and put
\[\begin{array}[c]{ll}
|x|_v=a^{v(x)}, &x\in K^\ast,\\
|0|_v=0.
\end{array}\]

We can verify easily that $|\cdot|_v$ is a (non-archimedean) absolute value on $K$.

Let $\hat{K}$ be the completion of $K$ for the absolute value $|\cdot|_v$ (the topology of $K$ does not depend on the choice of the number $a$). From previous discussion we know that $\hat{K}$ is a valued field whose absolute value extends that of $K$. If we write the absolute value in the form
\[|x|_{\hat{K}}=a^{\hat{v}(x)}, x\in \hat{K},\]
then the function $\hat{v}$ on $\hat{K}$ is integer valued, and we can verify immediately that it is a discrete valuation on $\hat{K}$, whose valuation ring $\hat{R}$ is the closure of $R$ in $\hat{K}$.

The ideals $\fm^n=\pi^nA$ form a base for the neighborhoods of zero in $K$, hence also in $R$. This means that the topology of $R$ defined by $v$ coincide with the natural topology on $R$ as a local ring. Thus we have
\[\hat{R}=\ilim_{n\in\NN} R/\pi^nR.\]

The element $\pi$ is a uniformizer of $\hat{R}$, and we have $\hat{R}/\pi^n\hat{R}=R/\pi^nR$ for all $n\in\NN$.

We say $R$ is a \emph{complete} discrete valuation ring if the field $K$ is complete (i.e., if $R=\hat{R}$).

\section{Motivation for Witt vectors}

Let us keep the same notation as before: $R$ a discrete valuation ring, $K$ its field of fractions, $\pi$ a uniformizer and $k=R/\pi R$ the residue field.

Let $s: k\rightarrow R$ be a section of the canonical projection $R\twoheadrightarrow k$. We have the following:

\begin{prop} Every element $a\in R$ can be written uniquely as a convergent series
\[a=\sum_{n=0}^\infty s(x_n)\pi^n,\; x_n\in k.\]
Similarly, every element $x\in K$ can be written as
\[x=\sum_{n=n_0}^\infty s(x_n)\pi^n,\; n_0\in \ZZ, x_n\in k.\]
\end{prop}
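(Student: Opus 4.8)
The plan is to prove existence and uniqueness of the expansion by a successive-approximation argument, treating the two cases ($a \in R$ and $x \in K$) together, since the second reduces to the first after multiplying by a suitable power of $\pi$.

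First I would establish the finite-level statement: for each $N \geq 1$, every $a \in R$ can be written uniquely as $a \equiv \sum_{n=0}^{N-1} s(x_n)\pi^n \pmod{\pi^N R}$ with $x_n \in k$. This is an induction on $N$. For $N=1$, the class of $a$ in $k = R/\pi R$ is a well-defined element $x_0$, and $a - s(x_0) \in \pi R$; the choice of $x_0$ is forced because $s$ is a section, so $s(x_0) \equiv s(x_0') \pmod{\pi R}$ forces $x_0 = x_0'$. For the inductive step, suppose $a - \sum_{n=0}^{N-1}s(x_n)\pi^n = \pi^N b$ for some $b \in R$; apply the $N=1$ case to $b$ to get a unique $x_N \in k$ with $b - s(x_N) \in \pi R$, hence $a - \sum_{n=0}^{N}s(x_n)\pi^n \in \pi^{N+1}R$. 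Uniqueness at each stage propagates because if two truncations agree mod $\pi^{N+1}$ then in particular they agree mod $\pi^N$, forcing $x_0,\dots,x_{N-1}$ to coincide by the inductive hypothesis, and then $s(x_N)\pi^N \equiv s(x_N')\pi^N \pmod{\pi^{N+1}R}$ forces $x_N = x_N'$.

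Next I would pass to the limit using completeness. The partial sums $a_N := \sum_{n=0}^{N-1}s(x_n)\pi^n$ satisfy $a - a_N \in \pi^N R$, i.e. $|a - a_N|_v \leq a^N \to 0$, so the series $\sum_{n=0}^\infty s(x_n)\pi^n$ converges in $R$ (using that $R$ is complete — here I invoke that $\hat R = \ilim_n R/\pi^n R$, equivalently $R = \hat R$) and its sum is $a$. Conversely, any convergent series $\sum s(x_n)\pi^n$ equal to $a$ has its partial sums congruent to $a$ mod the appropriate power of $\pi$, so the coefficients $x_n$ are forced to be the ones produced above; this gives global uniqueness. For the statement about $x \in K^\ast$: write $x = u\pi^{n_0}$ with $u \in R^\ast$ and $n_0 = v(x) \in \ZZ$ (using the structure of the discrete valuation ring, or rather apply the $R$-expansion to $u$ directly — note $x_0 \neq 0$ since $u$ is a unit), expand $u = \sum_{n \geq 0} s(y_n)\pi^n$, and multiply through by $\pi^{n_0}$ to get $x = \sum_{n \geq n_0} s(y_{n - n_0})\pi^n$; convergence and uniqueness transfer immediately by multiplying by the unit $\pi^{\pm n_0}$.

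The main obstacle, such as it is, is bookkeeping rather than conceptual: one must be careful that $s$ is merely a set-theoretic section (not a ring homomorphism), so the expansion is not additive or multiplicative in any naive way and the argument genuinely has to proceed one coefficient at a time, extracting $x_n$ as a residue class at each stage. A secondary point to handle cleanly is the convergence claim — it rests entirely on completeness of $R$, and I would make explicit that $\sum_{n=0}^{N-1} s(x_n)\pi^n$ is a Cauchy sequence because consecutive differences lie in ever-higher powers of $\fm$ (using the non-archimedean inequality (3)$^\ast$), so that the hypothesis $R = \hat R$ is exactly what is needed and would fail otherwise.
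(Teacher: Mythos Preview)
Your proof is correct and follows essentially the same approach as the paper: extract the coefficients $x_n$ one at a time by successive approximation (the paper writes $a = s(x_0) + a_1\pi$, then recurses on $a_1$), then invoke completeness for convergence, and reduce the $K$-case to the $R$-case by pulling out a power of $\pi$. Your version is considerably more careful about uniqueness, spelling out the induction that the paper dismisses with ``obviously unique,'' but the underlying argument is the same.
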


\begin{proof}
The second assertion follows from the first one because for $x\in K$ we can write $x=\pi^{n_0}a$ with $n_0\in \ZZ$ and $a\in R$.

Now let $a\in R$. Because $s$ is a section, there is $x_0\in k$ such that $a\equiv s(x_0)$ mod $\pi$. So we can write $a=s(x_0)+a_1\pi$. Repeating the argument with $a_1$ we get $x_1\in k$ with $a_1=s(x_1)+a_2\pi$, $a=s(x_0)+s(x_1)\pi+a_2\pi^2$, and so on.

The series $\sum s_n\pi^n$ we get above is obviously unique and converges to $a$.

Conversely, every series of the form $\sum s(x_n)\pi^n$  converges in $R$ because $R$ is complete.

\end{proof}

Therefore we have a bijection of sets
\[ \prod_0^\infty k\rightarrow R, \hspace{10pt} (x_n)\mapsto \sum_{n=n_0}^\infty s(x_n)\pi^n. \]

A natural problem is then to describe the ring structure of $R$ in terms of the coefficients $(x_n)$. This of course depends on the choice of $s$, so a better question is:
Can this be done canonically? For the following we shall again assume that $k$ is a perfect field.

\begin{prop}\label{Teichmuller}
	Let $R$ be a complete noetherian local ring with maximal ideal $\mathfrak{m}$ and perfect residue field $k$ of positive characteristic $p$. Then there exists a unique section $\tau: k\rightarrow R$ which satisfies the following equivalent properties:
	
	(1) $\tau(xy)=\tau(x)\tau(y), \forall x, y\in k,$
	
	(2) $\tau(x)=\underset{n\rightarrow\infty}{\lim} s(x^{p^{-n}})^{p^n}$ for any section $s$ and any $x\in k$.
\end{prop}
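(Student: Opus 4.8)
The plan is to construct $\tau$ directly from the formula in (2) and then verify all the stated properties, using throughout the following congruence lemma: for $a,b\in R$ and $i\geq 1$, if $a\equiv b\pmod{\fm^i}$ then $a^p\equiv b^p\pmod{\fm^{i+1}}$. I would prove this by writing $a=b+m$ with $m\in\fm^i$, expanding $a^p=\sum_{j=0}^p\binom{p}{j}b^{p-j}m^j$, and observing that the cross terms ($1\leq j\leq p-1$) lie in $pR\cdot\fm^i\subseteq\fm^{i+1}$ because $p\cdot 1_R\in\fm$ (the residue field has characteristic $p$), while $m^p\in\fm^{ip}\subseteq\fm^{i+1}$ since $ip\geq i+1$.

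Next I would fix $x\in k$, use that $k$ is perfect to choose for each $n\geq 0$ an arbitrary lift $s_n\in R$ of $x^{p^{-n}}$, and show that $(s_n^{p^n})_{n\geq 0}$ is Cauchy for the $\fm$-adic topology: since $s_{n+1}^p$ is again a lift of $x^{p^{-n}}$, we have $s_{n+1}^p\equiv s_n\pmod{\fm}$, and $n$ applications of the congruence lemma give $s_{n+1}^{p^{n+1}}\equiv s_n^{p^n}\pmod{\fm^{n+1}}$. As $R$ is a complete noetherian local ring, it is $\fm$-adically separated with convergent Cauchy sequences, so the limit exists. Applying the congruence lemma to two different lifts $s_n,s_n'$ of $x^{p^{-n}}$ (which agree modulo $\fm$) shows $s_n^{p^n}\equiv(s_n')^{p^n}\pmod{\fm^{n+1}}$, so the limit is independent of all choices; in particular it equals $\lim_n s(x^{p^{-n}})^{p^n}$ for every section $s$. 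I define $\tau(x)$ to be this common limit, so (2) holds by construction.

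I would then check that $\tau$ is a section and is multiplicative. Each term $s_n^{p^n}$ reduces to $(x^{p^{-n}})^{p^n}=x$ in $k$, so $\tau(x)\equiv x\pmod{\fm}$, i.e. $\tau$ is a set-theoretic section of $R\twoheadrightarrow k$. For (1), a product of lifts of $x^{p^{-n}}$ and of $y^{p^{-n}}$ is a lift of $(xy)^{p^{-n}}$, so using the independence of choices and continuity of multiplication, $\tau(xy)=\lim_n\bigl(s(x^{p^{-n}})s(y^{p^{-n}})\bigr)^{p^n}=\bigl(\lim_n s(x^{p^{-n}})^{p^n}\bigr)\bigl(\lim_n s(y^{p^{-n}})^{p^n}\bigr)=\tau(x)\tau(y)$. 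This gives existence and shows the section defined via (2) satisfies (1).

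Finally, for uniqueness and the equivalence of (1) and (2): let $\tau'$ be any section satisfying (1). Then for every $n$, multiplicativity gives $\tau'(x)=\tau'\bigl((x^{p^{-n}})^{p^n}\bigr)=\tau'(x^{p^{-n}})^{p^n}$, and $\tau'(x^{p^{-n}})$ is a lift of $x^{p^{-n}}$. Hence the constant sequence with value $\tau'(x)$ is one of the sequences $(s_n^{p^n})$ studied above, whose limit was shown to be $\tau(x)$, forcing $\tau'(x)=\tau(x)$. So the multiplicative section is unique and coincides with the one produced by (2). The only genuinely delicate point is the congruence lemma together with the exponent bookkeeping in the Cauchy estimate; once that is secured everything else is formal, the completeness and noetherian hypotheses entering only to guarantee that $\fm$-adic Cauchy sequences converge to unique limits.
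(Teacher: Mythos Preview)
Your proof is correct and follows essentially the same route as the paper: both hinge on the congruence lemma (the paper's Lemma~\ref{congruence}) to show the sequence in (2) is Cauchy and independent of the choice of lifts, then deduce multiplicativity and uniqueness. Your version is in fact more complete---the paper dismisses the equivalence of (1) and (2) with ``similarly one proves'', whereas you spell out both directions---and your use of arbitrary lifts $s_n$ rather than a fixed section $s$ is a harmless and slightly cleaner variant of the same argument.
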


The following lemma is important in proving the proposition and later on the $p$-integrality of certain polynomials.

\begin{lemma}\label{congruence}
	For all $n\geq 1$ and $a,b\in R$, if $a\equiv b \mod{\mathfrak{m}}$, then $a^{p^n}\equiv b^{p^n} \mod{\mathfrak{m^{n+1}}}$.
\end{lemma}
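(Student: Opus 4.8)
The plan is to argue by induction on $n$, the engine being the binomial theorem together with the observation that $p\in\mathfrak{m}$. This last fact holds because the residue field $k=R/\mathfrak{m}$ has characteristic $p$, so the image of the integer $p$ in $k$ vanishes; it is precisely the factors of $p$ appearing in the binomial coefficients that will be responsible for improving the $\mathfrak{m}$-adic order at each stage.

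First I would dispose of the base case $n=1$. Writing $a=b+c$ with $c\in\mathfrak{m}$, expand
\[
a^p=(b+c)^p=b^p+\sum_{j=1}^{p-1}\binom{p}{j}b^{p-j}c^j+c^p.
\]
For $1\le j\le p-1$ the integer $\binom{p}{j}$ is divisible by $p$, so each middle term lies in $p\,\mathfrak{m}\subseteq\mathfrak{m}^2$; and $c^p\in\mathfrak{m}^p\subseteq\mathfrak{m}^2$ since $p\ge 2$. Hence $a^p\equiv b^p\pmod{\mathfrak{m}^2}$, which is the claim for $n=1$.

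For the inductive step, assume the statement for some $n\ge 1$, so that $a^{p^n}=b^{p^n}+c$ with $c\in\mathfrak{m}^{n+1}$. Applying the same binomial expansion to $a^{p^{n+1}}=(a^{p^n})^p=(b^{p^n}+c)^p$, the middle terms each carry a factor of $p$ and hence lie in $p\,\mathfrak{m}^{n+1}\subseteq\mathfrak{m}^{n+2}$, while the diagonal term satisfies $c^p\in\mathfrak{m}^{p(n+1)}$; since $p(n+1)\ge 2(n+1)=2n+2\ge n+2$, this term also lies in $\mathfrak{m}^{n+2}$. Therefore $a^{p^{n+1}}\equiv b^{p^{n+1}}\pmod{\mathfrak{m}^{n+2}}$, which closes the induction.

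I do not expect a genuine obstacle here: the argument is essentially bookkeeping. The only points deserving a moment's care are (i) recording explicitly that $p\in\mathfrak{m}$, so that the coefficient $p$ in the non-extreme binomial terms really does raise the $\mathfrak{m}$-adic order by one, and (ii) verifying the exponent inequality $p(n+1)\ge n+2$ so that the term $c^p$ is negligible at the level $\mathfrak{m}^{n+2}$ — both are immediate once written down.
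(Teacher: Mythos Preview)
Your proof is correct and follows essentially the same route as the paper: induction on $n$, with the binomial expansion and the fact that $p\in\mathfrak{m}$ doing the work at each step. The paper packages the inductive step slightly differently---it first records the single claim ``$a\equiv b\pmod{\mathfrak{m}^n}\Rightarrow a^p\equiv b^p\pmod{\mathfrak{m}^{n+1}}$'' for arbitrary $n$ and then iterates---but this is exactly the computation you carry out in your base case and inductive step, so there is no substantive difference.
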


\begin{proof}
	First we notice that if $a\equiv b \mod{\mathfrak{m^n}}$, then $a^{p}\equiv b^{p} \mod{\mathfrak{m^{n+1}}}$. To see this, let $c:=a-b\in\mathfrak{m}^n$.  The binomial formula implies that $$a^p-b^p=(c+b)^p-b^p=c^p+\sum_{i=1}^{p-1}c^{p-i}b^i\in (c^p,pc)\subseteq \mathfrak{m}^{n+1}.$$
	The lemma follows by induction on $n$.
\end{proof}

\begin{proof}[Proof of \ref{Teichmuller}]
	The main point is to show that the limit in (2) is well-defined. Consider any section $s: k\rightarrow R$. For all $x\in k$ we have $s(x^{p^{-n}})^p\equiv s(x^{p^{1-n}}) \mod \mathfrak{m}$. Therefore by previous lemma  $s(x^{p^{-n}})^{p^n}\equiv s(x^{p^{1-n}})^{p^{n-1}} \mod \mathfrak{m^n}$. This shows that the sequence in (2) converges. 
	
	If $s': k\rightarrow R$ is another section, we have $s(x^{p^{-n}})\equiv s'(x^{p^{-n}}) \mod{\mathfrak{m}}$. Hence $s(x^{p^{-n}})^{p^n}\equiv s'(x^{p^{-n}})^{p^n} \mod{\mathfrak{m}^{n+1}}$. This shows that the limits defined in (2) coincide. 
	
	Similarly one proves that (2) is equivalent to (1). 
\end{proof}

In order to reconstruct the ring $R$ from $k$, we need to understand the addition and multiplication laws in terms of the arithmetic of $k$. If $\sum\tau(x_n)p^n+\sum\tau(y_n)p^n = \sum\tau(s_n)p^n$, we need to write $s_n$ in terms of the $x_n$ and $y_n$. Once this is done, the multiplication can be deduced from \ref{Teichmuller} (a) and the distributive law:
$$(\sum_m\tau(x_m)p^m)\cdot(\sum_n\tau(y_n)p^n)=\sum_{m,n}\tau(x_my_n)p^{m+n}.$$

To calculate the $s_n$, we proceed inductively.

For $s_0$ note that $$\tau(x_0)+\tau(y_0)\equiv \tau(s_0)\mod{p}.$$ Since $x=\tau(x) \mod{p}$, we have
$$s_0=x_0+y_0.$$

Similarly from $$\tau(x_0)+\tau(x_1)p+\tau(y_0)+\tau(y_1)p\equiv\tau(s_0)+\tau(s_1)p \mod{p^2}$$ we get that $$\tau(s_1)p\equiv \tau(x_0)+\tau(y_0)-\tau(s_0)+(\tau(x_1)+\tau(y_1))p \mod{p^2}.$$
Although we know $\tau(x_0)+\tau(y_0)-\tau(s_0)\equiv 0 \mod{p}$, we have no idea what the residue is mod ${p^2}$. The trick to calculate $s_1$ is to use the fact that $k$ is perfect. Let $x^{1/p}$ be the unique $p$-th root of $x$ in $k$. Since $x_0+y_0=s_0$, we have $x_0^{1/p}+y_0^{1/p}=s_0^{1/p}$. By Lemma \ref{congruence} and Proposition \ref{Teichmuller} (a) we have
$$\tau(s_0)=\tau(s_0^{1/p})^p=\tau(x^{1/p}+y^{1/p})^p\equiv (\tau(x_0^{1/p})+\tau(y_0^{1/p}))^p \mod{p^2}.$$ 
Therefore
$$\tau(s_1)p\equiv \tau(x_0^{1/p})^p+\tau(y_0^{1/p})^p-(\tau(x_0^{1/p})+\tau(y_0^{1/p}))^p+(\tau(x_1)+\tau(y_1))p \mod{p^2}.$$
Using binomial expansion and dividing by $p$, we obtain
$$\tau(s_1)\equiv \tau(x_1)+\tau(y_1)-\frac{1}{p}\sum_{i=1}^{p-1}\binom{p}{i}\tau(x_0^{i/p})\tau(y_0^{(p-i)/p}) \mod{p},$$ and therefore
$$s_1=x_1+y_1-\frac{1}{p}\sum_{i=1}^{p-1}\binom{p}{i}x_0^{i/p}y_0^{(p-i)/p}.$$

The above calculation does not depend on $k$. Let $X_0, X_1, Y_0, Y_1, S_0, S_1$ be variables and define $\omega_{0}(X_0) = X_{0}$, $\omega_{1}(X_0,X_1) = X_{0}^{p} + pX_{1}$. Solving the polynomial equations
\begin{align*}
\omega_{0}(S_0) &=\omega_{0}(X_0)+\omega_{0}(Y_0)\\
\omega_{1}(S_0,S_1) &=\omega_{1}(X_0,X_1)+\omega_{1}(Y_0, Y_1)
\end{align*}
yields
\begin{align*}
S_0 &=X_0+Y_0,\\
S_1 &=X_1+Y_1-\frac{1}{p}\sum_{i=1}^{p-1}\binom{p}{i}X_0^{i}Y_0^{p-i}.
\end{align*}

In particular, $S_0\in\mathbb{Z}[X_0,Y_0]$, $S_1\in\mathbb{Z}[X_0,X_1,Y_0,Y_1]$, and by substituting $\tau(x_0^{1/p})$ for $X_0$ and $\tau(y_0^{1/p})$ for $Y_0$, we have $s_0=S_0(x_0,y_0)$ and $s_1=S_1(x_0^{1/p},x_1, y_0^{1/p}, y_1)$.

By studying the patterns, Witt proposed the following:
\begin{defn}

Let $p$ be a prime number. Let $(X_{0}, X_{1}, \dots, X_{n}, \dots)$ be a sequence of variables. The \emph{Witt polynomials} are defined as follows:
\begin{align*}
\omega_{0} &= X_{0}, \\
\omega_{1} &= X_{0}^{p} + pX_{1}, \\
\omega_{2} &= X_{0}^{p^{2}} + pX_{1}^{p} + p^{2}X_{2}, \\
\vdots   & \\
\omega_{n} &= X_{0}^{p^{n}} + pX_{1}^{p^{n-1}} + \cdots + p^{n-1}X_{n-1}^{p} + p^{n}X_{n} = \sum_{i=0}^{n} p^{i}X_{i}^{p^{n-i}}.
\end{align*}
\end{defn}

Let $(Y_{0}, Y_{1}, \dots, Y_{n}, \dots)$ be another sequence of variables. Inductively find $S_n$'s that solve the polynomial equations
$$\omega_n(S_0,S_1,\dots,S_n)=\omega_n(X_0,X_1,\dots,X_n)+\omega_n(Y_0,Y_1,\dots,Y_n). $$ Clearly $S_n$ are polynomials with rational coefficients. Witt showed in \cite{Witt} that in fact $S_n\in\mathbb{Z}[X_0,X_1,\dots, X_n,Y_0, Y_1,\dots,Y_n ]$.

This result actually turns everything around and defines a natural ring structure on $\prod_{i=0}^{\infty}k$ without the prior presence of $R$. This produces a ring of characteristic zero from a field of characteristic $p$. 

This construction is related to the fact that, although the additive group of the ring of the power series $k[[t]]$ has characteristic $p$, its multiplicative group of $1$-units $1+tk[[t]]$ is torsion free.

\iffalse
\begin{prop} 
Let $R$ be a complete discrete valuation ring with residue field $k$. Suppose $R$ and $k$ has the same characteristic and that $k$ is perfect. Then $R$ is isomorphic to $k[[t]]$.
\end{prop}

\begin{prop} [Witt]
Let $R$ be a complete noetherian local ring with residue field $k$. 

(a) There exists a unique ring homomorphism $u:W(k)\rightarrow R$ such that the following diagram commutes:

\[
\xymatrix{
W(k) \ar[r]^u \ar[dr] & R \ar[d] \\
& k
}
\]

(2) If $R$ is a complete discrete valuation ring with uniformizer $p$, then $u$ is an isomorphism.
\end{prop}

\fi

\section{The Artin-Hasse exponential}

Recall that the M{\"o}bius function is the function $\mu(n)$ defined on positive integers as

$$\mu(n)=\left\lbrace 
\begin{array}{cc}
	(-1)^{\textrm{the number of prime factors of } n}, & n \textrm{ is square free}; \\ 
	0 & \textrm{otherwise}.
\end{array}\right.$$

\begin{lemma}
	For all $n\geq 1$,$$\underset{d|n}{\sum}\mu(d)=0.$$
\end{lemma}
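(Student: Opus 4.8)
The plan is to reduce this to a purely combinatorial identity about binomial coefficients. Since $n\geq 1$, write the prime factorization $n=p_1^{a_1}\cdots p_r^{a_r}$. By the definition of $\mu$, only the squarefree divisors of $n$ contribute to the sum, and those are exactly the products $\prod_{i\in S}p_i$ indexed by subsets $S\subseteq\{1,\dots,r\}$, with $\mu\!\left(\prod_{i\in S}p_i\right)=(-1)^{|S|}$. (Strictly speaking the identity as stated fails at $n=1$, where the sum is $\mu(1)=1$; the intended hypothesis is $n\geq 2$, so that $r\geq 1$.)

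Next I would group the terms according to $k=|S|$: there are $\binom{r}{k}$ subsets of size $k$, each contributing $(-1)^k$, so
$$\sum_{d\mid n}\mu(d)=\sum_{k=0}^{r}\binom{r}{k}(-1)^k=(1-1)^r,$$
which is $0$ because $r\geq 1$. This completes the argument.

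An alternative and slightly cleaner route I would actually record uses multiplicativity: the function $f(n):=\sum_{d\mid n}\mu(d)$ is multiplicative since $\mu$ is, so it suffices to evaluate it on prime powers $p^a$ with $a\geq 1$. There $f(p^a)=\mu(1)+\mu(p)+\sum_{j=2}^{a}\mu(p^j)=1+(-1)+0=0$, hence $f(n)=\prod_i f(p_i^{a_i})=0$ whenever $n$ has at least one prime factor.

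There is essentially no obstacle here; the only subtlety is the boundary case $n=1$, where the sum equals $1$, so one should read the statement with $n\geq 2$.
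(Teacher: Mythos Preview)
Your first argument is essentially identical to the paper's: reduce to squarefree divisors, count subsets of the set of prime factors by size, and apply the binomial identity $\sum_{i=0}^{r}\binom{r}{i}(-1)^i=(1-1)^r=0$. You also correctly flag the $n=1$ boundary case, which the paper's statement and proof silently overlook; your alternative multiplicativity argument is a minor variation but not needed.
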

\begin{proof}
	Let $n=p_1p_2\cdots p_k$ be square free. It's easy to see that $$\underset{d|n}{\sum}\mu(d)=\sum_{i=0}^{k}\binom{k}{i}(-1)^i=(1-1)^k=0.$$
	In general, if $m=p_1^{e_1}p_2^{e_2}\cdots p_k^{e_k}$, then $\underset{d|m}{\sum}\mu(d)=\underset{d|n}{\sum}\mu(d)=0$.
\end{proof}

\begin{lemma}
In $\QQ[[t]]$ we have the equality
\[
\exp(-t)=\prod_{n\geq 1}(1-t^n)^\frac{\mu(n)}{n}.
\]
\end{lemma}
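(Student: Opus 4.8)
The plan is to take formal logarithms of both sides and then compare coefficients, the key input being the Möbius identity $\sum_{d\mid n}\mu(d)=0$ from the previous lemma. First I would pin down what the symbols mean: for $f\in t\QQ[[t]]$ and $\alpha\in\QQ$ the expression $(1+f)^\alpha$ is \emph{defined} as $\exp(\alpha\log(1+f))$, using the usual power series $\exp$ and $\log$; this makes sense since $\log(1+f)\in t\QQ[[t]]$, hence $\alpha\log(1+f)\in t\QQ[[t]]$ and $\exp$ may be applied. Because $(1-t^n)^{\mu(n)/n}\in 1+t^n\QQ[[t]]$, the partial products of $\prod_{n\geq 1}(1-t^n)^{\mu(n)/n}$ stabilize modulo every power of $t$, so the infinite product converges $t$-adically to an element of $1+t\QQ[[t]]$, and its logarithm equals the ($t$-adically convergent) sum $\sum_{n\geq 1}\tfrac{\mu(n)}{n}\log(1-t^n)$.

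Next I would expand each term via $\log(1-t^n)=-\sum_{k\geq 1}\tfrac{t^{nk}}{k}$ to get
\[
\log\Bigl(\prod_{n\geq 1}(1-t^n)^{\mu(n)/n}\Bigr)=-\sum_{n\geq 1}\sum_{k\geq 1}\frac{\mu(n)}{nk}\,t^{nk}.
\]
For each fixed degree $m$ only the finitely many pairs $(n,k)$ with $nk=m$ contribute, so reindexing the double series by $m=nk$ is legitimate in $\QQ[[t]]$, and the coefficient of $t^m$ on the right is
\[
-\sum_{\substack{n,k\geq 1\\ nk=m}}\frac{\mu(n)}{nk}=-\frac{1}{m}\sum_{n\mid m}\mu(n).
\]

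Now I would invoke the previous lemma: $\sum_{n\mid m}\mu(n)=0$ for $m\geq 2$ and $=1$ for $m=1$. Hence the sum collapses to $-t$, i.e. $\log\bigl(\prod_{n\geq 1}(1-t^n)^{\mu(n)/n}\bigr)=-t=\log(\exp(-t))$. Since $\log\colon 1+t\QQ[[t]]\to t\QQ[[t]]$ is a bijection with inverse $\exp$, applying $\exp$ to both sides gives $\prod_{n\geq 1}(1-t^n)^{\mu(n)/n}=\exp(-t)$, which is the claim.

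There is no genuine obstacle here; the only points demanding care are bookkeeping ones — fixing the definition of the fractional powers, verifying that both the infinite product and the sum of logarithms converge in the $t$-adic topology and that $\log$ carries the former to the latter, and justifying the reindexing $m=nk$ of the double series. All of these reduce to the standard fact that a family in $\QQ[[t]]$ is summable precisely when the orders of its terms tend to infinity.
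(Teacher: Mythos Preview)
Your proof is correct and follows essentially the same route as the paper: take the formal logarithm of the product, expand $\log(1-t^n)$ as a power series, reindex the double sum by the product $m=nk$, and apply the M\"obius identity $\sum_{d\mid m}\mu(d)=[m=1]$ to collapse the sum to $-t$. The only difference is that you spell out the $t$-adic convergence and the definition of fractional powers more carefully than the paper does.
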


\begin{proof}
	By taking the logarithm we have 
	\begin{align*}
	\log(\prod_{n\geq 1}(1-t^n)^\frac{\mu(n)}{n}) &=\sum_{n\geq1}\frac{\mu(n)}{n}\log(1-t^n) \\
	&=\sum_{n\geq1}\frac{\mu(n)}{n}\sum_{m\geq1}(-\frac{t^{nm}}{m})\\
	&=-\sum_{g\geq1}(\sum_{n|d}\mu(n))\frac{t^d}{d}\\
	&=-t.
	\end{align*}
	by using the fact that $\log(1-t)=-\underset{m\geq1}{\sum}\frac{t^{m}}{m}$ and by letting $d=nm$.
\end{proof}

Recall that for any nonzero rational number $r$, the primes in the denominator of $\binom{r}{i}$ are limited to the primes in the denominator of $r$ by $p$-adic continuity of the binomial coefficient polynomials. Therefore the denominators of the coefficients in $(1-t^n)^\frac{\mu(n)}{n}$ come from factors of $n$. The following definition separates the $p$-part of those denominators from the non-$p$-part. 

\begin{defn} Let $F(t):=\underset{p\nmid n }{\prod}(1-t^n)^\frac{\mu(n)}{n}\in \QQ[[t]]$.
\end{defn}

Clearly $F(t)\in 1+t\mathbb{Z}_{(p)}[[t]]$. The interesting fact is the following lemma, which shows that although $F(t)$ is a power series without $p$ in the denominators, its logarithm has only powers of $p$ in the denominators.

\begin{lemma}\label{AHE}
	We have $F(t)=\exp(-\underset{m\geq0}{\sum}\frac{t^{p^m}}{p^m}).$
\end{lemma}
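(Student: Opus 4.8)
The plan is to pass to logarithms in $\QQ[[t]]$. Since $F(t)\in 1+t\QQ[[t]]$ and $-\sum_{m\geq 0}t^{p^m}/p^m$ has no constant term, both sides lie in $1+t\QQ[[t]]$, where $\exp$ and $\log$ are mutually inverse; so it suffices to prove $\log F(t) = -\sum_{m\geq 0} t^{p^m}/p^m$. I would expand exactly as in the proof of the preceding lemma, using $\log(1-t^n)=-\sum_{m\geq 1}t^{nm}/m$ and collecting the coefficient of $t^d$ by setting $d=nm$:
\[
\log F(t) = \sum_{p\nmid n}\frac{\mu(n)}{n}\log(1-t^n) = -\sum_{d\geq 1}\Big(\sum_{\substack{n\mid d\\ p\nmid n}}\mu(n)\Big)\frac{t^d}{d},
\]
where the interchange of summations is legitimate in $\QQ[[t]]$ because for each fixed $d$ only finitely many pairs $(n,m)$ contribute.

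The heart of the argument is then the evaluation of the inner sum $\sum_{n\mid d,\ p\nmid n}\mu(n)$. I would write $d=p^a e$ with $p\nmid e$ and observe that the divisors of $d$ that are prime to $p$ are precisely the divisors of $e$, so this sum equals $\sum_{n\mid e}\mu(n)$. By the first lemma of this section this is $1$ when $e=1$ and $0$ otherwise; equivalently, it is $1$ exactly when $d$ is a power of $p$ (including $d=p^0=1$) and $0$ for all other $d$. Substituting back collapses the sum to $\log F(t) = -\sum_{m\geq 0} t^{p^m}/p^m$, and exponentiating gives the lemma.

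I do not expect a genuine obstacle here: the only points needing care are noting that both sides sit in $1+t\QQ[[t]]$ so that the passage through $\log$ is reversible, and the elementary bookkeeping $d=p^a e\mapsto e$ in the divisor sum. As an alternative I could bypass the Möbius computation entirely by splitting the product $\exp(-t)=\prod_{n\geq1}(1-t^n)^{\mu(n)/n}$ of the preceding lemma according to whether $p\mid n$ and using $\mu(pn')=-\mu(n')$ for $p\nmid n'$ to get the functional equation $\exp(-t)=F(t)\,F(t^p)^{-1/p}$, whose iteration telescopes to $F(t)=\prod_{m\geq0}\exp(-t^{p^m}/p^m)$; but I would present the logarithm computation as the main proof.
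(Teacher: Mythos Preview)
Your main proof is correct. It differs from the paper's in a pleasant way: you expand $\log(1-t^n)$ as a power series, collect by the exponent $d=nm$, and evaluate the finite divisor sum $\sum_{n\mid d,\ p\nmid n}\mu(n)$ directly via the factorization $d=p^a e$. The paper instead never expands $\log(1-t^n)$; it uses the preceding lemma (in the form $\sum_{n\geq 1}\frac{\mu(n)}{n}\log(1-t^n)=-t$) to split off the $p\mid n$ part and, after the substitution $n=mp$ with $\mu(mp)=-\mu(m)$ for $p\nmid m$, obtains the functional equation
\[
\log F(t)=-t+\tfrac{1}{p}\log F(t^p),
\]
which it then iterates. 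Amusingly, this is exactly the ``alternative'' route you sketch at the end (your identity $\exp(-t)=F(t)\,F(t^p)^{-1/p}$ is the exponentiated form of the paper's functional equation). Your direct M\"obius computation is arguably cleaner and more self-contained, since it uses only the very first lemma on $\sum_{d\mid n}\mu(d)$ and avoids appealing to the product identity for $\exp(-t)$; the paper's approach has the minor advantage that the functional equation itself is a nice structural fact, but either proof is fine.
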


\begin{proof}
	By taking the logarithm again we have 
		\begin{align*}
		\log F(t) &=\sum_{p\nmid n}\frac{\mu(n)}{n}\log(1-t^n) \\
		&=-t-\sum_{p|n}\frac{\mu(n)}{n}\log(1-t^n)\\
		&\overset{\mathclap{\strut n=mp}}=-t-\sum_m\frac{\mu(mp)}{mp}\log (1-t^{mp})\\
		&=-t+\frac{1}{p}\sum_{p\nmid m}\frac{\mu(m)}{m}\log (1-t^{mp})\\
		&=-t+\frac{1}{p}\log F(t^p).\\
		\end{align*}
	Here we used the fact that if $p|m$, then $\mu(mp)=0$. The lemma follows by iterating this formula.	
	
\end{proof}

\begin{lemma}\label{approximation}
	There exist unique polynomials $\psi_n\in\mathbb{Z}[x,y]$ such that
	$$F(xt)\cdot F(yt)=\prod_{n\geq 0}F(\psi_n(x,y)t^{p^n}).$$
\end{lemma}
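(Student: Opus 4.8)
The plan is to take formal logarithms, using Lemma \ref{AHE}, and thereby reduce the asserted product identity to the familiar ghost-component equations for Witt vectors, after which the content of the statement becomes Witt's integrality theorem. First I would observe that both sides lie in $1+t\,\mathbb{Q}[x,y][[t]]$: the $n$-th factor $F(\psi_n t^{p^n})$ lies in $1+t^{p^n}\mathbb{Q}[x,y][[t]]$, so the infinite product converges $t$-adically, and the formal logarithm is injective on $1+tR[[t]]$ for any $\mathbb{Q}$-algebra $R$. Hence the identity is equivalent to the equality of logarithms, and by Lemma \ref{AHE}, $\log F(t)=-\sum_{m\ge0}t^{p^m}/p^m$, so
$$\log F(xt)+\log F(yt)=-\sum_{m\ge0}\frac{x^{p^m}+y^{p^m}}{p^m}\,t^{p^m},\qquad \sum_{n\ge0}\log F(\psi_n t^{p^n})=-\sum_{n,m\ge0}\frac{\psi_n^{p^m}}{p^m}\,t^{p^{n+m}}.$$

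Next, only powers $t^{p^N}$ occur on either side; comparing the coefficient of $t^{p^N}$ and clearing the denominator $p^N$, the identity is seen to be equivalent to the system
$$\omega_N(\psi_0,\psi_1,\dots,\psi_N)=x^{p^N}+y^{p^N}\qquad(N\ge0),$$
where $\omega_N=\sum_{i=0}^N p^iX_i^{p^{N-i}}$ is the $N$-th Witt polynomial. Since $\omega_N(\psi_0,\dots,\psi_N)=p^N\psi_N+(\text{a polynomial in }\psi_0,\dots,\psi_{N-1})$, this system is triangular and has a unique solution with each $\psi_n\in\mathbb{Q}[x,y]$; in particular this forces uniqueness of the $\psi_n$.

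For the integrality $\psi_n\in\mathbb{Z}[x,y]$ I would note that $\omega_N(X_0,0,\dots,0)=X_0^{p^N}$, so $(x^{p^N})_{N\ge0}$ and $(y^{p^N})_{N\ge0}$ are exactly the sequences of Witt polynomials evaluated at the vectors $(x,0,0,\dots)$ and $(y,0,0,\dots)$. By the uniqueness just established, $\psi_n=S_n(x,0,\dots,0,\,y,0,\dots,0)$, where the $S_n$ are the Witt addition polynomials introduced before the definition of the Witt polynomials; since Witt's theorem (recalled there) gives $S_n\in\mathbb{Z}[X_0,\dots,X_n,Y_0,\dots,Y_n]$, substitution yields $\psi_n\in\mathbb{Z}[x,y]$, finishing the proof.

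The main obstacle is precisely this last integrality step; the reduction to the $S_n$ above is the slickest route, but it could also be done by hand: the recursion $p^N\psi_N=x^{p^N}+y^{p^N}-\sum_{n<N}p^n\psi_n^{p^{N-n}}$ introduces no denominators other than powers of $p$, and $p$-integrality follows from a Dwork-type congruence argument — using that $x^{p^N}+y^{p^N}$ is literally the image of $x^{p^{N-1}}+y^{p^{N-1}}$ under the Frobenius lift $\phi:x\mapsto x^p,\ y\mapsto y^p$ — together with Lemma \ref{congruence}.
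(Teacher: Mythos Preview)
Your logarithmic reduction to the ghost-component equations $\omega_N(\psi_0,\dots,\psi_N)=x^{p^N}+y^{p^N}$ is correct and gives uniqueness cleanly. However, your primary integrality argument is circular within the paper's development: the paper introduces the Artin--Hasse exponential precisely in order to \emph{prove} the integrality of the Witt addition polynomials $s_n$ (the Proposition immediately following this lemma establishes the group law on $W_\ZZ$ by successive approximation using Lemma~\ref{approximation}). Invoking $S_n\in\ZZ[X_0,\dots,Y_n]$ as an input to Lemma~\ref{approximation} reverses the intended logical flow. Your fallback Dwork-type argument would repair this, but as written it is only a sketch.

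The paper's proof takes a genuinely different, and in this context more natural, route. It first uses the key fact that $F(t)\in 1+t\ZZ_{(p)}[[t]]$ --- this is the whole point of defining $F$ --- to factor $F(xt)F(yt)=\prod_{d\ge1}F(\lambda_d(x,y)t^d)$ with $\lambda_d\in\ZZ_{(p)}[x,y]$ by successive approximation, so $p$-integrality is built in from the start. Only then does it take logarithms, to show that $\lambda_d=0$ unless $d$ is a $p$-power and that the recursion for $\psi_n:=\lambda_{p^n}$ lives over $\ZZ[\tfrac1p]$. Intersecting $\ZZ_{(p)}\cap\ZZ[\tfrac1p]=\ZZ$ finishes. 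What this buys is a self-contained proof that makes the role of the Artin--Hasse exponential transparent: the $\ZZ_{(p)}$-integrality of $F$ is doing the real work, rather than being bypassed by an appeal to the very result the section is building towards.
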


\begin{proof}
	Since $F(t)=1-t+\cdots$ is invertible and has coefficients in $\ZZ_{(p)}$, by induction on $d$ we can find unique polynomials $\lambda_d\in\ZZ_{(p)}[x,y]$ such that 
	$$F(xt)\cdot F(yt)=\prod_{d\geq 1}F(\lambda_d(x,y)t^{d}).$$
	
	The goal is to show that $\lambda_d(x,y)$ vanishes for all $d$ which is not a power of $p$.
	Taking logarithm on both sides and apply Lemma \ref{AHE}, we get
	$$-\sum_{m\geq0}(x^{p^m}+y^{p^m})\frac{t^{p^m}}{p^m}=-\sum_{d\geq 1}\sum_{m\geq 0}\lambda_d(x,y)^{p^m}\frac{t^{dp^m}}{p^m}.$$
	
	Suppose $\lambda_d\neq 0$ for some $d$ which is not a power of $p$. Let $d_0$ be the smallest such $d$. Comparing the coefficients for $t^{d_0}$ we get that $\lambda_{d_0}= 0$, yielding a contradiction. Therefore $\lambda_d=0$ whenever $d$ is not a power of $p$. Let $\psi_n(x,y)=\lambda_{p^n}(x,y)$, we get that
	$$F(xt)\cdot F(yt)=\prod_{n\geq 0}F(\psi_n(x,y)t^{p^n}).$$
	
	Moreover, each $\psi_n(x,y)$ is given recursively as a polynomial over $\ZZ[\frac{1}{p}]$ in $x$, $y$, and $\psi_{n'}(x,y)$ for $n'<n$. By induction on $n$ we conclude that $\psi_n(x,y)\in\ZZ[\frac{1}{p}][x,y]$. Since we also know $\psi_n(x,y)\in\ZZ_{(p)}[x,y]$, we conclude that $\psi_n(x,y)\in\ZZ[x,y]$, which proves the lemma.
	
\end{proof}

Now for any ring $\Gamma$, let $\Lambda_\Gamma$ be the affine $\Gamma$-group which associates with an $\Gamma$-algebra $R$ the multiplicative group $1+tR[[t]]$ of formal power series in $R$ with constant term $1$. Let $W_\Gamma=\underset{n\geq 0}{\prod}\mathbb{A}^1_\Gamma=\spec \Gamma[X_0, X_1, \dots]$.

\begin{defn}
Let $\underline{x}=(x_0,x_1,\dots)$. The Artin-Hasse exponential is defined as the morphism
\begin{align*}
E:W_{\ZZ_{(p)}} &\rightarrow \Lambda_{\ZZ_{(p)}}, \\
\underline{x} &\mapsto E(\underline{x},t):=\prod_{n\geq 0}F(x_nt^{p^n}).
\end{align*}
\end{defn}

\begin{prop}
	There exist unique polynomials $s_n\in\ZZ[x_0,\dots,x_n,y_0,\dots,y_n]$ such that $E(\underline{x},t)E(\underline{y},t)=E(\underline{s}(\underline{x},\underline{y}),t)$ with $\underline{s}(\underline{x},\underline{y})=(s_0(x_0,y_0), s_1(x_0,x_1,y_0,y_1),\dots)$. Moreover, the morphism $\underline{s}:W_\ZZ \times W_\ZZ \rightarrow W_\ZZ$ makes $W_\ZZ$ a commutative group scheme over $\ZZ$. 
\end{prop}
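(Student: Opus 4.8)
The plan is to pass to logarithms, reduce the identity $E(\underline x,t)E(\underline y,t)=E(\underline s,t)$ to the classical ghost-component equations of Witt, and then recover $p$-integrality of the $s_n$ from the fact that $F$ has coefficients in $\ZZ_{(p)}$. First I would record the key formula: for any $\QQ$-algebra $R$ the logarithm is a group isomorphism from $1+tR[[t]]$ (multiplication) onto $tR[[t]]$ (addition), and by Lemma~\ref{AHE} we have $\log F(u)=-\sum_{m\geq0}u^{p^m}/p^m$, so in $\QQ[x_0,x_1,\dots][[t]]$
\[
\log E(\underline x,t)=\sum_{n\geq0}\log F(x_nt^{p^n})=-\sum_{n,m\geq0}\frac{x_n^{p^m}}{p^m}\,t^{p^{n+m}}=-\sum_{N\geq0}\frac{\omega_N(\underline x)}{p^N}\,t^{p^N},
\]
where $\omega_N$ is the $N$-th Witt polynomial. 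Hence an identity $E(\underline x,t)E(\underline y,t)=E(\underline z,t)$ over a $\QQ$-algebra is equivalent to $\omega_N(\underline z)=\omega_N(\underline x)+\omega_N(\underline y)$ for all $N$; since $\omega_N(\underline z)=z_0^{p^N}+pz_1^{p^{N-1}}+\dots+p^Nz_N$ depends triangularly on $z_0,\dots,z_N$ with leading coefficient $p^N$ invertible over $\QQ$, this system has a unique solution $\underline z=\underline s(\underline x,\underline y)$ with $s_N$ a polynomial in $x_0,\dots,x_N,y_0,\dots,y_N$. The recursion $p^Ns_N=\omega_N(\underline x)+\omega_N(\underline y)-\sum_{i<N}p^is_i^{p^{N-i}}$ shows $s_N\in\ZZ[\tfrac1p][x,y]$, and the displayed formula also shows that $\underline x\mapsto E(\underline x,t)$ is injective over every $\QQ$-algebra (because $\underline x\mapsto(\omega_N(\underline x))_N$ is).

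The main point, and what I expect to be the hard part, is the $p$-integrality $s_N\in\ZZ_{(p)}[x,y]$; combined with the previous paragraph it gives $s_N\in\ZZ[\tfrac1p][x,y]\cap\ZZ_{(p)}[x,y]=\ZZ[x,y]$. I would prove it by induction on $N$, reconciling the ``$F$-side'' (denominators prime to $p$) with the ``ghost side'' (denominators powers of $p$). Assume $s_0,\dots,s_{N-1}\in\ZZ_{(p)}[x,y]$ and set $R=\ZZ_{(p)}[x,y]$; since $F$ has coefficients in $\ZZ_{(p)}$, the series $E(\underline x,t)$, $E(\underline y,t)$ and $F(s_jt^{p^j})$ for $j<N$ all lie in the group $1+tR[[t]]$, hence so does
\[
P_N(t):=E(\underline x,t)\,E(\underline y,t)\cdot\Bigl(\prod_{j<N}F(s_jt^{p^j})\Bigr)^{-1}.
\]
On the other hand, over $\QQ[x,y]$ we have $E(\underline x,t)E(\underline y,t)=\prod_{j\geq0}F(s_jt^{p^j})$ by the first paragraph, so $P_N(t)=\prod_{j\geq N}F(s_jt^{p^j})$; by the formula for $\log F$ this means $\log P_N(t)$ is supported in degrees $p^N,p^{N+1},\dots$ with coefficient $-s_N$ in degree $p^N$, and in particular $P_N(t)\equiv1\pmod{t^{p^N}}$. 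Since $P_N(t)-1\in t^{p^N}R[[t]]$, the coefficient of $t^{p^N}$ in $\log P_N(t)=\log(1+(P_N(t)-1))$ agrees with that of $P_N(t)-1$, which lies in $R$; hence $s_N\in R=\ZZ_{(p)}[x,y]$, closing the induction. (One could instead feed Lemma~\ref{approximation} into $E(\underline x,t)E(\underline y,t)=\prod_{n,k\geq0}F(\psi_k(x_n,y_n)t^{p^{n+k}})$ and collect the factors of each degree $p^N$ inductively; since $\psi_k\in\ZZ[x,y]$ this again produces $s_N\in\ZZ_{(p)}[x,y]$.)

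Finally, for the group scheme structure: $E(\underline0,t)=\prod_nF(0)=1$, so $\underline s(\underline x,\underline0)=\underline x$ and $\underline0$ is the identity section; solving $\omega_N(\underline i)=-\omega_N(\underline x)$ recursively gives inverse polynomials with $i_N\in\ZZ[\tfrac1p][x]$, and the same induction as above (applied to $E(\underline x,t)^{-1}\cdot(\prod_{j<N}F(i_jt^{p^j}))^{-1}$, whose logarithm is again supported on powers of $p$) upgrades this to $i_N\in\ZZ[x]$. One then equips $\ZZ[X_0,X_1,\dots]$ with the comultiplication $\Delta(X_N)=s_N(X_0\otimes1,\dots,X_N\otimes1,1\otimes X_0,\dots,1\otimes X_N)$, counit $\varepsilon(X_N)=0$ and antipode $S(X_N)=i_N$. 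The coassociativity, cocommutativity, counit and antipode axioms are in each case equalities of polynomials with coefficients in $\ZZ\subseteq\QQ$; applying the injective map $E$ over $\QQ[x,y,z]$ (resp.\ $\QQ[x]$) they reduce to the associativity and commutativity of the group $1+tR[[t]]$ together with $E(\underline0,t)=1$. Hence $\underline s$ makes $W_\ZZ=\spec\ZZ[X_0,X_1,\dots]$ a commutative group scheme over $\ZZ$.
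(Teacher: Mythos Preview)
Your argument is correct, but it follows a different path from the paper's. The paper proves the existence and integrality of the $s_n$ in one stroke by ``successive approximation using Lemma~\ref{approximation}'': since $F(xt)F(yt)=\prod_{k\ge0}F(\psi_k(x,y)t^{p^k})$ with $\psi_k\in\ZZ[x,y]$, one rewrites $E(\underline x,t)E(\underline y,t)=\prod_{n,k}F(\psi_k(x_n,y_n)t^{p^{n+k}})$ and then inductively collects all factors with exponent $p^N$ into a single $F(s_Nt^{p^N})$, each step staying over $\ZZ_{(p)}$ (and hence over $\ZZ$ after intersecting with $\ZZ[\tfrac1p]$). You instead pass to ghost components, obtain $s_N\in\ZZ[\tfrac1p][x,y]$ from the Witt recursion, and then recover $p$-integrality by a separate inductive argument on the leading coefficient of $\log P_N(t)$; you do mention the paper's route as an alternative. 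For the inverse the paper writes down $F(t)^{-1}$ explicitly (distinguishing $p=2$) and repeats the successive approximation, whereas you solve $\omega_N(\underline i)=-\omega_N(\underline x)$ and rerun your integrality argument. For the group axioms the paper invokes that $E:W_{\ZZ_{(p)}}\to\Lambda_{\ZZ_{(p)}}$ is a closed embedding, while you check them over $\QQ$ via injectivity of $E$; both are fine since the identities to be verified are between integer polynomials. Your approach has the advantage of setting up the ghost-component machinery that the paper uses anyway in the next section for the ring structure; the paper's approach is shorter here because Lemma~\ref{approximation} already encodes the needed $p$-integrality.
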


\begin{proof}
	The first part is proved by successive approximation using Lemma \ref{approximation}. Thus we just need to find the identity and the inverse morphism. The identity is given by $\ul{0}=(0,0,\dots)$. To find the inverse, first it is not hard to check that 
	$$F(t)^{-1}=\left\{ \begin{array}{ll}
	F(-t) & \textrm{if } p\neq 2, \\ 
	\prod_{n\geq 0} F(-t^{p^n}) & \textrm{if } p=2.
	\end{array} \right.
	$$
	Again by successive approximation we can find unique morphism $\ul{i}:W_\ZZ\rightarrow W_\ZZ$. The group axioms follow from the facts that over $\ZZ_{(p)}$, the morphism $E: W_{\ZZ_{(p)}}\rightarrow \Lambda_{\ZZ_{(p)}}$ is a closed embedding and that $E(\ul{0},t)=1$ and that $E(\ul{x},t)^{-1}=E(\ul{i}(\ul{x}),t)$.
\end{proof}

\section{The ring of Witt vectors over $\ZZ$}

Recall that $E(\underline{x},t)=\prod_{n\geq 0}F(x_nt^{p^n})$. Using Lemma \ref{AHE}, we can write
\begin{align*}
E(\underline{x},t)&=\prod_{n\geq 0}F(x_nt^{p^n})\\
&=\prod_{n\geq 0}\exp\left( -\sum_{m\geq0}\frac{(x_nt^{p^n})^{p^m}}{p^m}\right) \\
&=\exp\left(-\sum_{n,m\geq 0}p^nx_n^{p^m}\frac{t^{p^{n+m}}}{p^{n+m}} \right) \\
&=\exp\left(-\sum_{l\geq n \geq 0}p^nx_n^{p^{l-n}}\frac{t^{p^{l}}}{p^{l}} \right).
\end{align*}

Let $\Phi_l(\underline{x})=x_0^{p^l}+px_1^{p^{l-1}}+\cdots +p^lx_l=\sum_{n=0}^l p^nx_n^{p^{l-n}}$, we have
$$E(\underline{x},t)=\exp(-\sum_{l\geq 0}\Phi_l(\underline{x})\frac{t^{p^l}}{p^l}).$$
Apply logarithm to $E(\underline{x},t)E(\underline{y},t)=E(\underline{s}(\underline{x},\underline{y}),t)$ we get $$\log E(\underline{x},t)+\log E(\underline{y},t)= \log E(\underline{s}(\underline{x},\underline{y},t),$$
which is equivalent to 
$$-\sum_{l\geq 0}\Phi_l(\underline{x})\frac{t^{p^l}}{p^l}-\sum_{l\geq 0}\Phi_l(\underline{y})\frac{t^{p^l}}{p^l}=-\sum_{l\geq 0}\Phi_l(\underline{s}(\underline{x},\underline{y})))\frac{t^{p^l}}{p^l}.$$
Therefore we have $$\Phi_l(\underline{x})+\Phi_l(\underline{y})=\Phi_l(\underline{s}(\underline{x},\underline{y})).$$
This proves the following

\begin{prop}
The above group law on $W_\ZZ$ is the unique one such that for each $l$, $\Phi_l:W_\ZZ\rightarrow (\mathbb{A}_\ZZ^1,+)=\boldsymbol{G_a}_\ZZ$ is a homomorphism.
\end{prop}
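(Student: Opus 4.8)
The existence half of the assertion is precisely the identity $\Phi_l(\underline{x})+\Phi_l(\underline{y})=\Phi_l(\underline{s}(\underline{x},\underline{y}))$ derived in the lines just above, so the only thing the proof must supply is uniqueness, and the plan is to reduce it to the invertibility of the system of ghost components. First I would package the $\Phi_l$ into a single morphism of schemes over $\ZZ$, the \emph{ghost map}
\[
\Phi=(\Phi_0,\Phi_1,\dots)\colon W_\ZZ\longrightarrow \prod_{l\geq 0}\boldsymbol{G_a}_\ZZ ,
\]
and check that after inverting $p$ it becomes an isomorphism. Indeed, solving $\Phi_l=X_0^{p^l}+pX_1^{p^{l-1}}+\cdots+p^lX_l$ for $X_l$ gives
\[
X_l=\frac{1}{p^l}\Bigl(\Phi_l-X_0^{p^l}-pX_1^{p^{l-1}}-\cdots-p^{l-1}X_{l-1}^{p}\Bigr),
\]
so by induction each $X_l$ is a polynomial in $\Phi_0,\dots,\Phi_l$ with coefficients in $\ZZ[\frac{1}{p}]$; hence $\ZZ[\frac{1}{p}][X_0,X_1,\dots]=\ZZ[\frac{1}{p}][\Phi_0,\Phi_1,\dots]$, the $\Phi_l$ are algebraically independent over $\ZZ[\frac{1}{p}]$, and $\Phi$ restricts to an isomorphism $W_{\ZZ[\frac{1}{p}]}\cong\prod_{l\geq 0}\boldsymbol{G_a}_{\ZZ[\frac{1}{p}]}$.

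Next I would run the uniqueness argument on coordinate rings. Let $\underline{s},\underline{s}'\colon W_\ZZ\times W_\ZZ\to W_\ZZ$ be two morphisms, each of which is a group law for which every $\Phi_l$ is a homomorphism. Unwinding the definition of a group homomorphism, this says that both $\Phi_l\circ\underline{s}$ and $\Phi_l\circ\underline{s}'$ equal the fixed element $\Phi_l(\underline{X})+\Phi_l(\underline{Y})$ of $\ZZ[X_0,X_1,\dots,Y_0,Y_1,\dots]$, so $\Phi_l\circ\underline{s}=\Phi_l\circ\underline{s}'$ for every $l$. Base changing to $\ZZ[\frac{1}{p}]$ and composing with $\Phi^{-1}$ (legitimate since $\Phi$ is an isomorphism there) yields $\underline{s}=\underline{s}'$ over $\ZZ[\frac{1}{p}]$. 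Finally, because $\ZZ[X_0,X_1,\dots,Y_0,Y_1,\dots]$ is $p$-torsion free it injects into its localization at $p$, so the two ring homomorphisms defining $\underline{s}$ and $\underline{s}'$ already coincide over $\ZZ$; that is, $\underline{s}=\underline{s}'$.

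The only substantive step is the triangular invertibility that makes $\Phi$ an isomorphism after inverting $p$; everything after that is descent bookkeeping resting on $p$-torsion-freeness. The point worth stating carefully is that a group law is extra structure (a morphism subject to the three axioms), not a property, so uniqueness must be proved as an equality of morphisms of schemes over $\ZZ$ — that is, on coordinate rings as above — rather than merely on $R$-valued points; the ghost map is exactly the device that makes this comparison possible, and it is also the mechanism behind the existence half, since componentwise addition on $\prod\boldsymbol{G_a}$ transported through $\Phi$ is what forces the integral addition polynomials produced earlier.
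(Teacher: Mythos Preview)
Your proposal is correct and follows essentially the same approach as the paper: the existence is exactly the identity $\Phi_l(\underline{s}(\underline{x},\underline{y}))=\Phi_l(\underline{x})+\Phi_l(\underline{y})$ derived just before the proposition, and uniqueness is obtained from the ghost map being an isomorphism over $\ZZ[\tfrac{1}{p}]$ (the paper records this in the remark immediately following) together with $p$-torsion freeness of the polynomial ring. Your write-up is in fact more explicit about the uniqueness step than the paper, which leaves it largely implicit.
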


\begin{rmk}\label{isom}
	Let $R$ be a ring. An element $\underline{x}=(x_0,x_1,\dots)\in W(R)$ is called a \emph{Witt vector}. The $x_0, x_1, \dots$ are called \emph{components} of $\underline{x}$, and the $\Phi_0(\underline{x}), \Phi_1(\underline{x}),\dots$ are called \emph{phantom components} of $\underline{x}$. Note that $(\Phi_l\otimes_\ZZ\ZZ[\frac{1}{p}])_{l\geq 0}$ defines an isomorphism
	$$W_{\ZZ[\frac{1}{p}]}\rightarrow \prod_{l\geq 0}\mathbb{A}^1_{\ZZ[\frac{1}{p}]},$$
	$$\underline{x}\mapsto (\Phi_l(\underline{x}))_{l\geq 0}.$$
	But this isomorphism reduces to $\Phi_l(\underline{x})\equiv x_0^{p^l} \mod p$.
\end{rmk}

\begin{defn}
	The Teichm\"{u}ller lift is the morphism $$\tau: \mathbb{A}^1_\ZZ \to W_\ZZ, x\mapsto \tau(x) = (x, 0, \dots).$$
\end{defn}

\begin{prop}
	The Teichm\"{u}ller lift is multiplicative.
\end{prop}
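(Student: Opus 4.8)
The plan is to verify the identity $\tau(xy)=\tau(x)\cdot\tau(y)$ by passing to phantom components, where $\cdot$ denotes the multiplication of $W_\ZZ$, i.e.\ the unique ring-scheme structure extending the additive group law already constructed for which every phantom map $\Phi_l\colon W_\ZZ\to\mathbb{A}^1_\ZZ$ is a homomorphism of rings (the target carrying its natural ring structure). Everything will rest on the injectivity of the phantom map over suitable rings, which is available from Remark~\ref{isom}.

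First I would record the phantom components of a Teichm\"uller representative. If $\underline{x}=\tau(x)=(x,0,0,\dots)$, then for every $l\geq0$
\[
\Phi_l(\tau(x))=\sum_{n=0}^l p^n x_n^{p^{l-n}}=x^{p^l},
\]
since all components of $\tau(x)$ except the zeroth vanish. In particular $\tau(x)$ has especially simple phantom components.

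Next I would treat the case of a $p$-torsion-free ring $R$. For such $R$ the map $R\to R[\tfrac1p]$ is injective, hence so is $W(R)\to W(R[\tfrac1p])$; combining this with the isomorphism of Remark~\ref{isom} shows that the phantom map $\Phi=(\Phi_l)_{l\geq0}\colon W(R)\to\prod_{l\geq0}R$ is injective. Then for $x,y\in R$ and every $l\geq0$,
\[
\Phi_l\bigl(\tau(x)\cdot\tau(y)\bigr)=\Phi_l(\tau(x))\,\Phi_l(\tau(y))=x^{p^l}y^{p^l}=(xy)^{p^l}=\Phi_l(\tau(xy)),
\]
where the first equality uses that $\Phi_l$ is a ring homomorphism and the last uses the computation above. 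Injectivity of $\Phi$ forces $\tau(x)\cdot\tau(y)=\tau(xy)$ in $W(R)$.

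Finally I would descend to an arbitrary ring by the standard universality argument: each Witt component of $\tau(x)\cdot\tau(y)$ and of $\tau(xy)$ is, by construction of the Witt operations, the value at $(X,Y)$ of a fixed polynomial in $\ZZ[X,Y]$, so the desired identity is an identity of polynomials in $\ZZ[X,Y]$. Applying the $p$-torsion-free case to $R=\ZZ[X,Y]$ (a characteristic-zero domain, hence $p$-torsion-free) with $x=X$, $y=Y$ yields precisely that polynomial identity; specializing along $\ZZ[X,Y]\to R$, $X\mapsto x$, $Y\mapsto y$, and invoking functoriality of $W$ together with naturality of $\tau$ and of the multiplication morphism, gives $\tau(x)\cdot\tau(y)=\tau(xy)$ in $W(R)$ for all $x,y$. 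The only step requiring any care is this reduction from the general case to the $p$-torsion-free case; the phantom-component computation itself is immediate, and since the integrality of the Witt operations is already established, I do not expect a genuine obstacle.
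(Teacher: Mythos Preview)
Your proof is correct and follows essentially the same route as the paper: compute $\Phi_l(\tau(x))=x^{p^l}$, use that the phantom map is an isomorphism over $\ZZ[\tfrac1p]$ to verify $\tau(xy)=\tau(x)\cdot\tau(y)$ there, and descend to $\ZZ$ by the universal polynomial-identity argument. The paper compresses all of this into one line (and also records the incidental formula $E(\tau(x),t)=F(xt)$), while you spell out the injectivity of the phantom map for $p$-torsion-free rings and the descent step explicitly; no genuinely different idea is involved.
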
	

\begin{proof}
	Over $\ZZ[\frac{1}{p}]$ we have $\Phi_l(\tau(x))=x^{p^l}$ and $E(\tau(x),t)=F(xt)$, which is clearly multiplicative.
\end{proof}

\begin{thm}
	There exists a unique ring-structure on the $\ZZ$-group $W_\ZZ$ such that each $\Phi_l:  W_\ZZ\rightarrow \mathbb{A}^1_\ZZ$ is a ring homomorphism.
\end{thm}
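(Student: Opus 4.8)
The plan is to construct the multiplication first over $\ZZ[\tfrac1p]$, where it is completely forced, and then to descend it to $\ZZ$ by an integrality argument in the spirit of the successive-approximation arguments used earlier to produce the addition polynomials $s_n$ and the morphism $\underline s$.

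Over $\ZZ[\tfrac1p]$, Remark \ref{isom} says that $(\Phi_l)_{l\ge 0}$ is an isomorphism of group schemes $W_{\ZZ[\frac1p]}\xrightarrow{\ \sim\ }\prod_{l\ge 0}\mathbb{A}^1_{\ZZ[\frac1p]}$. I would pull back along this isomorphism the componentwise ring structure on the target (each factor $\mathbb{A}^1$ carrying its usual ring structure). This gives a ring structure on $W_{\ZZ[\frac1p]}$ for which every $\Phi_l$ is a ring homomorphism, and it is visibly the only such structure, since $(\Phi_l)$ is injective. Concretely, the multiplication is a morphism $\underline m\colon W_{\ZZ[\frac1p]}\times W_{\ZZ[\frac1p]}\to W_{\ZZ[\frac1p]}$ given by polynomials $m_n\in\ZZ[\tfrac1p][x_0,\dots,x_n,y_0,\dots,y_n]$ uniquely characterized by $\Phi_n(\underline m(\underline x,\underline y))=\Phi_n(\underline x)\,\Phi_n(\underline y)$ for all $n\ge 0$, and the multiplicative identity is $\tau(1)=(1,0,0,\dots)$, because $\Phi_l(\tau(1))=1$ for every $l$.

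The main point is then to show $m_n\in\ZZ[x_0,\dots,x_n,y_0,\dots,y_n]$. For this I would isolate the following integrality lemma, in the spirit of the successive-approximation arguments used earlier: if $A$ is a $p$-torsion-free ring carrying a ring endomorphism $\phi$ with $\phi(a)\equiv a^p\pmod{pA}$ for all $a\in A$, then a sequence $(w_n)_{n\ge0}$ in $A$ has the form $(\Phi_n(a_0,\dots,a_n))_{n\ge0}$ for a (necessarily unique) sequence $(a_n)$ in $A$ if and only if $w_n\equiv\phi(w_{n-1})\pmod{p^nA}$ for all $n\ge1$. Its proof is the evident recursion: $a_n$ is forced to satisfy $p^na_n=w_n-\sum_{i<n}p^ia_i^{p^{n-i}}$, and one checks that $w_n-\sum_{i<n}p^ia_i^{p^{n-i}}\equiv w_n-\phi(w_{n-1})\pmod{p^n}$ by applying Lemma \ref{congruence} (whose proof applies verbatim to $A$ and the ideal $pA$) to the congruences $\phi(a_i)\equiv a_i^{p}\pmod{pA}$. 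I then apply this lemma with $A=\ZZ[x_0,x_1,\dots,y_0,y_1,\dots]$ and $\phi$ the endomorphism $x_i\mapsto x_i^p$, $y_i\mapsto y_i^p$ (which lifts the Frobenius of $A/pA$), and with $w_n=\Phi_n(\underline x)\,\Phi_n(\underline y)$. Since $\phi(\Phi_{n-1}(\underline x))=\sum_{i=0}^{n-1}p^ix_i^{p^{n-i}}=\Phi_n(\underline x)-p^nx_n\equiv\Phi_n(\underline x)\pmod{p^n}$ and likewise for $\underline y$, the required congruence $\phi(w_{n-1})\equiv w_n\pmod{p^n}$ holds, so the $m_n$ can be taken in $\ZZ[x_0,\dots,x_n,y_0,\dots,y_n]$; by the uniqueness noted above they coincide with the polynomials found over $\ZZ[\tfrac1p]$.

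It remains to verify that $W_\ZZ$ with the addition law already constructed, the multiplication $\underline m$, and the unit $\tau(1)$ satisfies the ring axioms (commutativity and associativity of $\underline m$, distributivity, the unit law) and to prove uniqueness over $\ZZ$. Each axiom is an equality of tuples of polynomials in $\ZZ[x_i,y_i,\dots]$, and since the ring map $\ZZ\hookrightarrow\ZZ[\tfrac1p]$ is injective so is $\ZZ[x_i,y_i,\dots]\hookrightarrow\ZZ[\tfrac1p][x_i,y_i,\dots]$; hence it suffices to check each identity after base change to $\ZZ[\tfrac1p]$, where it holds because the whole structure there was transported via the isomorphism $(\Phi_l)$ from the genuine componentwise ring structure on $\prod_l\mathbb{A}^1_{\ZZ[\frac1p]}$. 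Uniqueness over $\ZZ$ is the same descent: any ring structure on $W_\ZZ$ making every $\Phi_l$ a ring homomorphism has multiplication polynomials satisfying $\Phi_n(\underline m'(\underline x,\underline y))=\Phi_n(\underline x)\Phi_n(\underline y)$, so they agree with $\underline m$ after base change to $\ZZ[\tfrac1p]$ (where $(\Phi_l)$ is injective), hence already over $\ZZ$. The only genuine obstacle in all of this is the integrality step; everything else is formal descent along $\ZZ\hookrightarrow\ZZ[\tfrac1p]$.
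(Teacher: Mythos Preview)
Your proof is correct and follows the same overall strategy as the paper: establish existence and uniqueness over $\ZZ[\tfrac1p]$ via the ghost-component isomorphism of Remark~\ref{isom}, descend the integrality of the structure polynomials to $\ZZ$, and then verify the ring axioms and uniqueness by the injection $\ZZ\hookrightarrow\ZZ[\tfrac1p]$. The only real difference is in how the integrality step is packaged. The paper isolates a lemma valid for \emph{any} binary operation $u\colon\mathbb{A}^1_\ZZ\times\mathbb{A}^1_\ZZ\to\mathbb{A}^1_\ZZ$: there is a unique $\underline{v}\colon W_\ZZ\times W_\ZZ\to W_\ZZ$ with $\Phi_l\circ\underline{v}=u\circ(\Phi_l\times\Phi_l)$ for all $l$, proved by direct induction using the identity $\Phi_{n+1}(\underline{x})=\Phi_n(\underline{x}^{(p)})+p^{n+1}x_{n+1}$. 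You instead prove the Dwork-type criterion (a ghost sequence $(w_n)$ in a $p$-torsion-free ring with Frobenius lift $\phi$ comes from integral Witt coordinates iff $w_n\equiv\phi(w_{n-1})\pmod{p^n}$) and apply it to $w_n=\Phi_n(\underline{x})\Phi_n(\underline{y})$. These are two sides of the same coin---both hinge on the congruence $\phi(\Phi_{n-1})\equiv\Phi_n\pmod{p^n}$---but the paper's version disposes of addition, multiplication, and additive inverse in one stroke without rechecking a ghost congruence for each, while your Dwork lemma is a cleaner standalone statement that is reusable later (e.g., for the Frobenius and Verschiebung on $W$).
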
	

\begin{proof}
	The isomorphism in Remark \ref{isom} shows that the theorem is true over $\ZZ[\frac{1}{p}]$. Therefore we need to show that the addition and multiplication, as well as the identities and the additive inverse, are morphisms defined over $\ZZ$. It's easy to check that $\tau(0)=(0,0,\dots)$ and $\tau(1)=(1,0,\dots)$ are the additive and multiplicative identities. Others follow from the lemma below.
\end{proof}	

\begin{lemma}
	For every morphism $u: \mathbb{A}^1_\ZZ \times \mathbb{A}^1_\ZZ \rightarrow \mathbb{A}^1_\ZZ$ there exists a unique morphism $\underline{v}: W_\ZZ \times W_\ZZ \rightarrow W_\ZZ$ such that for all $l\geq 0$, the following diagram commutes:
	\begin{displaymath}
	\xymatrix
	{
		W_\ZZ\times W_\ZZ \ar[r]^{\Phi_l\times \Phi_l}\ar[d]_{\underline{v}}  & \mathbb{A}^1_\ZZ\times \mathbb{A}^1_\ZZ \ar[d]^{u} \\
		W_\ZZ \ar[r]_{\Phi_l} & \mathbb{A}^1_\ZZ.
	}
	\end{displaymath}
	 
\end{lemma}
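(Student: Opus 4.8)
The plan is to rephrase the statement as an assertion about polynomials and then reduce it to a Dwork-type integrality argument resting on Lemma~\ref{congruence}. A morphism $u\colon\mathbb{A}^1_\ZZ\times\mathbb{A}^1_\ZZ\to\mathbb{A}^1_\ZZ$ is the same datum as a polynomial $u\in\ZZ[X,Y]$, and a morphism $\underline v\colon W_\ZZ\times W_\ZZ\to W_\ZZ$ is the same datum as a sequence $\underline v=(v_0,v_1,\dots)$ with $v_n\in A:=\ZZ[X_0,X_1,\dots,Y_0,Y_1,\dots]$, that is, an element of $W(A)$, where the $X_i$ are the coordinates on the first copy of $W_\ZZ$ and the $Y_j$ those on the second. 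Putting $w_l:=u(\Phi_l(\underline X),\Phi_l(\underline Y))\in A$, the commutativity of the diagram is precisely the system $\Phi_l(\underline v)=w_l$ for all $l\geq0$. Since $\Phi_l(\underline v)=v_0^{p^l}+pv_1^{p^{l-1}}+\cdots+p^lv_l$ involves $v_l$ only through $p^lv_l$, this system can be solved recursively for $v_0,v_1,\dots$, and the solution is unique with coefficients in $\ZZ[\tfrac1p]$ (this is just the bijectivity of the phantom map after inverting $p$, Remark~\ref{isom}). Thus $\underline v$ is unique, and the entire content of the lemma is that these polynomials $v_n$ actually lie in $A$.

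For the integrality I would introduce the ring endomorphism $\phi\colon A\to A$ with $\phi(X_i)=X_i^p$ and $\phi(Y_j)=Y_j^p$; it fixes $\ZZ$ and satisfies $\phi(f)\equiv f^p\bmod pA$. A one-line computation gives $\Phi_l(\underline X)-\phi\bigl(\Phi_{l-1}(\underline X)\bigr)=p^lX_l$, and likewise for the $Y$'s, hence $\Phi_l(\underline X)\equiv\phi(\Phi_{l-1}(\underline X))$ and $\Phi_l(\underline Y)\equiv\phi(\Phi_{l-1}(\underline Y))$ modulo $p^lA$. Since $u$ has integer coefficients and $\phi$ is a ring homomorphism, $\phi(w_{l-1})=u(\phi\Phi_{l-1}(\underline X),\phi\Phi_{l-1}(\underline Y))$, and evaluating the polynomial $u$ on arguments congruent mod $p^lA$ yields $w_l\equiv\phi(w_{l-1})\bmod p^lA$ for all $l\geq1$. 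Now I would run the recursion inside $A$: assuming $v_0,\dots,v_{l-1}\in A$, so that $w_{l-1}=\Phi_{l-1}(\underline v)=\sum_{i=0}^{l-1}p^iv_i^{p^{l-1-i}}$, the congruence argument of Lemma~\ref{congruence}---whose proof applies verbatim to an arbitrary ring and the ideal $pA$, giving $a\equiv b\bmod pA\Rightarrow a^{p^n}\equiv b^{p^n}\bmod p^{n+1}A$---shows $\phi(v_i)^{p^{l-1-i}}\equiv v_i^{p^{l-i}}\bmod p^{l-i}A$, whence $\phi(w_{l-1})=\sum_{i=0}^{l-1}p^i\phi(v_i)^{p^{l-1-i}}\equiv\sum_{i=0}^{l-1}p^iv_i^{p^{l-i}}\bmod p^lA$. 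Combining this with $w_l\equiv\phi(w_{l-1})\bmod p^lA$ gives $w_l-\sum_{i=0}^{l-1}p^iv_i^{p^{l-i}}\in p^lA$; as $A$ is $p$-torsion-free we may divide by $p^l$ to get $v_l\in A$. This produces $\underline v\in W(A)$ with $\Phi_l(\underline v)=w_l$, i.e.\ the required morphism, and the diagram commutes by construction.

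The main obstacle is exactly this last integrality step: a priori the solution $v_n$ of $\Phi_l(\underline v)=w_l$ lies only in $\ZZ[\tfrac1p][X_i,Y_j]$, and one must show all denominators are trivial. Everything hinges on the existence of the Frobenius lift $\phi$ on the torsion-free ring $A$ together with the congruence $\Phi_l\equiv\phi\circ\Phi_{l-1}\bmod p^l$; granted those, the recursion clears denominators using only Lemma~\ref{congruence}. I would also record the general principle extracted here (``Dwork's lemma''): for a $p$-torsion-free ring $B$ equipped with an endomorphism lifting Frobenius, a sequence in $B$ is the sequence of phantom components of some Witt vector in $W(B)$ if and only if consecutive terms are congruent modulo the appropriate power of $p$ under that lift; the lemma at hand is the special case $B=A$.
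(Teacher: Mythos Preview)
Your proof is correct and follows essentially the same approach as the paper: uniqueness over $\ZZ[\tfrac1p]$ via the phantom isomorphism, then an induction on $l$ for integrality driven by the identity $\Phi_l=\phi\circ\Phi_{l-1}+p^l(\cdot)$ together with Lemma~\ref{congruence}. The only difference is cosmetic---you package the argument via a named Frobenius lift $\phi$ on $A$ and identify the mechanism as Dwork's lemma, whereas the paper writes the same computation using the notation $\underline{x}^{(p)}=(x_0^p,x_1^p,\dots)$ and the identity $\Phi_{n+1}(\underline{x})=\Phi_n(\underline{x}^{(p)})+p^{n+1}x_{n+1}$; your formulation is a bit cleaner but the underlying calculation is identical.
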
	

\begin{proof}
	By the isomorphism in Remark \ref{isom} there exist unique $\underline{v}=(v_0,v_1,\dots)$ with $v_n\in \ZZ[\frac{1}{p}][x_0,\dots,x_n,y_0,\dots,y_n]$ satisfying the desired conditions. We show by induction on $n$ that $v_n$ have integer coefficients.
	
	Let $A=\ZZ[x_0,x_1,\dots,y_0,y_1,\dots]$. Since $\Phi_0(\underline{x})=x_0$, we have $v_0=u(x_0,y_0)\in A$. Now fix $n\geq 0$ and assume that for all $i\leq n$, $v_i\in A$. For any $\underline{x}=(x_0,x_1,\dots)$, let $\underline{x}^{(p)}=(x_0^p,x_1^p,\dots)$. One can check easily from the definition that $$\Phi_{n+1}(\underline{x})=\Phi(\underline{x}^{(p)})+p^{n+1}x_{n+1}.$$ Using this and the property of $\underline{v}$ we have
	
	\begin{align*}
		\Phi_n(\underline{v}(\underline{x},\underline{y})^{(p)})+p^{n+1}v_{n+1}(\underline{x},\underline{y}) &=\Phi_{n+1}(\underline{v}(\underline{x},\underline{y}))\\
		&= u(\Phi_{n+1}(\underline{x}), \Phi_{n+1}(\underline{y}))\\
		&= u(\Phi_{n}(\underline{x}^{(p)})+p^{n+1}x_{n+1}, \Phi_{n}(\underline{y}^{(p)})+p^{n+1}y_{n+1}).
	\end{align*}	
	
	Since $\Phi_n(\underline{v}(\underline{x},\underline{y})^{(p)})$ and $u(\Phi_{n}(\underline{x}^{(p)})+p^{n+1}x_{n+1}, \Phi_{n}(\underline{y}^{(p)})+p^{n+1}y_{n+1})$ are in $A$ by assumption, we have $p^{n+1}v_{n+1}(\underline{x},\underline{y})\in A$. Moreover, we have 	
		\begin{align*}
			p^{n+1}v_{n+1} &\equiv u(\Phi_{n}(\underline{x}^{(p)}),\Phi_{n}(\underline{y}^{(p)}))-\Phi_n(\underline{v}^{(p)}) \mod p^{n+1}A\\
			&=\Phi_n(\underline{v}(\underline{x}^{(p)},\underline{y}^{(p)}))-\Phi_n(\underline{v}^{(p)}).
		\end{align*}
	Also, $v_i\in A$ implies that $v_i(\underline{x}^{(p)},\underline{y}^{(p)})\equiv v_i(\underline{x},\underline{y})^{(p)} \mod  pA$.
	Therefore we have
	$$v_i(\underline{x}^{(p)},\underline{y}^{(p)})^{p^{n-i}}\equiv (v_i(\underline{x},\underline{y})^{(p)})^{p^{n-i}} \mod  p^{n-i+1}A,$$
	hence
	$$p^iv_i(\underline{x}^{(p)},\underline{y}^{(p)})^{p^{n-i}}\equiv p^i(v_i(\underline{x},\underline{y})^{(p)})^{p^{n-i}} \mod  p^{n+1}A,$$	
	and consequently
	$$\Phi_n(\underline{v}(\underline{x}^{(p)},\underline{y}^{(p)}))\equiv \Phi_n(\underline{v}^{(p)})\mod p^{n+1}A.$$
	Therefore we have $p^{n+1}v_{n+1}\in p^{n+1}A$ and hence $v_{n+1}\in A$.
	
\end{proof}

\begin{exam}
	Let $\underline{s}=(s_0,s_1,\dots)$ and $\underline{p}=(p_0,p_1,\dots)$ be the morphisms that correspond to the addition and multiplication respectively. We can compute the following:
	
	\begin{align*}
		s_0(\underline{x}, \underline{y}) &= x_0 + y_0, \\
		s_1(\underline{x}, \underline{y}) &= x_1 + y_1 + \frac{x_0^p+y_0^p-(x_0+y_0)^p}{p}, \\
		s_2(\underline{x}, \underline{y}) &= x_2 + y_2 + \frac{x_1^p+y_1^p-s_1^p}{p} + \frac{x_0^{p^2}+y_0^{p^2}-(x_0+y_0)^{p^2}}{p^2},\\
		&\vdots  \\
		p_{0}(\underline{x}, \underline{y}) &= x_{0}y_{0},\\
		p_{1}(\underline{x}, \underline{y}) &= y_{0}^{p}x_{1} + y_{1}x_{0}^{p} + px_{1}y_{1},\\
		p_{2}(\underline{x}, \underline{y}) &= \frac{1}{p}((x_{0}^{p^{2}}y_{1}^{p} + x_{1}^{p}y_{0}^{p^{2}}) - (y_{0}^{p}x_{1}+y_{1}x_{0}^{p}+ p x_{1}y_{1})^p)\\
		&+(x_{0}^{p^{2}}y_{2} + x_{1}^{p}y_{1}^{p} + x_2y_{0}^{p^{2}})+p(x_{1}^{p}y_{2}+x_{2}y_{1}^{p}) + p^{2}x_{2}y_{2},\\
		&\vdots
	\end{align*}

As one can see, the formulas are becoming complicated very quickly but all have integer coefficients.	
	
\end{exam}	

Finally, let $n\geq 1$, we introduce \emph{Witt vectors of finite length $n$}. For this recall that the $i$-th components of $\underline{x}+\underline{y}$, $\underline{x}\cdot\underline{y}$ and $-\underline{x}$ depend only on the first $i$ components of $\underline{x}$ and $\underline{y}$. Thus the same formulas define a ring structure on $W_{n,\ZZ}(A):=\prod_{i=0}^{n-1}\mathbb{A}^1_A$ for any ring $A$, such that the truncation map
$$W(A)\rightarrow W_n(A),\quad \underline{x}\mapsto (x_0,x_1,\dots,x_{n-1})$$
is a ring homomorphism.

\section{The ring of Witt vectors over $k$} 

Let $k$ be a field of characteristic $p>0$. Let $W_k=W_{\FF_p}\times_{\spec \FF_p} \spec k$. Let $F$ and $V$ be the Frobenius and Verschibung for the additive group $W_k$. They are group endomorphisms satisfying $F\circ V=V\circ F=p\cdot \id$. The following proposition collects some of their properties.

\begin{prop}
	
	\begin{enumerate}[(i).]
		\item $F((x_0,x_1,\dots))=(x_0^p,x_1^p,\dots)$.
		\item $V((x_0,x_1,\dots))=(0, x_0,x_1,\dots)$.
		\item $p((x_0,x_1,\dots))=(0,x_0^p,x_1^p,\dots)$.
		\item $F(\ul{x}+\ul{y})=F(\ul{x})+F(\ul{y})$.
		\item $F(\ul{x}\cdot\ul{y})=F(\ul{x})\cdot F(\ul{y})$.
		\item $\ul{x}\cdot V(\ul{y})=V(F(\ul{x})\cdot \ul{y})$.
		\item $E(\ul{x}\cdot V(\ul{y}),t)=E(F(\ul{x})\cdot \ul{y}, t^P)$.
	\end{enumerate}
	
\end{prop}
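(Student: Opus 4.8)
The plan is to first identify $F$ and $V$ with the two evident maps — $F$ the componentwise $p$-th power and $V$ the shift — and then read off the remaining relations, either directly from these descriptions or from a short computation with phantom components. Since $W_\ZZ\cong\prod_{n\ge0}\mathbb{A}^1_\ZZ$ as a scheme, a ring homomorphism $f\colon R\to S$ induces $W(f)\colon(x_0,x_1,\dots)\mapsto(f(x_0),f(x_1),\dots)$ on underlying sets. It is standard that for a $k$-algebra $R$ the Frobenius of $W_k$ is, on $R$-points, the map induced by the $p$-power endomorphism $\mathrm{Frob}_R\colon a\mapsto a^p$ of $R$, i.e.\ $F=W(\mathrm{Frob}_R)\colon(x_i)\mapsto(x_i^p)$; this is (i). Then (iv) holds because $F$ is a group homomorphism, and (v) because $F=W(\mathrm{Frob}_R)$ is even a ring homomorphism, $\mathrm{Frob}_R$ being one.

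For (iii) and (ii) I would lift to characteristic $0$. Put $R_0=W(k)[X_0,X_1,\dots]$, a $p$-torsion-free ring with $R_0/pR_0\cong k[X_0,X_1,\dots]$, and let $\underline X=(X_0,X_1,\dots)\in W(R_0)$. A direct computation with Witt polynomials gives
\[
\Phi_l\bigl((0,X_0^p,X_1^p,\dots)\bigr)=p\,\Phi_l(\underline X)-p^{l+1}X_l\qquad(l\ge0),
\]
so the phantom components of $(0,X_0^p,X_1^p,\dots)$ and of $p\cdot\underline X$ agree modulo $p^{l+1}$ for every $l$. By the congruence lemmas already established (Lemma \ref{congruence} and the reasoning around Remark \ref{isom}), two Witt vectors over a $p$-torsion-free ring whose phantom components agree mod $p^{l+1}$ for all $l$ have entries congruent mod $p$; reducing mod $p$ yields (iii) over $k[X_i]$, hence over every $k$-algebra. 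Now let $\tilde V$ denote the shift $(x_i)\mapsto(0,x_0,x_1,\dots)$. By (i) and (iii), $F\circ\tilde V$ sends $\underline x$ to $(0,x_0^p,x_1^p,\dots)=p\cdot\underline x$, so $F\circ\tilde V=p\cdot\id_{W_k}=F\circ V$, whence $F\circ(\tilde V-V)=0$. Evaluating at $\underline X\in W(k[X_i])$ and using that the componentwise $p$-th power is injective on $W$ of the domain $k[X_i]$, we conclude $\tilde V=V$, i.e.\ (ii).

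For (vi) and (vii): both are identities of morphisms, so — using (i) and (ii) for the identification over $k$ — it suffices to check them over $\ZZ$, resp.\ over $\ZZ_{(p)}$, taking $V$ to be the shift and $F$ to be the Witt Frobenius $\mathcal F$ characterized by $\Phi_l\circ\mathcal F=\Phi_{l+1}$, which reduces mod $p$ to the componentwise $p$-th power again by the congruence lemma. Over $\ZZ$ the phantom map is injective on $W$ of a polynomial ring; since each $\Phi_l$ is a ring homomorphism and $\Phi_l\circ V=p\,\Phi_{l-1}$ for $l\ge1$ while $\Phi_0\circ V=0$, both $\Phi_l(\underline x\cdot V\underline y)$ and $\Phi_l\bigl(V(\mathcal F\underline x\cdot\underline y)\bigr)$ equal $p\,\Phi_l(\underline x)\Phi_{l-1}(\underline y)$ for $l\ge1$ and vanish for $l=0$, proving (vi). For (vii) (whose right-hand side should read $t^{p}$, not $t^{P}$) I would use the closed form $E(\underline z,t)=\exp\!\bigl(-\sum_{l\ge0}\Phi_l(\underline z)\,t^{p^l}/p^l\bigr)$ of Lemma \ref{AHE}; expanding both sides and reindexing $m=l+1$,
\[
E(\underline x\cdot V\underline y,t)=\exp\Bigl(-\sum_{l\ge1}\Phi_l(\underline x)\,\Phi_{l-1}(\underline y)\,\tfrac{t^{p^l}}{p^{l-1}}\Bigr)=E(\mathcal F\underline x\cdot\underline y,\,t^{p}),
\]
the last equality using $\Phi_l\circ\mathcal F=\Phi_{l+1}$; as $E$ is a closed embedding this is a legitimate identity of morphisms into $\Lambda$, and base change to $k$ finishes (vii).

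The only genuinely delicate point is the package (iii)–(ii): the slogans ``$F$ is the $p$-th power'', ``$V$ is the shift'', ``$p=V\circ F$'' all fail over $\ZZ$ and hold only after reduction mod $p$, so one must carry out the characteristic-$0$ lift and the congruence bookkeeping precisely and then invoke injectivity of $F$ on $W$ of an integral domain to pin $V$ down unambiguously. Everything else is either the tautology $F=W(\mathrm{Frob})$ or a routine phantom-component computation, so I expect no serious obstacle beyond keeping the characteristic-$p$ versus characteristic-$0$ statements cleanly separated.
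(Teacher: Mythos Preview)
Your argument is correct. The paper itself does not prove this proposition but simply cites \cite[p.~45]{Pink}, so there is no ``paper's approach'' to compare against; you have supplied a self-contained proof where the paper defers to an external reference. Your handling of (ii) and (iii) via a characteristic-zero lift, the congruence $\Phi_l(\underline a)\equiv\Phi_l(\underline b)\bmod p^{l+1}\Rightarrow a_l\equiv b_l\bmod p$, and then the injectivity of $F$ on $W$ of the integral domain $k[X_i]$ to pin down $V$, is exactly the kind of careful bookkeeping this result requires and matches the standard treatment (including Pink's). The only implicit input you use beyond what the paper has already established is the general identity $F_G\circ V_G=p\cdot\id_{G^{(p)}}$ for commutative affine group schemes in characteristic $p$; the paper states this (Theorem in \S1.6) only for finite group schemes, but the result holds in the generality you need and your appeal to it is legitimate.
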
	

\begin{proof}
\cite[p.~45]{Pink}
\end{proof}	
\begin{thm}
	The ring of Witt vectors $W(k)$ is a complete discrete valuation ring with uniformizer $p$ and residue field $k$.
\end{thm}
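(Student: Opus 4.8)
The plan is to verify directly the three defining properties of a complete discrete valuation ring: (1) $W(k)$ is a local ring with maximal ideal $pW(k)$; (2) $pW(k)$ is the only nonzero prime ideal, equivalently every nonzero element is $p^n$ times a unit; (3) $W(k)$ is $p$-adically complete. Throughout I would use the description of elements as Witt vectors $\underline{x}=(x_0,x_1,\dots)$ together with parts (i)--(iii) of the preceding proposition, in particular the crucial identity $p\cdot(x_0,x_1,\dots)=(0,x_0^p,x_1^p,\dots)$, which shows that multiplication by $p$ is, up to the Frobenius twist on components, the shift operator $V$.

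First I would identify the units. Since $k$ is a field, the truncation map $W(k)\to W_1(k)=k$, $\underline{x}\mapsto x_0$, is a ring homomorphism, so $\underline{x}$ is a unit whenever $x_0\neq 0$: indeed if $x_0\in k^\times$ one constructs the inverse componentwise, solving the polynomial identities for the multiplication $\underline{p}$ inductively, each step requiring only a division by $x_0$ (or a power of it). Conversely if $x_0=0$ then $\underline{x}$ maps to $0$ in the field $k$, so it is not a unit. Hence the non-units form the ideal $\mathfrak{m}=\ker(W(k)\to k)=\{\underline{x}:x_0=0\}$, which makes $W(k)$ local with residue field $k$. Next, using $p\cdot(x_0,x_1,\dots)=(0,x_0^p,x_1^p,\dots)$ and the fact that $k$ is perfect (so $x\mapsto x^p$ is bijective), one sees $\mathfrak{m}=V(W(k))=pW(k)$: given $\underline{y}=(0,y_1,y_2,\dots)\in\mathfrak{m}$, set $z_i=y_{i+1}^{p^{-1}}$, then $p\cdot(z_0,z_1,\dots)=\underline{y}$. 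So $\mathfrak{m}=(p)$ is principal.

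To get the valuation-ring structure I would show $\bigcap_n p^nW(k)=0$ and that every nonzero $\underline{x}$ lies in $p^nW(k)\setminus p^{n+1}W(k)$ for a unique $n$. The description of $p^n\cdot(-)$ as an $n$-fold Frobenius-twisted shift gives $p^nW(k)=\{\underline{x}:x_0=\dots=x_{n-1}=0\}$ exactly (again using perfectness to solve for the deleted components); hence the intersection over all $n$ is $(0)$, and if $\underline{x}\neq 0$ with $x_i$ the first nonzero component, then $\underline{x}\in p^iW(k)$, and dividing out by $p^i$ (inverting the shift via $p^{-i}$-th roots of components) yields $\underline{x}=p^i\cdot\underline{u}$ with $\underline{u}$ having nonzero leading component, i.e.\ a unit. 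This shows $W(k)$ is a discrete valuation ring with uniformizer $p$ and residue field $k$. Finally, completeness: the identification $W(k)/p^nW(k)\cong W_n(k)=\prod_{i=0}^{n-1}k$ (as rings, via the truncation map, whose kernel is $p^nW(k)$ by the previous step) shows $W(k)=\varprojlim_n W(k)/p^nW(k)=\varprojlim_n W_n(k)$, since a compatible system of truncated Witt vectors is literally a single Witt vector. Hence $W(k)$ is $p$-adically complete.

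The main obstacle, or at least the point requiring the most care, is the repeated use of perfectness of $k$ to invert the shift map $V$ on the nose: one must be sure that $p^nW(k)$ equals the set of Witt vectors with first $n$ components zero \emph{exactly}, not merely that it is contained in it. This is where $k$ perfect is indispensable (for imperfect $k$ one only gets $p^nW(k)\subseteq\{x_0=\dots=x_{n-1}=0\}$ with proper containment, and $W(k)$ need not even be a valuation ring), and it is the one place where the argument genuinely uses more than formal manipulation of the Witt addition and multiplication polynomials. Everything else is a direct unwinding of parts (i)--(iii) of the proposition together with the ring-homomorphism property of the truncation maps.
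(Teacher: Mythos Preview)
Your proposal is correct and follows essentially the same route as the paper: both arguments rest on the identity $p^nW(k)=V^n(W(k))$ (valid because $k$ is perfect), the resulting identification $W(k)/p^nW(k)\cong W_n(k)$, and completeness via $W(k)\cong\varprojlim_n W_n(k)$. The only cosmetic difference is that the paper packages the DVR verification as a length computation (showing by induction that $W_n(k)$ is a $W(k)$-module of length $n$), whereas you unwind the valuation explicitly by identifying the units and writing each nonzero element as $p^i$ times a unit; these are equivalent ways of saying the same thing.
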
	

\begin{proof}
	Since $k$ is perfect, we have $p^nW(k)=V^n(W(k))$ for all $n\geq 1$. Thus $W(k)/p^nW(k)\cong W_n(k)$ and $W(k)/pW(k)\cong W_1(k)=k$. Using this, by induction on $n$ one shows that $W_n(k)$ is a $W(k)$-module of length $n$. Since $W(k)\cong \plim_nW_n(k)$, the theorem follows.
\end{proof}

\begin{thm}[Witt] Let $R$ be a complete noetherian local ring with residue field $k$.
	
	(i) There is a unique ring homomorphism $u: W(k)\rightarrow R$ such that the following diagram commutes:
	\begin{displaymath}
	\xymatrix
	{
		W(k) \ar[rr]^{u}\ar[rd]_{} & & R \ar[ld]^{} \\
		& k  &.
	}
	\end{displaymath}
	
	(ii) If $R$ is a complete discrete valuation ring with uniformizer $p$, then $u$ is an isomorphism.
\end{thm}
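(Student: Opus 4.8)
The plan is to construct the homomorphism $u$ of part (i) from Teichm\"uller representatives, to verify that it is a ring homomorphism by reducing the required identities modulo powers of $\mathfrak m_R$ until they collapse onto the defining property of the phantom components of the Witt ring scheme, and then to deduce part (ii) by a d\'evissage along the $p$-adic filtration.

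\emph{Construction of $u$.} Since $R/\mathfrak m_R=k$ has characteristic $p$, we have $p\in\mathfrak m_R$, so $R$ satisfies the hypotheses of Proposition \ref{Teichmuller}; let $\tau\colon k\to R$ be the resulting multiplicative Teichm\"uller section. Using that $k$ is perfect, set
\[
u\colon W(k)\longrightarrow R,\qquad \underline x=(x_0,x_1,\dots)\longmapsto\sum_{n\ge 0}p^{\,n}\,\tau\!\big(x_n^{1/p^{n}}\big).
\]
This series converges because $p^{\,n}\tau(\,\cdot\,)\in p^{\,n}R\subseteq\mathfrak m_R^{\,n}$ and $R$ is $\mathfrak m_R$-adically complete; reducing modulo $\mathfrak m_R$ shows $u$ lifts $\id_k$, so the triangle commutes, and $u(1)=\tau(1)=1$, $u(0)=0$.

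\emph{$u$ is a ring homomorphism.} Fix $\underline x,\underline y\in W(k)$ and $N\ge 0$. Because $\tau$ is multiplicative, $\Phi_N\big(\tau(x_0^{1/p^{N}}),\dots,\tau(x_N^{1/p^{N}})\big)=\sum_{i=0}^{N}p^{\,i}\,\tau(x_i^{1/p^{i}})$, where $\Phi_N$ is the $N$-th phantom component; hence $u(\underline x)\equiv\Phi_N\big(\tau(x_0^{1/p^{N}}),\dots,\tau(x_N^{1/p^{N}})\big)\pmod{\mathfrak m_R^{N+1}}$. Put $\underline X=(\tau(x_i^{1/p^{N}}))_{0\le i\le N}$ and $\underline Y=(\tau(y_i^{1/p^{N}}))_{0\le i\le N}$ in $W_{N+1}(R)$. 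Since $x\mapsto x^{1/p^{N}}$ is a ring automorphism of $k$ and the Witt addition polynomials $S_i$ have integer coefficients, the $i$-th coordinate of $\underline X+\underline Y$ and the element $\tau\!\big((S_i(\underline x,\underline y))^{1/p^{N}}\big)$ have the same image $(S_i(\underline x,\underline y))^{1/p^{N}}$ in $k$; applying Lemma \ref{congruence} coordinatewise, together with $p^{\,i}\mathfrak m_R^{N-i+1}\subseteq\mathfrak m_R^{N+1}$, yields $\Phi_N(\underline X+\underline Y)\equiv u(\underline x+\underline y)\pmod{\mathfrak m_R^{N+1}}$. On the other hand $\Phi_N\colon W_{N+1}(R)\to R$ is a ring homomorphism for every ring $R$ (a base change and truncation of the ring-scheme morphism $\Phi_N$ over $\ZZ$), so $\Phi_N(\underline X+\underline Y)=\Phi_N(\underline X)+\Phi_N(\underline Y)\equiv u(\underline x)+u(\underline y)\pmod{\mathfrak m_R^{N+1}}$. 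Letting $N\to\infty$ and using $\bigcap_{N}\mathfrak m_R^{\,N}=0$ (Krull's theorem, $R$ being Noetherian local) gives $u(\underline x+\underline y)=u(\underline x)+u(\underline y)$; multiplicativity is identical, with the multiplication polynomials $P_i$ in place of the $S_i$ and the multiplicativity of $\Phi_N$. For uniqueness, any ring homomorphism $u'$ over $k$ must send the Teichm\"uller lift $(a,0,0,\dots)\in W(k)$ to a multiplicative lift of $a$ in $R$, hence to $\tau(a)$ by the uniqueness clause of Proposition \ref{Teichmuller}; since $\underline x=\sum_n p^{\,n}(x_n^{1/p^{n}},0,0,\dots)$ converges $p$-adically in $W(k)$ and $u'$ is continuous for the $p$-adic, resp.\ $\mathfrak m_R$-adic, topologies, $u'=u$.

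\emph{Part (ii), and the main difficulty.} Assume now $R$ is a complete discrete valuation ring with uniformizer $p$; then $\mathfrak m_R=pR$, $p$ is a non-zero-divisor, $R=\varprojlim_n R/p^{n}R$ and $R/pR=k$, and likewise $W(k)=\varprojlim_n W(k)/p^{n}W(k)$ with $p$ a non-zero-divisor. It is enough to show that each induced map $u_n\colon W(k)/p^{n}W(k)\to R/p^{n}R$ is an isomorphism, since then $u=\varprojlim_n u_n$ is one. We induct on $n$: $u_1=\id_k$, and for the inductive step $u$ maps the short exact sequence $0\to p^{n}W(k)/p^{n+1}W(k)\to W(k)/p^{n+1}W(k)\to W(k)/p^{n}W(k)\to 0$ to the corresponding one for $R$; the right-hand vertical map is $u_n$, an isomorphism by hypothesis, while multiplication by $p^{n}$ identifies each left-hand term with $k$ (using that $p$ is a non-zero-divisor) compatibly with $u$, on which $u$ induces $\id_k$ because $u$ lifts $\id_k$; the five lemma then shows $u_{n+1}$ is an isomorphism. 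The main obstacle is the ring-homomorphism step of part (i): one must steer the elementary but fiddly $\mathfrak m_R$-adic congruence estimates so that the whole identity reduces to the single fact that the phantom component $\Phi_N$ is a morphism of ring schemes; once that is in hand, the convergence, the uniqueness, and the d\'evissage in (ii) are all formal.
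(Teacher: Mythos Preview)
The paper does not give its own proof of this theorem; it simply cites \cite[p.~47]{Pink}. Your argument is correct and is essentially the standard proof: construct $u$ via the Teichm\"uller section and the $p$-adic expansion, verify additivity and multiplicativity by reducing modulo $\mathfrak m_R^{N+1}$ and invoking that the phantom component $\Phi_N$ is a ring homomorphism together with Lemma~\ref{congruence}, deduce uniqueness from the uniqueness of the multiplicative section in Proposition~\ref{Teichmuller}, and settle (ii) by a five-lemma d\'evissage on the $p$-adic filtrations. There is nothing to compare against in the paper itself, and your write-up would serve perfectly well as an inserted proof; the only implicit hypothesis worth flagging explicitly is that $k$ is perfect (needed both for Proposition~\ref{Teichmuller} and for the roots $x_n^{1/p^n}$), which is the standing assumption of the section.
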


\begin{proof}
\cite[p.~47]{Pink}
\end{proof}

\begin{exam}
$W(\FF_p)=\ZZ_p$, $W_n(\FF_p)=\ZZ/p^n\ZZ$.
\end{exam}

We conclude this section by stating without proof the classification theorem of complete discrete valuation rings with perfect residue field $k$ in \cite{Serre:localfields}. Let $R$ be a complete discrete valuation ring with residue field $k$. Suppose $k$ is perfect. In the case that $R$ and $k$ have the same characteristic, we have:

\begin{thm}[Equal Characteristic Case] Let $R$ be a complete discrete valuation ring with residue field $k$. Suppose that $R$ and $k$ have the same characteristic and that $k$ is perfect. Then $R$ is isomorphic to $k[[t]]$, the ring of formal power series with coefficients in $k$.
\end{thm}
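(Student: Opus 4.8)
The plan is to reduce the statement to the construction of a \emph{coefficient field}: a ring homomorphism $\sigma\colon k\to R$ splitting the canonical projection $R\twoheadrightarrow k$. Suppose such a $\sigma$ has been produced and fix a uniformizer $t$ of $R$. Applying the proposition proved earlier in this chapter (every element of a complete discrete valuation ring has a unique convergent expansion in powers of a uniformizer, with coefficients read off through a fixed section of the residue map) to the section $\sigma$ and the uniformizer $t$, every $a\in R$ can be written uniquely as $a=\sum_{n\geq 0}\sigma(x_n)t^n$ with $x_n\in k$. Then
\[
\phi\colon k[[t]]\longrightarrow R,\qquad \sum_{n\geq 0}a_nt^n\longmapsto \sum_{n\geq 0}\sigma(a_n)t^n
\]
is well defined since $R$ is complete, bijective by the existence and uniqueness of the expansion, and a unital ring homomorphism because $\sigma$ is (matching coefficients in a product of power series reduces to finite sums of products $\sigma(a_n)\sigma(b_m)=\sigma(a_nb_m)$). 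Hence $\phi$ is the desired isomorphism, and everything comes down to producing $\sigma$.

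Since $R$ is a complete discrete valuation ring with residue field $k$, its characteristic agrees with that of $k$ and is either $0$ or a prime $p$. If $\chara k=p>0$, then $p=0$ in $R$, and $R$ is a complete noetherian local ring with perfect residue field of characteristic $p$, so Proposition~\ref{Teichmuller} furnishes the unique multiplicative section $\tau\colon k\to R$; it remains only to verify that $\tau$ is additive. Fix any set-theoretic section $s$ of $R\twoheadrightarrow k$. By property~(2) of Proposition~\ref{Teichmuller}, $\tau(x)=\lim_{n\to\infty}s(x^{p^{-n}})^{p^n}$ for all $x\in k$. Because $R$ has characteristic $p$ we have $s(x^{p^{-n}})^{p^n}+s(y^{p^{-n}})^{p^n}=\bigl(s(x^{p^{-n}})+s(y^{p^{-n}})\bigr)^{p^n}$, and since $k$ is perfect the inverse of the $n$-th power of Frobenius is additive, so $s(x^{p^{-n}})+s(y^{p^{-n}})\equiv x^{p^{-n}}+y^{p^{-n}}=(x+y)^{p^{-n}}\pmod{\fm}$; hence Lemma~\ref{congruence} gives
\[
\bigl(s(x^{p^{-n}})+s(y^{p^{-n}})\bigr)^{p^n}\equiv s\bigl((x+y)^{p^{-n}}\bigr)^{p^n}\pmod{\fm^{n+1}}.
\]
Passing to the limit (using $\bigcap_n\fm^{n+1}=0$) yields $\tau(x)+\tau(y)=\tau(x+y)$, so $\sigma:=\tau$ is a coefficient field and this case is complete.

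The remaining case $\chara k=0$ is the one I expect to be the main obstacle, since Proposition~\ref{Teichmuller} is unavailable. Here every nonzero integer has nonzero, hence invertible, image in $k$, so $\QQ\subseteq R$. I would construct $\sigma$ by Zorn's lemma applied to the family of subfields $E\subseteq R$ with $E\cap\fm=0$: it is nonempty (it contains $\QQ$) and closed under unions of chains, so it has a maximal element $E$, with image $\ol E\subseteq k$. One shows $\ol E=k$: if some $\ol x\in k\setminus\ol E$ is transcendental over $\ol E$, any lift $x\in R$ of $\ol x$ generates a subfield $E(x)\subseteq R$ (every nonzero element of $E[x]$ has nonzero image in $k$, hence is a unit of $R$) with $E(x)\cap\fm=0$, contradicting maximality; if $\ol x$ is algebraic over $\ol E$ it is separable, so lifting its monic minimal polynomial $\ol f$ to $f\in E[X]$ and using $\ol f'(\ol x)\neq 0$, completeness of $R$ (Hensel's lemma) produces a root $x\in R$ of $f$ lifting $\ol x$, and $E[x]\cong E[X]/(f)$ is again a subfield meeting $\fm$ trivially and strictly containing $E$, a contradiction. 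Thus $E\xrightarrow{\ \sim\ }k$, and its inverse is the sought coefficient field $\sigma$. The delicate points are precisely the appeal to completeness for the Hensel lift and the automatic separability in characteristic $0$; if one only needs the theorem in the generality actually used in the rest of the paper, namely $\chara k=p$, the previous paragraph already gives a self-contained proof.
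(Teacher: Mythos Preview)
Your argument is correct and complete. The paper itself gives no proof here; it simply cites Serre's \emph{Local Fields}, and your approach---reduce to the existence of a coefficient field, then invoke the unique power-series expansion already proved in this chapter---is exactly Serre's. Your characteristic-$p$ case is in fact a pleasant shortcut over the general Cohen argument: rather than rerunning the Zorn--Hensel construction, you reuse the paper's own Proposition~\ref{Teichmuller} and Lemma~\ref{congruence}, exploiting that when $p=0$ in $R$ the $p^n$-th power map is additive, so the Teichm\"uller section is automatically a ring homomorphism.

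One cosmetic point: the sentence ``Since $R$ is a complete discrete valuation ring with residue field $k$, its characteristic agrees with that of $k$'' reads as though this were a consequence rather than the hypothesis (it fails for $\ZZ_p$, for instance). Just say you are splitting into the two cases $\chara k=p>0$ and $\chara k=0$ permitted by the equal-characteristic assumption.
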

\begin{proof} \cite[p.~33]{Serre:localfields} \end{proof}

Now suppose the characteristic of $R$ is not equal to that of $k$, i.e. $R$ has characteristic zero  and $k$ has characteristic $p>0$. Let $v$ be the discrete valuation associated to $R$. We can identify $\ZZ$ as a sub-ring of $R$. Since $p$ goes to zero in $k$, $p$ is in the maximal ideal $\fm$ of $R$ and $v(p)>0$. The integer $e=v(p)$ is called the {\it absolute ramification index} of $R$. We say $R$ is absolutely unramified if $e=1$, i.e., if $p$ is a uniformizer of $R$. For such rings we have the following structure theorem:

\begin{thm}[Unequal Characteristic Case] Let $k$ be a perfect field of characteristic $p>0$. There exists a complete discrete valuation ring which is absolutely unramified and has $k$ as its residue field. Such a ring is unique up to unique isomorphism.
	
\end{thm}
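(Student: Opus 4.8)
The plan is to take $R = W(k)$, the ring of Witt vectors over $k$, and to show both that it has the required properties and that it is the only ring with them; essentially all the substantive work has already been carried out in the preceding theorems, so what remains is a short formal deduction. For existence I would simply invoke the theorem that $W(k)$ is a complete discrete valuation ring with uniformizer $p$ and residue field $k$: the condition ``uniformizer $p$'' says exactly that the absolute ramification index $e = v(p)$ equals $1$, i.e. that $W(k)$ is absolutely unramified. So $W(k)$ is an absolutely unramified complete discrete valuation ring with residue field $k$, settling the first assertion.

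For uniqueness, let $R'$ be any absolutely unramified complete discrete valuation ring whose residue field is identified with $k$. A discrete valuation ring is noetherian and local, and completeness of $R'$ means it is $\fm_{R'}$-adically complete, so $R'$ is a complete noetherian local ring with residue field $k$ and Witt's theorem applies: there is a unique ring homomorphism $u \colon W(k) \to R'$ lifting $\id_k$ (making the triangle with the two projections to $k$ commute). Since $R'$ is absolutely unramified, $p$ is a uniformizer of $R'$, and part (ii) of Witt's theorem then gives that $u$ is an isomorphism. Thus $R' \cong W(k)$, and the isomorphism $u$ realizing this is the unique ring homomorphism $W(k)\to R'$ inducing the identity on residue fields — which is the precise content of ``unique up to unique isomorphism''. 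Running the same argument with the two rings exchanged shows $u^{-1}$ is likewise the unique such homomorphism $R'\to W(k)$, so the two descriptions are compatible.

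There is no real obstacle here beyond unwinding definitions correctly: the mathematical content lies entirely in the construction of $W(k)$ and in Witt's universal mapping property, both already established. The one point I would treat with a sentence of care is the phrase ``unique up to unique isomorphism'': it should be stated as an assertion in the category whose objects are complete discrete valuation rings $R$ equipped with an isomorphism $R/\fm_R \xrightarrow{\sim} k$ and whose morphisms are ring homomorphisms compatible with these identifications — in that category $W(k)$ is, up to unique isomorphism, the unique absolutely unramified object (indeed, by Witt's theorem applied to an arbitrary complete noetherian local ring with residue field $k$, it is even initial). I would close by noting that the perfectness of $k$ was used already in proving that $W(k)$ is a discrete valuation ring (via $p^n W(k) = V^n(W(k))$) and is not invoked again in this final deduction.
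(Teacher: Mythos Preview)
Your argument is correct and is exactly the standard deduction from the results already proved in this chapter. Note, however, that the paper itself does not supply a proof here: its ``proof'' consists solely of a citation to \cite[p.~36]{Serre:localfields}, followed by the remark that the ring in question is $W(k)$. So rather than taking a different route, you have actually written out the argument that the paper chose to omit, and done so using precisely the tools the paper has set up (the theorem that $W(k)$ is a complete DVR with uniformizer $p$, and Witt's universal property theorem). Your proof is therefore more self-contained than the paper's treatment.
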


\begin{proof}\cite[p.~36]{Serre:localfields} 
\end{proof}	
This ring is none other than $W(k)$.

Finally in the ramified case, we have:
\begin{thm}
	Let $R$ be a discrete valuation ring of characteristic unequal to that of its residue field $k$. Let $e$ be its absolute ramification index. Then there exists a unique homomorphism from $W(k)$ to $R$ which makes the following diagram commutative:
	\begin{displaymath}
	\xymatrix
	{
		W(k) \ar[rr]^{}\ar[rd]_{} & & R \ar[ld]^{} \\
		& k  &.
	}
	\end{displaymath}
	This homomorphism is injective, and $R$ is a free $W(k)$-module of rank $e$.
\end{thm}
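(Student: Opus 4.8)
The plan is to obtain the first two assertions almost for free from results already established, and to prove the freeness claim by a successive-approximation argument; throughout I take $R$ complete and $k$ perfect, as in the preceding theorems of this section.

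\textbf{Existence and uniqueness of the homomorphism.} A complete discrete valuation ring is in particular a complete noetherian local ring, so the existence and uniqueness of a ring homomorphism $u\colon W(k)\to R$ compatible with the identification of residue fields is exactly the first part of Witt's theorem quoted above. There is nothing further to do here.

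\textbf{Injectivity.} Since $W(k)$ is a discrete valuation ring with uniformizer $p$, its only prime ideals are $(0)$ and $(p)$. As $R$ is a domain, $\ker u$ is a prime ideal, so $\ker u\in\{(0),(p)\}$. The characteristic of $R$ is $0$ or a prime; a nonzero characteristic would vanish in $k=R/\fm$ and hence be divisible by $\chara k=p$, so it would equal $p$, contrary to hypothesis. Thus $\chara R=0$, so $p\neq0$ in $R$, $u(p)\neq0$, and $\ker u\neq(p)$; hence $\ker u=(0)$ and $u$ is injective.

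\textbf{Freeness of rank $e$.} View $R$ as a $W(k)$-module via $u$. First I would observe that the $p$-adic and $\pi$-adic topologies on $R$ coincide: by definition of the absolute ramification index $pR=\pi^eR$, so $p^nR=\pi^{en}R$ and these ideals are cofinal among the $\pi^mR$; as $R$ is $\pi$-adically complete it is therefore $p$-adically complete. Next, the chain $R\supseteq\pi R\supseteq\cdots\supseteq\pi^eR=pR$ has successive quotients each isomorphic to $k$, so $\dim_kR/pR=e$; choose $x_1,\dots,x_e\in R$ reducing to a $k$-basis of $R/pR$. These span $R$ over $W(k)$: given $r\in R$, write $r=\sum_i a_i^{(0)}x_i+pr_1$ with $a_i^{(0)}\in W(k)$ and $r_1\in R$, then iterate to get $r=\sum_i\bigl(\sum_{j\ge0}p^j a_i^{(j)}\bigr)x_i$, the inner series converging in the complete ring $W(k)$ and the tails $p^nr_n$ tending to $0$ in the $p$-adically complete ring $R$. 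They are also $W(k)$-independent: from a relation $\sum_i a_ix_i=0$ with the $a_i\in W(k)$ not all zero, let $n$ be the least of the $p$-adic valuations of the $a_i$ and write $a_i=p^nb_i$ (so some $b_i$ is a unit), cancel $p^n$ using that $R$ is $p$-torsion-free (a characteristic-zero domain), and reduce $\sum_i b_ix_i=0$ modulo $p$ to contradict the independence of the $\bar x_i$ in $R/pR$. Hence $R\cong W(k)^{\oplus e}$ as $W(k)$-modules, which is the last assertion.

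\textbf{Main obstacle.} The only step with genuine content is the freeness computation, and even there the work is just to assemble three elementary inputs — transporting $p$-adic completeness from $\pi$-adic completeness, $p$-torsion-freeness of $R$, and the dimension count $\dim_kR/pR=e$ — into the approximation argument. The existence and uniqueness is a direct citation of Witt's theorem, and injectivity is a one-line remark about $\spec W(k)$, so neither presents a real difficulty.
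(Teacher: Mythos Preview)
Your proof is correct. The paper does not give its own argument for this theorem at all: it simply cites \cite[p.~37]{Serre:localfields}. What you have written is essentially the standard proof one finds there --- existence and uniqueness from the universal property of $W(k)$ (Witt's theorem, already quoted in the paper), injectivity from the fact that $W(k)$ has only the two primes $(0)$ and $(p)$ and $R$ has characteristic zero, and freeness by lifting a $k$-basis of $R/pR$ and running the $p$-adic approximation/Nakayama argument. Your explicit observation that $pR=\pi^eR$ forces $\dim_k R/pR=e$ and makes the $\pi$-adic and $p$-adic topologies coincide is exactly the point that drives the freeness computation. So you have supplied the content the paper outsourced, by the expected route; there is nothing to correct.
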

\begin{proof}\cite[p.~37]{Serre:localfields}
\end{proof}

\chapter{Dieudonn\'{e} Theory}

In this chapter we give a (mostly) self-contained account of the fundamental results of Dieudonn\'e Theory. We explicitly construct the (contravariant) Dieudonn\'e functor, starting with finite group
schemes of local-local type first, then expanding to include all finite group
schemes of $p$-power order. Many of the proofs use ideas found in \cite{Pink}.

%%%%%%%%%%%%%%%%%%%%%%%%%%%%%%%%%%%%%%%%%%%%%%%%%%%%%%%%%%%%%
%%%%%%%%%%%%%%%%%%%%%%%%%%%%%%%%%%%%%%%%%%%%%%%%%%%%%%%%%%%%%
%%%%%%%%%%%%%%%%%%%%%%%%%%%%%%%%%%%%%%%%%%%%%%%%%%%%%%%%%%%%%
%%%%%%%%%%%%%%%%%%%%%%%%%%%%%%%%%%%%%%%%%%%%%%%%%%%%%%%%%%%%%

%%%%%%%%%%%%%%%%%%%%%%%%%%%%%%%%%%%%%%%%%%%%%%%%%%%%%%%%%%%%%%%%%%%%%%%%%%%%%%%
%%%%%%%%%%%%%%%%%%%%%%%%%%%%%%%%%%%%%%%%%%%%%%%%%%%%%%%%%%%%%%%%%%%%%%%%%%%%%%%
%%%%%%%%%%%%%%%%%%%%%%%%%%%%%%%%%%%%%%%%%%%%%%%%%%%%%%%%%%%%%%%%%%%%%%%%%%%%%%%
%%%%%%%%%%%%%%%%%%%%%%%%%%%%%%%%%%%%%%%%%%%%%%%%%%%%%%%%%%%%%%%%%%%%%%%%%%%%%%%

\section{Finite Witt group schemes}

Let $k$ be a field. From now on we abbreviate $W=W_k$, restoring $k$ only when the dependence on the field $k$ is discussed.

For any integer $n\geq 1$, let $W_n=W/V^nW$ be the additive group scheme of Witt vectors of length $n$ over $k$. Truncation induces natural epimorphisms $r: W_{n+1}\twoheadrightarrow W_n$, and Verschiebung induces natural monomorphisms $v: W_n\hookrightarrow W_{n+1}$, such that $r\circ v=v\circ r=V$. For any $n,n'>1$ they induce a short exact sequence
$$0\rightarrow W_{n'}\xrightarrow{v^n}W_{n+n'}\xrightarrow{r^{n'}}W_n\rightarrow 0.$$
Together with the natural isomorphism $W_1\cong \boldsymbol{G_a}$, these exact sequences describes $W_n$ as a successive extension of $n$ copies of $\boldsymbol{G_a}$.

For any integers $n,m\geq 1$, let $W_n^m$ be the kernel of $F^m$ on $W_n$. As above, truncation induces natural epimorphisms $r: W_{n+1}^m\twoheadrightarrow W_n^m$, and Verschiebung induces natural monomorphisms $v: W_n^m\hookrightarrow W_{n+1}^m$, such that $r\circ v=v\circ r=V$. Similarly, the inclusion induces natural monomorphisms $i: W_n^m\hookrightarrow W_n^{m+1}$, and Frobenius induces natural epimorphisms $f: W_n^{m+1}\twoheadrightarrow W_n^m$, such that $i\circ f=f\circ i=F$. For any $n,n',m,m'\geq 1$ they induce short exact sequences 
$$0\rightarrow W_{n'}^m\xrightarrow{v^n}W_{n+n'}^m\xrightarrow{r^{n'}}W_n^m\rightarrow 0.$$
$$0\rightarrow W_n^m\xrightarrow{i^{m'}}W_n^{m+m'}\xrightarrow{f^m}W_n^{m'}\rightarrow 0.$$
Together with the natural isomorphism $W_1^1\cong \boldsymbol{\alpha}_p$, these exact sequences describe $W_n^m$ as a successive extension of $nm$ copies of $\boldsymbol{\alpha}_p$. 

For later use we note the following fact: 

\begin{lemma}\label{factor}
	Let $G$ be a finite commutative group scheme with $F^m_G=0$ and $V^n_G=0$. Then any homomorphism $\phi: G\rightarrow W_{n'}^{m'}$ with $m'\geq m$ and $n'\geq n$ factors uniquely through the embedding $i^{m'-m}\circ v^{n'-n}: W_n^m\rightarrow W_{n'}^{m'}$.
\end{lemma}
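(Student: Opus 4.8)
The plan is to realise the embedding $\iota := i^{m'-m}\circ v^{n'-n}\colon W_n^m\to W_{n'}^{m'}$ as a closed subgroup scheme and to show that $\phi$ lands inside it, peeling off one Witt layer at a time. Since $i$ and $v$ are closed immersions, $\iota$ is a monomorphism of group schemes, so any scheme-theoretic factorisation of $\phi$ through $\iota$ is automatically unique and is automatically a morphism of group schemes; hence the whole content is to show that the scheme-theoretic image of $\phi$ is contained in that of $\iota$.

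First I would handle the Frobenius direction. The short exact sequence $0\to W_{n'}^m\xrightarrow{i^{m'-m}}W_{n'}^{m'}\xrightarrow{f^m}W_{n'}^{m'-m}\to 0$ recalled above, together with the relation $i\circ f=f\circ i=F$ (so that $F^m$ on $W_{n'}^{m'}$ equals $i^m\circ f^m$ with $i^m$ a monomorphism), identifies the image of $i^{m'-m}$ with $\ker\big(F^m\colon W_{n'}^{m'}\to W_{n'}^{m'}\big)$. So it suffices to see that $F^m\circ\phi=0$. Since $W_{n'}^{m'}$ is defined over $\FF_p$, its endomorphism $F^m$ is, under the canonical identification of $(W_{n'}^{m'})^{(p^m)}$ with $W_{n'}^{m'}$, the $m$-th iterate of the relative Frobenius, and functoriality of the relative Frobenius gives $F^m\circ\phi=\phi^{(p^m)}\circ F_G^m=0$ because $F_G^m=0$. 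Hence $\phi=i^{m'-m}\circ\phi_1$ for a homomorphism $\phi_1\colon G\to W_{n'}^m$, which of course still satisfies $F_G^m=0$ and $V_G^n=0$.

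Then I would repeat the argument in the Verschiebung direction for $\phi_1$: the exact sequence $0\to W_n^m\xrightarrow{v^{n'-n}}W_{n'}^m\xrightarrow{r^n}W_{n'-n}^m\to 0$ and the relation $r\circ v=v\circ r=V$ identify the image of $v^{n'-n}$ with $\ker\big(V^n\colon W_{n'}^m\to W_{n'}^m\big)$, so it suffices to see $V^n\circ\phi_1=0$; then $\phi_1=v^{n'-n}\circ\psi$ and $\phi=\iota\circ\psi$, finishing the proof. The subtlety is that functoriality of the relative Verschiebung yields only $V^n\circ\phi_1^{(p^n)}=\phi_1\circ V_G^n=0$, i.e. that $\phi_1^{(p^n)}$ factors through the closed subgroup $\ker(V^n)\subseteq W_{n'}^m$. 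To pass from this to a factorisation of $\phi_1$ itself, I would invoke that $k$ is perfect: base change along $\sigma^n$ is then an autoequivalence of the category of $k$-group schemes which carries the closed immersion $\ker(V^n)\hookrightarrow W_{n'}^m$ to itself, both objects being defined over $\FF_p$; applying the inverse autoequivalence to the factorisation of $\phi_1^{(p^n)}$ produces the desired factorisation of $\phi_1$, i.e. $V^n\circ\phi_1=0$.

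I expect this last step — converting $V^n\circ\phi_1^{(p^n)}=0$ into $V^n\circ\phi_1=0$ — to be the only genuine obstacle. The Frobenius step is painless precisely because one composes against $F_G^m=0$, which destroys the Frobenius twist; for Verschiebung the twist survives and must be removed by hand, which is where perfectness of $k$ enters. Everything else is formal: that $\iota$ is a monomorphism, that the two short exact sequences (with the relations among $i,f,F$ and $v,r,V$) identify the images of $i^{m'-m}$ and $v^{n'-n}$ with the relevant kernels of $F$ and $V$, and that a factorisation of a homomorphism through a group-scheme monomorphism is again a homomorphism.
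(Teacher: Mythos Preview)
Your proof is correct and follows the same approach as the paper, which uses functoriality of Frobenius to factor $\phi$ through $\ker(F^m)=\im(i^{m'-m})$ and then dismisses the Verschiebung step with ``similar argument with $V_G^n$ in place of $F_G^m$.'' You are in fact more careful than the paper: the twist that survives in the Verschiebung step (one obtains $V^n\circ\phi_1^{(p^n)}=0$ rather than $V^n\circ\phi_1=0$) is a genuine point the paper glosses over, and your resolution via perfectness of $k$ (equivalently, faithful flatness of $\sigma^n$) is the right fix.
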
 
\begin{proof}
	By the functoriality of Frobenius we have the following commutative diagram
	
	$$
	\xymatrix{
		G\ar[r]^{F_G^m} \ar[d]^\varphi & G^{(p^m)} \ar[d]^{\varphi^{(p^m)}} \\
		W_{n'}^{m'}\ar[r]^{F^m_{W_{n'}^{m'}}} & W_{n'}^{m'}
	}.
	$$
	Therefore the assumption implies that $F_{W_{n'}^{m'}}^m\circ \phi=\phi^{(p^m)}\circ F_g^m=0$. Thus $\phi$ factors through the kernel of $F^m$ on $W_{n'}^{m'}$, which is the image of $i^{m'-m}: W_{n'}^m\hookrightarrow W_{n'}^{m'}$. Similar argument with $V_G^n$ in place of $F_g^m$ shows the rest.
	
\end{proof}

\section{The Dieudonn\'e functor in the local-local case}

We will show that all commutative finite group schemes of local-local type can be constructed from the Witt group schemes $W_n^m$. The main step towards this is the following results on extensions:

\begin{prop}\label{ext}
	For any short exact sequence $0\rightarrow W_n^m\rightarrow G\rightarrow \boldsymbol{\alpha}_p\rightarrow 0$ there exists a homomorphism $\varphi$ making the following diagram commutative:
	$$
	\xymatrix{
		0\ar[r] &W_n^m\ar[r]\ar@{_{(}->}[d] &G\ar[r]\ar[dl]^\varphi &\boldsymbol{\alpha}_p\ar[r] &0 \\
		&W_{n+1}^{m+1}
	}.
	$$
\end{prop}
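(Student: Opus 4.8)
The plan is to reduce the extension problem to the universal property of the Witt group schemes $W_n^m$ established in Lemma \ref{factor}. The key observation is that $G$ sits in a short exact sequence $0\to W_n^m\to G\to \boldsymbol{\alpha}_p\to 0$, and both outer terms are killed by $F^{m+1}$ and $V^{n+1}$ (indeed $\boldsymbol{\alpha}_p=W_1^1$ is killed by $F$ and $V$, while $W_n^m$ is killed by $F^m$ and $V^n$); so by the standard fact that these operators are additive and compatible with extensions, $G$ itself satisfies $F_G^{m+1}=0$ and $V_G^{n+1}=0$. Once this is known, any homomorphism from $G$ into $W_{n'}^{m'}$ with $n'\ge n+1$, $m'\ge m+1$ automatically lands in the copy of $W_{n+1}^{m+1}$ via $i\circ v$, by Lemma \ref{factor}. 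So the real content is to produce \emph{some} nonzero homomorphism $\varphi: G\to W_{n+1}^{m+1}$ whose restriction to $W_n^m$ is the prescribed embedding $i\circ v: W_n^m\hookrightarrow W_{n+1}^{m+1}$.

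First I would set up the relevant $\mathrm{Ext}$ computation. Applying $\Hom_{gp}(G,-)$ or rather $\Hom_{gp}(-,W_{n+1}^{m+1})$ to the short exact sequence $0\to W_n^m\xrightarrow{i\circ v} W_{n+1}^{m+1}\xrightarrow{} Q\to 0$ — where $Q$ is the cokernel, which one identifies with $W_1^1=\boldsymbol{\alpha}_p$ using the exact sequences $0\to W_{n}^m\xrightarrow{v} W_{n+1}^m\to W_1^m\to 0$ and $0\to W_{n}^m\xrightarrow{i} W_n^{m+1}\to W_n^1\to 0$ displayed in the previous section — the extension $G$ of $\boldsymbol{\alpha}_p$ by $W_n^m$ defines a class in $\mathrm{Ext}^1(\boldsymbol{\alpha}_p, W_n^m)$, and I want to show this class is the image of a class in $\mathrm{Ext}^1(\boldsymbol{\alpha}_p, W_{n+1}^{m+1})$ that splits, or more precisely that $(i\circ v)_*: \mathrm{Ext}^1(\boldsymbol{\alpha}_p, W_n^m)\to \mathrm{Ext}^1(\boldsymbol{\alpha}_p, W_{n+1}^{m+1})$ kills the class of $G$. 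By the long exact $\mathrm{Ext}$-sequence, this is equivalent to the class of $G$ coming from $\Hom(\boldsymbol{\alpha}_p, Q)=\Hom(\boldsymbol{\alpha}_p,\boldsymbol{\alpha}_p)$, i.e. there being a morphism $\boldsymbol{\alpha}_p\to W_{n+1}^{m+1}$ lifting the connecting map. Producing this is where I expect the main obstacle to lie.

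The mechanism I would use to get past it: pull back the defining extension of $W_{n+1}^{m+1}$. There is a surjection $W_{n+1}^{m+1}\twoheadrightarrow W_{n+1}^{m+1}/(i\circ v)(W_n^m)$, and by the structure of the $W_n^m$ as iterated extensions of $\boldsymbol{\alpha}_p$ one checks the quotient is again $\boldsymbol{\alpha}_p$ (it is $W_1^{m+1}/W_1^m$-type data, or symmetrically a single $\boldsymbol{\alpha}_p$ sitting "at the corner"). Pulling this extension back along $G\to\boldsymbol{\alpha}_p$ and comparing with the original extension $0\to W_n^m\to G\to\boldsymbol{\alpha}_p\to 0$, one reduces to showing the two $\mathrm{Ext}^1(\boldsymbol{\alpha}_p, W_n^m)$ classes agree up to an automorphism of $\boldsymbol{\alpha}_p$, i.e. that $\mathrm{Ext}^1(\boldsymbol{\alpha}_p,W_n^m)$ is generated (as a module over $\End(\boldsymbol{\alpha}_p)$, or over the relevant operator ring) by the class coming from $W_{n+1}^{m+1}$. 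Concretely, one can exhibit the needed $\varphi$ by hand: choosing a set-theoretic (or fppf-local) splitting $\boldsymbol{\alpha}_p\to G$ of the extension and using the $W$-module structure, the obstruction to the splitting being a homomorphism is a $2$-cocycle with values in $W_n^m$, and one must show its image in the $W_{n+1}^{m+1}$-valued cocycles is a coboundary; this is a Frobenius/Verschiebung divisibility statement of exactly the type proved in Lemma \ref{congruence} and its analogues.

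Alternatively — and this is probably the cleanest route to write up — I would argue by a \emph{pushout} construction directly on Dieudonn\'e-module-like data: the extension $0\to W_n^m\to G\to\boldsymbol{\alpha}_p\to 0$ is classified by a symmetric $2$-cocycle, and the point is simply that every such cocycle, after composing with $i\circ v: W_n^m\hookrightarrow W_{n+1}^{m+1}$, becomes trivial because $W_{n+1}^{m+1}$ is "one step larger in both directions" and therefore has enough room to absorb the cocycle; this is the group-scheme shadow of the fact that $W$ is a (pro-)free object. So concretely, \textbf{the steps are}: (1) identify the cokernel of $i\circ v$ as $\boldsymbol{\alpha}_p$ using the exact sequences from the previous section; (2) write the long exact $\mathrm{Ext}$-sequence for $0\to W_n^m\to W_{n+1}^{m+1}\to\boldsymbol{\alpha}_p\to 0$ applied with $\boldsymbol{\alpha}_p$ in the first argument; (3) show the connecting homomorphism $\Hom(\boldsymbol{\alpha}_p,\boldsymbol{\alpha}_p)\to\mathrm{Ext}^1(\boldsymbol{\alpha}_p,W_n^m)$ hits the class of $G$ — equivalently, that $\mathrm{Ext}^1(\boldsymbol{\alpha}_p,W_n^m)$ is the cokernel-free part and $G$'s class dies in $\mathrm{Ext}^1(\boldsymbol{\alpha}_p,W_{n+1}^{m+1})$ — which is the heart of the matter and rests on a $p$-divisibility computation with Witt-vector addition polynomials; (4) conclude that the class of $G$ pushed to $\mathrm{Ext}^1(\boldsymbol{\alpha}_p,W_{n+1}^{m+1})$ vanishes, hence the pushout extension splits, and the splitting, combined with the inclusion $W_{n+1}^{m+1}\to W_{n+1}^{m+1}$ on the other factor, yields the desired $\varphi: G\to W_{n+1}^{m+1}$. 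The uniqueness-up-to-what-matters and the automatic factorization then follow from Lemma \ref{factor} applied to $G$, which I will have shown is killed by $F^{m+1}$ and $V^{n+1}$.
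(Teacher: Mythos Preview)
Your overall framework is correct: the statement is equivalent to showing that the pushforward map $(i\circ v)_*:\mathrm{Ext}^1(\boldsymbol{\alpha}_p,W_n^m)\to\mathrm{Ext}^1(\boldsymbol{\alpha}_p,W_{n+1}^{m+1})$ annihilates every class, and you correctly observe that this amounts to the connecting map $\delta:\Hom(\boldsymbol{\alpha}_p,Q)\to\mathrm{Ext}^1(\boldsymbol{\alpha}_p,W_n^m)$ being surjective, where $Q$ is the cokernel of $i\circ v$. But there are two real problems.

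First, a concrete error: the cokernel $Q=W_{n+1}^{m+1}/W_n^m$ is \emph{not} $\boldsymbol{\alpha}_p$. Since $W_n^m$ is a successive extension of $nm$ copies of $\boldsymbol{\alpha}_p$ and $W_{n+1}^{m+1}$ of $(n+1)(m+1)$ copies, the quotient has order $p^{n+m+1}$, not $p$. The two exact sequences you cite give cokernels $W_1^m$ and $W_n^1$ for $v$ and $i$ separately, and these do not combine to $W_1^1$. So step (1) of your plan fails as written, and the subsequent long exact sequence is not the one you describe.

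Second, and more seriously: even with the correct $Q$, your step (3) --- showing that every extension class is hit by $\delta$ --- is the \emph{entire} content of the proposition, and you defer it to ``a $p$-divisibility computation with Witt-vector addition polynomials'' without indicating what that computation is. The paper itself does not prove this proposition in-text; it cites Pink's notes, where the argument occupies several pages and proceeds by explicitly computing $\mathrm{Ext}^1(\boldsymbol{\alpha}_p,W_n^m)$ via symmetric Hochschild $2$-cocycles on $\boldsymbol{\alpha}_p$ with values in $W_n^m$, identifying a basis of such cocycles built from the Witt addition polynomials, and then checking by hand that each basis cocycle becomes a coboundary after composing with $i\circ v$. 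Your reference to Lemma~\ref{congruence} is in the right spirit (the relevant divisibilities are of that flavor), but the actual computation is not a one-liner, and Lemma~\ref{factor} plays no role here --- it is used downstream, in Proposition~\ref{copres}, not in establishing Proposition~\ref{ext}.
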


\begin{proof}
	\cite[~49-53]{Pink}
\end{proof}

\begin{prop}\label{embed}
	Every commutative finite group scheme $G$ of local-local type can be embedded into $(W_n^m)^{\oplus r}$ for some $n,m$ and $r$.
\end{prop}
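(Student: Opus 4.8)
The plan is to induct on the order $|G|=p^N$, using Proposition~\ref{ext} to enlarge an embedding of a subgroup of index $p$. The key preliminary is that a nontrivial commutative finite group scheme $G$ of local-local type admits an epimorphism $\pi\colon G\twoheadrightarrow \boldsymbol{\alpha}_p$. To produce $\pi$, pass to the Cartier dual $G^\vee$, which is again nontrivial and of local-local type (Frobenius and Verschiebung of $G^\vee$ are the duals of Verschiebung and Frobenius of $G$, so both remain nilpotent). Now $N:=\ker F_{G^\vee}$ is a nonzero subgroup of $G^\vee$ killed by $F$ (by functoriality of Frobenius), the Verschiebung of $N$ is nilpotent, and one more application of $\ker(-)$ together with the twist by $k$ being perfect yields a nonzero subgroup of $G^\vee$ killed by both $F$ and $V$; such a group scheme over $k$ is a direct sum of copies of $\boldsymbol{\alpha}_p$. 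Hence $\boldsymbol{\alpha}_p\hookrightarrow G^\vee$, and dualizing gives $\pi\colon G\twoheadrightarrow\boldsymbol{\alpha}_p$. Set $G':=\ker\pi$, a closed subgroup that is again local-local, with $|G'|=|G|/p$.

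For the induction, the cases $|G|\le p$ are immediate: $|G|=1$ is trivial, and a group of order $p$ of local-local type is simple, hence isomorphic to $\boldsymbol{\alpha}_p\cong W_1^1$. Assume the statement for all groups of order $<|G|$. Then $G'\hookrightarrow (W_n^m)^{\oplus r}$ for some $n,m\ge 1$ and $r$, via $\iota=(\phi_1,\dots,\phi_r)$ with $\phi_j\colon G'\to W_n^m$. Since Proposition~\ref{ext} only handles an extension of $\boldsymbol{\alpha}_p$ by a \emph{single} Witt group scheme, I would run the argument one coordinate at a time. For each $j$, form the push-out of $0\to G'\to G\xrightarrow{\pi}\boldsymbol{\alpha}_p\to 0$ along $\phi_j$, obtaining a short exact sequence $0\to W_n^m\to E_j\to\boldsymbol{\alpha}_p\to 0$ together with a morphism $\beta_j\colon G\to E_j$ whose restriction to $G'$ is $\phi_j$ followed by $W_n^m\hookrightarrow E_j$. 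Apply Proposition~\ref{ext} to $E_j$ to get $\varphi_j\colon E_j\to W_{n+1}^{m+1}$ restricting to the standard embedding $W_n^m\hookrightarrow W_{n+1}^{m+1}$ on $W_n^m$, and set $\tilde\phi_j:=\varphi_j\circ\beta_j\colon G\to W_{n+1}^{m+1}$. By construction $\tilde\phi_j$ restricts on $G'$ to the composite of $\phi_j$ with $W_n^m\hookrightarrow W_{n+1}^{m+1}$.

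Finally I would assemble these into
\[
\Psi=(\tilde\phi_1,\dots,\tilde\phi_r,\ \iota'\circ\pi)\colon G\longrightarrow (W_{n+1}^{m+1})^{\oplus r}\oplus W_1^1\subseteq (W_{n+1}^{m+1})^{\oplus(r+1)},
\]
where $\iota'\colon\boldsymbol{\alpha}_p=W_1^1\hookrightarrow W_{n+1}^{m+1}$ is the standard embedding (using $W_1^1\cong\boldsymbol{\alpha}_p$). If $x\in\ker\Psi$, then $\pi(x)=0$ since $\iota'$ is a monomorphism, so $x\in G'$; then each $\tilde\phi_j(x)=0$ forces $\phi_j(x)=0$ because $W_n^m\hookrightarrow W_{n+1}^{m+1}$ is a monomorphism, whence $x=0$ as $\iota$ is injective. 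Thus $\Psi$ has trivial kernel, hence is a closed immersion, completing the induction.

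\textbf{Expected main obstacle.} The substance of the argument is Proposition~\ref{ext}, which is granted. Within this proof the delicate point is purely organizational: because \ref{ext} enlarges only a single factor, one cannot push out along $\iota$ all at once, but must work coordinate-wise and then restore joint injectivity after passing from $G'$ to $G$ by appending the $\pi$-coordinate; verifying that the push-out maps $\tilde\phi_j$ restrict correctly on $G'$ is the part that needs care.
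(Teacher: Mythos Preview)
Your proof is correct and follows essentially the same approach as the paper: induction on $|G|$, coordinate-wise pushout along each $\phi_j$ to obtain an extension by $\boldsymbol{\alpha}_p$ to which Proposition~\ref{ext} applies, and then appending the extra coordinate $\iota'\circ\pi$ to recover injectivity on all of $G$. The paper's argument is slightly terser---it simply asserts the existence of the short exact sequence $0\to G'\to G\to\boldsymbol{\alpha}_p\to 0$ and leaves the final injectivity check implicit---whereas you supply both of these details explicitly, but the structure and the key idea are the same.
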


\begin{proof}
	To prove this by induction on $|G|$, we consider a short exact sequence $0\rightarrow G'\rightarrow G\rightarrow \boldsymbol{\alpha}_p\rightarrow 0$ and assume that there exists an embedding $\varphi=(\varphi_1,\dots,\varphi_r): G'\hookrightarrow (W_n^m)^{\oplus r}$. For $1\leq i\leq r$ define $G_i$ such that the upper left square in the following commutative diagram with exact rows is a pushout:

	$$\xymatrix{
		0\ar[r] &G' \ar[r]\ar[d]^{\varphi_i} &G \ar[r]\ar[d] &\boldsymbol{\alpha}_p \ar[r]\ar@{=}[d] &0\\
		0\ar[r] &W_n^m \ar[r]\ar[d]^{i\circ v} &G_i \ar[r]\ar@{.>}[dl] &\boldsymbol{\alpha}_p \ar[r] &0\\
		&W_{n+1}^{m+1}
	}.$$
	The dashed arrow, which exists by Proposition \ref{ext}, determines an extension of the composite embedding $i\circ v\circ\phi: G'\hookrightarrow (W_{n+1}^{m+1})^{\oplus r}$ to a homomorphism $G\rightarrow (W_{n+1}^{m+1})^{\oplus r}$. The direct sum of this with the composite homomorphism $G\twoheadrightarrow \boldsymbol{\alpha}_p=W_1^1\hookrightarrow W_{n+1}^{m+1}$ is an embedding $G\hookrightarrow (W_{n+1}^{m+1})^{\oplus r+1}$.
	
\end{proof}

\begin{prop}\label{copres}
	Every commutative finite group scheme $G$ with $F_G^m=0$ and $V_G^n=0$ fits in an exact sequence 
	$$0\rightarrow G\rightarrow (W_n^m)^{\oplus r}\rightarrow (W_n^m)^{\oplus s}$$
	for some $r,s\geq 1$.
\end{prop}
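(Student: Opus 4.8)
The plan is to first embed $G$ into a finite direct sum of copies of $W_n^m$ \emph{with exactly the exponents $m,n$ that kill $F_G$ and $V_G$}, then run the same reasoning on the cokernel and splice the two maps together.

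First I would invoke Proposition \ref{embed}. Since $F_G^m=0$ and $V_G^n=0$, both $F_G$ and $V_G$ are nilpotent, so $G$ is of local-local type, and Proposition \ref{embed} gives a closed immersion $G\hookrightarrow (W_{n'}^{m'})^{\oplus r}$ for some $n',m',r\geq 1$; enlarging $n'$ and $m'$ by composing with the monomorphisms $v$ and $i$ if necessary, I may assume $n'\geq n$ and $m'\geq m$. Composing with each of the $r$ projections yields homomorphisms $G\to W_{n'}^{m'}$, and Lemma \ref{factor} says each of them factors through the embedding $i^{m'-m}\circ v^{n'-n}\colon W_n^m\hookrightarrow W_{n'}^{m'}$. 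Assembling these factorizations produces a homomorphism $\iota\colon G\to (W_n^m)^{\oplus r}$ whose composite with the monomorphism $(W_n^m)^{\oplus r}\hookrightarrow (W_{n'}^{m'})^{\oplus r}$ is the original closed immersion; hence $\iota$ has trivial kernel and is itself a closed immersion. (If $G$ is trivial, one simply takes $r=1$ and $\iota=0$.)

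Next I would pass to the cokernel. Set $Q:=\cok(\iota)$, which exists as a finite commutative $k$-group scheme since that category is abelian, and fits in a short exact sequence $0\to G\xrightarrow{\iota}(W_n^m)^{\oplus r}\xrightarrow{q}Q\to 0$. Because $F^m$ and $V^n$ vanish on $W_n^m$, they vanish on $(W_n^m)^{\oplus r}$ and hence on the quotient $Q$, so $Q$ again satisfies the hypotheses of the proposition. The same argument applied to $Q$ gives a closed immersion $j\colon Q\hookrightarrow (W_n^m)^{\oplus s}$ for some $s\geq 1$. Finally, setting $\psi:=j\circ q$, and using that $q$ is an epimorphism with kernel $G$ while $j$ is a monomorphism, I get $\ker\psi=\ker q=G$, so
\[
0\to G\to (W_n^m)^{\oplus r}\xrightarrow{\ \psi\ }(W_n^m)^{\oplus s}
\]
is exact, which is exactly the assertion.

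I do not expect a serious obstacle. The only two points requiring attention are the reduction to matching exponents $m,n$ in the first embedding — which is precisely the content of Lemma \ref{factor} — and the fact that the cokernel inherits the vanishing of $F^m$ and $V^n$, which is immediate from it being a quotient of $(W_n^m)^{\oplus r}$. Everything else is formal manipulation in the abelian category of finite commutative group schemes over $k$, using that monomorphisms there are exactly the closed immersions and that orders are multiplicative in short exact sequences; if any of the numbers $r,s$ come out to be $0$ one pads with an extra copy of $W_n^m$ and the corresponding zero map.
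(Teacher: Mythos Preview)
Your proof is correct and follows essentially the same route as the paper: embed $G$ into some $(W_{n'}^{m'})^{\oplus r}$ via Proposition~\ref{embed}, use Lemma~\ref{factor} to bring the exponents down to $(m,n)$, then repeat the argument on the cokernel (which inherits $F^m=0$ and $V^n=0$ from $(W_n^m)^{\oplus r}$) to obtain the second map. The only cosmetic difference is that the paper names the cokernel $H$ rather than $Q$ and is slightly terser about why the factored map is still an embedding.
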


\begin{proof}
	By Proposition \ref{embed} there exists an embedding $G\hookrightarrow (W_{n'}^{m'})^{\oplus r}$ for some $n', m'$, and $r$. After composing it in each factor with the embedding $i\circ v: W_{n'}^{m'}\hookrightarrow W_{n'+1}^{m'+1}$, if necessary, we may assume that $n'\geq n$ and $m'\geq m$. Then Lemma \ref{factor} implies that the embedding factors through a homomorphism $G\rightarrow (W_n^m)^{\oplus r}$, which is again an embedding. Let $H$ denote its cokernel. Since $F^m=0$ and $V^n=0$ on $W_n^m$, the same is true on $(W_n^m)^{\oplus r}$ and hence on $H$. Repeating the first part of the proof with $H$ in the place of $G$, we therefore find an embedding $H\hookrightarrow (W_n^m)^{\oplus s}$ for some $s$. The proposition follows.
\end{proof}

Now we describe the Dieudonn{\'e} functor in the local-local case.

Note that $W_n$ form a directed system under the morphisms $v:
W_n\rightarrow W_{n+1}$, which takes $(a_0, \dots, a_{n-1})$ to $(0,a_0, \dots, a_{n-1})$. Furthermore, the collection of all $W_n^m$ form a directed system via the homomorphisms $v$ and $i$:
$$
\xymatrix{
W_n^m \ar@{^{(}->}[d]^v \ar@{^{(}->}[r]^i &W_n^{m+1}\ar@{^{(}->}[d]^v\\
W_{n+1}^m \ar@{^{(}->}[r]^i &W_{n+1}^{m+1}}
.
$$

\begin{exam}
	Over $\mathbb{F}_p$ we have $$W(\mathbb{F}_p)=\mathbb{Z}_p,\quad W_n(\mathbb{F}_p)=\mathbb{Z}/p^n \mathbb{Z},\quad \varinjlim W_n(\mathbb{F}_p)\cong \mathbb{Q}_p/\mathbb{Z}_p,$$ and in general, $$W_n(k)=W(k)/p^n W(k),\quad \varinjlim W_n(k)\cong \mathrm{Frac}(W(k))/W(k)=W(k)[\frac{1}{p}]/W(k).$$
\end{exam}

Let $\sigma: W(k)\rightarrow W(k)$ be the ring endomorphism induced by $F$.

\begin{defn}
Denote by $D=D_k$ the ring of ``non-commutative polynomials'' over $W(k)$ in two variables $F$ and $V$, subject to the following relations:

(1) $F\xi=\sigma(\xi)F, \forall \xi\in W(k)$,

(2) $V\sigma(\xi)=\xi V, \forall \xi\in W(k)$,

(3) $FV=VF=p$.
\end{defn}

Note that $D$ as a $W(k)$ module is free with a basis $\{\dots,V^2,V,1,F,F^2,\dots\}$.

\begin{defn}
Let $G$ be a finite commutative group scheme of local-local type. We define the (contravariant) Dieudonn\'e module of $G$ to be

 $$M(G)=\varinjlim _{m,n} \Hom(G,W_n^m),$$
with its induced left $D_k$-module structure via the actions of $D_k$ on $W_n^m$.
\end{defn}

\begin{rmk} The action of $W(k)$ on $W_n^m$ needs to be modified so that it is compatible with the transition maps between the $W_n^m$'s. Namely, the action of $x\in W(k)$ on $W_n^m$ is modified to be multiplication by $\sigma^{-n}(x)$.
\end{rmk}

\begin{thm}\label{locloc}The functor $G\mapsto M(G)$ gives an exact anti-equivalence of categories between {finite $k$-group schemes of local-local type} and {left $D_k$-modules of finite $W(k)$-length with $F$, $V$ nilpotent}.

\end{thm}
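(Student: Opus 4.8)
The plan is to establish the anti-equivalence in three moves: first verify the theorem by hand for the building blocks $W_n^m$, then bootstrap exactness and well-definedness of $M$ from the extension result, and finally deduce full faithfulness and essential surjectivity by reducing to those blocks via Propositions \ref{embed} and \ref{copres}. Throughout one must keep the $\sigma$-twist of the $W(k)$-action (the Remark preceding the theorem) straight, so that the $D_k$-structures on the $M(W_n^m)$ are genuinely compatible with the transition maps of the colimit.

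\emph{The blocks.} By Lemma \ref{factor} applied with $G=W_n^m$, every homomorphism $W_n^m\to W_{n'}^{m'}$ with $n'\geq n$, $m'\geq m$ factors uniquely through $i^{m'-m}\circ v^{n'-n}$; hence the directed system $\bigl(\Hom(W_n^m,W_{n'}^{m'})\bigr)$ is essentially constant, and $M(W_n^m)$ is the cyclic left $D_k$-module generated by the class of $\mathrm{id}_{W_n^m}$, with annihilator $D_kF^m+D_kV^n$. Thus $M(W_n^m)\cong D_{n,m}:=D_k/(D_kF^m+D_kV^n)$; reducing the monomial $W(k)$-generating set $\{V^{n-1},\dots,V,1,F,\dots,F^{m-1}\}$ modulo $FV=VF=p$ shows $D_{n,m}$ has $W(k)$-length $nm$, with $F$ and $V$ nilpotent. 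More generally, for any local--local $G$ with $F_G^m=V_G^n=0$, the canonical injection $\Hom(G,W_n^m)\hookrightarrow M(G)$ identifies $\Hom(G,W_n^m)$ with $\{x\in M(G):F^mx=V^nx=0\}=\Hom_{D_k}(D_{n,m},M(G))$, the non-obvious inclusion being Lemma \ref{factor} once more. In particular the theorem holds when the target $H$ is a finite direct sum of copies of some $W_n^m$.

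\emph{Exactness and well-definedness.} Since $\Hom(-,W_n^m)$ is left exact contravariantly and filtered colimits are exact, a short exact sequence $0\to G'\to G\to G''\to 0$ gives $0\to M(G'')\to M(G)\to M(G')$; it remains to see $M(G)\to M(G')$ is onto, i.e. that every $\phi\colon G'\to W_n^m$ extends to $G\to W_{n'}^{m'}$ for suitable $n',m'$. Choosing a filtration of the local--local group $G/G'$ with all graded pieces $\cong\boldsymbol{\alpha}_p=W_1^1$ and inducting along it, the extension problem reduces to the case $G/G'\cong\boldsymbol{\alpha}_p$, which is precisely Proposition \ref{ext} (after replacing $W_n^m$ by $W_{n+1}^{m+1}$ via $i\circ v$). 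Hence $M$ is exact. Well-definedness follows: for local--local $G$, Proposition \ref{embed} gives $G\hookrightarrow(W_n^m)^{\oplus r}$, so exactness yields a surjection $M(W_n^m)^{\oplus r}\twoheadrightarrow M(G)$ and $M(G)$ has finite $W(k)$-length; and $F^m,V^n$ annihilate $M(G)$ because $F_G^m=V_G^n=0$, by the functoriality square of Frobenius and Verschiebung used in the proof of Lemma \ref{factor}.

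\emph{Full faithfulness and essential surjectivity.} For full faithfulness with arbitrary $H$, take the co-presentation $0\to H\to(W_n^m)^{\oplus r}\to(W_n^m)^{\oplus s}$ of Proposition \ref{copres}, apply the exact functor $M$ to get $M(W_n^m)^{\oplus s}\to M(W_n^m)^{\oplus r}\to M(H)\to 0$, then apply $\Hom_{D_k}(-,M(G))$ and $\Hom(G,-)$ to the two sequences and conclude by the five lemma, using the block case for the two outer terms. For essential surjectivity, given a finite-$W(k)$-length left $D_k$-module $N$ with $F^mN=V^nN=0$, view $N$ as a $D_{n,m}$-module and pick a finite presentation $D_{n,m}^{\oplus s}\xrightarrow{u}D_{n,m}^{\oplus r}\to N\to 0$; since $M(W_n^m)\cong D_{n,m}$ and the block case identifies $\Hom_{D_k}(D_{n,m}^{\oplus r},D_{n,m}^{\oplus s})$ with $\Hom\bigl((W_n^m)^{\oplus s},(W_n^m)^{\oplus r}\bigr)$, the matrix $u$ is realized by some $\psi\colon(W_n^m)^{\oplus r}\to(W_n^m)^{\oplus s}$; set $G:=\ker\psi$ and apply the exact functor $M$ to $0\to G\to(W_n^m)^{\oplus r}\xrightarrow{\psi}(W_n^m)^{\oplus s}$ to obtain $M(G)\cong\cok(u)\cong N$. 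The main obstacle is the single non-formal step --- the surjectivity asserted in the exactness part, equivalently that $\varinjlim_{n,m}W_n^m$ is injective among local--local group schemes --- which rests entirely on Proposition \ref{ext}; all the rest is d\'evissage and the explicit computation of $M(W_n^m)$.
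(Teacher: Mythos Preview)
Your overall architecture matches the paper's: compute $M(W_n^m)$, establish exactness via Proposition~\ref{ext}, then reduce full faithfulness and essential surjectivity to the block case via Proposition~\ref{copres} and the five lemma. One pleasant difference is that you prove exactness directly (every $\phi\colon G'\to W_n^m$ extends along a composition series of $G/G'$ with successive quotients $\boldsymbol{\alpha}_p$, invoking Proposition~\ref{ext} at each step), whereas the paper first proves the length formula $\textrm{length}_{W(k)}M(G)=\log_p|G|$ by the same induction and only then deduces exactness by a length count; your route is slightly cleaner.

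There is, however, a gap in your ``blocks'' paragraph. You assert that $M(W_n^m)$ is cyclic over $D_k$ with annihilator exactly $D_kF^m+D_kV^n$, i.e.\ that the natural map $D_{n,m}\to\End(W_n^m)\cong M(W_n^m)$ is an isomorphism, but you justify only the containment $D_kF^m+D_kV^n\subseteq\mathrm{ann}([\id])$, and your length computation is for $D_{n,m}$, not for $M(W_n^m)$. Cyclicity is genuinely needed later: in the full-faithfulness step with $H=W_n^m$, a $D_k$-map $\psi\colon M(W_n^m)\to M(G)$ agreeing with $M(\phi)$ on the class of $\id_{W_n^m}$ equals $M(\phi)$ only if that class generates. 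The paper fills this as follows: injectivity of $D_{n,m}\to\End(W_n^m)$ holds because every nonzero $D$-submodule of $D_{n,m}$ contains the residue of $F^{m-1}V^{n-1}$, which acts nontrivially on $W_n^m$; surjectivity then follows by comparing lengths, using $\textrm{length}_{W(k)}M(W_n^m)=\log_p|W_n^m|=nm$. In your ordering that length formula is an immediate corollary of the exactness you have already proved (induct on $|G|$, using $M(\boldsymbol{\alpha}_p)=k$), so inserting this argument after your exactness paragraph closes the gap.
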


This ``main theorem of contravariant Dieudonn{\'e} theory in the local-local case'' is essentially a formal consequence of the results obtained thus far. Let $D_n^m=D/(DF^m+DV^n)$. By definition we have homomorphisms $D_n^m\rightarrow \End(W_n^m)$. Also recall that as $M(W_n^m)=\underset{u,v}{\varinjlim} \Hom(W_n^m,W_v^u)$, we have homomorphisms $\End(W_n^m)\rightarrow M(W_n^m)$.

\begin{prop}\label{iso}
(1) $D_n^m\xrightarrow{\cong} \End(W_n^m)\xrightarrow{\cong} M(W_n^m)$.

(2) $\textrm{length}_{W(k)}(M(G))=\log_p|G|$.

\end{prop}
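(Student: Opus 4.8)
\emph{Overview.} The strategy is to reduce everything to one extra ingredient — the exactness of the functor $M$ on short exact sequences of local-local group schemes — which can be extracted from Propositions \ref{ext}--\ref{copres} and Lemma \ref{factor} without invoking Theorem \ref{locloc} (invoking it here would be circular, as \ref{locloc} is itself obtained from \ref{iso}). So I would first prove: for an exact sequence $0\to G''\to G\to G'\to 0$ of finite commutative local-local group schemes, the induced sequence $0\to M(G')\to M(G)\to M(G'')\to 0$ is exact. Left exactness is formal, since the contravariant $\Hom(-,W_n^m)$ is left exact and filtered colimits preserve exactness. For surjectivity of $M(G)\to M(G'')$: a class in $M(G'')$ is represented by some $h\colon G''\to W_n^m$; pull back a composition series of $G'$ to a filtration $G''=H_0\subset H_1\subset\cdots\subset H_s=G$ with each $H_j/H_{j-1}\cong\boldsymbol{\alpha}_p$, and extend $h$ one step at a time: given a homomorphism $H_{j-1}\to W_{n+j-1}^{m+j-1}$, form the pushout of $0\to H_{j-1}\to H_j\to\boldsymbol{\alpha}_p\to 0$ along it and apply Proposition \ref{ext} to extend the embedding $W_{n+j-1}^{m+j-1}\hookrightarrow W_{n+j}^{m+j}$ to a map on $H_j$. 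Since the colimit transition map identifies a homomorphism with its composite into $W_{n+1}^{m+1}$, this produces a preimage of the original class.

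\emph{Part (2).} Granting this, induct on $|G|$. If $|G|=p$, then $G\cong\boldsymbol{\alpha}_p$: a simple local-local group has nilpotent $F$ and $V$, so $\ker F$ and $\ker V$, being nonzero normal subgroups, are everything, forcing $F=V=0$, and then $G$ embeds into some $\boldsymbol{\alpha}_p^{\oplus r}$ by Proposition \ref{embed}, whence $G\cong\boldsymbol{\alpha}_p$. Since $F=V=0$ on $\boldsymbol{\alpha}_p=W_1^1$, Lemma \ref{factor} together with cofinality identifies $M(\boldsymbol{\alpha}_p)$ with $\End(\boldsymbol{\alpha}_p)=k$, of $W(k)$-length $1=\log_p p$. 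If $|G|>p$, choose a simple subgroup $G'\subset G$; then the exactness lemma applied to $0\to G'\to G\to G''\to 0$ gives $\textrm{length}_{W(k)}M(G)=\textrm{length}_{W(k)}M(G')+\textrm{length}_{W(k)}M(G'')=1+\log_p|G''|=\log_p|G|$.

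\emph{Part (1).} I would argue in four steps. (a) The pairs $(u,v)$ with $u\ge m$ and $v\ge n$ form a cofinal subsystem of the colimit defining $M(W_n^m)$, and for each such pair Lemma \ref{factor} identifies $\Hom(W_n^m,W_v^u)$ with $\End(W_n^m)$ compatibly with the transition maps; hence the colimit equals $\End(W_n^m)$, i.e. $\beta$ is an isomorphism, and by part (2) together with $|W_n^m|=p^{nm}$ (from the description of $W_n^m$ as an $nm$-fold extension of $\boldsymbol{\alpha}_p$) we get $\textrm{length}_{W(k)}\End(W_n^m)=nm$. (b) The ring map $D\to\End(W_n^m)$ kills $F^m$ and $V^n$, which act as $0$ on $W_n^m=\ker(F^m\colon W_n\to W_n)$, hence kills $DF^m+DV^n$ and factors through $\alpha\colon D_n^m\to\End(W_n^m)$. (c) A direct reckoning in the $W(k)$-basis $\{V^i\}_{i\ge 1}\cup\{1\}\cup\{F^i\}_{i\ge 1}$ of $D$, using $FV=VF=p$ to rewrite $V^iF^m$ and $F^iV^n$, shows $DF^m+DV^n$ is spanned by scalar multiples of basis vectors and that $D_n^m$ is a direct sum of cyclic modules of lengths summing to $\min(m,n)+\sum_{j=1}^{m-1}\min(j,n)+\sum_{j=1}^{n-1}\min(j,m)=nm$. (d) Finally $\alpha$ is surjective, by induction on $nm$: for $nm=1$ it is $W(k)\twoheadrightarrow k=\End(\boldsymbol{\alpha}_p)$; for $m\ge 2$ one uses the exact sequence $0\to W_n^{m-1}\xrightarrow{i}W_n^m\xrightarrow{f^{m-1}}W_n^1\to 0$ — an endomorphism $\phi$ commutes with $F^{m-1}$, hence preserves $\ker F^{m-1}=\textrm{im}(i)$ and restricts to $\alpha(d')$ on $W_n^{m-1}$ by the inductive hypothesis ($D$-equivariance of $i$); then $\phi-\alpha(d')$ kills $\textrm{im}(i)$, so it factors through $f^{m-1}$ via a map $W_n^1\to W_n^m$ whose image lies in $\ker F=\textrm{im}(W_n^1\hookrightarrow W_n^m)$, hence equals $\alpha(d'')$ on $W_n^1$ by induction; combining with $F^{m-1}=i^{m-1}\circ f^{m-1}$ gives $\phi=\alpha(d'+d''F^{m-1})$ — and symmetrically for $n\ge 2$ via the $V$-chain $0\to W_{n-1}^m\to W_n^m\to W_1^m\to 0$. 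A surjection of $W(k)$-modules of equal finite length $nm$ is an isomorphism, so $\alpha$, and with it $D_n^m\xrightarrow{\sim}\End(W_n^m)\xrightarrow{\sim}M(W_n^m)$, is an isomorphism.

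\emph{Main obstacle.} The two points needing real care are the right-exactness of $M$ — the iterated pushout-and-extend argument with Proposition \ref{ext}, where one must track how the level of the target $W_n^m$ rises and check compatibility of the successive diagrams — and the surjectivity of $\alpha$, which rests on the $D$-equivariance of the structural maps $i,v,f,r$ among the $W_n^m$ and on the identities $F=i\circ f=f\circ i$ and $V=v\circ r=r\circ v$. Both are elementary, but must be arranged so that no step secretly uses Theorem \ref{locloc}: everything should rest only on Propositions \ref{ext}, \ref{embed}, \ref{copres} and Lemma \ref{factor}.
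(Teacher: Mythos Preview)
Your argument is correct and tracks the paper's proof closely in overall architecture: both establish $\End(W_n^m)\cong M(W_n^m)$ via Lemma~\ref{factor}, prove~(2) by induction on $|G|$ using a pushout together with Proposition~\ref{ext} to get right-exactness, and then conclude the first isomorphism in~(1) from a length count. There are two noteworthy differences. First, for $D_n^m\to\End(W_n^m)$ the paper shows \emph{injectivity} by a socle argument---every nonzero $D$-submodule of $D_n^m$ contains the class of $F^{m-1}V^{n-1}$, whose image in $\End(W_n^m)$ is visibly nonzero---and then invokes equality of lengths; you instead prove \emph{surjectivity} by an induction along the $F$- and $V$-filtrations of $W_n^m$. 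Both reach the same endpoint, but the socle argument is a one-liner once you observe it, whereas your inductive surjectivity requires tracking $D$-equivariance of $i,f,v,r$ and the identity $F^{m-1}=i^{m-1}\circ f^{m-1}$. Second, in the induction for~(2) the paper takes $\boldsymbol{\alpha}_p$ as the \emph{quotient} in $0\to G'\to G\to\boldsymbol{\alpha}_p\to 0$, so a single pushout-and-extend step suffices; you take $\boldsymbol{\alpha}_p$ as the \emph{subgroup}, which forces the iterated composition-series extension you describe. Your general exactness lemma is correct and slightly stronger than what is needed here, but the paper's choice avoids the iteration entirely.
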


\begin{proof}
We prove the second isomorphism in (1), then (2), then finally the first isomorphism in (1).

For (1) since $W_n^m\hookrightarrow W_{n'}^{m'}$ is a monomorphism for all $n\leq n'$ and $m\leq m'$, the map $D_n^m\rightarrow \End(W_n^m)$ is injective. By lemma it is also surjective. This proves the second isomorphism of (1).

Now for (2) we prove this using induction on $|G|$. The assertion is trivial for $|G|=1$. For $G=\boldsymbol{\alpha}_p=W_1^1$, we have $M(\boldsymbol{\alpha}_p)=M(W_1^1)=\End(\boldsymbol{\alpha}_p)\cong k$ has $W(k)$-length 1.

If $|G|>1$, there exists a short exact sequence
$$0\rightarrow G'\rightarrow G\rightarrow\boldsymbol{\alpha}_p\rightarrow 0.$$

Assume the assertion holds for $G'$. The induced sequence
$$0\leftarrow M(G')\leftarrow M(G)\leftarrow M(\boldsymbol{\alpha}_p)\leftarrow 0$$
is exact except possibly at $M(G')$. To prove exactness, consider $[\varphi]\in M(G')$ represented by a homomorphism $\varphi: G'\rightarrow W_n^m$ for some $m,n$. Consider the morphism of short exact sequences
$$
\xymatrix{
0\ar[r] &G' \ar[r]\ar[d]^\varphi &G \ar[r]\ar[d] &\boldsymbol{\alpha}_p \ar[r]\ar@{=}[d] &0\\
0\ar[r] &W_n^m \ar[r] &H \ar[r] &\boldsymbol{\alpha}_p \ar[r] &0
}
$$
where $H$ is the pushout of the left hand square.

Apply Proposition \ref{ext} to the lower exact sequence yields a homomorphism $H\rightarrow W_n^m$ extanding the homomorphism $i\circ v:W_n^m\rightarrow W_{n+1}^{m+1}$:
$$
\xymatrix{
0\ar[r] &G' \ar[r]\ar[d]^\varphi &G \ar[r]\ar[d] &\boldsymbol{\alpha}_p \ar[r]\ar@{=}[d] &0\\
0\ar[r] &W_n^m \ar[r]\ar[d]^{iv} &H \ar[r]\ar@{.>}[dl] &\boldsymbol{\alpha}_p \ar[r] &0\\
&W_{n+1}^{m+1}
}.
$$
The composite homomorphism $\psi: G\rightarrow H\rightarrow W_{n+1}^{m+1}$ defines an element $[\psi]\in M(G)$ which maps to $[\varphi]$. This proves the exactness of the sequence, therefore
\begin{align*}
\textrm{length}_{W(k)}M(G) &=\textrm{length}_{W(k)}M(G')+\textrm{length}_{W(k)}M(\boldsymbol{\alpha}_p)\\
&=\log_p|G'|+1\\
&=\log_p|G|.
\end{align*}

Returning to (1)  one easily checks that every non-trivial $D$-submodule of $D_n^m$ contains the residue class of $F^{m-1}V^{n-1}$. Since the image of $F^{m-1}V^{n-1}$ in $\End(W_n^m)$ is non-zero, we deduce that the map $D_n^m\rightarrow \End(W_n^m) $ is injective.

One directly calculates that $\textrm{length}_{W(k)}D_n^m=nm$. By (2) we also have $\textrm{length}_{W(k)}M(W_n^m)=nm$. Thus $D_n^m\rightarrow \End(W_n^m) $ is an injective homomorphism of $D$-modules of equal finite length, hense it is an isomorphism.

\end{proof}

\begin{lemma}
The functor $M$ is exact.
\end{lemma}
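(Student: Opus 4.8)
The statement asserts that $M$ carries a short exact sequence $0\to G'\to G\to G''\to 0$ of finite $k$-group schemes of local-local type to a short exact sequence
$$0\to M(G'')\to M(G)\to M(G')\to 0.$$
Exactness at $M(G'')$ and at $M(G)$ is formal: for each fixed pair $(n,m)$ the contravariant functor $\Hom(-,W_n^m)$ on commutative group schemes is left exact, so $0\to\Hom(G'',W_n^m)\to\Hom(G,W_n^m)\to\Hom(G',W_n^m)$ is exact, and since filtered colimits of $W(k)$-modules are exact, passing to $\varinjlim_{n,m}$ over the directed system $(W_n^m, i, v)$ preserves exactness. Thus the only real content is surjectivity of the restriction map $M(G)\to M(G')$.

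To prove surjectivity I plan to induct on $|G''|$, the case $|G''|=1$ being trivial. The base case is $G''\cong\boldsymbol{\alpha}_p$ (the only local-local group scheme of order $p$, by the classification), and here I would reuse the pushout construction already appearing in the proof of Proposition \ref{iso}(2). Given $[\varphi]\in M(G')$ represented by a homomorphism $\varphi\colon G'\to W_n^m$, form the pushout $H$ of $G'\hookrightarrow G$ along $\varphi$, so that $0\to W_n^m\to H\to\boldsymbol{\alpha}_p\to 0$ is exact; Proposition \ref{ext} then supplies a homomorphism $H\to W_{n+1}^{m+1}$ extending the embedding $i\circ v\colon W_n^m\hookrightarrow W_{n+1}^{m+1}$, and the composite $\psi\colon G\to H\to W_{n+1}^{m+1}$ represents a class $[\psi]\in M(G)$ whose image in $M(G')$ is $[i\circ v\circ\varphi]=[\varphi]$; the last equality holds because of the normalization of the $W(k)$-action recorded in the remark before Theorem \ref{locloc}, which is precisely what makes $\varphi$ and $i\circ v\circ\varphi$ define the same element of the colimit.

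For the inductive step, since $G''$ is local-local and nonzero it has a subgroup scheme isomorphic to $\boldsymbol{\alpha}_p$ (a simple subobject, the simple local-local group scheme being $\boldsymbol{\alpha}_p$); letting $\bar G\subseteq G$ be its preimage gives short exact sequences $0\to G'\to\bar G\to\boldsymbol{\alpha}_p\to 0$ and $0\to\bar G\to G\to G''/\boldsymbol{\alpha}_p\to 0$ with $|G''/\boldsymbol{\alpha}_p|<|G''|$. The base case yields surjectivity of $M(\bar G)\to M(G')$, and the inductive hypothesis yields surjectivity of $M(G)\to M(\bar G)$; composing gives surjectivity of $M(G)\to M(G')$. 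All arrows in sight are $D_k$-linear, so exactness as a sequence of $D_k$-modules is the same as exactness on underlying abelian groups.

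The only genuinely nonformal ingredient is the surjectivity of $M(G)\to M(G')$, and its real input is Proposition \ref{ext}; once that extension-lifting property is granted, everything else is the dévissage on $|G''|$ plus routine manipulation of pushouts and filtered colimits. I expect the extension step itself — already carried out inside the proof of Proposition \ref{iso} — to be the only place where one must think, the rest being bookkeeping, with the lone subtlety being the identification $[\varphi]=[i\circ v\circ\varphi]$ in $M(G')$ coming from the modified $W(k)$-action.
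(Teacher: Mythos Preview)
Your argument is correct, but the paper takes a shorter route. Once Proposition~\ref{iso}(2) is in hand, one knows $\mathrm{length}_{W(k)}M(G)=\log_p|G|$ for every local-local $G$. Given a short exact sequence $0\to G'\to G\to G''\to 0$, left exactness gives $0\to M(G'')\to M(G)\to M(G')$, and then the image of $M(G)\to M(G')$ has length $\mathrm{length}\,M(G)-\mathrm{length}\,M(G'')=\log_p|G|-\log_p|G''|=\log_p|G'|=\mathrm{length}\,M(G')$, forcing surjectivity. Your d\'evissage via pushouts and Proposition~\ref{ext} is essentially re-running the proof of Proposition~\ref{iso}(2) itself; the paper instead harvests that work through the length formula and finishes in one line. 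One small remark: the equality $[\varphi]=[i\circ v\circ\varphi]$ in $M(G')$ is immediate from the definition of the filtered colimit (the transition maps \emph{are} post-composition with $i$ and $v$); the modified $W(k)$-action mentioned in the remark is about making the scalar action compatible across the system, not about identifying these two classes.
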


\begin{proof}
	The functor $M$ is left exact by construction. For any exact sequence $0\rightarrow G'\rightarrow G\rightarrow G''\rightarrow 0$, Proposition \ref{iso} (2) and the multiplicativity of group orders imply that the image of the induced map $M(G)\rightarrow M(G'')$ has the same finite length over $W(k)$ as $M(G')$ itself. Thus the map is surjective, and $M$ is exact.
\end{proof}

\begin{lemma}
If $F_G^m=0$ and $V_G^n=0$, then $F^m$ and $V^n$ annihilates $M(G)$. In particular, the functor $M$ lands in the indicated subcategory.
\end{lemma}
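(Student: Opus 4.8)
The plan is to reduce an arbitrary class in $M(G)=\varinjlim_{n',m'}\Hom(G,W_{n'}^{m'})$ to one represented by a homomorphism landing in the single group scheme $W_n^m$ attached to the exponents $m,n$ of the hypothesis, and then to exploit that $F^m$ and $V^n$ already vanish as endomorphisms of $W_n^m$.

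First I would take a class $[\varphi]\in M(G)$ represented by some $\varphi\colon G\to W_{n'}^{m'}$. Since the transition maps $i\colon W_{n'}^{m'}\hookrightarrow W_{n'}^{m'+1}$ and $v\colon W_{n'}^{m'}\hookrightarrow W_{n'+1}^{m'}$ are monomorphisms commuting with the $F$- and $V$-actions, composing $\varphi$ with a finite string of them replaces it by another representative of the same class; so I may assume $n'\ge n$ and $m'\ge m$. Then, since $F_G^m=0$ and $V_G^n=0$, Lemma \ref{factor} shows that $\varphi$ factors as $G\xrightarrow{\psi}W_n^m\xrightarrow{\,i^{m'-m}v^{n'-n}\,}W_{n'}^{m'}$, so $[\varphi]=[\psi]$ in the direct limit.

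Now the $D_k$-action on $M(G)$ is by post-composition with the $D_k$-action on the target (the $\sigma^{-n}$-twist of the remark affects only the $W(k)$-action, not $F$ and $V$), and by construction $W_n^m$ is the kernel of $F^m$ inside the additive group $W_n=W/V^nW$; hence $F^m$ and $V^n$ are both the zero endomorphism of $W_n^m$ --- this is also contained in Proposition \ref{iso}(1), where $\End(W_n^m)\cong D/(DF^m+DV^n)$. Therefore $F^m\cdot[\psi]=[F^m\circ\psi]=0$ and $V^n\cdot[\psi]=[V^n\circ\psi]=0$, and as $[\psi]$ was an arbitrary element of $M(G)$, both $F^m$ and $V^n$ annihilate $M(G)$. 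Together with $\textrm{length}_{W(k)}M(G)=\log_p|G|<\infty$ from Proposition \ref{iso}(2), and since $F^m=V^n=0$ forces $F,V$ nilpotent, this shows $M$ takes values in the category of left $D_k$-modules of finite $W(k)$-length on which $F$ and $V$ are nilpotent.

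The only genuine obstacle is the first reduction: a priori different elements of $M(G)$ are seen only inside $\Hom(G,W_{n'}^{m'})$ with $n',m'$ growing, and there is no reason for the vanishing of $F^{m'},V^{n'}$ with growing exponents to give the fixed exponents $m,n$ --- the point is precisely that one may first enlarge the representative and then push it down to $W_n^m$ via Lemma \ref{factor}. Everything after that is formal.
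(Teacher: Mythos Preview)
Your proof is correct and essentially follows the paper's approach; the paper's one-line proof invokes ``the functoriality of $F$ and $V$'' directly, while you make this concrete by routing through Lemma~\ref{factor} (whose proof \emph{is} that functoriality argument) to replace an arbitrary representative by one landing in $W_n^m$. The only difference is cosmetic: the paper applies functoriality of $F,V$ to each $\varphi\colon G\to W_{n'}^{m'}$ in place, whereas you first normalize the target to $W_n^m$ and then use $F^m=V^n=0$ there; both come to the same thing.
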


\begin{proof}
	This is clear from the definition of $M(G)$, the functoriality of $F$ and $V$, and Proposition \ref{iso} (2).
\end{proof}	

\begin{lemma}
The functor $M$ is fully faithful.
\end{lemma}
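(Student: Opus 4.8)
The plan is to prove that the natural map $\Hom(G',G)\to\Hom_{D_k}(M(G),M(G'))$, $f\mapsto M(f)$, is bijective for all finite $k$-group schemes $G,G'$ of local-local type, treating faithfulness first and then fullness. For faithfulness, suppose $f\colon G'\to G$ has $M(f)=0$. I would factor $f=\iota\circ q$, where $q\colon G'\twoheadrightarrow H$ is the epimorphism onto $H:=\im(f)$ and $\iota\colon H\hookrightarrow G$ is a monomorphism. Since $M$ is contravariant and exact, $M(q)$ is a monomorphism and $M(\iota)$ is an epimorphism, so $M(f)=M(q)\circ M(\iota)=0$ forces $M(H)=0$; by Proposition \ref{iso}(2) this gives $\log_p|H|=\textrm{length}_{W(k)}M(H)=0$, hence $H$ is trivial and $f=0$.

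For fullness I would first enlarge $m,n$ so that $F^m$ and $V^n$ vanish on both $G$ and $G'$; then, by the lemma above, $F^m$ and $V^n$ annihilate $M(G)$ and $M(G')$, so both are modules over $D_n^m=D/(DF^m+DV^n)$. Using Proposition \ref{copres}, choose exact sequences $0\to G\xrightarrow{\iota_G}(W_n^m)^{\oplus r}\xrightarrow{a}(W_n^m)^{\oplus s}$ and $0\to G'\xrightarrow{\iota_{G'}}(W_n^m)^{\oplus r'}\to(W_n^m)^{\oplus s'}$. Applying the exact contravariant functor $M$ and invoking Proposition \ref{iso}(1) — which identifies $M(W_n^m)$ with the free rank-one left $D_n^m$-module and, by additivity of $M$, makes $M$ fully faithful on the full subcategory of finite direct sums of copies of $W_n^m$ — one obtains epimorphisms of $D_n^m$-modules $M(\iota_G)\colon(D_n^m)^{\oplus r}\twoheadrightarrow M(G)$ and $M(\iota_{G'})\colon(D_n^m)^{\oplus r'}\twoheadrightarrow M(G')$.

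Now let $u\colon M(G)\to M(G')$ be an arbitrary $D_k$-linear map; it is automatically $D_n^m$-linear. Since $(D_n^m)^{\oplus r}$ is free, hence projective, the map $u\circ M(\iota_G)$ lifts through the epimorphism $M(\iota_{G'})$ to a $D_n^m$-linear map $(D_n^m)^{\oplus r}\to(D_n^m)^{\oplus r'}$, which by full faithfulness on direct sums of $W_n^m$ equals $M(b)$ for a unique $b\colon(W_n^m)^{\oplus r'}\to(W_n^m)^{\oplus r}$ with $M(b\circ\iota_{G'})=u\circ M(\iota_G)$. To cut $b$ down to a morphism $G'\to G$, I would compute $M(a\circ b\circ\iota_{G'})=M(b\circ\iota_{G'})\circ M(a)=u\circ M(\iota_G)\circ M(a)=u\circ M(a\circ\iota_G)=0$ because $a\circ\iota_G=0$. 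By the faithfulness already proved, $a\circ b\circ\iota_{G'}=0$, so $b\circ\iota_{G'}$ factors through $\ker(a)=\iota_G(G)$, yielding a unique $f\colon G'\to G$ with $\iota_G\circ f=b\circ\iota_{G'}$. Then $M(f)\circ M(\iota_G)=M(b\circ\iota_{G'})=u\circ M(\iota_G)$, and cancelling the epimorphism $M(\iota_G)$ gives $M(f)=u$; uniqueness of $f$ follows again from faithfulness.

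The step I expect to carry the real weight — rather than being formal diagram-chasing — is the reduction to direct sums of the $W_n^m$: one needs not merely the abstract isomorphisms of Proposition \ref{iso}(1) but the fact that the functor $M$ itself realizes $\End(W_n^m)\xrightarrow{\sim}\End_{D_k}(M(W_n^m))$, and hence is fully faithful on all finite direct sums of copies of $W_n^m$. Granting that, the rest is just exactness of $M$, projectivity of free $D_n^m$-modules, and the length formula of Proposition \ref{iso}(2).
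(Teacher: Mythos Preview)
Your argument is correct, and it shares the same key input as the paper's proof --- namely, the reduction to direct sums of $W_n^m$ via Proposition~\ref{iso}(1) --- but the execution is organized differently.

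The paper's proof is more economical: it chooses a single copresentation $0\to H\to U^r\to U^s$ (with $U=W_n^m$) of the \emph{target} only, applies the left-exact functors $\Hom(G,-)$ and $\Hom_D(-,M(G))$ to obtain two exact rows connected by the vertical maps $M$, and then invokes the $5$-Lemma to reduce to the case $H=U$. There is no separate treatment of faithfulness, no second copresentation, and no projectivity argument.

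Your route is more constructive: you prove faithfulness directly via the image factorization and the length formula, then for fullness you copresent \emph{both} $G$ and $G'$, lift through the surjection $M(\iota_{G'})$ using projectivity of the free $D_n^m$-module $(D_n^m)^{\oplus r}$, and use the already-established faithfulness to descend the lifted map back to a morphism $G'\to G$. This buys you an explicit preimage $f$ rather than an existence statement extracted from a diagram chase, at the cost of a longer setup. Your closing observation is exactly right: the one nonformal step in either approach is that Proposition~\ref{iso}(1) identifies $M$ on $\End(W_n^m)$ (not merely the modules abstractly), so that $M$ is genuinely fully faithful on finite direct sums of $W_n^m$.
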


\begin{proof}
	Let $G$ and $H$ be finite commutative group schemes over $k$ of local-local type. Choose $m, n$ such that $F^m, V^n$ annihilate $G$, $H$. For convenience, we abbreviate $U:=W_n^m$ in this proof. By Proposition \ref{copres}, we may choose a copresentation
	$$0\rightarrow H\rightarrow U^r\rightarrow U^s$$
	for some $r$ and $s$. By exactness of $M$, we obtain a presentation of $D$ modules
	$$0\leftarrow M(H) \leftarrow M(U)^r\leftarrow M(U)^s.$$
	Applying the left exact functor $\textrm{Hom}(G,-)$ and $\textrm{Hom}_D(-,M(G))$, we obtain a commutative diagram with exact rows
	$$
	\xymatrix{
		0\ar[r] &\textrm{Hom}(G,H) \ar[r]\ar[d]^M &\textrm{Hom}(G,U^r) \ar[r]\ar[d]^M &\textrm{Hom}(G,U^s) \ar[d]^M \\
		0\ar[r] &\textrm{Hom}_D(M(H),M(G)) \ar[r] &\textrm{Hom}_D(M(U^r),M(G)) \ar[r] &\textrm{Hom}_D(M(U^s),M(G))\\
	}
	$$
	where the vertical arrows are induced by the functor $M$. We must prove that the left vertical arrow is bijective. By the $5$-Lemma it suffices to show that the other two vertical arrows are bijective. Since $M$ is an additive functor, this in turn reduces to direct summands for $U^r$ and $U^s$. All in all, it suffices to prove the bijectivity in the case $H=U=W_n^m$, which is clear from Proposition \ref{iso} (1).
	
\end{proof}

\begin{lemma}
The functor $M$ is essentially surjective.
\end{lemma}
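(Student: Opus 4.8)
The plan is to realize an arbitrary object of the target category --- a left $D_k$-module $N$ of finite $W(k)$-length on which $F$ and $V$ act nilpotently --- in the form $M(G)$, by choosing a finite presentation of $N$ over a suitable quotient ring and transporting it through the full faithfulness and exactness of $M$ that have just been established. Concretely, first pick $m,n\geq 1$ with $F^mN=0$ and $V^nN=0$, so that $N$ is a module over $D_n^m=D/(DF^m+DV^n)$. The proof of Proposition \ref{iso} shows $\textrm{length}_{W(k)}D_n^m=nm$, so $D_n^m$ is left Artinian, and any $D_n^m$-module of finite $W(k)$-length --- in particular $N$, and also every submodule of a finite free $D_n^m$-module --- is of finite length, hence finitely generated over $D_n^m$. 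Therefore $N$ admits a finite free presentation of left $D_k$-modules
\[
(D_n^m)^{\oplus s}\xrightarrow{\ f\ }(D_n^m)^{\oplus r}\longrightarrow N\longrightarrow 0 .
\]

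Next, using Proposition \ref{iso}(1), which gives $D_n^m\cong M(W_n^m)$ as left $D_k$-modules, together with the additivity of $M$, rewrite the first two terms as $M((W_n^m)^{\oplus r})$ and $M((W_n^m)^{\oplus s})$; then $f$ becomes a $D_k$-linear map $M((W_n^m)^{\oplus s})\to M((W_n^m)^{\oplus r})$. Since $M$ is contravariant and fully faithful (the previous lemma), there is a unique homomorphism of finite group schemes $\phi\colon (W_n^m)^{\oplus r}\to (W_n^m)^{\oplus s}$ with $M(\phi)=f$. Set $G:=\ker\phi$. This is a finite commutative group scheme, and it is of local-local type because it is a closed subgroup of $(W_n^m)^{\oplus r}$, on which $F$ and $V$ are both nilpotent, so the same holds on $G$.

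Finally, compute $M(G)$. The sequence $0\to G\to (W_n^m)^{\oplus r}\xrightarrow{\phi}(W_n^m)^{\oplus s}$ is exact; factoring $\phi$ through its image $C:=\im\phi$ and applying the exact contravariant functor $M$ to $0\to G\to (W_n^m)^{\oplus r}\to C\to 0$ and to $0\to C\to (W_n^m)^{\oplus s}$ yields that
\[
M((W_n^m)^{\oplus s})\xrightarrow{\ M(\phi)\ }M((W_n^m)^{\oplus r})\longrightarrow M(G)\longrightarrow 0
\]
is exact, so $M(G)\cong\cok(M(\phi))=\cok(f)\cong N$. Thus $N$ lies in the essential image of $M$. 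This argument is a formal assembly of tools already in hand, so I do not expect a genuine obstacle; the one point deserving care is the finiteness step --- ensuring that $N$ and the syzygy module $\ker\big((D_n^m)^{\oplus r}\to N\big)$ are finitely generated over the Artinian ring $D_n^m$, so that an honest \emph{finite} free presentation exists --- after which exactness, full faithfulness, and the identification $M(W_n^m)\cong D_n^m$ do the rest.
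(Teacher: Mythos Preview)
Your proof is correct and follows essentially the same route as the paper: choose $m,n$ with $F^mN=V^nN=0$, write a finite presentation $(D_n^m)^{\oplus s}\to (D_n^m)^{\oplus r}\to N\to 0$, lift the presenting map via $M(W_n^m)\cong D_n^m$ and full faithfulness to a homomorphism $\psi$ of group schemes, and set $G=\ker\psi$. The only cosmetic difference is that the paper invokes the $5$-Lemma at the end, whereas you unwind the exactness directly by factoring through the image; your justification of the finite presentation via the Artinian property of $D_n^m$ is also a bit more explicit than the paper's.
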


\begin{proof}
	Let $N$ be a left $D$-module of finite length with $F$ and $V$ nilpotent. Suppose that $F^m$ and $V^n$ annihilate $N$. Then there exists an epimorphism of $D$ modules $(D_n^m)^{\oplus r}\twoheadrightarrow N$ for some $r$. Its kernel is again annihilated by $F^m$ and $V^n$; hence there exists a presentation
	$$(D_n^m)^{\oplus s}\xrightarrow{\varphi}(D_n^m)^{\oplus r}\rightarrow N\rightarrow 0.$$
	Since $D_n^m=M(W_n^m)$ and $M$ is fully faithful, we have $\varphi=M(\psi)$ for a unique homomorphism $(W_n^m)^{\oplus r}\xrightarrow{\psi}(W_n^m)^{\oplus s}$. Setting $G(N):=\ker (\psi)$, the $5$-Lemma shows that $N\cong M(G(N))$.
\end{proof}	

Putting the above results together, we see that Theorem \ref{locloc} is proven.

\subsection{A duality theorem of Dieudonn\'e modules}

Let $k$ be a perfect field of characteristic $p>0$, and consider the torsion $W(k)$-module $W(k)[\frac{1}{p}]/W(k)$.
Let $M$ be a finite length $W(k)$-module, define
$$M^\vee:=\Hom_{W(k)}(M,W(k)[\frac{1}{p}]/W(k)).$$

\begin{prop}\label{dual}
The functor $M\mapsto M^\vee$ defines an equivalence from the category of finite length $W(k)$-modules to itself, and there is a functorial isomorphism $M\cong (M^\vee)^\vee$.
\end{prop}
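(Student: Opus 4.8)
The plan is to recognize Proposition~\ref{dual} as a special case of Matlis duality for the complete discrete valuation ring $W(k)$ with dualizing module $E:=W(k)[\tfrac1p]/W(k)$, and to reduce everything to cyclic modules via the structure theorem for finite length modules over a principal ideal domain.

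First I would record two elementary facts about $E$. (a) $E$ is a divisible $W(k)$-module, hence injective: $W(k)$ is a PID, over which divisible $=$ injective, and $E$ is visibly divisible since for $0\ne r=up^n\in W(k)$ (with $u$ a unit) and $x=\tilde x+W(k)\in E$ one has $r\cdot(u^{-1}p^{-n}\tilde x+W(k))=x$. (b) For each $n\ge 1$ evaluation at $1$ gives a natural isomorphism $(W(k)/p^nW(k))^\vee\xrightarrow{\ \sim\ }\tfrac{1}{p^n}W(k)/W(k)\cong W(k)/p^nW(k)$, because a $W(k)$-homomorphism $W(k)/p^nW(k)\to E$ is determined by the image of $1$, which can be an arbitrary element of $E$ annihilated by $p^n$.

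Next, by (a) the contravariant functor $(-)^\vee=\Hom_{W(k)}(-,E)$ is exact, and it plainly commutes with finite direct sums. A finite length $W(k)$-module $M$ is a finitely generated torsion module over the PID $W(k)$, so $M\cong\bigoplus_i W(k)/p^{n_i}W(k)$; combined with (b) this shows $M^\vee\cong\bigoplus_i W(k)/p^{n_i}W(k)$ is again of finite length, so $(-)^\vee$ indeed maps the category of finite length $W(k)$-modules to itself, and $\textrm{length}_{W(k)}(M^\vee)=\textrm{length}_{W(k)}(M)$. (Alternatively this length equality follows from exactness of $(-)^\vee$ and the cyclic case by d\'evissage, avoiding the structure theorem.)

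Finally I would treat the biduality morphism $\mathrm{ev}_M\colon M\to(M^\vee)^\vee$, $m\mapsto(\phi\mapsto\phi(m))$, which is natural in $M$. It is injective: for $0\ne m\in M$ the cyclic submodule $W(k)\cdot m$ is isomorphic to $W(k)/p^jW(k)$ for some $j\ge 1$, the homomorphism $W(k)\cdot m\to E$ sending $m$ to $p^{-j}+W(k)$ is nonzero at $m$, and by injectivity of $E$ it extends to some $\phi\in M^\vee$ with $\phi(m)\ne 0$, whence $\mathrm{ev}_M(m)\ne 0$. Since $\mathrm{ev}_M$ is an injection between finite length $W(k)$-modules of equal length $\textrm{length}((M^\vee)^\vee)=\textrm{length}(M^\vee)=\textrm{length}(M)$, it is an isomorphism. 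The resulting functorial isomorphism $\mathrm{id}\xrightarrow{\sim}(-)^{\vee\vee}$ exhibits the contravariant functor $(-)^\vee$ as its own quasi-inverse, hence as a (contravariant) self-equivalence of the category of finite length $W(k)$-modules, which is exactly the assertion. The only step needing genuine care is the injectivity of $E$: it is what makes both the exactness of $(-)^\vee$ and the extension argument for injectivity of $\mathrm{ev}_M$ work, while the rest is a direct computation with cyclic modules.
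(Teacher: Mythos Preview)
Your proof is correct and follows essentially the same route as the paper: both reduce to cyclic modules $W(k)/p^nW(k)$ via additivity and the structure theorem for finite length modules over the PID $W(k)$, then verify the cyclic case directly. Your write-up is considerably more detailed---you make explicit the Matlis duality framing, the injectivity of $E=W(k)[\tfrac1p]/W(k)$, and you handle the biduality isomorphism by showing $\mathrm{ev}_M$ is injective and then invoking equality of lengths, whereas the paper simply asserts that the cyclic case is ``clear'' and lets naturality of $\mathrm{ev}_M$ plus the direct-sum decomposition do the rest---but the underlying idea is the same.
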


\begin{proof}
	Since the functor is additive, and every $M$ is a direct sum of cyclic modules, it suffices to prove the isomorphism in the case $M=W(k)/p^nW(k)$, which is clear.
\end{proof}	

Now suppose $M$ is a $D$-module, define maps $F^\ast, V^\ast: M^\vee\rightarrow M^\vee$ by

$$(F^\ast m^\ast)(m):=\sigma(m^\ast(Vm))$$
$$(V^\ast m^\ast)(m):=\sigma^{-1}(m^\ast(Fm))$$
for $m\in M$ and $m^\ast\in M^\vee$. Together with the action of $W(k)$ on $M^\vee$ through its action on $\varinjlim W_n(k)$, this turns $M^\vee$ into a $D$-module.

\begin{cor}
The functor $M\mapsto M^\vee$ defines an equivalence from the category of finite length $D$-modules to itself, and there is a functorial isomorphism $M\cong (M^\vee)^\vee$.
\end{cor}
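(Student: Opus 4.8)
The plan is to reduce everything to Proposition~\ref{dual}: the only genuinely new content is to check that the assignment $M\mapsto M^\vee$ respects the extra structure (the operators $F^\ast$, $V^\ast$ and the $\sigma$-twisted $W(k)$-action), that it is functorial on $D$-module homomorphisms, and that the canonical $W(k)$-linear isomorphism $M\xrightarrow{\sim}(M^\vee)^\vee$ of Proposition~\ref{dual} is in fact $D$-linear. Once these are in hand, the corollary follows from the formal fact that a contravariant additive functor $T$ on an abelian category, equipped with a natural isomorphism $\mathrm{id}\xrightarrow{\sim}T\circ T$, is an anti-equivalence with quasi-inverse $T$.

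First I would verify that $(M^\vee,F^\ast,V^\ast)$ is a left $D$-module, i.e. that relations (1)--(3) in the definition of $D$ hold. Each is a one-line computation from the definitions. For $\xi\in W(k)$, $m^\ast\in M^\vee$ and $m\in M$ one has $(F^\ast(\xi m^\ast))(m)=\sigma((\xi m^\ast)(Vm))=\sigma(\xi\cdot m^\ast(Vm))=\sigma(\xi)\cdot(F^\ast m^\ast)(m)$, giving $F^\ast\xi=\sigma(\xi)F^\ast$; relation (2) is analogous with $V^\ast$; and for (3), $(F^\ast V^\ast m^\ast)(m)=\sigma((V^\ast m^\ast)(Vm))=\sigma(\sigma^{-1}(m^\ast(FVm)))=m^\ast(pm)=(pm^\ast)(m)$, and symmetrically $V^\ast F^\ast m^\ast=pm^\ast$, using $FV=VF=p$. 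Since $M^\vee$ has the same finite $W(k)$-length as $M$ by Proposition~\ref{dual}, it again lies in the category of finite length $D$-modules.

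Next I would check functoriality and the double-dual identity. Given a $D$-module homomorphism $f\colon M\to N$, the $W(k)$-linear map $f^\vee\colon N^\vee\to M^\vee$, $n^\ast\mapsto n^\ast\circ f$, commutes with $F^\ast$ and $V^\ast$ because $f$ commutes with $V$ and $F$; e.g. $(f^\vee(F^\ast n^\ast))(m)=(F^\ast n^\ast)(f(m))=\sigma(n^\ast(Vf(m)))=\sigma(n^\ast(f(Vm)))=(F^\ast(f^\vee n^\ast))(m)$. Likewise, let $\iota_M\colon M\to(M^\vee)^\vee$ be the evaluation map, $\iota_M(m)(m^\ast)=m^\ast(m)$, which is a $W(k)$-linear isomorphism natural in $M$ by Proposition~\ref{dual}; then $(F^\ast\iota_M(m))(m^\ast)=\sigma(\iota_M(m)(V^\ast m^\ast))=\sigma((V^\ast m^\ast)(m))=\sigma(\sigma^{-1}(m^\ast(Fm)))=m^\ast(Fm)=\iota_M(Fm)(m^\ast)$, and similarly $V^\ast\iota_M(m)=\iota_M(Vm)$, so $\iota_M$ is an isomorphism of $D$-modules.

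At this point the functor $M\mapsto M^\vee$ has been shown to land in finite length $D$-modules, to be a contravariant additive functor, and to satisfy $\mathrm{id}\cong((-)^\vee)^\vee$ naturally, so the asserted anti-equivalence is immediate. I do not expect a serious obstacle; the one thing to watch is the bookkeeping of the $\sigma$-twists in the definitions of $F^\ast$, $V^\ast$ and in the $W(k)$-action on $\varinjlim W_n(k)$, which are arranged precisely so that $\sigma$ and $\sigma^{-1}$ cancel in the computations above. All the real content --- that the linear dual into $W(k)[\frac1p]/W(k)$ is a perfect duality on finite length modules --- is already contained in Proposition~\ref{dual}.
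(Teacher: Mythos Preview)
Your proof is correct and follows exactly the approach the paper intends: the paper's own proof is the single line ``This is a direct consequence of Proposition~\ref{dual},'' and what you have written is precisely the verification that the $D$-module structure is compatible with the $W(k)$-linear duality of that proposition. Your computations checking the relations $F^\ast\xi=\sigma(\xi)F^\ast$, $F^\ast V^\ast=V^\ast F^\ast=p$, and the $D$-linearity of $\iota_M$ are all accurate, so you have simply filled in the details the paper leaves to the reader.
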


\begin{proof}
	This is a direct consequence of Proposition \ref{dual}.
\end{proof}	

\begin{prop}\label{duality}
For any finite commutative group scheme $G$ over $k$ of local-local type, there is a functorial isomorphism of $D$-modules
$$M(G^\vee)\cong M(G)^\vee.$$
\end{prop}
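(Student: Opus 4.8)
The plan is to prove the statement by reducing, via a copresentation, to the single case $G=W_n^m$, where it becomes a direct computation inside the ring $D$. The first thing to record is that all three functors in sight are exact: $M$ is exact by the Lemma proved above; Cartier duality $G\mapsto G^\vee$ is an exact anti-equivalence on finite commutative $k$-group schemes; and $N\mapsto N^\vee=\Hom_{W(k)}(N,W(k)[\frac{1}{p}]/W(k))$ is exact because $W(k)[\frac{1}{p}]/W(k)$ is an injective $W(k)$-module (this is what underlies Proposition \ref{dual}). Hence both $G\mapsto M(G^\vee)$ and $G\mapsto M(G)^\vee$ are exact \emph{covariant} functors from finite local-local $k$-group schemes to finite-length $D$-modules with $F,V$ nilpotent, and it suffices to produce an isomorphism between them.

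Now fix $m,n$ with $F_G^m=V_G^n=0$ and choose, by Proposition \ref{copres}, a copresentation $0\to G\to (W_n^m)^{\oplus r}\xrightarrow{\psi}(W_n^m)^{\oplus s}$. Two auxiliary facts are needed. (a) The Cartier dual of $W_n^m$ is $W_m^n$: Cartier duality interchanges Frobenius and Verschiebung, so $(W_n^m)^\vee$ is local-local with $F^n=V^m=0$, and the explicit Cartier pairing on truncated Witt groups pins it down to $W_m^n$ (standard; see \cite{Pink} or \cite{DG}). (b) There is an isomorphism of $D$-modules $(D_n^m)^\vee\cong D_m^n$. Granting these: dualizing the copresentation gives $(W_m^n)^{\oplus s}\xrightarrow{\psi^\vee}(W_m^n)^{\oplus r}\to G^\vee\to 0$, and applying the exact contravariant functor $M$ together with Proposition \ref{iso}(1) yields an exact sequence $0\to M(G^\vee)\to (D_m^n)^{\oplus r}\to (D_m^n)^{\oplus s}$. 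On the other hand, applying $M$ to the original copresentation gives $M(W_n^m)^{\oplus s}\to M(W_n^m)^{\oplus r}\to M(G)\to 0$, and then applying $(-)^\vee$ gives $0\to M(G)^\vee\to ((D_n^m)^\vee)^{\oplus r}\to ((D_n^m)^\vee)^{\oplus s}$. Provided the isomorphism in (b) is sufficiently natural — in particular compatible with the action of $\End(W_n^m)=D_n^m$, so that it intertwines the two middle maps, both of which are induced by the matrix $\psi$ with entries in $D_n^m$ — the five lemma identifies the kernels and gives $M(G^\vee)\cong M(G)^\vee$. A routine argument — independence of the chosen copresentation, together with lifting a morphism $G\to G'$ to a morphism of copresentations, possible since the $(W_n^m)^{\oplus r}$ are injective objects in this category (cf. Proposition \ref{embed}) — promotes this to an isomorphism of functors, as required.

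The real work is concentrated in fact (b), which I expect to be the main obstacle. One must write down the $W(k)$-module structure of $D_n^m=D/(DF^m+DV^n)$ explicitly — a ``staircase'' of cyclic summands $W(k)/p^{\min(\cdot,\cdot)}W(k)$, one for each of the $m+n-1$ monomials $V^{n-1},\dots,V,1,F,\dots,F^{m-1}$, whose orders sum to $mn$ — and then check that the twisted operators on the dual, which by definition are the $\sigma$-semilinear maps $(F^\ast\phi)(x)=\sigma(\phi(Vx))$ and $(V^\ast\phi)(x)=\sigma^{-1}(\phi(Fx))$, correspond under the evident $W(k)$-linear identification to the $F,V$-action on $D_m^n=D/(DF^n+DV^m)$; keeping track of the Frobenius twists and the semilinearity is the only genuine subtlety, and it is exactly what also makes the naturality needed in the five-lemma step go through. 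As an alternative to (a)+(b), one can reformulate the entire argument through the covariant Dieudonné module $M_\ast(G):=\varinjlim_{m,n}\Hom(W_m^n,G)$, which is canonically $M(G^\vee)$ once (a) is known, and then exhibit a perfect pairing $M_\ast(G)\times M(G)\to W(k)[\frac{1}{p}]/W(k)$ by composing homomorphisms $W_m^n\to G\to W_{n'}^{m'}$ and reading off a residue; non-degeneracy of this pairing again reduces, by the copresentation, to the case $G=W_n^m$, where it is the same computation as in (b).
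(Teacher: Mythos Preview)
Your proposal is correct and follows essentially the same approach as the paper: both reduce the statement to the Cartier duality $(W_n^m)^\vee\cong W_m^n$ together with the identification $M(W_n^m)\cong D_n^m$, with the general case recovered from this via a copresentation. The paper's proof is in fact only a two-line sketch (``this essentially reduces to the duality between $W_n^m$ and $W_m^n$''), so your version, which spells out the exactness, the five-lemma reduction, and isolates the computation $(D_n^m)^\vee\cong D_m^n$ as the crux, is a faithful and considerably more detailed expansion of what the paper intends.
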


\begin{proof}
Recall that 
$D_n^m:=D/(DF^m+DV^n)\cong \End(W_n^m)\cong M(W_n^m).$
This essentially reduces to the duality between $W_n^m$ and $W_m^n$.
\end{proof}

\section{The Dieudonn\'e functor in the \'etale case}

Let $D$ act on $W_n$ on the left, where $F$ and $V$ act as such and $\xi\in W(k)$ through multiplication by $\sigma^{-n}(\xi)$. Then the monomorphisms $v: W_n\hookrightarrow W_{n+1}$ are $D$-equivariant. Also, the $W_n^m$ form a fundamental system of infinitesimal neighborhoods of zero in all $W_n$. Thus for any finite commutative group scheme $G$ of local-local type, we have
$$M(G)=\varinjlim _{m,n} \Hom(G,W_n^m)=\varinjlim _{n} \Hom(G,W_n).$$
Using the latter description we now give a similar results for finite commutative group schemes of reduced-local type:

\begin{thm}
	The functor $G\mapsto M(G)=\varinjlim _{n} \Hom(G,W_n)$ induces an anti-equivalence from the category of finite commutative \'etale group schemes over $k$ of $p$-power order to the category of left $D$-modules of finite length with $F$ an isomorphism.
	Moreover, $\textrm{length}_{W{k}}M(G)=\log_p|G|$.
	
\end{thm}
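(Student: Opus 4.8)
The plan is to mirror the proof of the local--local case (Theorem \ref{locloc}), with the constant group schemes $\underline{\ZZ/p^n\ZZ}$ taking over the role played there by the injective cogenerators $W_n^m$. I would first treat the case $k=\bar{k}$, which is the only one needed in this paper; the case of a general perfect base field then follows by Galois descent applied simultaneously to finite \'etale group schemes and to finite $W(k)$-length $D_k$-modules with $F$ an isomorphism. For $k=\bar{k}$ the functor $G\mapsto G(k)$ is an equivalence from finite \'etale group schemes of $p$-power order to finite abelian $p$-groups, and the standard "injective" building block is $\underline{\ZZ/p^n\ZZ}$.

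The key computation is that for \'etale $G$ one has $\Hom(G,W_n)=\Hom_{\mathrm{gp}}(G(k),W_n(k))$ (a homomorphism from a disjoint union of copies of $\spec k$ into $W_n$ is just a map of the groups of $k$-points), and since each $W_n^m$ is infinitesimal while $G$ is reduced, passing to the limit gives $M(G)=\varinjlim_n\Hom(G,W_n)=\Hom_{\mathrm{gp}}(G(k),\varinjlim_n W_n(k))=\Hom_{\mathrm{gp}}(G(k),W(k)[\frac{1}{p}]/W(k))$. Here $F$ acts through the Witt Frobenius of $W(k)[\frac{1}{p}]/W(k)$, which is $\sigma$ and is bijective because $k$ is perfect, so $F$ is an isomorphism on $M(G)$; and $V=pF^{-1}$ is automatically nilpotent once $M(G)$ has finite $W(k)$-length (equivalently, $G$ being \'etale of $p$-power order is of reduced--local type). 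In particular $M(\underline{\ZZ/p^n\ZZ})=(W(k)[\frac{1}{p}]/W(k))[p^n]\cong W(k)/p^nW(k)$ with $F=\sigma$. Since $W(k)[\frac{1}{p}]/W(k)$ is a divisible, hence injective, $\ZZ_p$-module, the (contravariant) functor $M$ is exact; together with the multiplicativity of orders in short exact sequences this yields $\textrm{length}_{W(k)}M(G)=\log_p|G|$ by d\'evissage, the base case $G=\underline{\ZZ/p\ZZ}$ giving $M(G)=k$ of length $1$.

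Full faithfulness I would deduce exactly as in the local--local case. Every $G$ admits a copresentation $0\rightarrow G\rightarrow(\underline{\ZZ/p^n\ZZ})^{\oplus r}\rightarrow(\underline{\ZZ/p^n\ZZ})^{\oplus s}$ (the corresponding statement for finite abelian $p$-groups is elementary); applying the exact functor $M$ produces a presentation of $D$-modules, and comparing $\Hom(H,-)$ of the first sequence with $\Hom_D(-,M(H))$ of the second via the five lemma reduces the claim to bijectivity of $\Hom(\underline{\ZZ/p^m\ZZ},\underline{\ZZ/p^n\ZZ})\rightarrow\Hom_D(M(\underline{\ZZ/p^n\ZZ}),M(\underline{\ZZ/p^m\ZZ}))$. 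Both sides equal $\ZZ/p^{\min(m,n)}\ZZ$: the right-hand side because a $D$-linear map between the modules $W(k)/p^\bullet$ must commute with the semilinear operator $\sigma$ and is therefore determined by its restriction to the $\sigma$-fixed submodule $\ZZ/p^\bullet\ZZ$, and one checks the map is the obvious isomorphism.

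The main obstacle is essential surjectivity, and this is exactly where the hypothesis $k=\bar{k}$ is used. Given a finite $W(k)$-length left $D$-module $N$ with $F$ an isomorphism, $N$ is killed by some $p^n$ and one wants $N\cong M(G)$. It suffices to show that the category of such $N$ killed by $p^n$ is equivalent, via $N\mapsto N^{F=\sigma}$, to finite-length $\ZZ/p^n\ZZ$-modules, i.e. that the natural map $W(k)\otimes_{\ZZ_p}N^{F=\sigma}\rightarrow N$ is an isomorphism; then $N^{F=\sigma}\cong\bigoplus_i\ZZ/p^{n_i}\ZZ$ forces $N\cong\bigoplus_i M(\underline{\ZZ/p^{n_i}\ZZ})=M(\bigoplus_i\underline{\ZZ/p^{n_i}\ZZ})$. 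This descent statement is the semilinear-algebra heart of the \'etale theory: it follows from the surjectivity of $\wp=F-\id$ on $W_m(\bar{k})$ --- equivalently from Lang's theorem that every $\mathrm{GL}_n$-torsor over a finite field, and hence (by a limit argument) over $\bar{\FF}_p$, is trivial --- which allows one, by reduction modulo $p$ and successive approximation, to produce an $F$-stable free $\ZZ/p^n\ZZ$-submodule of $N$ that spans $N$ over $W(k)$. With exactness, the length formula, full faithfulness, and essential surjectivity all in place, the asserted anti-equivalence follows.
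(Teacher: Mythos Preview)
Your proposal is correct and follows exactly the strategy the paper sketches (reduce to $k=\bar{k}$ using Galois descent, then identify $M(G)$ with $\Hom_{\mathrm{gp}}(G(k),W(k)[\frac{1}{p}]/W(k))$ and argue via the building blocks $\underline{\ZZ/p^n\ZZ}$), supplying the details the paper defers to \cite{Pink}. One notational quibble: in your essential-surjectivity step you presumably mean $N^{F=1}$ (the $\ZZ_p$-module of $F$-fixed vectors) rather than $N^{F=\sigma}$, since on $M(\underline{\ZZ/p^n\ZZ})\cong W(k)/p^nW(k)$ the $\sigma$-linear operator $F$ \emph{is} $\sigma$, and it is its fixed submodule $\ZZ/p^n\ZZ$ that recovers the underlying abelian group.
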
	
\begin{proof}
	The idea is to prove this over algebraically closed field $\bar{k}$ first, then use Galois decent and functoriality of $M$ to prove the general case. See \cite[~p.71]{Pink}.
\end{proof}

\section{The Dieudonn\'e functor in the general case}

Recall from Chapter 2 that any finite commutative group scheme $G$ of $p$-power order admits a unique decomposition
$$G=G_{rl}\oplus G_{lr}\oplus G_{ll}.$$
Previously we have already defined $M(G_{ll})$ and $M(G_{rl})$. Since $G_{lr}^\vee$ is of reduced-local type, we can define 
$$M(G_{lr}):=M(G_{lr}^\vee)^\vee$$ and thus
$$M(G):=M(G_{rl})\oplus M(G_{lr}^\vee)^\vee\oplus M(G_{ll}).$$
By construction this is a finite length left $D$-module, and we have
$$\textrm{length}_{W(k)}M(G)=\log_p|G|.$$

\begin{lemma}
	Every finite length left $D$-module has a unique and functorial decomposition 
	$$M=M_{rl}\oplus M_{lr}\oplus M_{ll}$$
	where $F$ is isomorphic (resp. nilpotent, nilpotent) and $V$ is nilpotent (resp. isomorphic, nilpotent) on $M_{rl}$ (resp. $M_{lr}$, $M_{ll}$).
\end{lemma}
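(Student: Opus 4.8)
The plan is to build the decomposition by two successive applications of Fitting's lemma — first for the operator $F$, then for $V$ restricted to the part on which $F$ is nilpotent — and to check at each stage that the Fitting summands are actually $D$-submodules, not merely $W(k)$-submodules. Two elementary observations will drive the argument. First, since $W(k)$ is a discrete valuation ring with uniformizer $p$, every $W(k)$-module of finite length is killed by a power of $p$, so $p$ acts nilpotently on $M$. Second, since $p=VF=FV$ in $D$, the identity $pm=V(Fm)$ shows that $\ker F$, and symmetrically $\ker V$, is annihilated by $p$.

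First I would apply Fitting's lemma to the $\sigma$-semilinear endomorphism $F$ of $M$. Because $\sigma$ is a ring automorphism, $\ker(F^n)$ and $\im(F^n)$ are $W(k)$-submodules, so for $N\gg0$ one gets $M=M_0\oplus M_1$ with $M_0=\ker(F^N)$, $M_1=\im(F^N)$, and $F$ nilpotent on $M_0$, bijective on $M_1$. The key point is that both summands are $D$-stable. Stability under $F$ is immediate; for $V$ one checks that $V(\im F^N)=pF^{N-1}(M)\subseteq\im(F^{N-1})=\im(F^N)$, while $\ker(F^N)$ is $V$-stable because for $m\in\ker(F^N)$ we have $F^{N-1}m\in\ker F$, hence $pF^{N-1}m=0$, hence $F^{N}(Vm)=F^{N-1}(pm)=pF^{N-1}m=0$. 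On $M_1$ the operator $F$ is invertible, so $V=pF^{-1}$ there, and since $p$ is nilpotent, $V$ is nilpotent on $M_1$; thus $M_1$ already has the shape required of $M_{rl}$, which incidentally explains why there is no fourth ``$rr$''-summand.

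Next I would run Fitting's lemma for $V$ on the $D$-module $M_0$. Since $F$ and $V$ enter the relations $FV=VF=p$ symmetrically, the same reasoning shows the two Fitting summands $M_0=M_{00}\oplus M_{01}$ are $D$-submodules, with $V$ nilpotent on $M_{00}$ and bijective on $M_{01}$, while $F$ is nilpotent on both (being nilpotent on all of $M_0$). Setting $M_{rl}:=M_1$, $M_{lr}:=M_{01}$, $M_{ll}:=M_{00}$ then yields the claimed decomposition with the stated behavior of $F$ and $V$.

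For uniqueness and functoriality I would establish the intrinsic formulas $M_{rl}=\bigcap_{n\geq1}\im(F^n)$, $M_{lr}=\bigcap_{n\geq1}\im(V^n)$, and $M_{ll}=\bigl(\bigcup_{n\geq1}\ker(F^n)\bigr)\cap\bigl(\bigcup_{n\geq1}\ker(V^n)\bigr)$; each is verified by decomposing along $M_{rl}\oplus M_{lr}\oplus M_{ll}$ and using that $F$, resp. $V$, is bijective on one summand and nilpotent on the other two. These descriptions involve only $F$, $V$ and the module structure, so the decomposition is canonical, and any homomorphism of $D$-modules $f\colon M\to M'$ satisfies $f(\im F^n)\subseteq\im (F')^n$ and $f(\ker F^n)\subseteq\ker (F')^n$, and likewise for $V$, hence respects the splittings. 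The main obstacle is exactly the one flagged above — showing the Fitting summands are genuinely $D$-submodules — and the two elementary facts ($p$ nilpotent on $M$, and $\ker F$, $\ker V$ killed by $p$) are precisely what make that go through.
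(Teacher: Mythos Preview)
Your argument is correct and follows essentially the same route as the paper: apply Fitting's lemma to $F$ to split off the part where $F$ is bijective, then apply it to $V$ on the remaining $F$-nilpotent piece. The paper's proof is considerably terser—it simply asserts that the images $F^nM$ are $D$-submodules and that uniqueness and functoriality are ``clear''—whereas you spell out the $D$-stability checks and the intrinsic characterizations, but the underlying strategy is identical.
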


\begin{proof}
	The images of $F^n: M\rightarrow M$ form a descending sequence of $D$-submodules of $M$. Since $M$ has finite length, this sequence stabilizes, say with $F^nM=M'$ for all $n\gg 0$. Then $F: M'\rightarrow M'$ is an isomorphism, and by looking at the length we have $M=M'\oplus \ker(F^n)$. Repeating the argument with $V$ on $\ker(F^n)$ we obtain the desired decomposition. Uniqueness and functoriality are clear.
\end{proof}	

Recall from Proposition \ref{duality} that there is a functorial isomorphism $M(G_{ll}^\vee)\cong M(G_{ll})^\vee$. This isomorphism extends to $G$. We have now proven:

\begin{thm}
The above functor $M$ induces an anti-equivalence of categories between finite commutative groups schemes over $k$ of $p$-power order and left $D$-modules of finite length. Moreover, $\textrm{length}_{W{k}}M(G)=\log_p|G|$, and there is a functorial isomorphism $M(G^\vee)\cong M(G)^\vee$.
\end{thm}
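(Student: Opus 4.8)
The plan is to reduce the general statement to the three cases already settled --- local-local (Theorem~\ref{locloc}), reduced-local (the \'etale case), and local-reduced --- by showing that the source and target categories each decompose as a product of three full subcategories in a way that $M$ respects factor by factor.

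First I would record that the category of finite commutative group schemes over $k$ of $p$-power order is the product of its three full subcategories of objects of reduced-local, local-reduced, and local-local type: the decomposition $G=G_{rl}\oplus G_{lr}\oplus G_{ll}$ is functorial and unique, and since any homomorphism between group schemes of different types vanishes, $\Hom(G,H)$ splits as $\Hom(G_{rl},H_{rl})\oplus\Hom(G_{lr},H_{lr})\oplus\Hom(G_{ll},H_{ll})$. Dually, the Lemma on $D$-modules gives a functorial decomposition $M=M_{rl}\oplus M_{lr}\oplus M_{ll}$; since a homomorphism of $D$-modules commutes with $F$, it preserves both the locus where $F$ is an isomorphism and the locus where $F$ is nilpotent (and likewise for $V$), so homomorphisms between $D$-modules concentrated in different types vanish. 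Hence the category of finite-length left $D$-modules is also a product of three full subcategories: $F$ iso/$V$ nilpotent, $F$ nilpotent/$V$ iso, $F$ nilpotent/$V$ nilpotent. It therefore suffices to check that $M$ carries each of the three group-scheme subcategories anti-equivalently onto the corresponding $D$-module subcategory.

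For the local-local part this is Theorem~\ref{locloc}. For the reduced-local part this is the \'etale-case theorem, using that reduced-local group schemes of $p$-power order are exactly the finite commutative \'etale group schemes of $p$-power order, and that the target there is the subcategory with $F$ an isomorphism. For the local-reduced part, $M|_{G_{lr}}$ is by definition the composite of Cartier duality $G_{lr}\mapsto G_{lr}^\vee$ (an anti-equivalence onto reduced-local group schemes), the \'etale-case functor $M$ (an anti-equivalence onto $D$-modules with $F$ an isomorphism), and the $D$-module duality $N\mapsto N^\vee$ (a contravariant self-equivalence of finite-length $D$-modules). The composite of three contravariant functors is contravariant, i.e.\ an anti-equivalence; moreover the defining formulas $(F^\ast m^\ast)(m)=\sigma(m^\ast(Vm))$ and $(V^\ast m^\ast)(m)=\sigma^{-1}(m^\ast(Fm))$ show that $N\mapsto N^\vee$ interchanges the conditions ``$F$ iso'' and ``$V$ iso'', so the composite lands in the local-reduced $D$-module subcategory, as required. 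Patching the three anti-equivalences over the product decompositions yields the asserted anti-equivalence. Exactness and the length identity $\mathrm{length}_{W(k)}M(G)=\log_p|G|$ then follow because both are additive over direct sums and hold in each of the three cases: in the local-local and \'etale cases by the cited theorems, and in the local-reduced case because $G_{lr}$ and $G_{lr}^\vee$ have the same order and $W(k)$-duality preserves length.

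Finally, for the functorial isomorphism $M(G^\vee)\cong M(G)^\vee$: for local-local $G$ it is Proposition~\ref{duality}; for reduced-local $G$ we get $M(G^\vee)=M(G^{\vee\vee})^\vee\cong M(G)^\vee$ from the definition of $M$ on local-reduced objects together with $G^{\vee\vee}\cong G$; and for local-reduced $G$ we have $M(G)=M(G^\vee)^\vee$ by definition, hence $M(G)^\vee\cong M(G^\vee)^{\vee\vee}\cong M(G^\vee)$. For a general $G=G_{rl}\oplus G_{lr}\oplus G_{ll}$ one has $G^\vee=G_{rl}^\vee\oplus G_{lr}^\vee\oplus G_{ll}^\vee$ with summands of local-reduced, reduced-local and local-local type respectively, and $W(k)$-duality of $D$-modules respects the direct-sum decomposition (interchanging the rl- and lr-summands), so the three partial isomorphisms assemble into one, naturally in $G$. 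The main obstacle here is purely organizational: keeping variances straight through the composite that defines $M$ on local-reduced objects, and verifying that the type-by-type decompositions on the two sides genuinely match under $M$ --- including the $F\leftrightarrow V$ swap induced by duality --- after which everything is formal.
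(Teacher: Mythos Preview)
Your proposal is correct and follows exactly the approach the paper intends: the paper's own ``proof'' is the single line ``We have now proven:'' placed after the decomposition lemma for $D$-modules and the remark that the duality isomorphism of Proposition~\ref{duality} extends to all of $G$, so you have simply written out the formal patching argument (type-by-type anti-equivalence, additivity of the length formula, and assembly of the duality isomorphisms) that the paper leaves to the reader.
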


\section{The Dieudonn\'e functor for $p$-divisible groups}

First let us prove a lemma.

\begin{lemma}
	Let $\cdots\rightarrow M_{n+1}\xrightarrow{\pi_n}M_n\rightarrow \cdots \rightarrow M_1$ be a projective system of $W(k)-modules$ with the following properties.
	
	(1) The sequence $M_{n+1}\xrightarrow{p^n}M_{n+1}\xrightarrow{\pi_n}M_n\rightarrow 0$ is exact for all $n$.  
	
	(2) $M_n$ is of finite length for all $n$.
	
	Let $M=\varprojlim_nM_n$. Then $M$ is a finitely generated $W(k)$-module and the canonical map $M\rightarrow M_n$ identifies $M_n$ with $M/p^nM$ for all $n$.
\end{lemma}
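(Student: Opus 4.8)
The plan is to exploit the exact sequences in (1) to control the structure of the inverse limit level-by-level. First I would observe that condition (1), applied repeatedly, shows that $\pi_n$ is surjective and that $\ker(\pi_n) = p^n M_{n+1}$; more generally, composing transition maps, the map $M_{n+m} \to M_n$ is surjective with kernel $p^n M_{n+m}$. A standard consequence is that the Mittag--Leffler condition holds for the system $(M_n)$, so $\varprojlim$ is exact on the relevant short exact sequences and $\varprojlim^1$ vanishes; in particular the canonical map $M = \varprojlim_n M_n \to M_n$ is surjective for every $n$. Call this map $\rho_n$. The goal is then to identify $\ker(\rho_n)$ with $p^n M$.

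Next I would show $\ker(\rho_n) = p^n M$. The inclusion $p^n M \subseteq \ker(\rho_n)$ is immediate once we know that $p^n$ kills $M_n$, which follows from (1) since $p^n M_n = \pi_{n-1}\circ(p^{n-1}\text{-multiplication})$ composed appropriately — more directly, the exactness of $M_{n+1}\xrightarrow{p^n}M_{n+1}\xrightarrow{\pi_n}M_n\to 0$ together with $M_n = M_{n+1}/p^n M_{n+1}$ gives $p^n M_n = 0$. For the reverse inclusion, take $x = (x_m)_m \in \ker(\rho_n)$, so $x_n = 0$ in $M_n$, hence $x_m \in \ker(M_m \to M_n) = p^n M_m$ for all $m \geq n$. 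I want to produce $y = (y_m) \in M$ with $p^n y = x$. For each $m \geq n$, the set $S_m = \{ z \in M_m : p^n z = x_m\}$ is nonempty; and one checks $\pi_m(S_{m+1}) \subseteq S_m$. Since each $S_m$ is a nonempty finite set (as $M_m$ has finite length, hence finite cardinality — here I use that $W(k)$ is, by the theorem earlier in the excerpt, a complete DVR with residue field $k$; if $k$ is infinite one instead argues with the profiniteness coming from the $p$-adic topology, but finite length over a local ring with the descending chain condition still gives a compact setup), the inverse limit $\varprojlim_m S_m$ is nonempty by the usual compactness argument for inverse limits of nonempty finite sets. Any element of it is the desired $y$. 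Thus $\ker(\rho_n) = p^n M$ and $\rho_n$ induces an isomorphism $M/p^n M \xrightarrow{\ \sim\ } M_n$.

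Finally, finite generation of $M$: from $M/pM \cong M_1$, which has finite length, pick $x_1, \dots, x_r \in M$ whose images generate $M_1$ over $W(k)/pW(k) = k$. The submodule $N = \sum W(k) x_i$ satisfies $N + pM = M$. Then $M/N = p(M/N)$, and since $M$ is $p$-adically separated — because $\bigcap_n p^n M = \bigcap_n \ker(\rho_n) = \ker(\varprojlim \text{projections}) = 0$ — and $p$-adically complete (it is an inverse limit of the $M_n = M/p^n M$), Nakayama's lemma in the complete/topological form forces $M/N = 0$, i.e. $M = N$ is finitely generated. The main obstacle is the reverse inclusion $\ker(\rho_n) \subseteq p^n M$: it is not purely formal from surjectivity of the transition maps, and requires the compactness/inverse-limit argument on the fibers $S_m$ (equivalently, a vanishing $\varprojlim^1$ statement for the system $\ker(p^n \text{ on } M_m)$). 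Everything else is bookkeeping with the exact sequences and a standard application of completeness.
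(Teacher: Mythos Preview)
Your approach is essentially the same as the paper's, and the argument is correct in outline. The paper proceeds slightly more directly: it first observes (as you do) that
\[
M_{n+m}\xrightarrow{\ p^n\ }M_{n+m}\longrightarrow M_n\longrightarrow 0
\]
is exact for all $m$, and then simply takes $\varprojlim_m$ of this right-exact sequence, invoking the Mittag--Leffler condition (which holds because all terms have finite length over $W(k)$) to obtain $M\xrightarrow{p^n}M\to M_n\to 0$ in one stroke. For finite generation, the paper lifts generators of $M_1$ to a map $W(k)^r\to M$, observes it is surjective modulo $p^n$ for every $n$, and concludes by taking the inverse limit of surjections between finite-length modules. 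Your Nakayama argument is an equivalent repackaging.

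One genuine imprecision to fix: in your fiber argument you write that each $S_m$ is a ``nonempty finite set (as $M_m$ has finite length, hence finite cardinality).'' This is false whenever $k$ is infinite --- finite length over $W(k)$ does not imply finite cardinality. Your parenthetical recovery (``profiniteness\ldots compact setup'') is pointing the right way but is too vague to count as a proof. The clean fix is exactly the ML argument you already used for surjectivity: each $S_m$ is a coset of the $p^n$-torsion submodule $M_m[p^n]$, and since the $M_m[p^n]$ are Artinian $W(k)$-modules, the images $\mathrm{Im}(M_{m'}[p^n]\to M_m[p^n])$ stabilize, so the system $(M_m[p^n])$ satisfies Mittag--Leffler and $\varprojlim^1$ vanishes. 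This gives nonemptiness of $\varprojlim S_m$ without any finiteness-of-cardinality claim. Once you replace the finite-set remark with this, your proof is complete and matches the paper's.
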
	

\begin{proof}
	From (1) it follows that 
	$$M_{n+m}\xrightarrow{p^n}M_{n+m}\xrightarrow{\pi_n\circ \cdots \circ\pi_{n+m-1}}M_n\rightarrow 0$$
	is exact for all $n$ and $m$. Taking the inverse limit over $m$, we obtain an exact sequence 
	$$M\xrightarrow{p^n}M\rightarrow M_n\rightarrow 0$$
	where the second map is the canonical projection, hence the last assertion. 
	
	Now let $m_1, \dots, m_r$ be elements in $M$ generating $M/pM=M_1$; consider the $W(k)$-module homomorphism $\varphi: W(k)^r\rightarrow M$ such that $\varphi(a_1,\dots,a_r)=a_1m_1+\dots+a_rm_r$. It induces surjective maps $W(k)^r/p^nW(k)^r\rightarrow M/p^nM$ for all $n$, hence is surjective as an inverse limit of surjective maps of finite length modules.
\end{proof}	

\begin{defn}
	Let $H=(G_n,i_n)$ be a $p$-divisible group over $k$. Define $$M(H):=\varprojlim_nM(G_n).$$
\end{defn}

Combining previous results, we have the following:

\begin{thm}
	The functor $H\mapsto M(H)$ induces an anti-equivalence between the category of $p$-divisible groups over $k$ and the category of $D$-modules which are free of finite rank over $W(k)$.
	
	Furthermore, we have:
	
	(1) For any perfect extension $K/k$, there is a functorial isomorphism of $D$-modules $$M(H\otimes_kK)\cong W(K)\otimes_{W(k)}M(H).$$
	
	(2) If $H^\vee$ is the dual of $H$, then
	$$M(H^\vee)=\textrm{Hom}_{W(k)}(M(H),W(k)),$$
	with $(F_{M(H^\vee)}(f))(m)=f(V_M(m))^{(p)}$, $(V_{M(H^\vee)}(f))(m)=f(F_M(m))^{p^{-1}}$.
\end{thm}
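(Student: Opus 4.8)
The plan is to assemble the statement from the three functors already constructed (local-local, étale/reduced-local, and the general finite case) together with the limit lemma proved just above, and then to extract the structural consequences about $M(H)$ being free of finite rank and the compatibilities (1) and (2).

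First I would verify that $M(H) := \varprojlim_n M(G_n)$ is a $D$-module which is free of finite rank over $W(k)$. The transition maps $M(G_{n+1}) \to M(G_n)$ are induced (contravariantly) by the closed immersions $i_n \colon G_n \hookrightarrow G_{n+1}$; by the basic property (2) of $p$-divisible groups, the short exact sequence $0 \to G_1 \to G_{n+1} \xrightarrow{j_{1,n}} G_n \to 0$ dualizes, under the exact functor $M$, to a short exact sequence of $D$-modules, and more generally the identity $G_{m+n} \xrightarrow{p^m} G_{m+n}$ factoring through $G_n$ shows that $M(G_{n+1}) \xrightarrow{p^n} M(G_{n+1}) \xrightarrow{\pi_n} M(G_n) \to 0$ is exact. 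Since each $M(G_n)$ has finite $W(k)$-length $n h$, the limit lemma applies verbatim and yields that $M(H)$ is finitely generated over $W(k)$ with $M(H)/p^n M(H) \cong M(G_n)$. To see freeness, note $M(H)/pM(H) \cong M(G_1)$ has $k$-dimension $h$, while by the length formula $\operatorname{length}_{W(k)} M(H)/p^n M(H) = nh$; a finitely generated module over the DVR $W(k)$ with these two numerical invariants must be free of rank $h$ (any torsion summand $W(k)/p^a W(k)$ would stop contributing to $\operatorname{length}(-/p^n)$ once $n \ge a$, contradicting linear growth). This gives the functor $H \mapsto M(H)$ into free finite-rank $D$-modules.

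Next I would establish that this functor is an anti-equivalence, by exhibiting a quasi-inverse. Given a $D$-module $N$ free of finite rank over $W(k)$, set $N_n := N/p^n N$; each $N_n$ is a finite-length $D$-module, so by the general finite-case anti-equivalence there is a finite commutative group scheme $G_n$ of $p$-power order with $M(G_n) \cong N_n$, and the surjections $N_{n+1} \twoheadrightarrow N_n$ correspond to closed immersions $i_n \colon G_n \hookrightarrow G_{n+1}$ whose cokernel, being $\ker(N_{n+1} \xrightarrow{p^n} N_{n+1})\big/ p N_{n+1}$-dual, matches the Barsotti--Tate exactness condition; one checks the order of $G_n$ is $p^{n \cdot \operatorname{rank} N}$ from the length formula, so $(G_n, i_n)$ is a $p$-divisible group $H(N)$. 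Functoriality of $M$ on finite group schemes and the universal property of $\varprojlim$ make $H \mapsto M(H)$ and $N \mapsto H(N)$ mutually quasi-inverse; full faithfulness reduces to the finite-level statement plus $\operatorname{Hom}(H, H') = \varprojlim \operatorname{Hom}(G_n, G'_n)$.

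Finally, for the two compatibilities: statement (1) follows by applying $W(K) \otimes_{W(k)} -$ to $M(H) = \varprojlim M(G_n)$ and using that base change to a perfect extension commutes with the finite-level Dieudonné functor (already recorded for each $M(G_n)$) and, because each $M(G_n)$ has finite length and $W(K)$ is flat over $W(k)$, commutes with the inverse limit. Statement (2) follows by passing to the limit in the finite-level duality $M(G_n^\vee) \cong M(G_n)^\vee = \operatorname{Hom}_{W(k)}(M(G_n), W(k)[\tfrac1p]/W(k))$, using that $H^\vee = (G_n^\vee, j_{1,n}^\vee)$ and that for $M(H)$ free of finite rank one has a canonical identification $\varprojlim_n \operatorname{Hom}_{W(k)}(M(G_n), W(k)[\tfrac1p]/W(k)) \cong \operatorname{Hom}_{W(k)}(M(H), W(k))$ (the dual of $M(H)/p^n M(H)$ is the $p^n$-torsion of $\operatorname{Hom}_{W(k)}(M(H), W(k))[\tfrac1p]/\operatorname{Hom}_{W(k)}(M(H),W(k))$); the formulas for $F$ and $V$ on the dual are inherited termwise from the definitions $(F^\ast m^\ast)(m) = \sigma(m^\ast(Vm))$, $(V^\ast m^\ast)(m) = \sigma^{-1}(m^\ast(Fm))$ already given, rewritten in terms of the $W(k)$-linear dual via the pairing with $W(k)[\tfrac1p]/W(k)$. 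I expect the main obstacle to be the careful bookkeeping in the quasi-inverse construction — checking that the finite group schemes $G_n$ produced from $N/p^n N$ genuinely assemble into a $p$-divisible group (i.e.\ that the Barsotti--Tate exactness axiom holds, not merely that orders multiply correctly) — together with the compatibility of the $\sigma$-twists in the $W(k)$-action across the transition maps when identifying the limit of the $W(k)[\tfrac1p]/W(k)$-duals with the honest $W(k)$-linear dual.
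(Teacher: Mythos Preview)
The paper does not give a proof of this theorem: it simply writes ``Combining previous results, we have the following:'' and states the theorem, then moves on. Your proposal is a correct and careful elaboration of exactly the argument the paper leaves implicit --- applying the preceding limit lemma to the system $M(G_n)$ (after verifying its hypothesis from the defining exact sequence of a $p$-divisible group and the exactness of $M$), deducing freeness from the linear growth of lengths, and then passing to the limit in the finite-level anti-equivalence and duality statements. There is nothing substantively different to compare; your write-up is essentially the proof the paper omits.
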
	

We call $M(H)$ the Dieudonn\'e module of $H$. Recall that $D$ is the ring of ``non-commutative polynomials'' over $W(k)$ in two variables $F$ and $V$, subject to the following relations:

(1) $F\xi=\sigma(\xi)F$, $\forall \xi\in W(k)$,

(2) $V\sigma(\xi)=\xi V$, $\forall \xi\in W(k)$,

(3) $FV=VF=p$.

To give a free $D$-module $M$ of finite rank over $W(k)$ is equivalent to give a free $W(k)$-module $M$ of finite rank together with two maps $F: M\rightarrow M$ and $V: M\rightarrow M$, such that

(1) $F$ is $\sigma$-linear: $F(\xi m)=\sigma(\xi)F(m)$, $\forall \xi\in W(k), m\in M$,

(2) $V$ is $\sigma^{-1}$-linear: $V(\xi m)=\sigma^{-1}(\xi)V(m)$, $\forall \xi\in W(k), m\in M$,

(3) $F\circ V=V\circ F=p$.

Furthermore, write $F=\varphi$ and $V=p\circ\varphi^{-1}$, the triple $(M,F,V)$ is equivalent to the pair $(M, \varphi)$ where $M$ is as before and $\varphi: M\rightarrow M$ is $\sigma$-linear such that $pM\subseteq\varphi M\subseteq M$. Thus $(M,\varphi)$ is an $F$-crystal with all Hodge slopes equal to $0$ or $1$.

\begin{defn}
	Let $H$ be a $p$-divisible group over $k$. Let $(M,F,V)$ be its Dieudonn\'e module. The \emph{dimension} of $H$ is $\dim_k(M/FM)$, the \emph{codimension} of $H$ is $\dim_k(FM/pM)$. 
\end{defn}

Let $d$ be the dimension of $H$ and $c$ the codimension of $H$. It is not hard to check that $c+d$ is the rank of $M$, which is also the hight of $H$. Furthermore, the number $c$ is the dimension of $H^\vee$, the dual $p$-divisible group of $H$.
\chapter{Truncated Barsotti--Tate Groups}

Through out this chapter, let $k$ be an algebraically closed field of characteristic $p>0$. Let $c, d\geq 0$ be such that $h:=c+d>0$.

\section{Barsotti--Tate groups of level 1}
Let $H$ be a $p$-divisible group of height $h$ over $k$. Let $H[p]$ be its $p$-kernel, which is a finite group scheme over $k$ of order $p^h$. Kraft showed in the unpublished manuscript \cite{Kraft} that, fixing $h$, there are only finitely many such group schemes up to isomorphism. This result was re-obtained, independently, by Oort. Together with Ekedahl he used it to define and study a stratification of the moduli space of principally polarized abelian varieties over $k$. Their results can be found in \cite{Oort:stramoduli}. We will review Kraft's result in this section.

Let $\mathscr{C}(1)_k$ be the category of finite group schemes over $k$ which are annihilated by $p$. Note that $W(k)/pW(k)=k$, Dieudonn\'e theory tells us that this category is equivalent to the category of triples $(M,F,V)$ where
\begin{itemize}
\item $M$ is a finite dimensional $k$-vecter space,
\item $F:M\rightarrow M$ is a $\sigma$-linear endomorphim,
\item $V:M\rightarrow M$ is a $\sigma^{-1}$-linear endomorphim,
\end{itemize}
such that $F\circ V=V\circ F=0$. 

A necessary and sufficient condition for $G\in \mathscr{C}(1)_k$ to be a Barsotti--Tate group of level 1 (i.e. $p$-kernel of a $p$-divisible group) is that the sequence
$$G\xrightarrow{F_G} G^{(p)}\xrightarrow{V_G} G$$
is exact. On the Dieudonn\'e module this means that we have
$$\ker(F)=\im(V) \textrm{ and } \ker(V)=\im(F).$$

In the unpublished manuscript \cite{Kraft}, Kraft showed that the objects of $\mathscr{C}(1)_k$ admit a normal form. To describe this, one distinguished two types of group schemes.

(1) \emph{Linear type}. Consider a linear graph $\Gamma$:

\begin{center}
	\begin{tikzpicture}[scale=.8]
	
	\draw (0,0)--(1,0);
	\draw [dashed] (1,0) -- (3,0);
	\draw (3,0)--(4,0);
	\draw [arrows={-triangle 45}] (6,0)--(4,0);
	\draw [arrows={-triangle 45}] (6,0)--(8,0);
	\draw [arrows={-triangle 45}] (8,0)--(10,0);
	\draw [arrows={-triangle 45}] (12,0)--(10,0);
	\draw (12,0)--(13,0);
	\draw [dashed](13,0)--(15,0);
	\draw (15,0)--(16,0);
	
	\fill (0,0) circle (2pt);
	\fill (4,0) circle (2pt);
	\fill (6,0) circle (2pt);
	\fill (8,0) circle (2pt);
	\fill (10,0) circle (2pt);
	\fill (12,0) circle (2pt);
	\fill (16,0) circle (2pt);
	
	\node [above] at (5,0) {V};
	\node [above] at (7,0) {F};
	\node [above] at (9,0) {F};
	\node [above] at (11,0) {V};

	\end{tikzpicture}
\end{center}

with all edges labeled either by $F$ (drawn as $\xrightarrow{F}$) or by $V$ (drawn as $\xleftarrow{V}$). We associate to $\Gamma$ a Dieudonn\'e module $(M,F,V)$ in the following way. For each vertex $v$ we have a base vector $e_v$. Define $F$ and $V$ according to the arrows in $\Gamma$. So $F(e_v)=e_w$ if there is an $F$-arrow from $v$ to $w$, and $F(e_v)=0$ is there is not an $F$-arrow starting at $v$. Similarly, $V(e_v)=e_w$ if there is a $V$-arrow from $v$ to $w$, and $V(e_v)=0$ is there is not a $V$-arrow starting at $v$.

One readily checks that this defines a Dieudonn\'e module $M_\Gamma=(M_\Gamma,F,V)$. Write $G_\Gamma$ for the corresponding group scheme. 

(2) \emph{Circular type}. Consider a circular graph $\Gamma$:

\begin{center}
\begin{tikzpicture}

%\draw[dotted] (3,3) circle [radius=3];

\node(1) at ($(3,3) +(0:3)$) {};
\node(2) at ($(3,3) +(30:3)$) {};
\node(3) at ($(3,3) +(60:3)$) {};
\node(4) at ($(3,3) +(90:3)$) {};
\node(5) at ($(3,3) +(120:3)$) {};

%\draw [->]  (1) to [bend right=15] node [auto][swap] {F} (2);
%\draw [->]  (3) to [bend left=15] node [auto] {V} (2);
%\draw [->]  (4) to [bend left=15] node [auto] {V} (3);
%\draw [->]  (4) to [bend right=15] node [auto][swap] {F} (5);

\draw[loosely dashed] (5) arc (120:360:3);
\draw [arrows={-triangle 45}] (1) arc (0:30:3);
\draw [arrows={-triangle 45}] (3) arc (60:30:3) ;
\draw [arrows={-triangle 45}] (4) arc (90:60:3) ;
\draw [arrows={-triangle 45}] (4) arc (90:120:3) ;

\node[rotate=-75] at ($(3,3) +(15:3.3)$) {V};
\node[rotate=-45] at ($(3,3) +(45:3.3)$) {F};
\node[rotate=-15] at ($(3,3) +(75:3.3)$) {F};
\node[rotate=15] at ($(3,3) +(105:3.3)$) {V};

\fill (1) circle (1.5pt);
\fill (2) circle (1.5pt);
\fill (3) circle (1.5pt);
\fill (4) circle (1.5pt);
\fill (5) circle (1.5pt);

\end{tikzpicture}
\end{center}
where again all edges are labeled by $F$ or $V$. We require that the $F-V$ pattern is not periodic, i.e., $\Gamma$ is not invariant under a non-trivial rotation. Two circular diagrams which differ by a rotation will be considered equivalent. By the same rules as in the linear case, we obtain a Dieudonn\'e module $M_\Gamma=(M_\Gamma,F,V)$ and write $G_\Gamma$ for the corresponding group scheme. 

\begin{exam}
	The category $\mathscr{C}(1)_k$ has three simple objects: $\underline{\ZZ/p\ZZ}$, $\boldsymbol{\mu}_p$ and $\boldsymbol{\alpha}_p$. The corresponding graphs are :
	\medskip
	
	\begin{tikzpicture}
	\draw [arrows={-triangle 45}] (0,0) arc (90:-270:1);
	\fill (0,0) circle (2pt);
	\node [right] at (1,-1){F};
	\node [below] at (0,-2) {$\underline{\ZZ/p\ZZ}$};
	\end{tikzpicture}\hfill
	\begin{tikzpicture}
	\draw [arrows={-triangle 45}] (0,0) arc (-270:90:1);
	\fill (0,0) circle (2pt);
	\node [right] at (1,-1){V};
	\node [below] at (0,-2) {$\boldsymbol{\mu}_p$};
	\end{tikzpicture}\hfill
	\begin{tikzpicture}
	\fill (0,0) circle (2pt);
	\node [right] at (1,0){(no arrow)};
	\node [below] at (0,-1) {$\boldsymbol{\alpha}_p$};
	\end{tikzpicture}

\end{exam}

\begin{thm}[Kraft]\label{kraft}

(i) If $\Gamma$ is a diagram of the type described as above then $G_\Gamma$ is indecomposable. If $G_\Gamma \cong G_{\Gamma'}$, then $\Gamma$ and $\Gamma'$ are equivalent, i.e., either $\Gamma=\Gamma'$ of linear type or $\Gamma$ and $\Gamma'$ are of circular type and differ by a rotation.

(ii) Every indecomposable object $G$ in $\mathscr{C}(1)_k$ is isomorphic to some $G_\Gamma$.

(iii) The group scheme $G_\Gamma$ is a $BT_1$ if and only if $\Gamma$ is a circular diagram.

(iv) The Cartier dual of $G_\Gamma$ is isomorphic to $G_{\Gamma^\vee}$ where $\Gamma^\vee$ is the diagram obtained by changing all $F$-arrows into $V$-arrows and vice versa.

\end{thm}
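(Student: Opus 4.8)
The plan is to move everything to the Dieudonn\'e side. By the equivalence recalled above, $\mathscr{C}(1)_k$ is equivalent to the category $\mathcal{T}$ of triples $(M,F,V)$ with $M$ a finite-dimensional $k$-vector space, $F$ a $\sigma$-linear and $V$ a $\sigma^{-1}$-linear endomorphism of $M$, and $FV=VF=0$; a graph $\Gamma$ yields the object $M_\Gamma=(M_\Gamma,F,V)$. Under this dictionary the four assertions become: (i) $M_\Gamma$ is indecomposable in $\mathcal{T}$, and $M_\Gamma\cong M_{\Gamma'}$ forces $\Gamma$ and $\Gamma'$ to agree up to the stated equivalence; (ii) every object of $\mathcal{T}$ is a finite direct sum of objects $M_\Gamma$; (iii) the exactness criterion $\ker F=\im V$ and $\ker V=\im F$---which, as recalled just above, singles out the $BT_1$'s inside $\mathscr{C}(1)_k$---holds for $M_\Gamma$ precisely when $\Gamma$ is circular; (iv) $M_\Gamma^\vee\cong M_{\Gamma^\vee}$ for the duality $(-)^\vee$ on finite-length $D$-modules from the preceding chapter.

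The heart of the matter, and the step I expect to be the main obstacle, is the normal-form theorem, i.e.\ (ii) together with the indecomposability half of (i). First, $FV=VF=0$ gives $\im F\subseteq\ker V$ and $\im V\subseteq\ker F$, and $F,V$ are nilpotent because $\dim_k M<\infty$; thus $(M,F,V)$ is nothing but a finite-dimensional module over the twisted monomial ring $\bar D:=D/pD=k\langle F,V\rangle/(FV,VF)$. Forgetting the semilinear twist, modules over such a ring are classified by strings and primitive bands---strings $\leftrightarrow$ the finite (``linear'') graphs, primitive bands $\leftrightarrow$ the non-periodic (``circular'') graphs---with Krull--Schmidt providing the direct-sum decomposition. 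To carry this out honestly in the semilinear setting I would induct on $\dim_k M$, at each stage splitting off a cyclic summand generated either by a vector in the socle $\ker F\cap\ker V$ or in a chosen complement of $\im F+\im V$, and normalising structure constants as I go: perfectness of $k$ (extraction of $p$-power roots) clears all scalars on the ``nilpotent/string'' part to $0$ and $1$, while the standard fact that over $k=\bar k$ a $\sigma^{\ell}$-linear automorphism ($\ell\ge1$) becomes the identity in a suitable basis rigidifies the ``band'' part---the monodromy around a circular word of length $\ell$ is exactly such an automorphism, so no continuous eigenvalue parameter survives. The outcome is a basis of $M$ permuted, up to clearable scalars, by $F$ and $V$ along the edges of a disjoint union of linear and non-periodic circular graphs; periodicity is excluded because a cyclic word of period smaller than its length splits over $\bar k$---already the two-vertex $F$-cycle (where $F^2=\mathrm{id}$) decomposes as two one-vertex $F$-loops once one diagonalises the $\sigma$-linear involution $F$.

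Granting the normal form, $M_\Gamma$ is indecomposable for connected $\Gamma$ because an endomorphism of $M_\Gamma$ is pinned down by the image of a single basis vector---commutation with $F$ and $V$ propagates that image along every edge of the connected graph, subject only to a closing-up condition---so $\End_{\bar D}(M_\Gamma)$ is a local Artinian ring and its only idempotents are $0$ and $1$. For the uniqueness part of (i), one recovers $\Gamma$ from $M_\Gamma$ intrinsically: its vertices form the $F,V$-standard basis (unique up to scalars, and in the circular case also up to cyclic rotation), and the oriented, labelled edges are read off from the nonzero values of $F$ and $V$ on that basis; the only residual ambiguity is the rotation of a circular diagram, which is exactly the asserted equivalence.

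Finally, (iii) and (iv) are direct checks on the normal form. If $\Gamma$ is linear, pick an endpoint vertex $v$; its unique incident edge is an $F$- or $V$-edge in one of two orientations, and in every case $e_v$ witnesses the failure of the $BT_1$ criterion (for instance, if $v$ is the head of an $F$-edge then $e_v\in\ker F$ but $e_v\notin\im V$, as nothing maps to $v$ under $V$), so $G_\Gamma$ is not a $BT_1$; if $\Gamma$ is circular, every vertex lies on an $F$-chain or a $V$-chain running from a peak to a valley, and a short bookkeeping of peaks and valleys yields $\im F=\ker V$ and $\im V=\ker F$ at once, so $G_\Gamma$ is a $BT_1$. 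For (iv), the duality recalled earlier sends the pair $(F,V)$ acting on $M$ to $(V^\ast,F^\ast)$ acting on $M^\vee$, i.e.\ it interchanges the roles of $F$ and $V$ up to $\sigma^{\pm1}$-twists of scalars, which is harmless since all structure constants of $M_\Gamma$ are $0$ or $1$ and fixed by $\sigma$; hence the dual basis $\{e_v^\ast\}$ is the $F,V$-standard basis for the graph obtained from $\Gamma$ by swapping all $F$- and $V$-labels, giving $M_\Gamma^\vee\cong M_{\Gamma^\vee}$ and therefore $G_\Gamma^\vee\cong G_{\Gamma^\vee}$.
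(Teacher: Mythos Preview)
The paper does not supply a proof of this theorem: it is stated as a result of Kraft, attributed to the unpublished manuscript \cite{Kraft}, and then used without further argument. So there is no ``paper's own proof'' to compare your proposal against.

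Taken on its own merits, your outline is sound and hits the essential points. The identification of $\mathscr{C}(1)_k$ with modules over $\bar D=k\langle F,V\rangle/(FV,VF)$ is correct, and you correctly isolate the two phenomena that distinguish the semilinear situation from the ordinary string-algebra one: perfectness of $k$ lets you rescale away all structure constants along a string, and Lang's theorem (a $\sigma^\ell$-linear automorphism of a finite-dimensional $\bar k$-vector space has a basis of fixed vectors) both kills the continuous monodromy parameter on a band and forces higher ``Jordan-block'' band modules to decompose, so that exactly one indecomposable survives per primitive cyclic word. Your treatments of (iii) and (iv) are fine; for (iv) note that the duality also reverses the direction of each arrow in addition to swapping labels, but reversing a linear or circular diagram yields an isomorphic module, so the statement as phrased still holds.

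The one place where your write-up is thin is the inductive step for the normal form. ``Splitting off a cyclic summand generated by a vector in $\ker F\cap\ker V$ or in a complement of $\im F+\im V$'' is the right instinct, but as stated it does not obviously produce a \emph{direct} summand: you need to exhibit a complementary $\bar D$-submodule, not just a quotient, and the semilinear rescaling has to be woven into that step rather than applied afterwards. A cleaner route is to first pass to the associated word (record, for each basis vector of a suitable filtration, whether it is hit by $F$, by $V$, or by neither) and then argue that any two bases realising the same word differ by an automorphism; this makes both existence and uniqueness transparent. None of this is a genuine obstruction---the argument goes through---but a referee would ask you to fill it in.
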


Let $H$ be a $p$-divisible group over $k$ with codimension $c$ and dimension $d$. Apply Theorem \ref{kraft}, we can see that $H[p]$ can be represented by a (circular) word with $F$ appearing $c$ times and $V$ appearing $d$ times. Let $h=c+d$ be the height of $H$. Then $H[p]$ can be parameterized by the cosets of $S_h/S_c\times S_d$. In particular, the number of isomorphism classes of such $H[p]$ is $\frac{h!}{c!\cdot d!}$.

This observation also motivates the following definition. Let $c, d\geq 0$ be two integers such that $h=c+d>0$. Let $\pi\in S_h$ be a permutation on $J=\{1,2,\dots,h\}$. Define a Dieudonn\'e module $(M,\varphi_\pi)$ as follows.
Let $M=W(k)^h$. Let $\{e_1, e_2,\dots,e_h\}$ be a $W(k)$-basis of $M$. Define $\varphi_\pi: M\rightarrow M$ on basis elements as
$$\varphi_\pi(e_i)=\left\{\begin{array}{lr}
pe_{\pi(i)},\quad& 1\leq i\leq d; \\
e_{\pi(i)},\quad& d<i\leq h.
\end{array}
\right.$$
and extend $\varphi_\pi$ to $M$ $\sigma$-linearly. Write $H_\pi$ for the corresponding $p$-divisible group. Then $H_\pi$ is a $p$-divisible group of codimension $c$ and dimension $d$. 

\begin{defn}
The $p$-divisible groups $H_\pi$ defined as above are called canonical lifts of Barsotti--Tate groups of level 1.
\end{defn}

\begin{thm}[Vasiu]
	Let $c,d, h$ be as before. For every $p$-divisible group $(M,\varphi)$ with dimension $d$ and height $h$, there is $\pi\in S_h$ and $g\in GL_M(W(k))$ with $g\equiv 1_m \mod p$, such that $(M,\varphi)$ is isomorphic to $(M,g\varphi_\pi)$.	
\end{thm}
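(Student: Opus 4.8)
The plan is to descend the problem to the special fibre, where Kraft's classification supplies the permutation $\pi$, and then to lift the resulting isomorphism of Dieudonn\'e modules back over $W(k)$ by a successive-approximation argument, the error terms of the lift being collected into the unipotent factor $g\equiv 1_M\bmod p$.

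First I would reduce modulo $p$. Since $W(k)/pW(k)=k$, the reduction $(\bar M,\bar\varphi):=(M/pM,\varphi\bmod p)$ is, by Dieudonn\'e theory, the Dieudonn\'e module of the $p$-kernel $H[p]$ of the $p$-divisible group attached to $(M,\varphi)$; it is a Barsotti--Tate group of level $1$ of dimension $d$ and codimension $c$. By Theorem \ref{kraft} it is a direct sum of the Dieudonn\'e modules of circular graphs $\Gamma_1,\dots,\Gamma_s$, which together carry $d$ vertices of $V$-type and $c$ of $F$-type. Labelling the $V$-type vertices by $\{1,\dots,d\}$ and the $F$-type vertices by $\{d+1,\dots,h\}$ in any manner, and letting $\pi\in S_h$ be the permutation whose cycles are read off the arrows of the $\Gamma_i$, one checks directly from the recipe defining $M_\Gamma$ and from the definition of $\varphi_\pi$ that $(\bar M,\bar\varphi)\cong(\bar M,\bar\varphi_\pi)$; this reproves the first theorem of Vasiu quoted in the Introduction, the ambiguity $\pi\leftrightarrow\pi^{-1}$ produced by the two possible arrow conventions being harmless since the statement ranges over all of $S_h$. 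Now lift a $k$-linear isomorphism realizing $(\bar M,\bar\varphi)\cong(\bar M,\bar\varphi_\pi)$ to some $u\in GL_M(W(k))$ (possible since $GL_M(W(k))\to GL_{\bar M}(k)$ is surjective, $GL_M$ being smooth and $W(k)$ $p$-adically complete), and replace $(M,\varphi)$ by the isomorphic $(M,u\varphi u^{-1})$; thus I may assume henceforth that $\bar\varphi=\bar\varphi_\pi$, i.e.\ $\varphi\equiv\varphi_\pi\bmod p$.

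It remains to produce $v\in GL_M(W(k))$ and $g\in 1+p\,\End(M)$ with $v\varphi v^{-1}=g\varphi_\pi$; note that $g\varphi_\pi\equiv\varphi_\pi\bmod p$ automatically, so the right-hand side is again a Dieudonn\'e module whose $p$-divisible group has $p$-kernel $H_\pi[p]$. I would construct $v$ and $g$ as $p$-adic limits of partial products: inductively one seeks $v_n\in GL_M(W(k))$ reducing into the stabilizer of $\bar\varphi_\pi$ in $GL_{\bar M}(k)$, and $g_n\in 1+p\,\End(M)$, with $v_n\varphi v_n^{-1}\equiv g_n\varphi_\pi$ modulo an ever higher power of $p$, the case $n=0$ being the previous step. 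Passing from $n$ to $n+1$, after writing the corrections as $(1+p^n x)v_n$ and $g_n(1+p^n y)$, reduces to a $\sigma$-semilinear equation over $k$ of the shape
\[
\bar x\,\bar\varphi_\pi-\bar\varphi_\pi\,\bar x \;=\; \bar c \;+\; \bar\varphi_\pi\,\bar y ,
\]
where $\bar c\in\End(\bar M)$ is the obstruction inherited from step $n$, the unknown $\bar x$ ranges over $\End(\bar M)$, and $\bar y$ enters only through $\bar\varphi_\pi\bar y$. Granting that this equation is solvable at every stage, the partial products converge $p$-adically to the desired $v$ and $g$.

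The main obstacle is exactly this solvability, together with a careful bookkeeping of the obstruction classes $\bar c$. The key is that the very special shape of $\varphi_\pi$ --- it carries the chosen $W(k)$-basis of $M$ to itself up to powers of $p$, equivalently it normalizes $\lie(T)$ for the diagonal maximal torus $T$ of $GL_M$ --- makes $\End(\bar M)$ decompose into the $\varphi_\pi$-stable lines spanned by the matrix units indexed by the orbits of $(\pi,\pi)$ on $J^2$, the same combinatorial structure that governs the rest of the chapter; on each such line the equation decouples into a one-variable $\sigma$-semilinear equation over the algebraically closed field $k$, hence is solvable. The same analysis yields the rigidity asserted in the Introduction: the lift is determined by $H_\pi[p]$ together with the condition $\varphi_\pi(\lie(T))=\lie(T)$, which is the content of \cite[Corollary 11.1(d)]{Vasiu:modpshimura}, the present statement being essentially a reformulation of it.
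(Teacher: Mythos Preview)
The paper itself does not supply a proof of this theorem; it is quoted as a result of Vasiu with no argument given (the Introduction points to \cite[Corollary 11.1(d)]{Vasiu:modpshimura}), so there is no ``paper's own proof'' to compare against.

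Your outline---reduce mod $p$, invoke Kraft's classification to produce $\pi$, then lift by successive approximation---is the natural strategy and is essentially what one finds in Vasiu's work. But the inductive step as you have written it contains a genuine gap. With the corrections placed as you describe, the linearized equation is (correcting $\bar\varphi_\pi\bar y$ to $\bar y\bar\varphi_\pi$)
\[
\bar x\,\bar\varphi_\pi - \bar\varphi_\pi\,\bar x - \bar y\,\bar\varphi_\pi \;=\; -\bar c,
\]
and this is \emph{not} solvable for arbitrary $\bar c$. Since $\bar\varphi_\pi$ kills $\bar F^1$ and has image of codimension $d$, the left-hand side, for any $(\bar x,\bar y)$, always maps $\bar F^1$ into $\mathrm{im}\,\bar\varphi_\pi$; hence there is a $d^2$-dimensional obstruction space $\Hom_k(\bar F^1,\bar M/\mathrm{im}\,\bar\varphi_\pi)$. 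Your appeal to the torus-normalizing property and the $(\pi,\pi)$-orbit decomposition does make the equation block-diagonal, but on blocks where the orbit meets $J_-$ the reduced operator is degenerate and the resulting scalar equation reads $0=(\text{component of }\bar c)$, not an Artin--Schreier type equation that algebraic closedness would dispatch.

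What actually makes the induction run is that the obstruction class $\bar c$ arising at each step lands in the allowable subspace. At the first step this is precisely the content of $\bar\vartheta=\bar\vartheta_\pi$ (equivalently $\varphi(M)=\varphi_\pi(M)$): from $\varphi\vartheta=p$ and $\vartheta(e_{\pi(i)})\equiv e_i\pmod p$ for $i\le d$ one obtains $p^{-1}\varphi(e_i)-e_{\pi(i)}\in\varphi(M)+pM$, which is exactly the vanishing of the obstruction. Propagating this through the higher steps requires maintaining compatibility with the Hodge filtration (equivalently with $\vartheta$), not merely the Frobenius congruence $v_n\varphi\equiv g_n\varphi_\pi v_n\pmod{p^n}$; this is the missing ingredient in your sketch.
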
	

In particular, we have that every $BT_1$ is isomorphic to some $H_\pi[p]$, which justifies the name.
%%%%%%%%%%%%%%%%%%%%%%%%%%%%%%%%%%%%%%%%%%%%%%%%%%%%%%%%%%%%%%%%%%%%%%%%%%%%%%%
\section{Automorphism group schemes}

Let $U$ be a finite dimensional vector space over $k$. Then the functor
\[\boldsymbol{GL}_U(R)=\Aut_R(U\otimes_kR)\]
that associates to every $k$-algebra $R$ the group of $R$-linear automorphisms of $U\otimes_kR$ is representable by a scheme, and so is a group scheme. In fact suppose that $e_1, 
\dots, e_n$ is a basis of $U$. Then every $f\in \Aut_R(U\otimes_kR)$ is determined by
\[f(e_i)=\sum_{i=1}^{n}\lambda_{ij}e_j.\]

So giving a $R$-linear automorphism of $U\otimes_kR$ is the same as to give a matrix $[\lambda_{ij}]_{i,j}$ with determinant in $S^\times$. So $\boldsymbol{GL}_U$ is represented by the localization of $k[x_{ij}]$ at $\det(x_{ij})$. If $U=k^n$, then $\boldsymbol{GL}_U$ is just $\boldsymbol{GL}_n$ in example \ref{aaa}. 

Now assume $U$ has some sort of algebraic structure, for instance a bilinear multiplication, or even a whole Hopf $k$-algebra structure. Let $F(R)$ be those $R$-linear endomorphisms $U\otimes_kR\rightarrow U\otimes_kR$ that preserve the given structure. The condition that an $R$-linear endomorphism preserve the structure is given by polynomial equations in the matrix entries: for multiplication, e.g., we only need the equations saying that the product is preserved for basis elements. Thus $F$ is representable and we call it $\Ends(U)$, the \emph{endomorphism group scheme} of $U$. Similarly, we can define $\Auts(U)$, the \emph{automorphism group scheme} of $U$, by taking the invertible  $R$-linear automorphisms preserving the structure.

It is clear that $\Auts(U)$ is an open subscheme of $\Ends(U)$. Also $\Auts(U)$ is a closed subgroup scheme of $\boldsymbol{GL}_U$.

If $G=\spec A$ is a finite group scheme over $k$, we write $\Auts(G):=\Auts(A)$ and $\Ends(G):=\Ends(A)$.

\begin{defn}
	Let $H$ be a $p$-divisible group over $k$ of codimension $c$ and dimension $d$, and $m\in \NN^\ast$. We write
	\[\gamma_H(m):=\dim(\Auts(H[p^m])).\]
	
	We call $(\gamma_H(m))_{m\ge 1}$ the \emph{centralizing sequence} of $H$ and we call $s_H:=\gamma_H(n_H)$ the \emph{specializing height} of $H$, where $n_H$ is the isomorphism number of $H$.
\end{defn}

The importance of the number $\gamma_H(m)$ stems out from the following three main facts (cf. \cite{GV1}):

(i) They are codimension of the versal level $m$ strata.

(ii) They can compute the isomorphism number $n_H$.

(iii) They are a main source of invariants that go up under specializations.

\begin{thm}[Gabber--Vasiu]\label{gamma}
	The centralizing sequence of $H$ has the following three basic properties:
	(a) The sequence $(\gamma_H(m))_{m\ge 1}$ is an increasing sequence in $\NN$.
	
	(b) For each $l\in \NN^\ast$, the sequence $(\gamma_H(m+l)-\gamma_H(m))_{m\ge1}$ is a decreasing sequence in $\NN$.
	
	(c) If $cd>0$, then we have $\gamma_H(1)<\gamma_H(2)<\cdots<\gamma_H(n_H)$.
	
	(d) For $m\ge n_H$ we have $\gamma_H(m)=\gamma_H(n_H)\le cd$.
\end{thm}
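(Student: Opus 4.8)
The plan is to translate the statement into linear algebra over $W(k)$ via Dieudonn\'e theory, to import from \cite{Vasiu:reconstr} and \cite{GV1} a description of $\gamma_H(m)$ in that language (together with its interpretation as the codimension of a versal level-$m$ stratum), and then to read off (a)--(d). Let $(M,\varphi)$ be the Dieudonn\'e module of $H$: a free $W(k)$-module of rank $h=c+d$ with $pM\subseteq\varphi(M)\subseteq M$. By the anti-equivalence of Chapter~3, $H[p^m]$ corresponds to $(M/p^mM,\varphi)$, and for a $k$-algebra $R$ the group $\Auts(H[p^m])(R)$ consists of the $W_m(R)$-linear automorphisms of $M/p^mM\otimes_{W_m(k)}W_m(R)$ commuting with the Dieudonn\'e operators $F=\varphi$ and $V=p\varphi^{-1}$; in particular $\Auts(H[p^m])$ is a closed subgroup scheme of an algebraic group over $k$, of dimension $\gamma_H(m)$. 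Put $N=\End(M)$ and let $\vartheta\colon N[\tfrac1p]\to N[\tfrac1p]$, $x\mapsto\varphi x\varphi^{-1}$; this is $\sigma$-linear and satisfies $pN\subseteq\vartheta(N)\subseteq p^{-1}N$, so $(N,\vartheta)$ is an isocrystal over $k$ with a lattice stable up to isogeny, with Newton slopes in $[-1,1]$. The imported statement is: there is an increasing filtration $0=Q_0\subseteq Q_1\subseteq Q_2\subseteq\cdots$ by finite length $W(k)$-submodules of a module functorially built from $(N,\vartheta)$, with $\gamma_H(m)=\textrm{length}_{W(k)}(Q_m)$, such that multiplication by $p$ induces surjections $Q_m/Q_{m-1}\twoheadrightarrow Q_{m+1}/Q_m$; equivalently, the increments $\delta_m:=\gamma_H(m)-\gamma_H(m-1)$ form a non-increasing sequence of non-negative integers.

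Granting this, parts (a), (b) and the inequality in (d) are formal. Part (a): $Q_m\subseteq Q_{m+1}$ gives $\gamma_H(m)\le\gamma_H(m+1)$; alternatively, the versal level-$(m+1)$ stratum is contained in the versal level-$m$ stratum, so its codimension is at least as large. Part (b): $\gamma_H(m+l)-\gamma_H(m)=\sum_{j=1}^{l}\delta_{m+j}$, which is non-increasing in $m$ since $(\delta_m)$ is non-increasing; this is exactly the assertion that $(\gamma_H(m+l)-\gamma_H(m))_{m\ge1}$ decreases. For the inequality in (d), $\gamma_H(m)$ is by (i) the codimension of a stratum in the formal deformation space of $H$, which by Grothendieck--Messing has dimension $\dim H\cdot\dim H^\vee=cd$; hence $\gamma_H(m)\le cd$. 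The stabilization in (d) also follows: for $m\ge n_H$ the definition of $n_H$ gives $H'[p^m]\cong H[p^m]\Leftrightarrow H'\cong H\Leftrightarrow H'[p^{m+1}]\cong H[p^{m+1}]$, so the versal level-$m$ and level-$(m+1)$ strata through $H$ coincide and $\gamma_H(m)=\gamma_H(m+1)$; hence $\gamma_H$ is constant on $[n_H,\infty)$ with value $\gamma_H(n_H)\le cd$.

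It remains to prove (c). The degenerate case is immediate: if $cd=0$ then $H$ is the unique $p$-divisible group of dimension $d$ and codimension $c$ over $k$, so $n_H=0$ and $\gamma_H\equiv0$, and the assertion is vacuous. Assume $cd>0$, and suppose $\gamma_H(m)=\gamma_H(m+1)$ for some $m\ge1$, i.e.\ $\delta_{m+1}=0$; by the monotonicity of $(\delta_j)$ from the first paragraph, $\delta_j=0$ for all $j\ge m+1$, so $\gamma_H$ is already constant on $[m,\infty)$. The key reconstruction step -- the content of the ``level torsion'' estimates of \cite{Vasiu:reconstr} and \cite{GV1} -- is that $\gamma_H(m)=\gamma_H(m+1)$ forces the locus $\{H'\mid H'[p^m]\cong H[p^m]\}$ to coincide with $\{H'\mid H'[p^{m+1}]\cong H[p^{m+1}]\}$ (two nested unions of $\sigma$-conjugacy orbits of equal codimension in a smooth ambient space must be equal); iterating this using $\delta_j=0$ for $j>m$ shows $H'[p^m]\cong H[p^m]$ implies $H'\cong H$, i.e.\ $m\ge n_H$. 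Consequently $\gamma_H(j)\neq\gamma_H(j+1)$ for every $1\le j<n_H$, and by (a) these inequalities are strict, which is (c).

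The main obstacle is the Dieudonn\'e-theoretic core collected in the first paragraph -- in particular the concavity of $m\mapsto\gamma_H(m)$ encoded by the surjections $Q_m/Q_{m-1}\twoheadrightarrow Q_{m+1}/Q_m$ -- together with the reconstruction step used in (c). The reconstruction step would be carried out by writing an isomorphism $H'[p^m]\cong H[p^m]$ as $\varphi'=h\varphi h^{-1}$ with $h\in\GL_M(W(k))$ and $h\equiv1\bmod p^m$, and then comparing the tangent spaces of the $\GL_M(W(k))$-orbit of $\varphi$ reduced modulo $p^m$ and modulo $p^{m+1}$: vanishing of $\delta_{m+1}$ says these orbits have equal codimension, and a smoothness argument then identifies them. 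This point, and the precise construction of the modules $Q_m$ from $(N,\vartheta)$, are the technically demanding inputs; once they are in place, everything else in (a)--(d) is bookkeeping.
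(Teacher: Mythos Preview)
The paper does not prove this theorem at all: its entire proof reads ``See \cite[Theorem 1]{GV1}.'' So there is no argument in the paper to compare against; the result is imported wholesale from the Gabber--Vasiu reference.

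Your proposal goes further than the paper by attempting to sketch the architecture of the proof in \cite{GV1}. The broad shape you describe --- interpreting $\gamma_H(m)$ via Dieudonn\'e theory and as the codimension of a level-$m$ stratum, extracting concavity of $m\mapsto\gamma_H(m)$ from a filtration with surjective graded pieces, bounding by $cd$ via the dimension of the deformation space, and deducing (c) from a reconstruction principle --- is a plausible outline of how such results are proved. But as you yourself flag in your final paragraph, the two load-bearing inputs are not established in your write-up: you do not construct the modules $Q_m$ or prove the surjections $Q_m/Q_{m-1}\twoheadrightarrow Q_{m+1}/Q_m$, and the reconstruction step (that $\delta_{m+1}=0$ forces the level-$m$ and level-$(m+1)$ strata to coincide) is asserted via a heuristic about ``nested unions of $\sigma$-conjugacy orbits of equal codimension'' rather than proved. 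Equal codimension of nested closed subsets does not in general force equality, so that step genuinely requires the orbit-theoretic machinery of \cite{GV1}. In short: your sketch is a reasonable roadmap, but it is not a proof, and the paper does not pretend to give one either --- it simply cites the source.
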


\begin{proof}
	See \cite[Theorem 1]{GV1}.
\end{proof}

We can associate another automorphism group scheme to $H[p^m]$ by using the Dieudonn\'e module of $H$, cf. \cite{Vasiu:levelm}. Let $(M, \varphi,\vartheta)$ be the Dieudonn\'e module of $H$. Let $\Auts(H[p^m])_{\textrm{crys}}$ be the group scheme over $k$ of automorphisms of $(M/p^mM, \varphi_m, \vartheta_m)$. Thus, if $R$ is a $k$-algebra and if $\sigma_R$ is the Frobenius endomorphism of $W_m(R)$, then $\Auts(H[p^m])_{\textrm{crys}}$ is the subgroup of $\mathcal{D}_m(R)=\boldsymbol{GL}_M(W_m(R))$ formed by those $W_m(R)$-linear automorphisms $f$ of $W_m(R)\otimes_{W_m(k)}M/p^mM$ that satisfy the identities $(1_{W_m(R)}\otimes \varphi_m) \circ f^{(\sigma)}=f\circ (1_{W_m(R)}\otimes \varphi_m)$ and $f^{(\sigma)}\circ (1_{W_m(R)}\otimes \vartheta_m) = (1_{W_m(R)}\otimes \vartheta_m)\circ f)$; here $\varphi_m$ and $\vartheta_m$ are viewed as $W_m(k)$-linear maps $(M/p^mM)^{(\sigma)}\rightarrow M/p^mM$ and $M/p^mM \rightarrow (M/p^mM)^{(\sigma)}$ respectively. We similarly define the group scheme $\Ends(H[p^m])_{\textrm{crys}}$ of endomorphisms of $(M/p^mM, \varphi_m, \vartheta_m)$. We will see in the next section that there is a close relation between $\Auts(H[p^m])$ and $\Auts(H[p^m])_{\textrm{crys}}$.

\section{Orbit spaces of truncated Barsotti--Tate groups}

In this section we include the group action $\mathbb{T}_m$ introduced in \cite[Section 5]{Vasiu:reconstr}. Its set of orbits parametrizes the isomorphism classes of truncated Barsotti--Tate groups of level $m$ over $k$ that have codimension $c$ and dimension $d$.

Let $H$ be a $p$-divisible group of codimension $c$ and dimension $d$. Let $(M,\varphi)$ be the Dieudonn\'e module of $H$. Let $\vartheta=p\circ\varphi: M\rightarrow M$ be the Verschiebung map of $(M,\varphi)$. 

\subsection{The scheme $\mathcal{H}$}
The classification of $F$-crystals over $k$ by Dieudonn\'e (see \cite{Demazure1}, Ch. IV) implies that we have a direct sum decomposition $(M[\frac{1}{p}],\varphi) = \oplus(W_s,\varphi)$ into simple $F$-isocrystals over $k$. Since the Newton slopes of a Dieudonn\'e module is $0$ or $1$, we have a direct sum decomposition $M=F^1\oplus F^0$ such that $F^1/pF^1$ is the kernel of the reduction modulo $p$ of $\varphi$. This naturally gives a direct sum decomposition of $W(k)$-modules
$$\End(M) = \Hom(F^0,F^1)\oplus\End(F^1)\oplus\End(F^0)\oplus\Hom(F^1,F^0).$$

Let $\mathcal{W}_+$ be the maximal subgroup scheme of $\boldsymbol{GL}_M$ that fixes both $F^1$ and $M/F^1$; it is a closed subgroup scheme of $\boldsymbol{GL}_M$ whose Lie algebra is the direct summand $\Hom(F^0,F^1)$ of $\End(M)$ and whose relative dimension is $cd$. Let $\mathcal{W}_0:=\boldsymbol{GL}_{F^1}\times_{W(k)}\boldsymbol{GL}_{F^0}$; it is a closed subgroup scheme of $\boldsymbol{GL}_M$ whose Lie algebra is the direct summand $\End(F^1)\oplus\End(F^0)$ of $\End(M)$ and whose relative dimension is $d^2+c^2$. Let $\mathcal{W}_-$ be the maximal subgroup scheme of $\boldsymbol{GL}_M$ that fixes both $F^0$ and $M/F^0$; it is a closed subgroup scheme of $\boldsymbol{GL}_M$ whose Lie algebra is the direct summand $\Hom(F^1,F^0)$ of $\End(M)$ and whose relative dimension is $cd$. Let
$$\mathcal{H}:=\mathcal{W}_+\times_{W(k)}\mathcal{W}_0\times_{W(k)}\mathcal{W}_-;$$
it is a smooth, affine scheme over $\spec(W(k))$ of relative dimension $cd+d^2+c^2+cd=h^2$.

\subsection{The functor $\mathbb{W}_m$}

Let $m\geq 1$. Let $G$ be a smooth affine group scheme over $\spec(W(k))$. Let $\mathbb{W}_m(G)$ be the contravariant functor from the category of affine schemes over $k$ to the category of sets groups such that
$$\mathbb{W}_m(G)(\spec R): = G(W_m(R)).$$
It is well known that this functor is representable by an affine group scheme over $k$ of finite type to be denoted also by $\mathbb{W}_m(G)$, cf \cite[Sect. 4, Cor. 4, p.~641]{Greenberg}. We have $\mathbb{W}_m(G)(k)=G(W_m(k))$ and a natural identification $\mathbb{W}_1(G)=G_k$.

If $I$ is an ideal of $R$ of square $0$, then the ideal $\ker(W_m(R)\twoheadrightarrow W_m(R/I))$ is nilpotent and thus the reduction map $G(W_m(R))\rightarrow G(W_m(R/I))$ is surjective (cf. \cite[Chapter 2, Prop. 6]{bosch1990neron}). From this and loc. cit. we get that the scheme $\mathbb{W}_m(G)$ is smooth.

Suppose now that $G$ is a smooth, affine group scheme over $\spec W(k)$. Then $\mathbb{W}_m(G)$ is a smooth affine group over $k$. The length reduction $W(k)$-epimorphisms $W_{m+1}\twoheadrightarrow W_m(R)$ define naturally a smooth epimorphism
\[\textrm{Red}_{m+1,G}: \mathbb{W}_{m+1}(G)\twoheadrightarrow \mathbb{W}_m(G)\]
of affine group schemes over $k$. The kernel of $\textrm{Red}_{m+1,G}$ is the vector group over $k$ defined by the Lie algebra $\lie(G_k)^{\sigma^m}$, and thus it is a unipotent commutative group isomorphic to $\mathbb{G}_a^{\dim(G_k)}$. Using this and the identification $\mathbb{W}_1(G)=G_k$, by induction on $m\in \NN^\ast$ we get that:

(1) we have $\dim(\mathbb{W}_m(G))=m\dim(G_k)$;

(2) the group $\mathbb{W}_m(G)$ is connected if and only if $G_k$ is connected.

\subsection{The group action $\mathbb{T}_m$}

Let $\sigma_\varphi: M\xrightarrow{~} M$ be the $\sigma$-linear automorphism such that
$$\sigma_\varphi(m)=\left\{
\begin{array}{lr}
\frac{1}{p}\varphi(m), & m\in F^1;\\
\varphi(m), & m\in F^0.
\end{array}
\right.$$
Let $\sigma_\varphi$ act on the sets underlying the groups $\boldsymbol{GL}_M(W(k))$ and $\boldsymbol{GL}_M(W_m(k))$ in the natural way: if $g\in \boldsymbol{GL}_M(W(k))$, then
$$\sigma_\varphi(g)=\sigma_\varphi\circ g\circ \sigma_\varphi^{-1}, \quad \sigma_\varphi(g[m])=(\sigma_\varphi\circ g\circ \sigma_\varphi^{-1})[m].$$
For $g\in \mathcal{W}_+(W(k))$ (resp. $g\in \mathcal{W}_0(W(k))$ or $g\in \mathcal{W}_-(W(k))$ ) we have $\varphi(g) = \sigma_\varphi(g^p)$ (resp. $\varphi(g) = \sigma_\varphi(g)$ or $\varphi(g^p) = \sigma_\varphi(g)$).

Let $\mathcal{H}_m:=\mathbb{W}_m(\mathcal{H})$ and $\mathcal{D}_m:=\mathbb{W}_m(\boldsymbol{GL}_M)$. We have a natural action
$$\mathbb{T}_m: \mathcal{H}_m\times_k\mathcal{D}_m \rightarrow \mathcal{D}_m$$
defined on $k$-valued points as follows. If $h=(h_1, h_2, h_3) \in \mathcal{H}(W(k))$ and $g\in\boldsymbol{GL}_M(W(k))$, then the product of $h[m]=(h_1[m],h_2[m],h_3[m])\in \mathcal{H}_m(k)=\mathcal{H}(W_m(k))$ and $g[m]\in \mathcal{D}_m(k)=\boldsymbol{GL}_M(W_m(k))$ is the element
\begin{align*}
\mathbb{T}_m(h_m,g_m) :&= (h_1h_2h_3^pg\varphi(h_1h_2h_3^p)^{-1})[m]\\
&= (h_1h_2h_3^pg\varphi(h_3^p)^{-1}\varphi(h_2)^{-1}\varphi(h_1)^{-1})[m]\\
&= (h_1h_2h_3^pg\sigma_\varphi(h_3)^{-1}\sigma_\varphi(h_2)^{-1}\sigma_\varphi(h_1)^{-1})[m]\\
&= h_1[m]h_2[m]h_3[m]^pg[m]\sigma_\varphi(h_3[m])^{-1}\sigma_\varphi(h_2[m])^{-1}\sigma_\varphi(h_1[m])^{-1}\in\mathcal{D}_m(k).
\end{align*}

The formula $\mathbb{T}_m(h_m,g_m)=(h_1h_2h_3^pg\varphi(h_1h_2h_3^p)^{-1})[m]$ shows that the action $\mathbb{T}_m$ is intrinsically associated to $D$, i.e., it does not depend on the choice of the direct sum decomposition $M=F^1\oplus F^0$. For later use we mention that
$$\mathbb{T}_1(h_1,g_1) = h_1[1]h_2[1]g[1]\sigma_\varphi(h_3[1])^{-1}\sigma_\varphi(h_2[1])^{-1}\in\mathcal{D}_1(k)=\boldsymbol{GL}_M(k).$$

We have the following

\begin{lemma}
	Let $g_1, g_2\in\boldsymbol{GL}_M(W(k))$. Then the points $g_1[m], g_2[m]\in\mathcal{D}_m(k)$ belong to the same orbit of the action $\mathbb{T}_m$ if and only if the following two Dieudonn\'e modules $(M/p^mM, g_1[m]\varphi_m,\vartheta_mg_1[m]^{-1})$ and $(M/p^mM, g_2[m]\varphi_m,\vartheta_mg_2[m]^{-1})$ are isomorphic.
\end{lemma}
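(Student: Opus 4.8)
The plan is to make both sides of the equivalence explicit and then argue the two implications separately: one direction is a short honest computation, the other a deformation argument. First I would unwind the hypotheses. Since $\mathcal{H}$ is smooth over $\spec(W(k))$ and $W(k)$ is $p$-adically complete, the reduction map $\mathcal{H}(W(k))\to\mathcal{H}(W_m(k))=\mathcal{H}_m(k)$ is surjective; hence $g_1[m]$ and $g_2[m]$ lie in a common $\mathbb{T}_m$-orbit exactly when there is a triple $(h_1,h_2,h_3)\in\mathcal{H}(W(k))$ such that, writing $x:=h_1h_2h_3^p\in\boldsymbol{GL}_M(W(k))$, one has $g_2\equiv x\,g_1\,\varphi(x)^{-1}\pmod{p^m}$; here $\varphi(x):=\varphi\circ x\circ\varphi^{-1}$, which lies in $\boldsymbol{GL}_M(W(k))$ because the identities $\varphi(h_1)=\sigma_\varphi(h_1^p)$, $\varphi(h_2)=\sigma_\varphi(h_2)$, $\varphi(h_3^p)=\sigma_\varphi(h_3)$ express $\varphi(x)$ through the $\sigma$-linear automorphism $\sigma_\varphi$ of the lattice $M$. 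On the other side, an isomorphism between the two Dieudonn\'e modules is an $f\in\boldsymbol{GL}_M(W_m(k))$ with $f\circ(g_1[m]\varphi_m)=(g_2[m]\varphi_m)\circ f$ and $f\circ(\vartheta_m g_1[m]^{-1})=(\vartheta_m g_2[m]^{-1})\circ f$.

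For the implication ``same orbit $\Rightarrow$ isomorphic'' I would take $f:=x[m]$. From $g_2=x\,g_1\,\varphi(x)^{-1}$ one obtains $x\circ(g_1\varphi)\circ x^{-1}=g_2\varphi$ in $\End(M)[\frac1p]$, and since $\vartheta g_i^{-1}=p\,(g_i\varphi)^{-1}$ there, also $x\circ(\vartheta g_1^{-1})\circ x^{-1}=\vartheta g_2^{-1}$. As $x$ and $x^{-1}$ preserve $M$ while $g_i\varphi$ and $\vartheta g_i^{-1}$ are integral, both identities survive reduction modulo $p^m$, so $x[m]$ is the desired isomorphism.

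For the reverse implication I would induct on $m$. The case $m=1$ asserts that the $\mathbb{T}_1$-orbits in $\boldsymbol{GL}_M(k)$ are precisely the fibres of the map sending $g$ to the isomorphism class of the truncated Barsotti--Tate group of level $1$ with Dieudonn\'e module $(M/pM,g[1]\varphi_1,\vartheta_1 g[1]^{-1})$; this is a Bruhat-type statement for $\boldsymbol{GL}_M(k)$ relative to the parabolic $P=\mathcal{W}_+\mathcal{W}_0$ stabilizing $F^1$, and it follows from the parametrization of $BT_1$'s of type $(c,d)$ recalled above, cf. \cite[1.2. Basic Theorem A]{Vasiu:modpshimura}. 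For the passage from $m$ to $m+1$: an isomorphism $f\in\boldsymbol{GL}_M(W_{m+1}(k))$ reduces modulo $p^m$ to one at level $m$, so the inductive hypothesis yields $x=h_1h_2h_3^p$ with $g_2\equiv x\,g_1\,\varphi(x)^{-1}\pmod{p^m}$. It then remains to correct $x$ by an element of $\ker(\mathcal{H}_{m+1}\twoheadrightarrow\mathcal{H}_m)$, which is the vector group $\lie(\mathcal{H}_k)^{\sigma^m}$, so as to upgrade the congruence to one modulo $p^{m+1}$; the change this produces in $x\,g_1\,\varphi(x)^{-1}$ modulo $p^{m+1}$ is $\FF_p$-linear in the correction, and one checks that the discrepancy $g_2-x\,g_1\,\varphi(x)^{-1}\bmod p^{m+1}$ lies in its image.

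I expect this last surjectivity to be the main obstacle; equivalently, it amounts to the smoothness of the orbit map of $\mathbb{T}_{m+1}$ through $g_1[m+1]$ together with the fact that its image is a full isomorphism class. The essential point is that one must use \emph{both} the $\varphi_m$- and the $\vartheta_m$-equivariance of $f$: the $\varphi_m$-equivariance alone only pins down the congruence modulo $p^{m-1}$, since $\varphi_m$ has a nonzero kernel on $M/p^mM$. A streamlined treatment of this computation, carried out uniformly in $m$ rather than by induction, is \cite[Section 5]{Vasiu:reconstr}, which may alternatively be cited directly.
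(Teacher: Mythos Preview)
The paper itself does not prove this lemma; its entire proof is a citation to \cite[Ch.~2, Sect.~2.2]{Vasiu:levelm}. Your closing suggestion to cite the result directly is therefore exactly what the paper does, though you point to \cite[Section 5]{Vasiu:reconstr} rather than to \cite{Vasiu:levelm}.

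As for the argument you sketch: the forward implication is correct and complete. The identity $g_2 = x\,g_1\,\varphi(x)^{-1}$ with $x=h_1h_2h_3^p$ does give $x\circ(g_1\varphi)\circ x^{-1}=g_2\varphi$ and, via $\vartheta=p\varphi^{-1}$, the companion identity for $\vartheta$; since $x\in\boldsymbol{GL}_M(W(k))$ both survive reduction modulo $p^m$.

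For the reverse implication, the gap you flag is real and is the substance of the lemma. Two points to be aware of. First, your inductive step uses the isomorphism $f$ only through its reduction modulo $p^m$; but the $x$ produced by the inductive hypothesis need bear no relation to $f$, so you have not explained why the level-$(m{+}1)$ discrepancy $g_2 - x\,g_1\,\varphi(x)^{-1}$ should land in the image of the linearized action of $\lie(\mathcal{H}_k)^{\sigma^m}$. To make this work one must actually exploit $f$ at level $m{+}1$, not merely its existence at level $m$. Second, the base case $m=1$ is not literally a Bruhat statement: the $\mathbb{T}_1$-action is left translation by the parabolic $\mathcal{W}_+\mathcal{W}_0$ together with a $\sigma_\varphi$-twisted right translation by $\mathcal{W}_0\mathcal{W}_-$, and identifying the orbits with isomorphism classes of $BT_1$'s uses the classification recalled in Section~4.1 (Kraft/Vasiu), not just the Bruhat decomposition of $\boldsymbol{GL}_M(k)$. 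So your outline is on the right track, but the ``one checks'' is precisely where the work in \cite{Vasiu:levelm} lies.
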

\begin{proof}
	\cite[Ch 2, Sect. 2.2]{Vasiu:levelm}
\end{proof}

\begin{cor}
	The set of orbits of the action $\mathbb{T}_m$ are in natural bijection to the set of isomorphism classes of truncated Barsotti--Tate groups of level $m$ over $k$ which have codimension $c$ and dimension $d$.
\end{cor}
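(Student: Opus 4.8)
The plan is to exhibit an explicit surjection from $\mathcal{D}_m(k)=\boldsymbol{GL}_M(W_m(k))$ onto the set of isomorphism classes of truncated Barsotti--Tate groups of level $m$ over $k$ of codimension $c$ and dimension $d$, and then to read off from the preceding Lemma that its fibres are exactly the orbits of $\mathbb{T}_m$. To set up the map, I would send $g[m]\in\mathcal{D}_m(k)$ to the triple $(M/p^mM,\,g[m]\varphi_m,\,\vartheta_m g[m]^{-1})$, which by the Lemma's set-up is the Dieudonn\'e module of the level-$m$ truncation of the $p$-divisible group attached to $(M,g\varphi)$, where $g\in\boldsymbol{GL}_M(W(k))$ is any lift of $g[m]$ (every element of $\boldsymbol{GL}_M(W_m(k))$ lifts, $\boldsymbol{GL}_M$ being smooth over $W(k)$). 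Here $(M,g\varphi)$ is genuinely a Dieudonn\'e module of a $p$-divisible group of codimension $c$ and dimension $d$: since $g$ is a $W(k)$-linear automorphism it fixes $pM$ and carries $\varphi(M)$ onto $g\varphi(M)$, so $pM\subseteq g\varphi(M)\subseteq M$ and $\dim_k M/g\varphi(M)=\dim_k M/\varphi(M)=d$. As the triple depends only on $g[m]$, the Dieudonn\'e anti-equivalence gives a well-defined map
$$\Psi\colon \mathcal{D}_m(k)\longrightarrow \{\,\text{isomorphism classes of truncated Barsotti--Tate groups of level }m,\ \text{codim }c,\ \dim d\,\}.$$

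Next I would prove that $\Psi$ is surjective. By definition, a truncated Barsotti--Tate group of level $m$ of codimension $c$ and dimension $d$ is $H'[p^m]$ for some $p$-divisible group $H'$ of codimension $c$ and dimension $d$; let $(M',\varphi')$ be its Dieudonn\'e module, free of rank $h$ over $W(k)$, with Hodge decomposition $M'=F'^1\oplus F'^0$ and $\rank F'^1=d$. Choosing a $W(k)$-linear isomorphism $M'\xrightarrow{\sim}M$ and then correcting it by an element of $\boldsymbol{GL}_M(W(k))$ that carries the image of $F'^1$ onto $F^1$ and that of $F'^0$ onto $F^0$ --- possible because $F^1$ is a free direct summand of the same rank $d$ --- I may assume $M'=M$ with the same decomposition $F^1\oplus F^0$, so that $F^1$ is the Hodge filtration for both $\varphi$ and $\varphi'$. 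Writing $\mu\colon M\to M$ for the $W(k)$-linear map that is multiplication by $p$ on $F^1$ and the identity on $F^0$, the definition of $\sigma_\varphi$ gives $\varphi=\sigma_\varphi\circ\mu$, and the analogous $\sigma$-linear automorphism $\sigma_{\varphi'}$ of $M$ gives $\varphi'=\sigma_{\varphi'}\circ\mu$. Hence $g:=\sigma_{\varphi'}\circ\sigma_\varphi^{-1}\in\boldsymbol{GL}_M(W(k))$ and $\varphi'=g\varphi$, so $(M',\varphi')\cong(M,g\varphi)$; truncating modulo $p^m$ yields $H'[p^m]=\Psi(g[m])$.

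Finally, the preceding Lemma states that $\Psi(g_1[m])=\Psi(g_2[m])$, equivalently (by the Dieudonn\'e anti-equivalence) the triples $(M/p^mM,g_i[m]\varphi_m,\vartheta_m g_i[m]^{-1})$ are isomorphic, if and only if $g_1[m]$ and $g_2[m]$ lie in one orbit of $\mathbb{T}_m$. Combined with the surjectivity just shown, $\Psi$ descends to a bijection between the set of $\mathbb{T}_m$-orbits on $\mathcal{D}_m(k)$ and the set of isomorphism classes of truncated Barsotti--Tate groups of level $m$ of codimension $c$ and dimension $d$; since $k$ is algebraically closed this orbit set is exactly what is meant by ``the set of orbits of the action $\mathbb{T}_m$'', which is the claim.

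I expect the main obstacle to be the surjectivity step, concretely the linear-algebra assertion that the Dieudonn\'e module of \emph{any} $p$-divisible group of codimension $c$ and dimension $d$ can be written as $(M,g\varphi)$ with $g\in\boldsymbol{GL}_M(W(k))$ for the single fixed pair $(M,\varphi)$. Its content is that the underlying module is free of the fixed rank $h$ and that the Hodge decomposition $F^1\oplus F^0$ is unique up to $\boldsymbol{GL}_M(W(k))$, so that after a suitable change of basis the two $\sigma$-linear operators differ only by the automorphism $\sigma_{\varphi'}\sigma_\varphi^{-1}$; the remaining verifications (well-definedness of $\Psi$, descent to orbits) are bookkeeping. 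One could alternatively first normalise $\varphi$ to some $\varphi_\pi$ via Vasiu's theorem, but essentially the same Hodge-decomposition argument would still be required.
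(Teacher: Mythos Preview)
Your proposal is correct and is precisely the argument the paper intends: the paper states the corollary with no proof, treating it as an immediate consequence of the preceding Lemma, and your map $\Psi$ together with the Hodge-decomposition surjectivity argument is exactly what is needed to fill in that gap. The only content beyond the Lemma is the surjectivity step, and your reduction via $\varphi=\sigma_\varphi\circ\mu$ and $\varphi'=\sigma_{\varphi'}\circ\mu$ (after aligning the Hodge filtrations) is the standard way to see that every Dieudonn\'e module of the given numerical type arises as $(M,g\varphi)$.
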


Let $\mathcal{O}_m$ be the orbit of $1_M[m]\in\mathcal{D}_m(k)$ under the action of $\TT_m$. Let $\mathcal{S}_m$ be the subgroup scheme of $\mathcal{H}_m$ which is the stabilizer of $1_M[m]$ under the action $\TT_m$. Let $\mathcal{C}_m$ be the reduced group of $\mathcal{S}_m$. Let $\mathcal{C}^0_m$ be the identity component of $\mathcal{C}_m$. Then we have

\begin{thm}[Vasiu]
	Let $m\in \NN^\ast$. With the above notations, the following three properties hold:
	
	(a) the connected smooth affine group $\mathcal{C}_m^0$ is unipotent;
	
	(b) there exist two finite homomorphisms
	\[\iota_m: \mathcal{S}_m\rightarrow \Auts(H[p^m])_{\textrm{crys}}\]and\[ \zeta_m: \Auts(H[p^m])\rightarrow\Auts(H[p^m])_{\textrm{crys}}\]
	which at the level of $k$-valued points induce isomorphisms
	\[\iota_m(k): \mathcal{S}_m(k)\rightarrow \Auts(H[p^m])_{\textrm{crys}}(k)\] and \[\zeta_m(k): \Auts(H[p^m])(k)\rightarrow\Auts(H[p^m])_{\textrm{crys}}(k);\]
	
	(c) we have $\dim(\mathcal{S}_m)=\dim(\mathcal{C}_m)=\dim(\mathcal{C}_m^0)=\gamma_H(m)$.
		
\end{thm}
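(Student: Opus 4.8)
The plan is to prove (b) first, deduce (c) from (b) together with the Lemma preceding the theorem and Theorem~\ref{gamma}, and finally treat (a), which is a structural fact about the reduced stabilizer. The two comparison homomorphisms $\zeta_m$ and $\iota_m$ are the crux; everything else is formal once they are in hand. I would construct $\zeta_m\colon\Auts(H[p^m])\to\Auts(H[p^m])_{\textrm{crys}}$ by applying the relative (crystalline) Dieudonn\'e functor: an $R$-automorphism of $H[p^m]_R$ induces, naturally in $R$, an automorphism of the associated Dieudonn\'e module $(M/p^mM,\varphi_m,\vartheta_m)\otimes W_m(R)$ compatible with $\varphi_m$ and $\vartheta_m$, giving a homomorphism of group schemes over $k$. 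On $k$-valued points this is the contravariant Dieudonn\'e anti-equivalence of Chapter~3 applied to the finite commutative $p$-power group scheme $H[p^m]$, so $\zeta_m(k)$ is an isomorphism. That $\zeta_m$ is moreover a \emph{finite} morphism is the one genuinely external ingredient; it rests on crystalline Dieudonn\'e theory and is recorded in \cite{Vasiu:levelm}.

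Next I would construct $\iota_m\colon\mathcal S_m\to\Auts(H[p^m])_{\textrm{crys}}$ straight from the formula for $\mathbb T_m$. Over a $k$-algebra $R$, a triple $h=(h_1,h_2,h_3)\in\mathcal H(W_m(R))$ lies in $\mathcal S_m$ exactly when $u:=h_1h_2h_3^p\in\boldsymbol{GL}_M(W_m(R))$ satisfies $u\equiv\varphi(u)\pmod{p^m}$, where $\varphi(u)=\varphi u\varphi^{-1}$ is legitimate because $u$ sits in the big cell $\mathcal W_+\mathcal W_0\mathcal W_-$ of $\boldsymbol{GL}_M$. This relation is precisely $u\varphi_m=\varphi_m u^{(\sigma)}$, and, $u$ being invertible, also $u^{(\sigma)}\vartheta_m=\vartheta_m u$; thus $u\in\Auts(H[p^m])_{\textrm{crys}}(R)$, and I set $\iota_m(h):=h_1h_2h_3^p$. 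That $\iota_m(k)$ is an isomorphism is exactly the Lemma preceding the theorem evaluated at $g_1=g_2=1_M$: the stabilizer of the point $1_M[m]$ for the action $\mathbb T_m$ is carried bijectively onto the group of self-isomorphisms of $(M/p^mM,\varphi_m,\vartheta_m)$. Finiteness of $\iota_m$ should then follow from the big-cell description of $\mathcal H$ as a subscheme of $\boldsymbol{GL}_M$ together with the defining equation of $\mathcal S_m$, as worked out in \cite[Ch.~2, Sect.~2.2]{Vasiu:levelm}.

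Part (c) is then formal: by definition $\gamma_H(m)=\dim\Auts(H[p^m])$, and a finite homomorphism of finite-type group schemes over the algebraically closed field $k$ inducing a bijection on $k$-points is surjective, hence preserves dimension, so $\dim\Auts(H[p^m])=\dim\Auts(H[p^m])_{\textrm{crys}}=\dim\mathcal S_m$. Since passing to the reduced subscheme does not change the dimension and the identity component of a group scheme has the same dimension as the whole group, $\dim\mathcal S_m=\dim\mathcal C_m=\dim\mathcal C_m^0=\gamma_H(m)$; the inequality $\gamma_H(m)\le cd$ of Theorem~\ref{gamma} fits with $\mathcal H$ having relative dimension $h^2$ over $W(k)$.

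For (a) I would show that $\mathcal C_m^0$ contains no non-trivial torus. Via $\iota_m$ this reduces to the same statement for the reduced identity component of $\Auts(H[p^m])_{\textrm{crys}}$: a torus would furnish a non-trivial diagonalizable group of automorphisms of $(M/p^mM,\varphi_m,\vartheta_m)$ of order prime to $p$, and simultaneously diagonalizing such automorphisms against the $\sigma$-semilinear operator $\varphi_m$ — which for $H_\pi$ permutes the standard basis according to $\pi$ — forces them to be rigid along the cyclic structure of $\pi$; combined with the constraint that $h$ lie in $\mathcal H_m$, whose only non-unipotent directions sit in the Levi block $\mathcal W_0$ and are already killed at level $1$ by the twisted stabilizer equation $h_1[1]=\sigma_\varphi(h_2[1])h_2[1]^{-1}\in\mathcal W_+$, one filters $\mathcal C_m^0$ along the truncation levels $1,\dots,m$ (each successive kernel being a vector group by the structure of $\mathbb W_m$) to exhibit $\mathcal C_m^0$ as a successive extension of copies of $\mathbb{G}_a$, hence unipotent. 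Making this interaction of the Frobenius twist with the Levi part precise over all truncation levels is the step I expect to be the main obstacle; the rigorous version is carried out in \cite{Vasiu:levelm}.
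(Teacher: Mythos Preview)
The paper does not prove this theorem at all: its entire proof is the single line ``\cite[Theorem 5]{Vasiu:levelm}.'' So there is no argument in the paper to compare your proposal against. Your sketch is a plausible outline of what that cited proof contains, and you correctly defer the hard steps (finiteness of $\zeta_m$ and $\iota_m$, the precise interaction of Frobenius with the Levi factor for unipotence) to the same reference. In that sense your proposal and the paper agree: both point to \cite{Vasiu:levelm} for the substance.

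One caution on your construction of $\iota_m$: you set $\iota_m(h)=h_1h_2h_3^p$, but you should check that this really defines a \emph{homomorphism} of group schemes $\mathcal S_m\to\Auts(H[p^m])_{\textrm{crys}}$, not just a morphism of schemes. The scheme $\mathcal H_m=\mathbb W_m(\mathcal W_+\times\mathcal W_0\times\mathcal W_-)$ is introduced in the paper only as a smooth affine scheme acting on $\mathcal D_m$, and the group law on $\mathcal H_m$ (and hence on the stabilizer $\mathcal S_m$) that makes $\mathbb T_m$ a group action is not the na\"ive product-of-factors law; it is whatever law makes $h\mapsto h_1h_2h_3^p$ compatible with twisted conjugation. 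So your formula is the right map of sets, but verifying it respects the relevant group structure is part of what needs to be extracted from \cite{Vasiu:levelm}, not something that falls out automatically from the big-cell description.
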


\begin{proof}
	\cite[Theorem 5]{Vasiu:levelm}.
\end{proof}

\begin{cor}
	The following properties hold:
	
	(a) The open embeddings $\Auts(H[p^m])\hookrightarrow \Ends(H[p^m])$ and $\Auts(H[p^m])_{\textrm{crys}}\hookrightarrow \Ends(H[p^m])_{\textrm{crys}}$ are also closed.
	
	(b) The identity component $\Auts(H[p^m])^0$ (resp. $\Auts(H[p^m])_{\textrm{crys}}^0$) is the translation of the identity component $\Ends(H[p^m])^0$ (resp. $\Ends(H[p^m])_{\textrm{crys}}^0$) via the $k$-valued point $1_{H[p^m]}$ (resp. $1_{M/p^mM}$).
	
	(c) Each $k$-valued point of $\Ends(H[p^m])_{\textrm{crys}}^0$ is a nilpotent endomorphism of $M/p^mM$.
\end{cor}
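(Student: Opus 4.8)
The plan is to transfer the three assertions to the crystalline automorphism and endomorphism group schemes, reduce parts (a) and (b) to part (c), and then extract (c) from the preceding theorem of Vasiu together with a tangent space computation at the zero endomorphism. For the first step, the homomorphism $\zeta_m$ of the preceding theorem is the restriction to units of a finite morphism of affine $k$-monoid schemes $\zeta_m^{\mathrm{End}}\colon\Ends(H[p^m])\to\Ends(H[p^m])_{\textrm{crys}}$ supplied by the contravariant Dieudonn\'e functor; it is bijective on $k$-valued points, compatible with composition and with the additive structures (sums of endomorphisms of the commutative group scheme $H[p^m]$ are again endomorphisms), and $\Auts(H[p^m])$ is its preimage of $\Auts(H[p^m])_{\textrm{crys}}$. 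Being finite and bijective on $k$-points it matches connected components, sending the component of the zero (resp. of the identity) endomorphism to that of $0$ (resp. of $1_{M/p^mM}$); and since the Dieudonn\'e functor is contravariant and faithful, an endomorphism of $H[p^m]$ is nilpotent iff the induced endomorphism of $M/p^mM$ is. Hence it suffices to treat the crystalline schemes; I abbreviate $\mathcal E:=\Ends(H[p^m])_{\textrm{crys}}$ and $\mathcal U:=\Auts(H[p^m])_{\textrm{crys}}$, with $\mathcal E^0$ the connected component of $0$ and $\mathcal U^0$ the identity component of $\mathcal U$.

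The second step is to deduce (a) and (b) from (c). Now $\mathcal E$ is an affine finite-type $k$-scheme carrying an additive group law (unit $0$) and a composition monoid law (unit $1$), and $\mathcal U$ is the open subscheme of units, i.e. the locus where $f\mapsto\det(f\bmod p)$ is nonzero. Additive translation $f\mapsto 1+f$ is an automorphism of the scheme $\mathcal E$ carrying $\mathcal E^0$ onto the connected component of $1$. Granting (c), every $k$-point of $1+\mathcal E^0$ is unipotent, hence invertible; since a zero of $\det(\,\cdot\bmod p)$ at any point of the closed subscheme $1+\mathcal E^0$ would propagate to a $k$-point of it, we get $1+\mathcal E^0\subseteq\mathcal U$, and being open, closed, connected and containing $1$ it equals $\mathcal U^0$ --- this is (b), and it also shows $\mathcal U^0$ is open and closed in $\mathcal E$. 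For (a), any connected component $C$ of $\mathcal E$ meeting $\mathcal U$ has a $k$-point $f_0\in\mathcal U(k)$, and left composition with $f_0$ is a scheme automorphism of $\mathcal E$ sending the component of $1$, namely $\mathcal U^0$, onto $C$; hence $C=f_0\circ\mathcal U^0\subseteq\mathcal U$, so $\mathcal U$ is a union of connected components of $\mathcal E$ and $\mathcal U\hookrightarrow\mathcal E$ is closed.

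For (c), the preceding theorem gives that $\mathcal C_m^0$ is unipotent and that $\iota_m$ is finite and bijective on $k$-points, so $\mathcal C_m^0$ surjects onto $(\mathcal U^0)_{\textrm{red}}$, which is therefore a connected reduced unipotent group of dimension $\dim\mathcal C_m^0=\gamma_H(m)$. As $\mathcal U^0$ is a closed subgroup scheme of $\mathcal D_m=\mathbb W_m(\boldsymbol{GL}_M)$, whose reduction homomorphism to $\mathbb W_1(\boldsymbol{GL}_M)=(\boldsymbol{GL}_M)_k$ has unipotent kernel, and since the image of $(\mathcal U^0)_{\textrm{red}}$ in $(\boldsymbol{GL}_M)_k$ is a unipotent subgroup and hence consists of unipotent operators, every $g\in\mathcal U^0(k)$ is a unipotent automorphism of $M/p^mM$: $g-1$ is nilpotent modulo $p$, whence $(g-1)^h\in p\End(M/p^mM)$ by Cayley--Hamilton and, iterating, $(g-1)^{mh}=0$ (using that $M=M(H)$ is free over $W(k)$). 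Thus $\mathcal U^0-1$ is a connected subscheme of $\mathcal E$ through $0$ whose $k$-points are all nilpotent, so $\mathcal U^0-1\subseteq\mathcal E^0$, and what remains for (c) is the reverse inclusion $\mathcal E^0\subseteq\mathcal U^0-1$.

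To obtain the reverse inclusion I would first reduce to $m=1$: the reduction morphism $\mathcal E\to\Ends(H[p])_{\textrm{crys}}$ carries $\mathcal E^0$ into the zero-component of the target, and an endomorphism of the free $W_m(k)$-module $M/p^mM$ commuting with $\varphi_m$ is nilpotent iff its reduction modulo $p$ is. For $m=1$ one computes that, $\varphi_1$ and $\vartheta_1$ having vanishing differential, linearizing the two defining commutation relations at $f=0$ identifies $T_0\Ends(H[p])_{\textrm{crys}}$ with the space of $f\in\End_k(M/pM)$ that kill $\im\varphi_1$ and have image in $\ker\vartheta_1$, of dimension $cd$; matched against $\dim(\mathcal U^0-1)=\gamma_H(1)\le cd$ (Theorem~\ref{gamma}) and equidimensionality of $\mathcal E^0$ at $0$, this forces $\mathcal E^0=\mathcal U^0-1$; alternatively the $m=1$ case of (c) is a special case of \cite[1.2. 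Basic Theorem A]{Vasiu:modpshimura}. This last point is the main obstacle: since $\Ends(H[p^m])_{\textrm{crys}}$ is in general non-reduced and its zero-component need not be irreducible, a naive dimension count does not settle it, and pinning $\mathcal E^0$ down as exactly $\mathcal U^0-1$ --- equivalently, ruling out positive-dimensional components of $\mathcal E^0$ through $0$ other than $\mathcal U^0-1$ --- requires either the explicit level-$1$ analysis of \cite{Vasiu:modpshimura} or a careful examination of the Greenberg-functor and orbit-space constructions behind the preceding theorem. Everything else --- the transfer across $\zeta_m^{\mathrm{End}}$ and the passage from unipotence of $\mathcal C_m^0$ to nilpotence of the endomorphisms in the zero-component --- is formal.
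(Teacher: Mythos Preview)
The paper itself gives no proof here: it simply cites \cite[Corollary 6]{Vasiu:levelm}. So there is no argument in the paper to compare yours against.

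Your scaffolding is sound. Deducing (a) and (b) from (c) via the additive translation $f\mapsto 1+f$ works: granting (c), every $k$-point of $1+\mathcal E^0$ is unipotent and hence invertible, so the vanishing locus of $\det(\,\cdot\bmod p)$ on the finite-type $k$-scheme $1+\mathcal E^0$ has no $k$-point and is therefore empty, giving $1+\mathcal E^0\subseteq\mathcal U$; being a full connected component of $\mathcal E$ through $1$, it equals $\mathcal U^0$, and left composition by units then makes $\mathcal U$ a union of components of $\mathcal E$. The transfer to the non-crystalline side via an extension $\zeta_m^{\mathrm{End}}$ of $\zeta_m$ is reasonable but not free: it requires a Dieudonn\'e functor over arbitrary (non-perfect) $k$-algebras, which the paper has not set up---this is implicit in the construction of $\zeta_m$ in \cite{Vasiu:levelm}, but you should flag it.

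You have also correctly isolated the real content in (c), and your own diagnosis of the gap is accurate. The reduction to $m=1$ (an endomorphism of the free $W_m(k)$-module $M/p^mM$ is nilpotent iff its reduction mod $p$ is) is valid. But the tangent-space count does not finish the job: you compute $\dim T_0\mathcal E=cd$ at level $1$, while $\dim(\mathcal U^0-1)=\gamma_H(1)$, and in general $\gamma_H(1)<cd$ strictly; even when equality holds, bounding $\dim\mathcal E^0$ by $cd$ does not rule out further irreducible components of the (possibly non-reduced, reducible) connected scheme $\mathcal E^0$ through $0$ beyond $\mathcal U^0-1$. So (c) genuinely needs the external input you name---the explicit level-$1$ analysis of \cite{Vasiu:modpshimura} or the orbit-space machinery behind the preceding theorem in \cite{Vasiu:levelm}. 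Your write-up is thus an honest scaffold around that citation rather than a self-contained proof, which is precisely the status the paper itself gives this corollary.
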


\begin{proof}
	\cite[Corollary 6]{Vasiu:levelm}.
\end{proof}
\section{Endomorphisms of truncated Barsotti--Tate groups}

Let $H_1$ and $H_2$ be two $p$-divisible groups over $k$. Let $(M_1,F,V)$ and $(M_2,F,V)$ be the Dieudonn\'e modules of $H_1$ and $H_2$ respectively. Let $H_{H_1,H_2}=\Hom_{W(k)}(M_1,M_2)$. Let $(\Hom_{W(k)}(M_1,M_2)[\frac{1}{p}],F)$ be the $F$-isocrystal defined by the rule
$$F(g)=F\circ g\circ F^{-1} = V^{-1}\circ g\circ V.$$ 
The classification of $F$-isocrystals over $k$ (see \cite[Ch.2, Sect. 4]{Manin:formalgroups}, \cite{Demazure1}, etc.) implies that we have a direct sum decomposition $\Hom_{W(k)}(M_1,M_2)[\frac{1}{p}]=\oplus_{\alpha\in\QQ}W(\alpha)$ that is left invariant by $F$ and that has the property that all Newton polygon slopes of $(W(\alpha), F)$ are $\alpha$. Write 
$$\Hom_{W(k)}(M_1,M_2)[\frac{1}{p}]=N_+\oplus N_0\oplus N_-$$
where all slopes of $(N_+,F)$ are positive, all slopes of $(N_0,F)$ are $0$ and all slopes of $(N_-,F)$ are negative.

\begin{lemma}[Vasiu]\label{keylemma}
	Let $m$ be a positive integer. Each homomorphism of truncated Dieudonn\'e modules
	$$\xi_m: (M_1/p^mM_1,F,V)\rightarrow(M_2/p^mM_2,F,V)$$
	can be lifted to a $W(k)$-linear map $\xi: M_1\rightarrow M_2$ such that 
	$F(\xi)-\xi\in p^m\Hom_{W(k)}(M_1,M_2)$.
\end{lemma}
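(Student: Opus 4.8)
The statement is a lifting result: we are given a map of truncated Dieudonné modules $\xi_m$ over $W_m(k) := W(k)/p^m W(k)$ and we want a $W(k)$-linear lift $\xi : M_1 \to M_2$ that is compatible with $F$ up to $p^m$. The plan is to first lift $\xi_m$ arbitrarily (as a $W(k)$-linear map, ignoring the $F$-compatibility), then correct it using a successive-approximation / contraction argument driven by the $F$-isocrystal decomposition $\Hom_{W(k)}(M_1,M_2)[\frac 1p] = N_+ \oplus N_0 \oplus N_-$. The point of introducing that decomposition just before the lemma is precisely that on $N_+$ the operator $g \mapsto F\circ g\circ F^{-1}$ is topologically contracting (it raises $p$-adic valuations), on $N_-$ it is topologically contracting for the inverse operator, and on $N_0$ it is — up to a lattice — an isomorphism with bounded distortion. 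So the obstruction to $F$-equivariance can be killed componentwise.

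\medskip

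\textbf{Step 1: crude lift.} Since $M_1$ is free over $W(k)$, the surjection $\Hom_{W(k)}(M_1,M_2) \twoheadrightarrow \Hom_{W_m(k)}(M_1/p^m, M_2/p^m)$ lets us pick any $W(k)$-linear $\xi^{(0)} : M_1 \to M_2$ reducing to $\xi_m$. Because $\xi_m$ is a morphism of truncated Dieudonné modules, $F(\xi^{(0)}) - \xi^{(0)} = V^{-1}\xi^{(0)}V - \xi^{(0)}$ already lies in $p^m \Hom_{W(k)}(M_1,M_2)[\frac 1p]$ — but a priori it need not lie in the integral lattice $\Hom_{W(k)}(M_1,M_2)$, only in $p^{m-N}\Hom$ for some fixed $N$ depending on the slopes. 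So the genuine content is to improve $\xi^{(0)}$ so that the defect becomes integral and divisible by $p^m$.

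\medskip

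\textbf{Step 2: the correction argument.} Write $L = \Hom_{W(k)}(M_1,M_2)$ and let $\Psi(g) = F\circ g\circ F^{-1} = V^{-1}\circ g\circ V$ be the Frobenius of the $F$-isocrystal $L[\frac 1p]$. Fix the decomposition $L[\frac1p] = N_+\oplus N_0\oplus N_-$ and a $W(k)$-lattice $L'$ inside it stable under both $\Psi$ and $p\Psi^{-1}$ and commensurable with $L$; after multiplying by a fixed power of $p$ (which is harmless because we only care about congruences mod $p^m$ for every $m$, equivalently we may first prove the statement with $p^m$ replaced by $p^{m-c_0}$ and then re-run it) we reduce to $L' = L$. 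We look for a correction $\eta \in p^{?}L$ with $\xi = \xi^{(0)} + \eta$ satisfying $\Psi(\xi) - \xi \in p^m L$, i.e. $(\Psi - \mathrm{id})(\eta) = -(\Psi(\xi^{(0)}) - \xi^{(0)}) + p^m(\cdots)$. Decompose everything along $N_+\oplus N_0\oplus N_-$. On $N_+$, $\Psi - \mathrm{id}$ is invertible on the lattice with $\Psi$ strictly contracting, so $(\Psi-\mathrm{id})^{-1} = -\sum_{j\ge 0}\Psi^j$ converges $p$-adically and preserves the relevant divisibility; on $N_-$, rewrite $(\Psi - \mathrm{id})\eta = -\Psi(\mathrm{id} - \Psi^{-1})\eta$ and use that $\Psi^{-1}$ is strictly contracting there, so again $(\Psi - \mathrm{id})^{-1}$ exists and is continuous; on $N_0$, $\Psi - \mathrm{id}$ is invertible over $W(k)[\frac1p]$ with bounded denominators, and since we have already arranged the source term to be divisible by a high power of $p$, the solution $\eta$ on this block is integral. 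Assembling the three blocks gives the integral correction $\eta$ with $\xi = \xi^{(0)} + \eta$ the desired lift.

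\medskip

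\textbf{Main obstacle.} The one delicate point is the $N_0$-component: there $\Psi-\mathrm{id}$ is an isogeny, not an isomorphism, of lattices, so one must track exactly how many powers of $p$ are lost and verify that the mod-$p^m$ hypothesis on $\xi_m$ supplies enough divisibility to absorb that loss — equivalently, one must be careful about the "commensurability constant" $c_0$ relating $L$ to a $\Psi$-stable lattice and confirm it is independent of $m$. Once that bookkeeping is pinned down (it is, since the slopes and the isocrystal structure do not depend on $m$), the argument is a routine $p$-adic fixed-point/telescoping estimate, and the lemma follows; this is also morally the same mechanism used in Step 1 and in the orbit-space lemmas of the previous section, so I would cite the slope decomposition and the contraction estimate rather than re-derive them.
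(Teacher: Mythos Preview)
The paper does not prove this lemma itself; it simply cites \cite[p.~822]{traversosolved}. Your outline via the slope decomposition $L[\tfrac1p]=N_+\oplus N_0\oplus N_-$ with contraction on $N_\pm$ is indeed the shape of the argument in that reference. There is, however, a genuine error in your treatment of the slope-$0$ piece.

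You assert that on $N_0$ the operator $\Psi-\mathrm{id}$ is invertible over $W(k)[\tfrac1p]$ with bounded denominators. This is false. Since $k$ is algebraically closed, $N_0$ is a sum of trivial $F$-isocrystals (Dieudonn\'e--Manin), and the kernel of $\Psi-\mathrm{id}$ on $N_0$ is the entire $\QQ_p$-form of $N_0$ --- far from zero whenever $N_0\neq 0$. Concretely, if $M_1=M_2$ is the Dieudonn\'e module of a supersingular elliptic curve (the case $c=d=1$, $\pi=(12)$ in the paper's notation), then $N_0$ is all of $\End(M)[\tfrac1p]$ and $\Psi-\mathrm{id}$ has a $\ZZ_p$-kernel of rank $4$. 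What is true, and what the argument actually needs, is that $\Psi-\mathrm{id}$ is \emph{surjective} on any $\Psi$-stable $W(k)$-lattice $L_0\subset N_0$, and in fact surjective on each $p^jL_0$; this is Lang's theorem (equivalently the Artin--Schreier--Witt exact sequence), and it uses the hypothesis $k=\bar k$ in an essential way. Surjectivity suffices to produce $\eta_0$, but the solution is highly non-unique, so your ``bounded-denominator inverse'' picture is not the correct mechanism here.

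A second, smaller gap: your device for absorbing the commensurability constant $c_0$ --- ``prove the statement with $p^m$ replaced by $p^{m-c_0}$ and then re-run it'' --- does not work as written. You are handed $\xi_m$ only modulo $p^m$, so you cannot restart the argument from a finer truncation $\xi_{m+c_0}$ that you do not possess. One has to organize the correction so that $\eta$ already lies in $p^mL$; this is possible, but it requires tracking where $\Psi$ and $\Psi^{-1}$ gain or lose a single factor of $p$ relative to the \emph{original} lattice $L=\Hom_{W(k)}(M_1,M_2)$ (using both the $F$- and the $V$-compatibility of $\xi_m$), not merely relative to a commensurable $\Psi$-adapted lattice.
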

\begin{proof}
	\cite[p.~822]{traversosolved}

\end{proof}

Let $h=c+d>0$. Let $\pi$ be a permutation on $J=\{1,2,\dots,h\}$. Recall that $H_\pi$ is the $p$-divisible group whose Dieudonn\'e module $(M,\varphi_\pi)$  is defined as follows.  Let $\{e_1, e_2,\dots,e_h\}$ be a $W(k)$-basis of $M$. We have
$$\varphi_\pi(e_i)=\left\{\begin{array}{lr}
pe_{\pi(i)},\quad& 1\leq i\leq d; \\
e_{\pi(i)},\quad& d<i\leq h.
\end{array}
\right.$$
Therefore we have $M=F^1\oplus F^0$ where $F^1=\oplus_{i=1}^{d} W(k)e_i$ and  $F^0=\oplus_{i=d+1}^{h} W(k)e_i$.

Let $e_{i,j}: M\rightarrow M$ be the $W(k)$-linear map such that for each $l\in J$ we have $e_{i,j}(e_l)=\delta_{j,l}e_i$. Then $\{e_{i,j}| i,j\in J\}$ form a $W(k)$-basis of $\End(M)$.

Recall that $\varphi_\pi$ acts on $\End_{W(k)}(M)[\frac{1}{p}]$ $\sigma$-linearly by the rule $\varphi_\pi(g)=\varphi_\pi\circ g\circ\varphi_\pi^{-1}$. In particular, we have

\[
\varphi_\pi(e_{i,j})=\left\{
\begin{array}{ll}
pe_{\pi(i),\pi(j)} &\textrm{if } (i,j)\in J_+,\\
e_{\pi(i),\pi(j)} &\textrm{if } (i,j)\in J_0,\\
\frac{1}{p}e_{\pi(i),\pi(j)} &\textrm{if } (i,j)\in J_-.
\end{array}
\right.
\]
where $J_+=\{(i,j)\in J^2| i\leq d < j\}$, $J_0=\{(i,j)\in J^2| i,j\leq d \textrm{ or } i,j>d\}$ and $J_-=\{(i,j)\in J^2| j\leq d < i\}$, as illustrated by the following diagram 

\[ \left( \begin{array}{c:c} J_0 &  J_+ \\ \hdashline
J_- & J_0 \end{array} \right). \] 

Let
\[
\varepsilon_{i,j}=\left\{
\begin{array}{ll}
1 &\textrm{if } (i,j)\in J_+,\\
0 &\textrm{if } (i,j)\in J_0,\\
-1 &\textrm{if } (i,j)\in J_-.
\end{array}
\right.
\]
Then we can write
\[\varphi_\pi(e_{i,j})=p^{\varepsilon_{i,j}}e_{\pi(i),\pi(j)}.\]

Now let $g\in \End_{W(k)}(M)$, write $g=\sum_{(i,j)\in J^2} \underline{x}_{i,j}e_{i,j}$ with $\underline{x}_{i,j}\in W(k)$. We have 
$$\varphi_\pi(g)=\sum_{(i,j)\in J^2} \sigma(\underline{x}_{i,j})p^{\varepsilon_{i,j}}e_{\pi(i),\pi(j)}.$$

In order for $\varphi_\pi(g)\in \End_{W(k)}(M)$, we need $\sigma(\underline{x}_{i,j})\in pW(k)$, or equivalently $\underline{x}_{i,j}\in pW(k)$, for all $(i,j)\in J_-$. Write

\[
\mu_{i,j}=\left\{
\begin{array}{ll}
1& \text{if } (i,j)\in J_-,\\
0& \text{if } (i,j)\in J_+\cup J_0. 
\end{array}
\right.
\]

Then we have $\varphi_\pi(g)\in \End_{W(k)}(M)$ if and only if $g=\sum_{(i,j)\in J^2} p^{\mu_{i,j}}\underline{x}_{i,j}e_{i,j}$, where $\underline{x}_{i,j} \in W(k)$ is arbitrary.

Now let $g\in \End_{W(k)}(M)$ be such that $\varphi_\pi(g)-g\in p^m\End_{W(k)}(M)$, then we have 
\[\sum_{(i,j)\in J^2} p^{\mu_{i,j}+\varepsilon_{i,j}}\sigma(\underline{x}_{i,j})e_{\pi(i),\pi(j)}\equiv \sum_{(i,j)\in J^2} p^{\mu_{i,j}}\underline{x}_{i,j}e_{i,j}\ \mod p^m.\]
Thus by comparing coefficients, we get
\begin{equation}\label{eq1}
p^{\mu_{i,j}+\varepsilon_{i,j}}\sigma(\underline{x}_{i,j})\equiv  p^{\mu_{\pi(i),\pi(j)}}\underline{x}_{\pi(i),\pi(j)} \mod p^m,\quad \forall (i,j)\in J^2.
\end{equation}

\subsection{Endomorphisms of $H_\pi[p]$}

When $m=1$, the equation \ref{eq1} becomes

\begin{equation}\label{eq2}
p^{\mu_{i,j}+\varepsilon_{i,j}} x_{i,j}^p=  p^{\mu_{\pi(i),\pi(j)}}x_{\pi(i),\pi(j)}, \quad \forall (i,j)\in J^2
\end{equation}

where $x_{i,j} \in W_1(k)=k$. 

We can break this system of equations into smaller systems, one for each orbit of $(\pi,\pi)$ on $J^2$. Let $\mathcal{O}=((i_1,j_1), (i_2,j_2), \dots, (i_l,j_l))$ be such an orbit, i.e., $ (\pi(i_1),\pi(j_1))=(i_{2},j_{2}), (\pi(i_2),\pi(j_2))=(i_{3},j_{3}), \dots, (\pi(i_l),\pi(j_l))=(i_1,j_1)$. For simplicity we write $x_s=x_{i_s,j_s}$, $ \varepsilon_s=\varepsilon_{i_s,j_s}$ and $\mu_s=\mu_{i_s,j_s}$. Let $\varepsilon_{\mathcal{O}}=(\varepsilon_1,\varepsilon_2,\dots, \varepsilon_l)$
and $\mu_{\mathcal{O}}=(\mu_1,\mu_2,\dots, \mu_l)$. Here and in what follows all indices are taken modulo the size of the orbit $|\mathcal{O}|=l$.

If $|\mathcal{O}|=1$, then we have $x_1^p=x_1$ when $\varepsilon_1=0$, or $x_1=0$ when $\varepsilon_1=\pm 1$. The former contributes a factor of $p$ to the number of connected components of $\Ends(H_\pi[p])$.

Assume $|\mathcal{O}|\geq 2$. Let us first examine the case where $\mathcal{O}\subseteq J_0$. In this case, we have $\varepsilon_\mathcal{O}=(0,0,\dots,0)$ and $\mu_\mathcal{O}=(0,0,\dots,0)$. The equation \ref{eq2} becomes
\[x_1^p=x_2,\]
\[x_2^p=x_3,\]
\[\cdots\]
\[x_l^p=x_1.\]
Thus we have $x_1^{p^l}=x_1$. This equation has $p^l$ distinct solutions in $k$. The polynomial $x_1^{p^l}-x_1$ give rise to $p^l$ connected components of $\Ends(H_\pi[p])$.

Now assume $\mathcal{O}\nsubseteq J_0$. Then $\varepsilon_\mathcal{O}$ contains non-zero numbers, say $\varepsilon_s$. If $\varepsilon_s=1$, then we have $0=p^{\mu_{s+1}}x_{s+1}$. Unless $\mu_{s+1}=1$ (i.e., $\varepsilon_{s+1}=-1$), we get $x_{s+1}=0$. Let $u$ be the smallest positive integer such that $\varepsilon_{s+u}=-1$ or $\infty$ if no such integer exists, then by repeating the previous argument, we get that $x_{s+i}=0$ for all $0<i<u$.

Similarly, if $\varepsilon_s=-1$, then we have $p^{\varepsilon_{s-1}}x_{s-1}^p=0$. Unless $\varepsilon_{s-1}=1$, we get $x_{s-1}=0$. Let $v$ be the smallest positive integer such that $\varepsilon_{s-v}=1$ or $\infty$ if no such integer exists, then by repeating the previous argument, we get that $x_{s-i}=0$ for all $0<i<v$.

Therefore, only segments of the form $<-1, 0, \dots, 0, 1>$ in $\varepsilon_\mathcal{O}$ produce non-trivial equations. Let $<\varepsilon_s, \dots, \varepsilon_t>$ be such a segment, then we have 
\begin{align*}
	x_s^p &=x_{s+1},\\
	x_{s+1}^p &=x_{s+2},\\
	&\cdots ,\\
	x_{t-1}^p &=x_{t},\\
	0 &= p^{\mu_{t+1}}x_{t+1},\\
	&\cdots	.
\end{align*}

Therefore we have $x_{s+1} = x_s^p, \dots, x_{t}=x_s^{p^{t-s}}$ and a free variable $x_s$ which adds $1$ to $\gamma_{H_\pi}(1)$.

To sum up, we have the following 

\begin{thm}
	Let the notations be as before. Then
	
	(1) $\gamma_{H_\pi}(1)$ is the number of segments of the type $<-1,0,\dots,0,1>$ that appear in $\varepsilon_{\mathcal{O}}$ as $\mathcal{O}$ ranges through all the orbits.
	
	(2)
	The number of connected components of $\Ends(H_\pi[p])$ is $p^{c_1}$ where $c_1=\sum_{\varepsilon_{\mathcal{O}}=(0,0,\dots,0)}|\mathcal{O}| $.
\end{thm}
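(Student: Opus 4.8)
The plan is to transport both assertions to the Dieudonn\'e side and reduce them to an explicit system of polynomial equations, which then decouples over the orbits of $(\pi,\pi)$ on $J^2$. By the corollary in Section 4.3, the open immersion $\Auts(H_\pi[p])\hookrightarrow\Ends(H_\pi[p])$ is also closed and $\Auts(H_\pi[p])^0$ is a translate of $\Ends(H_\pi[p])^0$, so $\gamma_{H_\pi}(1)=\dim\Auts(H_\pi[p])=\dim\Ends(H_\pi[p])$; and via the finite homomorphisms $\iota_1,\zeta_1$ of Vasiu's theorem in Section 4.3 (isomorphisms on $k$-valued points), together with the evident Dieudonn\'e-functoriality of $\Ends$ extending $\zeta_1$, the dimension and the number of connected components of $\Ends(H_\pi[p])$ agree with those of the crystalline endomorphism scheme $\Ends(H_\pi[p])_{\textrm{crys}}$, i.e.\ of the endomorphism scheme of the truncated Dieudonn\'e module $(M/pM,\varphi_1,\vartheta_1)$. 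So it is enough to analyze $\Ends(H_\pi[p])_{\textrm{crys}}$.

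For that I would use the parametrization set up just before the statement. By Lemma \ref{keylemma} with $m=1$, the $k$-points of $\Ends(H_\pi[p])_{\textrm{crys}}$ are exactly the reductions mod $p$ of the $g\in\End_{W(k)}(M)$ with $\varphi_\pi(g)-g\in p\End_{W(k)}(M)$; writing $g=\sum_{(i,j)\in J^2}p^{\mu_{i,j}}\underline x_{i,j}e_{i,j}$ and reducing the congruence mod $p$ gives precisely the system \eqref{eq2} in the coordinates $x_{i,j}\in k$. Through the Witt/Greenberg-functor description $\mathbb{W}_1$ of Section 4.3, these same relations cut out $\Ends(H_\pi[p])_{\textrm{crys}}$ as a closed subscheme of the affine space $\End(M)\otimes_{W(k)}k=\bigoplus_{(i,j)\in J^2}k\,e_{i,j}$. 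Checking that this presentation faithfully records the scheme-theoretic dimension and the set of connected components, not merely the $k$-points, is the single genuinely delicate point, and it is exactly here that the structural input of Section 4.3 (finiteness and $k$-point bijectivity of $\iota_1,\zeta_1$, and the translate description of identity components) is used.

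Granting this, the analysis splits over the orbits $\mathcal{O}=((i_1,j_1),\dots,(i_l,j_l))$ of $(\pi,\pi)$ on $J^2$, since \eqref{eq2} never links coordinates of different orbits; I write $x_s:=x_{i_s,j_s}$ with $\varepsilon_s,\mu_s$ accordingly and read indices modulo $l$. If $\varepsilon_{\mathcal{O}}=(0,\dots,0)$, equivalently $\mathcal{O}\subseteq J_0$, the relations become $x_s^{\,p}=x_{s+1}$ for all $s$, hence $x_1^{\,p^l}=x_1$; this cuts out $\spec k[x]/(x^{p^l}-x)$, an \'etale $k$-algebra of rank $p^l$, contributing a factor $p^l=p^{|\mathcal{O}|}$ to the number of connected components and $0$ to the dimension. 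If $\varepsilon_{\mathcal{O}}$ has a nonzero entry, propagating \eqref{eq2} shows that $\varepsilon_s=1$ forces $x_{s+1}=0$ unless $\varepsilon_{s+1}=-1$, and $\varepsilon_s=-1$ forces $x_{s-1}=0$ unless $\varepsilon_{s-1}=1$; iterating, every $x_t$ vanishes except along a maximal segment of shape $<-1,0,\dots,0,1>$, along which the surviving relations read $x_{s+1}=x_s^{\,p},\dots,x_t=x_s^{\,p^{t-s}}$. Such a segment thus defines the graph of a polynomial map, a subscheme isomorphic to $\mathbb{A}^1_k$: connected, contributing $1$ to the dimension. (Singleton orbits $l=1$ are the degenerate case and obey the same bookkeeping.)

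Assembling the orbit contributions: $\Ends(H_\pi[p])_{\textrm{crys}}$ is the product over all orbits of the pieces just identified, so its dimension equals the total number of segments of type $<-1,0,\dots,0,1>$ across all the $\varepsilon_{\mathcal{O}}$, which is $(1)$, and its number of connected components equals $\prod_{\varepsilon_{\mathcal{O}}=(0,\dots,0)}p^{|\mathcal{O}|}=p^{c_1}$ with $c_1=\sum_{\varepsilon_{\mathcal{O}}=(0,\dots,0)}|\mathcal{O}|$, which is $(2)$. I expect the main obstacle to be not the (elementary) orbit case analysis but the bookkeeping in the second paragraph: justifying that $\dim\Ends(H_\pi[p])$ and the count of its connected components can be read off from the naive equations \eqref{eq2} over $k$, i.e.\ controlling possible nilpotents and spurious components, for which the results of Section 4.3 do the real work.
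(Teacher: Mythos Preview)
Your proposal is correct and follows essentially the same route as the paper: the paper derives equation \eqref{eq2}, decomposes it over the orbits of $(\pi,\pi)$ on $J^2$, and carries out exactly the case analysis you describe (all-zero orbits give $x_1^{p^l}=x_1$ and hence $p^{|\mathcal{O}|}$ components; a nonzero entry propagates zeros until a segment $\langle -1,0,\dots,0,1\rangle$ is reached, which yields one free variable). You are in fact more explicit than the paper about the passage from $\Ends(H_\pi[p])$ to $\Ends(H_\pi[p])_{\mathrm{crys}}$ and about why the naive equations \eqref{eq2} compute the scheme-theoretic dimension and component count; the paper leaves this implicit in the setup of Section 4.3 and simply reads off the answer from the orbit analysis.
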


Part (1) of this theorem is a special case of \cite[1.2. Basic Theorem A]{Vasiu:modpshimura}.

\begin{exam}
	In this example all induces $i,j\in J=\{1,2,\dots,h\}$ are taken modulo $h$. Suppose that $g.c.d(c,d)=1$ and that $H$ is minimal in the sense of \cite[Sect. 1.1]{Oort:minimal}. We can assume that we have $\pi(i)=i+d$ for all $i\in J$, cf. \cite[Sect 1.4]{Oort:minimal}. The $p$-divisible group $H$ is uniquely determined up to isomorphism by $H[p]$, cf. \cite[Example 3.3.6]{CBP}. Therefore we can assume $(M,\varphi)=(M,\varphi_\pi)$.
	
	In particular, let $c=2$, $d=3$. Then $\pi$ has five orbits on $J^2$. They are
	\begin{align*}
	\mathcal{O}_1 &= [(1,1)]=((1,1),(4,4),(2,2),(5,5),(3,3)), \varepsilon_{\mathcal{O}_1}=(0,0,0,0,0);\\
	\mathcal{O}_2 &= [(1,2)]=((1,2),(4,5),(2,3),(5,1),(3,4)), \varepsilon_{\mathcal{O}_2}=(0,0,0,-1,1);\\
	\mathcal{O}_3 &= [(1,3)]=((1,3),(4,1),(2,4),(5,2),(3,5)), \varepsilon_{\mathcal{O}_3}=(0,-1,1,-1,1);\\
	\mathcal{O}_4 &= [(1,4)]=((1,4),(4,2),(2,5),(5,3),(3,1)), \varepsilon_{\mathcal{O}_4}=(1,-1,1,-1,0);\\
	\mathcal{O}_5 &= [(1,5)]=((1,5),(4,3),(2,1),(5,4),(3,2)), \varepsilon_{\mathcal{O}_5}=(1,-1,0,0,0).
	\end{align*}
	Therefore $\gamma_H(1)=0+1+2+2+1=6$. 
	
	In general, for minimal $p$-divisible groups $H$ we have $\gamma_H(1)=cd.$
\end{exam}

\subsection{Endomorphisms of $H_\pi[p^2]$}

Now let $m=2$. Recall that for $\underline{x}=(a,b)\in W_2(k)$, we have $\sigma(\underline{x})=(a^p,b^p)$ and $p\underline{x}=(0,a^p)$. Write $\underline{x}_{i,j}=(a_{i,j}, b_{i,j})$, the equation \ref{eq2} becomes

\begin{equation}\label{eq3}
p^{\mu_{i,j}+\varepsilon_{i,j}} (a_{i,j}^p,b_{i,j}^p)=  p^{\mu_{\pi(i),\pi(j)}}(a_{\pi(i),\pi(j)}, b_{\pi(i),\pi(j)}),\quad \forall (i,j)\in J^2
\end{equation}
where $a_{i,j},b_{i,j}\in k.$

As before, let $\mathcal{O}=((i_1,j_1), (i_2,j_2), \dots, (i_l,j_l))$ be an orbit. For simplicity we write $\varepsilon_s=\varepsilon_{i_s,j_s}$, $\mu_s=\mu_{i_s,j_s}$, $a_s=a_{i_s,j_s}$ and $b_s=b_{i_s,j_s}$, with the understanding that all indices are taken modulo $|\mathcal{O}|$. Let $\varepsilon_{\mathcal{O}}=(\varepsilon_1,\varepsilon_2,\dots, \varepsilon_l)$ and $\mu_{\mathcal{O}}=(\mu_1,\mu_2,\dots, \mu_l)$. 

If $|\mathcal{O}|=1$, then we have
\begin{align*}
	(a_1^p,b_1^p) &= (0, a_1^p),\hspace{1cm} \textrm{ if } \varepsilon_1=-1,\\
	(a_1^p,b_1^p) &= (a_1, b_1),\hspace{1cm} \textrm{ if } \varepsilon_1=0,\\
	(0,a_1^{p^2}) &= (a_1, b_1),\hspace{1cm} \textrm{ if } \varepsilon_1=1.
\end{align*}	

Therefore, we get $a_1=b_1=0$ when $\varepsilon_1=\pm 1$, and $a_1^p=a_1, b_1^p=b_1$ when $\varepsilon_1=0$. The latter contributes a factor of $p^2$ to the number of connected components of $\Ends(H_\pi[p^2])$.

Now assume $|\mathcal{O}|\geq 2$. The relation between $(a_s,b_s)$ and $(a_{s+1},b_{s+1})$ is given by $\varepsilon_s$ and $\mu_{s+1}$. By checking all six possible combinations of $\varepsilon_s$ and $\mu_{s+1}$, we get the following four disjoint cases. 

\begin{itemize}
	
	\item If $\varepsilon_s \in\{-1, 0\}$ and $\mu_{s+1}=0$, then $$(a_s^p,b_s^p)=(a_{s+1},b_{s+1}).$$	
	
	\item If $\varepsilon_s=1$ and $\mu_{s+1}=1$, then $$(0,a_s^{p^2})=(0,a_{s+1}^p).$$ 
	
	\item If $\varepsilon_s=1$ and $\mu_{s+1}=0$, then $$(0,a_s^{p^2})=(a_{s+1},b_{s+1}).$$
	
	\item If $\varepsilon_s \in \{-1, 0\}$ and $\mu_{s+1}=1$, then $$(a_s^p,b_s^p)=(0,a_{s+1}^p).$$	
\end{itemize}

Since $\mu_{s+1}$ depends on $\varepsilon_{s+1}$, these relations ultimately come from $\varepsilon_\mathcal{O}$. If we draw a line $x-y$ when $y=x$, a single headed arrow $x\rightarrow y$ when $y=x^p$ and a double headed arrow $x\twoheadrightarrow y$ when $y=x^{p^2}$, then we can use the following four diagrams to represent the four types of relations between $(a_s,b_s)$ and $(a_{s+1},b_{s+1})$ according to $<\varepsilon_s, \varepsilon_{s+1}>$:
\begin{itemize}
	
	\item $<-1,0>$ or $<-1,1>$ or $<0,0>$ or $<0,1>$:
	$$
	\xymatrix{
		\ar@{--}[r] &a_s \ar[r] &a_{s+1} \ar@{--}[r] & \mathbin{\phantom{0}} \\
		\ar@{--}[r] &b_s \ar[r] &b_{s+1} \ar@{--}[r] & \mathbin{\phantom{0}}
	}
	$$
	
	\item $<1,-1>$:
	$$
	\xymatrix{
		\ar@{--}[r] &a_s \ar[r] &a_{s+1} \ar@{--}[r] & \mathbin{\phantom{0}} \\
		\ar@{--}[r] &b_s &b_{s+1} \ar@{--}[r] & \mathbin{\phantom{0}}
	}
	$$
	
	\item $<1,0>$ or $<1,1>$:
	$$
	\xymatrix{
		\ar@{--}[r] &a_s \ar@{->>}[dr] & 0 \ar@{--}[r] & \mathbin{\phantom{0}} \\
		\ar@{--}[r] &b_s  &b_{s+1} \ar@{--}[r] & \mathbin{\phantom{0}}
	}
	$$
	\item $<-1,-1>$ or $<0,-1>$:
	$$
	\xymatrix{
		\ar@{--}[r] & 0  & a_{s+1} \ar@{--}[r] & \mathbin{\phantom{0}} \\
		\ar@{--}[r] &b_s\ar@{-}[ur]  &b_{s+1} \ar@{--}[r] & \mathbin{\phantom{0}}
	}
	$$
\end{itemize}

By putting these diagrams together, we can represent the equations \ref{eq3} on $\mathcal{O}$ by a graph $\Gamma_\mathcal{O}$. Here and in what follows by graph we mean an oriented graph which has some vertices marked as $0$ and which has multiple types of edges (like $-$, $\rightarrow$, $\twoheadrightarrow$, etc.).

\begin{exam}\label{diagrams}
	(a) Let $\varepsilon_\mathcal{O}=(-1,-1,1,1)$. Then $\Gamma_\mathcal{O}$ is:
	
		$$
		\xymatrix{
			0 	 &a_2 \ar[r] & a_3 \ar@{->>}[rd] & 0 \ar@/_1pc/[lll] \\
			b_1	 \ar@{-}[ur] &b_2 \ar[r]  &b_3 & b_4.
		}
		$$\\
		
		Therefore we get $a_1=a_4=0$ and two free variables $b_1$ and $b_2$.
		
	(b)	Let $\varepsilon_\mathcal{O}=(-1,1,-1,1)$. Then $\Gamma_\mathcal{O}$ is:
		$$
		\xymatrix{
			a_1\ar[r] 	 &a_2 \ar[r] & a_3 \ar[r] & a_4 \ar@/_1pc/[lll] \\
			b_1	 \ar[r] &b_2   &b_3\ar[r] & b_4.
		}
		$$
			
		There are two free variables $b_1$ and $b_3$. Combining the four equations in the first row, we get $ a_1=a_1^{p^4}$.
	
	(c)	Let $\varepsilon_\mathcal{O}=(0,0,0,0)$. Then $\Gamma_\mathcal{O}$ is:
	
		$$
		\xymatrix{
			a_1\ar[r] 	 &a_2 \ar[r] & a_3 \ar[r] & a_4 \ar@/_1pc/[lll] \\
			b_1	 \ar[r] &b_2 \ar[r]   &b_3\ar[r] & b_4 . \ar@/^1pc/[lll]
		}
		$$\\
		
	This gives $a_1=a_1^{p^4}$ and $b_1=b_1^{p^4}$.	
	
	(d)	Let $\varepsilon_\mathcal{O}=(-1,0,-1,-1,1,1,0,1)$. We use $\ast$, $\circ$ and $\bullet$ to represent variables. We use $\circ$ for a variable $y_s$ that doesn't depend on $a_{s-1}$ or $b_{s-1}$ (an ``open start''), use $\bullet$ for a variable $y_s$ that has no relation with $a_{s+1}$ or $b_{s+1}$ (an ``open end''), and use $\ast$ for other variables. Then we can draw $\Gamma_\mathcal{O}$ as follows:
	
	$$
	\xymatrix{
		\ast \ar[r] &0 &0	 &\ast \ar[r] & \ast \ar@{->>}[rd] & 0\ar@{->>}[rd] & 0 \ar[r] &\ast \ar@/_2pc/[lllllll] \\
		\circ \ar[r] & \ast\ar@{-}[ur] &\circ \ar@{-}[ur] &\circ \ar[r] &\bullet &\bullet  &\ast \ar[r] & \bullet.
	}
	$$\\
	
\end{exam}

From above examples, we can see that for an orbit $\mathcal{O}$ with $|\mathcal{O}|\geq 2$, the graph $\Gamma_\mathcal{O}$ further decomposes into linear graphs and circular graphs which are connected components. It is clear that a zero vertex in a linear graph will make all variables zero, and a zero vertex cannot belong to a circular graph. We call linear graphs that have no zero vertices \emph{free} linear graphs.

\begin{lemma}\label{dc} Let $\mathcal{O}$ and $\Gamma_\mathcal{O}$ be as before. Then we have the following.
	
	(a) Each circular graph in $\Gamma_\mathcal{O}$ contributes a factor of $p^{|\mathcal{O}|}$ to the number of connected components of $\Ends(H_\pi[p^2])$.
	
	(b) Each free linear graph in $\Gamma_\mathcal{O}$ adds $1$ to the dimension of $\Auts(H_\pi[p^2])$. 
\end{lemma}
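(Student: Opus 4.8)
The plan is to reduce both assertions to a direct analysis of the polynomial system \eqref{eq3}, exactly as was done for $m=1$. First I would record that, just as in the case $m=1$, the number of connected components of $\Ends(H_\pi[p^2])$ and the number $\gamma_{H_\pi}(2)=\dim\Auts(H_\pi[p^2])$ are read off from the affine $k$-scheme $Z$ of solutions of \eqref{eq3} in the coordinates $(a_{i,j},b_{i,j})_{(i,j)\in J^2}$; here one uses that on $W_2$ of an $\mathbb{F}_p$-algebra the Frobenius $\sigma$ is $(a,b)\mapsto(a^p,b^p)$ and multiplication by $p$ is $(a,b)\mapsto(0,a^p)$, so \eqref{eq3} really is a system of polynomial equations, and that $\dim\Auts(H_\pi[p^2])$ equals the dimension of the identity component of $\Ends(H_\pi[p^2])$ because the former scheme's identity component is a translate of the latter's. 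Since these equations couple only coordinates lying in one $(\pi,\pi)$-orbit, and within an orbit only coordinates lying in one connected component of $\Gamma_\mathcal{O}$, the scheme $Z$ is a product of closed subschemes $Z_C$ indexed by the components $C$ of the graphs $\Gamma_\mathcal{O}$, and it suffices to understand $Z_C$ for each $C$. Because $\dim$ and the set of connected components are unchanged on passing to $Z_{\mathrm{red}}$, and because the only equations of \eqref{eq3} that are not already radical are those of types $<1,-1>$ and $<-1,-1>,<0,-1>$ (reading $(a_{s+1}-a_s^p)^p=0$, resp. $(a_{s+1}-b_s)^p=0$, over any $\mathbb{F}_p$-algebra), I would work throughout with the "clean" relations $a_{s+1}=a_s^p$, $b_{s+1}=b_s^p$, $b_{s+1}=a_s^{p^2}$, $a_{s+1}=b_s$ together with the vanishings $a_s=0$, i.e. with the edges of $\Gamma_\mathcal{O}$ as drawn; in particular each $Z_C$ will be equidimensional, so that $\dim\Auts(H_\pi[p^2])=\dim\Ends(H_\pi[p^2])=\sum_C\dim(Z_C)_{\mathrm{red}}$.

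For part (b): a connected component $C$ that is a linear graph is a chain $x_1\to x_2\to\cdots\to x_r$ of clean relations $x_{i+1}=x_i^{p^{d_i}}$, $d_i\in\{0,1,2\}$, whose source $x_1$ is constrained by nothing. If no vertex of $C$ is marked $0$, then $x_2,\dots,x_r$ are polynomial functions of the free variable $x_1$, so $(Z_C)_{\mathrm{red}}\cong\mathbb{A}^1$: it is connected and contributes $1$ to the dimension. If some vertex is marked $0$, then propagating the relations forward, and backward using that $k$ is perfect (so $u\mapsto u^p$ is a bijection on $k$, allowing one to solve $x_{i+1}=x_i^{p^{d_i}}=0$ for $x_i$), forces every $x_i=0$, so $(Z_C)_{\mathrm{red}}\cong\spec k$: it is connected and $0$-dimensional. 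Summing over all components then gives $\dim\Auts(H_\pi[p^2])=\sum_\mathcal{O}\#\{\text{free linear graphs in }\Gamma_\mathcal{O}\}$, which is (b).

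For part (a): let $C$ be a circular graph in $\Gamma_\mathcal{O}$ and put $l:=|\mathcal{O}|$. I would first show $C$ has length exactly $l$. Every edge of $\Gamma_\mathcal{O}$ sends the index $s\in\mathbb{Z}/l\mathbb{Z}$ of its tail to $s+1$ at its head, so a cycle of length $r$ advances the index by $r$ and must return to its start, giving $l\mid r$; and $r\le 2l$ since $\Gamma_\mathcal{O}$ has $2l$ vertices. The case $r=2l$ is excluded because it would force the cycle through both $a_s$ and $b_s$ for every $s$, hence force the transitions into and out of every position to supply edges in both rows, which only the transitions of shape $<\varepsilon_s\in\{-1,0\},\varepsilon_{s+1}\ne -1>$ do; that makes every $\varepsilon_s$ equal to $0$, and then $\Gamma_\mathcal{O}$ is the disjoint union of an $a$-cycle and a $b$-cycle, not one $2l$-cycle. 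Thus $r=l$, so going around $C$ once uses one edge for each of the $l$ transitions of $\mathcal{O}$ and expresses every coordinate on $C$ as an iterated $p$-power of a chosen base coordinate $x$, with $x$ satisfying $x^{p^{e}}=x$ where $e$ is the sum of the exponents of the $l$ edges traversed. I would then check $e=l$: the edges that stay within a row, together with the $<1,-1>$ edge, each have exponent $1$; the row-switching edge $a\twoheadrightarrow b$ (from a $<1,\varepsilon_{s+1}\ne -1>$ transition) has exponent $2$ and the row-switching edge $b-a$ (from a $<\varepsilon_s\in\{-1,0\},-1>$ transition) has exponent $0$; but these two kinds of row-switch occur equally often along a cycle, so the exponents average to $1$ over all $l$ edges and $e=l$. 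Hence $(Z_C)_{\mathrm{red}}\cong\spec k[x]/(x^{p^{l}}-x)$, which is \'etale of degree $p^{l}$ over the algebraically closed field $k$ and so has exactly $p^{l}=p^{|\mathcal{O}|}$ connected components; by the product decomposition of $\Ends(H_\pi[p^2])$, each circular graph multiplies the number of connected components by $p^{|\mathcal{O}|}$, which is (a).

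The main obstacle I anticipate is the combinatorics in the last paragraph: proving that a circular component winds around its orbit exactly once (ruling out length $2l$) and that the Frobenius exponents accumulated around it sum to $|\mathcal{O}|$. Both come down to a finite but somewhat fiddly case analysis of how the four transition diagrams glue and of which vertices are forced to vanish; everything else — the product decomposition, the reduced structure, and the linear case — is routine.
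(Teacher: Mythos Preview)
Your approach is correct and coincides with the paper's: both reduce (a) to the observation that going once around a circular graph yields an equation $x^{p^{w}}=x$ with $w=|\mathcal{O}|$ because the weight-$0$ edges and weight-$2$ edges occur in equal number, and both treat (b) as the trivial observation that a free linear chain is parametrized by its initial vertex. The paper's proof is considerably terser---it does not justify that a circular component has exactly $|\mathcal{O}|$ edges or discuss the reduced structure---so your added care on those points only strengthens the argument.
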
	

\begin{proof}
	(a) Take a circular graph in $\Gamma_\mathcal{O}$. If we assign weight $0$ to arrows with no heads, $1$ to single-headed arrows and $2$ to double-headed arrows, then we get an equation $y_s^{p^w}=y_s$, where $w$ is the sum of the weights of all arrows in the circular graph. Since the number of weight $0$ arrows is equal to the number of weight $2$ arrows in a circular graph, we have that $w=|\mathcal{O}|$.  
	
	(b) It is obvious that a free linear graph produces $1$ free variable, namely, the one that corresponds to the open start of the free linear graph.
\end{proof}	

To track the free linear graphs in $\Gamma_\mathcal{O}$, we make the following observation.

1. All zero vertices are in the first row. All free linear graphs start and end in the second row. Moreover, let $b_s$ (resp. $b_t$) be the variable corresponding to the start (resp. the end) of the
free linear graph, then we have $\varepsilon_s=-1$ and $\varepsilon_t=1$.

 Indeed, the only way to get an open start ($\circ_{b_s}$) is to use the following two diagrams. They correspond to $\varepsilon_s=-1$

	$$
	\xymatrix{
		\ar@{--}[r] & 0  & \ast \ar@{--}[r] & \mathbin{\phantom{0}} \\
		\ar@{--}[r] &\ast\ar@{-}[ur]  &\circ_{b_s} \ar@{--}[r] & \mathbin{\phantom{0}}
	}
	$$
	$$
	\xymatrix{
		\ar@{--}[r] &\ast \ar[r] &\ast \ar@{--}[r] & \mathbin{\phantom{0}} \\
		\ar@{--}[r] &\bullet &\circ_{b_s} \ar@{--}[r] & \mathbin{\phantom{0}}
	}
	$$
	
Similarly, the only way to get an open end ($\bullet_{b_t}$) is to use the following two diagrams. They correspond to $\varepsilon_t=1$.

$$
\xymatrix{
	\ar@{--}[r] & \ast \ar@{-->>}[dr] & 0 \ar@{--}[r] & \mathbin{\phantom{0}} \\
	\ar@{--}[r] &\bullet_{b_t} &\ast \ar@{--}[r] & \mathbin{\phantom{0}}
}
$$
$$
\xymatrix{
	\ar@{--}[r] &\ast \ar[r] &\ast \ar@{--}[r] & \mathbin{\phantom{0}} \\
	\ar@{--}[r] &\bullet_{b_t} &\circ \ar@{--}[r] & \mathbin{\phantom{0}}
}
$$

2. Let us study the effect of $0$'s in $\varepsilon_\mathcal{O}$. Take a segment of the form $<\varepsilon_{i-1}, 0, \varepsilon_{i+1}>$. By combining the diagrams earlier, we get four cases depending on $<\varepsilon_{i-1},0,\varepsilon_{i+1}>$:

\begin{itemize}
	\item $<-1,0,0>$ or $<-1,0,1>$ or $<0,0,0>$ or $<0,0,1>$:
	$$
	\xymatrix{
		\ar@{--}[r] &\ast \ar[r] &\ast \ar[r] &\ast \ar@{--}[r] & \mathbin{\phantom{0}} \\
		\ar@{--}[r] &\ast \ar[r] &\ast \ar[r] &\ast \ar@{--}[r] & \mathbin{\phantom{0}}
	}
	$$
	
	\item $<1,0,-1>$:
	$$
	\xymatrix{
		\ar@{--}[r] &\ast \ar@{->>}[rd] &0 &\ast \ar@{--}[r] & \mathbin{\phantom{0}} \\
		\ar@{--}[r] &\ast &\ast\ar@{-}[ur] &\ast \ar@{--}[r] & \mathbin{\phantom{0}}
	}
	$$
	\item $<1,0,0>$ or $<1,0,1>$:
	$$
	\xymatrix{
		\ar@{--}[r] &\ast \ar@{->>}[dr] & 0 \ar[r] &\ast \ar@{--}[r] & \mathbin{\phantom{0}} \\
		\ar@{--}[r] &\ast  &\ast \ar[r] &\ast \ar@{--}[r] & \mathbin{\phantom{0}}
	}
	$$
	\item $<-1,0,-1>$ or $<0,0,-1>$:
	$$
	\xymatrix{
		\ar@{--}[r] &\ast\ar[r] & 0  & \ast \ar@{--}[r] & \mathbin{\phantom{0}} \\
		\ar@{--}[r] &\ast\ar[r] &\ast \ar@{-}[ur]  &\ast \ar@{--}[r] & \mathbin{\phantom{0}}
	}
	$$
	
\end{itemize}	
	
By Comparing these diagrams with the diagrams of $<\varepsilon_{i-1},\varepsilon_{i+1}>$:
\begin{itemize}
	
	\item $<-1,0>$ or $<-1,1>$ or $<0,0>$ or $<0,1>$:
	$$
	\xymatrix{
		\ar@{--}[r] &\ast \ar[r] &\ast \ar@{--}[r] & \mathbin{\phantom{0}} \\
		\ar@{--}[r] &\ast \ar[r] &\ast \ar@{--}[r] & \mathbin{\phantom{0}}
	}
	$$
	
	\item $<1,-1>$:
	$$
	\xymatrix{
		\ar@{--}[r] &\ast \ar[r] &\ast \ar@{--}[r] & \mathbin{\phantom{0}} \\
		\ar@{--}[r] &\ast &\ast \ar@{--}[r] & \mathbin{\phantom{0}}
	}
	$$
	
	\item $<1,0>$ or $<1,1>$:
	$$
	\xymatrix{
		\ar@{--}[r] &\ast \ar@{->>}[dr] & 0 \ar@{--}[r] & \mathbin{\phantom{0}} \\
		\ar@{--}[r] &\ast  &\ast \ar@{--}[r] & \mathbin{\phantom{0}}
	}
	$$
	\item $<-1,-1>$ or $<0,-1>$:
	$$
	\xymatrix{
		\ar@{--}[r] & 0  &\ast \ar@{--}[r] & \mathbin{\phantom{0}} \\
		\ar@{--}[r] &\ast\ar@{-}[ur]  &\ast \ar@{--}[r] & \mathbin{\phantom{0}}
	}
	$$
\end{itemize}
	we see that deleting the zeros from $\varepsilon_\mathcal{O}$ does not affect the type of graphs we get from $\Gamma_\mathcal{O}$ (it will make some graphs shorter of course). To keep track of the number of free linear graphs in $\Gamma_\mathcal{O}$, we can therefore ignore the zeros in $\varepsilon_\mathcal{O}$.

Assume that $\varepsilon_\mathcal{O}$ has no zero.  Let $b_s$ be a free variable. Let us examine what the segment $<\varepsilon_s,\varepsilon_{s+1},\dots,\varepsilon_t>$ that corresponds to the linear graph staring with $b_s$ is like.

We know that $\varepsilon_s=-1$. If $\varepsilon_{s+1}=1$, then the segment ends and we get a free linear graph from $<\varepsilon_s,\varepsilon_{s+1}>=<-1,1>$:
$$
\xymatrix{
	\ar@{--}[r] &\ast \ar[r] &\ast \ar@{--}[r] & \mathbin{\phantom{0}} \\
	 &\circ_{b_s} \ar[r] &\bullet  & \mathbin{\phantom{0}}.
}
$$

If $\varepsilon_{s+1} = -1$, then we have 

\begin{equation*}\tag{Diag. 1}
\xymatrix{
\ar@{--}[r]&	0 	 &\ast \ar@{--}[r] & \mathbin{\phantom{0}}   \\
 \mathbin{\phantom{0}}	&\circ_{b_s}	 \ar@{-}[ur] & \circ \ar@{--}[r] & \mathbin{\phantom{0}} .
}
\end{equation*}

In order for $b_s$ to be free we must have $\varepsilon_{s+2}=1$, for three consecutive $-1$ produces a diagram 
	$$
	\xymatrix{
		\ar@{--}[r] &0 & 0  & \ast \ar@{--}[r] & \mathbin{\phantom{0}} \\
		 &\circ_{b_s} \ar@{-}[ur] &\circ \ar@{-}[ur]  &\circ \ar@{--}[r] & \mathbin{\phantom{0}}
	}
	$$
	forcing $b_s=0$.
	
	For $\varepsilon_{s+3}$, we can have two choices. If $\varepsilon_{s+3}=1$, then the segment ends and we have a free linear graph from $<\varepsilon_s,\varepsilon_{s+1},\varepsilon_{s+2},\varepsilon_{s+3}>=<-1,-1,1,1>$:
	
	$$
	\xymatrix{
	\mathbin{\phantom{0}} \ar@{--}[r]	&0 	 &\ast \ar[r] & \ast \ar@{->>}[rd] & 0 \ar@{--}[r] &\mathbin{\phantom{0}} \\
	\mathbin{\phantom{0}}	&\circ_{b_s}	 \ar@{-}[ur] &\circ \ar[r]  &\bullet & \bullet &\mathbin{\phantom{0}}.
	}
	$$
	(We have another one but that is counted by the segment $<\varepsilon_{s+1},\varepsilon_{s+2}>=<-1,1>$.)
	
	If $\varepsilon_{s+3}=-1$. Then we have a segment $<-1,-1,1,-1,\varepsilon_{s+4},\dots>$, which produce the diagram
	\begin{equation*}\tag{Diag. 2}
		\xymatrix{
		\mathbin{\phantom{0}}\ar@{--}[r]	&0 	 &\ast \ar[r] & \ast \ar[r] & \ast \ar@{--}[r] &\mathbin{\phantom{0}}\\
		\mathbin{\phantom{0}}	&\circ_{b_s}	 \ar@{-}[ur] &\circ \ar[r]  &\bullet & \circ \ar@{--}[r] &\mathbin{\phantom{0}}.
		}
	\end{equation*}
	
	Comparing Diag. 1 with Diag. 2, we see that deleting $\varepsilon_{s+2}=1$ and $\varepsilon_{s+3}=-1$ does not affect the type of the graph starting with $b_s$. Therefore we can remove $\varepsilon_{s+2}$ and $\varepsilon_{s+3}$ and examine $<-1,-1,\varepsilon_{s+4},\dots>$.
	
	Repeating the arguments, we see that we can have some pairs of 1 and -1, then eventually two consecutive $1$'s to terminate the segment.
	
Therefore, if $b_s$ be a free variable, then the segment $<\varepsilon_s,\varepsilon_{s+1},\dots,\varepsilon_t>$ that corresponds to the free linear graph staring with $b_s$ is either $<-1,1>$ or $<-1,-1,\dots,1,1>$ where $\dots$ can be any number of pairs of 1 and -1.	
	
Factoring in the zeros we could have in $\varepsilon_\mathcal{O}$, we can see that such a segment  $<\varepsilon_s,\varepsilon_{s+1},\dots,\varepsilon_t>$ can be described by the following three properties: 

\begin{itemize}
	\item We have $\varepsilon_s=-1, \varepsilon_{t}=1$.
	\item We have $\sum_{i=s}^t \varepsilon_{i} = 0.$
	\item For all $s\leq j<t$, we have $-2\leq \sum_{i=s}^j \varepsilon_{i} < 0$.
\end{itemize}

This motivates the following definition.

\begin{defn}\label{fls}
Let $\mathcal{O}=((i_1,j_1), (i_2,j_2), \dots, (i_l,j_l))$ be an orbit and $\varepsilon_{\mathcal{O}}=(\varepsilon_1,\varepsilon_2,\dots,\varepsilon_l)$ as before. Let $n\in \NN^\ast$. A segment $<\varepsilon_s,\varepsilon_{s+1},\dots,\varepsilon_{t}>$ of $\varepsilon_{\mathcal{O}}$ is called a \emph{free linear segment of level $n$} if it satisfies the following conditions.
\begin{itemize}
	\item We have $\varepsilon_s=-1, \varepsilon_{t}=1$.
	\item We have $\sum_{i=s}^t \varepsilon_{i} = 0$.
	\item For all $s\leq j<t$ , we have $-n\leq \sum_{i=s}^j \varepsilon_{i} < 0$.
	\item There exists a $j_0$, $s\leq j_0<t$, such that $\sum_{i=s}^{j_0} \varepsilon_{i}=-n$.
\end{itemize}
We write $a_n(\mathcal{O})$ for the number of free linear segments of level $n$ in $\varepsilon_{\mathcal{O}}$.
\end{defn}

Thus we have 

\begin{thm}\label{nfls}
	Let $\mathcal{O}$ and $\varepsilon_\mathcal{O}$ be as before. Then the number of free linear graphs in $\Gamma_\mathcal{O}$ is $a_1(\mathcal{O})+a_2(\mathcal{O})$.
\end{thm}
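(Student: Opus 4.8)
The plan is to deduce Theorem \ref{nfls} directly from the structural description of $\Gamma_\mathcal{O}$ built up above, reducing it to an elementary statement about partial sums of $\varepsilon_\mathcal{O}$. The first step is to make precise the correspondence the diagrammatic analysis has been establishing: I claim that sending a free linear graph to the segment $<\varepsilon_s, \varepsilon_{s+1}, \dots, \varepsilon_t>$ cut out by it — where $b_s$ is its unique open start and $b_t$ its unique open end, and indices are read modulo $|\mathcal{O}|$ — is a bijection from the set of free linear graphs in $\Gamma_\mathcal{O}$ onto the set of segments of $\varepsilon_\mathcal{O}$ satisfying $\varepsilon_s = -1$, $\varepsilon_t = 1$, $\sum_{i=s}^t \varepsilon_i = 0$, and $-2 \le \sum_{i=s}^j \varepsilon_i < 0$ for all $s \le j < t$. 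I would prove this by induction on the length of the segment, propagating the pair of variables $(a_i, b_i)$ to $(a_{i+1}, b_{i+1})$ through the four possible relation types, which are governed by the consecutive pair $<\varepsilon_i, \varepsilon_{i+1}>$; the ``delete the zeros of $\varepsilon_\mathcal{O}$'' reduction noted above lets one dispose of the zero-free case first, where the base case is $<-1,1>$ and the inductive step reads off the relation between $(a_i, b_i)$ and $(a_{i+1}, b_{i+1})$ from $<\varepsilon_i, \varepsilon_{i+1}>$, and then reinstate the zeros.

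Next, to each segment in the image of that bijection I would attach its \emph{depth} $n := -\min_{s \le j < t} \sum_{i=s}^j \varepsilon_i$. Because $\varepsilon_s = -1$ gives $\sum_{i=s}^s \varepsilon_i = -1$, we have $n \ge 1$; and the bound $\sum_{i=s}^j \varepsilon_i \ge -2$ gives $n \le 2$; so $n \in \{1, 2\}$. Matching this against Definition \ref{fls}: the segment is a free linear segment of level $1$ precisely when $n = 1$, and of level $2$ precisely when $n = 2$ — the endpoint and total-sum conditions there are already in force, the inequality $-n \le \sum_{i=s}^j \varepsilon_i$ holds by the very definition of the depth, and ``there exists $j_0$ with $\sum_{i=s}^{j_0} \varepsilon_i = -n$'' is exactly the assertion that this minimum is attained, i.e. that the depth equals $n$. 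The cases $n = 1$ and $n = 2$ are mutually exclusive and together exhaust the segments of the above type, so the bijection partitions the free linear graphs of $\Gamma_\mathcal{O}$ into those of depth $1$ — numbering $a_1(\mathcal{O})$ — and those of depth $2$ — numbering $a_2(\mathcal{O})$. Adding the two counts yields Theorem \ref{nfls}.

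The main obstacle is the first step: turning the picture-level reasoning into a genuine proof of the bijection. One has to verify that every free (zero-free) linear component of $\Gamma_\mathcal{O}$ really is the path traced out by one contiguous $\varepsilon$-segment with exactly the stated partial-sum bounds — in particular that it cannot close up around the full orbit, and that its open start and open end are each unique — and, conversely, that every segment meeting those conditions gives rise to an honest free linear component, with no variable forced to be $0$ along the way. The induction on the consecutive pairs $<\varepsilon_i, \varepsilon_{i+1}>$, using the four elementary block diagrams relating $(a_i, b_i)$ to $(a_{i+1}, b_{i+1})$, is the mechanism I would use to pin all of this down; once it is in place, the depth computation and the final count are immediate.
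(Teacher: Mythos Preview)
Your proposal is correct and follows essentially the same route as the paper: the paper has already established, in the discussion leading up to the theorem, that free linear graphs in $\Gamma_\mathcal{O}$ correspond precisely to segments $<\varepsilon_s,\dots,\varepsilon_t>$ with $\varepsilon_s=-1$, $\varepsilon_t=1$, $\sum_{i=s}^t\varepsilon_i=0$, and $-2\le\sum_{i=s}^j\varepsilon_i<0$ for $s\le j<t$, and its one-line proof simply observes that such segments are indexed by their starting position and invokes that discussion. Your organization via the depth $n=-\min_{s\le j<t}\sum_{i=s}^j\varepsilon_i\in\{1,2\}$ makes the match with Definition~\ref{fls} and the partition into $a_1(\mathcal{O})$ and $a_2(\mathcal{O})$ more explicit than the paper does, but it is the same argument.
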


\begin{proof}
	From definition \ref{fls}, it is clear that no two free linear segments start with the same $\varepsilon_s$. The theorem follows from the above consideration.
\end{proof}

\begin{cor}
	Let $\pi$, $H_\pi$ be as before, then
	\[\gamma_{H_\pi}(2)=\sum_{\mathcal{O}}a_1(\mathcal{O})+a_2(\mathcal{O}).\]
\end{cor}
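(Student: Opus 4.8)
The plan is to reduce the statement to the two results already established in this subsection, namely Lemma~\ref{dc} and Theorem~\ref{nfls}, together with the orbit decomposition of the defining equations. Recall that by definition $\gamma_{H_\pi}(2)=\dim(\Auts(H_\pi[p^2]))$, and that by the crystalline description (via Lemma~\ref{keylemma} and the discussion leading to equation \ref{eq3}) the relevant group scheme is cut out, up to the translation and open--closed relations recorded in the corollary to Vasiu's theorem, by the system \ref{eq3} in the variables $(a_{i,j},b_{i,j})$. Since $(\pi,\pi)$ acts on $J^2$ with orbits $\mathcal{O}$, this system splits as a direct product of independent subsystems, one for each orbit; hence its solution scheme is a product over the $\mathcal{O}$, and its dimension is the sum of the dimensions of the blocks.

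Next I would analyze the block attached to a single orbit $\mathcal{O}$. For $|\mathcal{O}|=1$ the three cases $\varepsilon_1=\pm 1$ and $\varepsilon_1=0$ were already worked out above and contribute $0$ to the dimension (they force $a_1=b_1=0$, or give the zero-dimensional locus $a_1^p=a_1,\ b_1^p=b_1$). For $|\mathcal{O}|\ge 2$ I would use the graph $\Gamma_\mathcal{O}$: it decomposes into connected components that are either circular graphs or linear graphs. By Lemma~\ref{dc}(a) a circular graph contributes only a zero-dimensional (finite \'etale) factor; a non-free linear graph forces all of its variables to vanish and so contributes a single reduced point; and by Lemma~\ref{dc}(b) each free linear graph contributes exactly $1$ to $\dim(\Auts(H_\pi[p^2]))$. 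Therefore the dimension of the block attached to $\mathcal{O}$ equals the number of free linear graphs appearing in $\Gamma_\mathcal{O}$.

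Finally, Theorem~\ref{nfls} identifies that number with $a_1(\mathcal{O})+a_2(\mathcal{O})$. Summing over all orbits $\mathcal{O}$ then yields $\gamma_{H_\pi}(2)=\sum_{\mathcal{O}}\bigl(a_1(\mathcal{O})+a_2(\mathcal{O})\bigr)$, which is the assertion. The point requiring the most care is the additivity: I must confirm that the solution scheme genuinely factors as a product over orbits and, inside each orbit, over the connected components of $\Gamma_\mathcal{O}$, so that the dimensions add with no interaction among the blocks, and that $\dim(\Auts(H_\pi[p^2]))$ is correctly computed from this scheme; both of these rest on the crystalline identification of the automorphism group scheme and the translation-of-identity-components statement already deduced from Vasiu's results. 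Once these are in place the proof is just the bookkeeping assembly of Lemma~\ref{dc} and Theorem~\ref{nfls}.
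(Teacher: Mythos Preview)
Your proposal is correct and follows essentially the same approach as the paper: the paper's proof is the single sentence ``This is a direct consequence of Theorem~\ref{nfls} and Lemma~\ref{dc},'' and your argument is precisely a careful unpacking of that sentence via the orbit decomposition and the graph analysis. The additional care you take with the $|\mathcal{O}|=1$ case and with the additivity over orbits is implicit in the paper's setup and does not constitute a different route.
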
	
\begin{proof}
	This is a direct consequence of Theorem \ref{nfls} and Lemma \ref{dc}.
\end{proof}		

Next we examine the circular graphs in $\Gamma_\mathcal{O}$. 

First note that an orbit $\mathcal{O}=(0,0,\dots,0)\subseteq J_0$ will produce two circular graphs, as shown below.

\[
\xymatrix{
	\ast \ar[r] 	 &\ast \ar[r] & \ast \ar@{--}[r] & \ast \ar[r] &\ast \ar@/_2pc/[llll] \\
	\ast	 \ar[r] &\ast \ar[r]   &\ast \ar@{--}[r] & \ast \ar[r] &\ast . \ar@/^2pc/[llll]
}
\]\\

Now assume $\mathcal{O}\nsubseteq J_0$. Note that in this case $\Gamma_\mathcal{O}$ can have at most $1$ circular graph, because each circular graph uses $|\mathcal{O}|$ distinct vertices and having $\pm 1$ in $\varepsilon_\mathcal{O}$ makes some vertices zero.

Ignore the zeros first in $\varepsilon_\mathcal{O}$ as before, we see that the orbits that produce circular graphs are such that $\varepsilon_\mathcal{O}=(-1,1,-1,1,\dots,-1,1)$, as shown below. 

\[
\xymatrix{
	\ast \ar[r] 	 &\ast \ar[r] & \ast \ar[r] &\ast  \ar@{--}[r] & \ast \ar[r] &\ast \ar@/_2pc/[lllll] \\
	\circ	 \ar[r] &\bullet   &\circ\ar[r] &\bullet  & \circ \ar[r] &\bullet .
}
\]\\

Factoring in the zeros we could have in $\varepsilon_\mathcal{O}$ and taking into account the number of circular graphs, we make the following definition.

\begin{defn}\label{co}
	Let $\mathcal{O}=((i_1,j_1), (i_2,j_2), \dots, (i_l,j_l))$ be an orbit and $\varepsilon_{\mathcal{O}}=(\varepsilon_1,\varepsilon_2,\dots,\varepsilon_l)$ as before. Let $n\geq 0$. Then $\mathcal{O}$ is called a \emph{circular orbit of level $n$} if $\varepsilon_\mathcal{O}$ satisfies the following conditions.
	\begin{itemize}
		\item $\sum_{i=1}^l \varepsilon_i = 0.$
		
		\item For all $u,v\in \{1,2,\dots,l\}$, $ |\sum_{i=u}^v \varepsilon_i| \leq n$.
		
		\item There exist $u,v \in \{1,2,\dots,l\}$ such that $ |\sum_{i=u}^v \varepsilon_i| = n$.
	\end{itemize}
	We write $\mathcal{C}_\pi(n)$ for the set of circular orbits of level $n$.
\end{defn}

Therefore, circular orbits of level $0$ are those $\mathcal{O}$ with $\varepsilon_\mathcal{O}=(0,0,
\dots, 0)$. They each produce two circular graphs. Circular orbits of level $1$ are those $\mathcal{O}$ whose $\varepsilon_\mathcal{O}$ contains the same number of $-1$'s and $1$'s, alternating with $0$'s in between. They each produce one circular graph. Also, note that circular orbits of level $2$ or above have $2$ or more consecutive $-1$'s and  don't produce circular graphs.

\begin{thm}
	Let the notations be as before. Then the number of connected components of $\Ends(H_\pi[p^2])$ is $p^{c_2}$ where $$c_2=\sum_{\mathcal{O}\in\mathcal{C}_\pi(0)}2|\mathcal{O}|+\sum_{\mathcal{O}\in\mathcal{C}_\pi(1)}|\mathcal{O}| .$$
\end{thm}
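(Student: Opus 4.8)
The plan is to count the number of connected components of $\Ends(H_\pi[p^2])$ by analyzing the defining equations \eqref{eq3} orbit by orbit, exactly as was begun in the development above. The number of connected components of $\Ends(H_\pi[p^2])$ equals $p$ to the power of the dimension of the space of ``free'' solution coordinates that are genuinely unconstrained modulo the Frobenius-type equations, which in turn is the total number of vertices lying in circular components of the graphs $\Gamma_\mathcal{O}$; this is because a free linear graph contributes a connected affine space (an $\mathbb{A}^1$-factor, hence no new component), whereas each circular graph of weight $w=|\mathcal{O}|$ imposes an equation $y^{p^{|\mathcal{O}|}}=y$ on each of its $|\mathcal{O}|$ vertices, contributing a factor $p^{|\mathcal{O}|}$ per vertex-row — but more precisely, as recorded in Lemma \ref{dc}(a), each circular graph contributes a factor $p^{|\mathcal{O}|}$, and we must count the number of circular graphs in each $\Gamma_\mathcal{O}$ and weight by $|\mathcal{O}|$.

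First I would dispose of the orbits $\mathcal{O}$ with $|\mathcal{O}|=1$: by the explicit case analysis already carried out, such an orbit contributes $p^2$ to the number of components precisely when $\varepsilon_1 = 0$, i.e. when $\mathcal{O}$ is a circular orbit of level $0$ with $|\mathcal{O}|=1$, which matches the term $\sum_{\mathcal{O}\in\mathcal{C}_\pi(0)} 2|\mathcal{O}|$ in this degenerate case (since $2|\mathcal{O}|=2$). Next, for $|\mathcal{O}|\ge 2$, I would invoke the structural decomposition of $\Gamma_\mathcal{O}$ into linear and circular connected components (established in the discussion preceding Lemma \ref{dc}) and Lemma \ref{dc}(a), reducing the problem to: how many circular graphs does $\Gamma_\mathcal{O}$ contain? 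Using the reduction that deleting zeros from $\varepsilon_\mathcal{O}$ does not change the graph type (the diagram comparison already given), together with the observation that a $\pm 1$ in $\varepsilon_\mathcal{O}$ forces some vertex to be zero and hence excludes it from any circular component, I would show: if $\varepsilon_\mathcal{O}=(0,\dots,0)$ then $\Gamma_\mathcal{O}$ has exactly two circular graphs (the $a$-row and the $b$-row cycles); if the nonzero entries of $\varepsilon_\mathcal{O}$ read $(-1,1,-1,1,\dots,-1,1)$ cyclically (equivalently $\mathcal{O}\in\mathcal{C}_\pi(1)$) then $\Gamma_\mathcal{O}$ has exactly one circular graph; and otherwise $\varepsilon_\mathcal{O}$ contains two consecutive nonzero entries of the same sign (after deleting zeros), forcing a zero vertex into every would-be cycle, so $\Gamma_\mathcal{O}$ has no circular graph. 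The last point is exactly the content of Definition \ref{co}: $\mathcal{C}_\pi(0)$ consists of the all-zero orbits, $\mathcal{C}_\pi(1)$ of the alternating ones, and $\mathcal{C}_\pi(n)$ for $n\ge 2$ are the ones with two or more consecutive $-1$'s, which do not contribute.

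Assembling the count, each orbit in $\mathcal{C}_\pi(0)$ contributes $p^{2|\mathcal{O}|}$ (two circular graphs, each giving $p^{|\mathcal{O}|}$), each orbit in $\mathcal{C}_\pi(1)$ contributes $p^{|\mathcal{O}|}$ (one circular graph), and all other orbits contribute a factor $1$. Multiplying over all orbits gives the number of connected components of $\Ends(H_\pi[p^2])$ as $p^{c_2}$ with
\[
c_2 = \sum_{\mathcal{O}\in\mathcal{C}_\pi(0)} 2|\mathcal{O}| + \sum_{\mathcal{O}\in\mathcal{C}_\pi(1)} |\mathcal{O}|,
\]
as claimed. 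The main obstacle I anticipate is the bookkeeping in the claim that $\varepsilon_\mathcal{O}$ not of the two special forms always forces a zero vertex into every cycle of $\Gamma_\mathcal{O}$: one must check carefully, across all the local diagram types $<\varepsilon_s,\varepsilon_{s+1}>$, that any pattern with two consecutive like-signed nonzero entries (after deleting zeros) propagates a $0$ around, and that the zero-deletion reduction genuinely preserves the \emph{number} of circular components and not merely their existence. This is a finite but somewhat delicate combinatorial verification, essentially parallel to — and slightly simpler than — the free-linear-segment analysis used for $\gamma_{H_\pi}(2)$, and I would carry it out by the same diagram-chasing method displayed in Example \ref{diagrams} and the discussion following it.
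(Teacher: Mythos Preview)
Your proposal is correct and follows essentially the same approach as the paper: the paper's proof is the single line ``This is a direct consequence of Lemma~\ref{dc},'' relying on the discussion immediately preceding the theorem which establishes exactly the orbit-by-orbit circular-graph count you outline (two circular graphs for $\mathcal{C}_\pi(0)$, one for $\mathcal{C}_\pi(1)$, none otherwise). Your write-up is more explicit about the $|\mathcal{O}|=1$ case and the bookkeeping concern you flag, but the substance is identical.
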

\begin{proof}
	This is a direct consequence of Lemma \ref{dc}.
\end{proof}	

\subsection{Endomorphisms of $D_\pi[p^m]$}

We can add more rows to the diagrams to represent equations \ref{eq1} when $m\geq 3$. For example, when $m=3$, write $\underline{x}=(a,b,c)\in W_3(k)$. The effects of $-1$, $0$ and $1$ in $\varepsilon_\mathcal{O}$ can be represented by the following diagrams.

\begin{itemize}

	\item $<0,0>$ or $<-1,0>$ or $<0,1>$ or $<-1,1>$:
	$$
	\xymatrix{
		\ar@{--}[r] &\ast \ar[r] &\ast \ar@{--}[r] & \mathbin{\phantom{0}} \\
		\ar@{--}[r] &\ast \ar[r] &\ast \ar@{--}[r] & \mathbin{\phantom{0}}\\
		\ar@{--}[r] &\ast \ar[r] &\ast \ar@{--}[r] & \mathbin{\phantom{0}}
	}
	$$
	
	\item $<1,-1>$ :
	$$
	\xymatrix{
		\ar@{--}[r] &\ast \ar[r] &\ast \ar@{--}[r] & \mathbin{\phantom{0}} \\
		\ar@{--}[r] &\ast \ar[r] &\ast \ar@{--}[r] & \mathbin{\phantom{0}}\\
		\ar@{--}[r] &\bullet  &\circ \ar@{--}[r] & \mathbin{\phantom{0}}
	}
	$$
	
	\item $<-1,-1>$ or $<0,-1>$:
	$$
	\xymatrix{
		\ar@{--}[r] & 0  & \ast \ar@{--}[r] & \mathbin{\phantom{0}} \\
		\ar@{--}[r] &\ast \ar@{-}[ur]  &\ast \ar@{--}[r] & \mathbin{\phantom{0}}\\
		\ar@{--}[r] &\ast \ar@{-}[ur]  &\circ \ar@{--}[r] & \mathbin{\phantom{0}}
	}
	$$
	\item $<1,1>$ or $<1,0>$:
	$$
	\xymatrix{
		\ar@{--}[r] &\ast \ar@{->>}[dr] & 0 \ar@{--}[r] & \mathbin{\phantom{0}} \\
		\ar@{--}[r] &\ast \ar@{->>}[dr]  &\ast \ar@{--}[r] & \mathbin{\phantom{0}}\\
		\ar@{--}[r] &\bullet  &\ast \ar@{--}[r] & \mathbin{\phantom{0}}
	}
	$$
	
\end{itemize}

By combining the diagrams above and using a similar argument, we get that for $m=3$ each free linear segment of level $n\le 3$ produces a free linear diagram. In general, we get the following:

\begin{thm}\label{main1}
	Let $\pi$, $H_\pi$ be as before, then
	\[\gamma_{H_\pi}(m)=\sum_{\mathcal{O}}\sum_{n=1}^{m}a_n(\mathcal{O}).\]
	\qed
\end{thm}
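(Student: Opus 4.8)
The plan is to mimic the analysis of the cases $m=1$ and $m=2$ carried out above, in three steps: first reduce $\gamma_{H_\pi}(m)$ to the dimension of the solution scheme of the congruences \eqref{eq1} read modulo $p^m$; then decompose that scheme orbit by orbit into the "diagram'' pieces $\Gamma_{\mathcal{O}}$, so that its dimension becomes a count of free linear graphs; and finally match the free linear graphs with the free linear segments of level at most $m$. Concretely, by the theorem of Vasiu recalled above (the finite homomorphisms $\iota_m,\zeta_m$ of \cite[Theorem~5]{Vasiu:levelm}) together with \cite[Corollary~6]{Vasiu:levelm}, $\gamma_{H_\pi}(m)=\dim\Auts(H_\pi[p^m])_{\textrm{crys}}=\dim\Ends(H_\pi[p^m])_{\textrm{crys}}$; and by Lemma~\ref{keylemma} applied with $H_1=H_2=H_\pi$, the latter scheme is (up to the reduced structure, which is all that matters for the dimension) the locus $\mathcal{E}_m$ cut out by \eqref{eq1} in the affine space whose coordinates are the components of the Witt vectors $\underline{x}_{i,j}\in W_m(k)$. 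Since \eqref{eq1} only couples coordinates whose indices lie in a common $(\pi,\pi)$-orbit on $J^2$, we have $\mathcal{E}_m=\prod_{\mathcal{O}}\mathcal{E}_{m,\mathcal{O}}$, and hence $\gamma_{H_\pi}(m)=\sum_{\mathcal{O}}\dim\mathcal{E}_{m,\mathcal{O}}$.

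Next I would build the diagram $\Gamma_{\mathcal{O}}$ for general $m$. Writing each element of $W_m(k)$ in components and using $\sigma(a_0,\dots,a_{m-1})=(a_0^p,\dots,a_{m-1}^p)$ and $p(a_0,\dots,a_{m-1})=(0,a_0^p,\dots,a_{m-2}^p)$, the relation between $\underline{x}_s$ and $\underline{x}_{s+1}$ forced by \eqref{eq1} on $\mathcal{O}$ depends only on $(\varepsilon_s,\varepsilon_{s+1})$ and falls into exactly the four patterns already seen for $m=2$, now drawn with $m$ rows: identity single arrows (row $k\mapsto k$) when $\varepsilon_s\le 0$ and $\varepsilon_{s+1}\ne -1$; a plain edge moving each vertex one row up, with a zero vertex left at the top, when $\varepsilon_s\le 0$ and $\varepsilon_{s+1}=-1$; a double arrow moving each vertex one row down, with a zero vertex left at the top, when $\varepsilon_s=1$ and $\varepsilon_{s+1}\ne -1$; and identity single arrows on the top $m-1$ rows, the bottom row being disconnected on both sides, when $\varepsilon_s=1$ and $\varepsilon_{s+1}=-1$. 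Assembling these around the cyclic orbit yields $\Gamma_{\mathcal{O}}$, which — exactly as for $m=1,2$ — is a disjoint union of linear graphs and circular graphs: a linear graph meeting a zero vertex forces all of its variables to vanish and contributes a point; a \emph{free} linear graph (one meeting no zero vertex) carries exactly one free parameter, located at its open start; each circular graph contributes only a finite étale factor of $p$-power degree. Therefore $\dim\mathcal{E}_{m,\mathcal{O}}$ equals the number of free linear graphs in $\Gamma_{\mathcal{O}}$, so $\gamma_{H_\pi}(m)=\sum_{\mathcal{O}}\#\{\text{free linear graphs in }\Gamma_{\mathcal{O}}\}$, by the same reasoning that packaged the $m=2$ count into Theorem~\ref{nfls} and Lemma~\ref{dc}.

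The combinatorial core is the last step. A vertex has no incoming edge and is not a zero vertex precisely when it is the bottom-row vertex $a_{s,m-1}$ of a column with $\varepsilon_s=-1$; hence the connected components of $\Gamma_{\mathcal{O}}$ that can be free linear graphs are in bijection with the positions $s$ with $\varepsilon_s=-1$, each such $s$ determining the segment $<\varepsilon_s,\dots,\varepsilon_t>$ where $t$ is the first index after $s$ with $P_u:=\sum_{i=s}^{u}\varepsilon_i=0$. Tracking the thread issued from $a_{s,m-1}$ through the four transition patterns, one checks that at column $u$ it occupies row $m-1+\max(P_{u-1},P_u)$ (convention $P_{s-1}=0$); since $P_u\le 0$ throughout and $\min_u P_u=-n$, where $n$ is the level of this segment in the sense of Definition~\ref{fls}, the smallest row index it reaches is $m-n$ and the largest is $m-1$. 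Because all zero vertices of $\Gamma_{\mathcal{O}}$ sit in the top row while at the deepest column of the segment the transitions on both sides are of identity type, the thread (and indeed its whole connected component) avoids every zero vertex — equivalently yields a genuine free linear graph — if and only if $m-n\ge 0$. Consequently the number of free linear graphs in $\Gamma_{\mathcal{O}}$ is $\sum_{n=1}^{m}a_n(\mathcal{O})$, and summing over all orbits gives
$$\gamma_{H_\pi}(m)=\sum_{\mathcal{O}}\sum_{n=1}^{m}a_n(\mathcal{O}),$$
as asserted.

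I expect the main obstacle to be making the thread-tracking of the last step fully rigorous for all $m$ at once: one must confirm that the entire connected component of $a_{s,m-1}$, not merely the single exhibited path, contains a zero vertex exactly when $n>m$; that a zero-free component contributes precisely one free parameter; that zeros occurring inside $\varepsilon_{\mathcal{O}}$ only elongate threads without altering their type; and that no component is counted twice. This is the same bookkeeping that makes the $m=2$ argument lengthy, and it is tamed by the closed formula $m-1+\max(P_{u-1},P_u)$ for the occupied row, which replaces the explicit pattern reductions (the "Diag.~1 versus Diag.~2'' manipulations) of the $m=2$ case by a single monotonicity statement about partial sums. Once that formula is in place the remaining verifications are immediate and parallel to the lower-level cases, and the companion formula for the number of connected components of $\Ends(H_\pi[p^m])$ follows by running the same analysis on the circular graphs of $\Gamma_{\mathcal{O}}$, a circular orbit of level $n<m$ yielding $m-n$ circular graphs of combined size $p^{(m-n)|\mathcal{O}|}$.
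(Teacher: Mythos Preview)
Your approach is exactly the paper's: extend the two-row diagram $\Gamma_{\mathcal{O}}$ to $m$ rows and count free linear components, one per free linear segment of level at most $m$. The paper itself stops at explicit diagrams for $m\le 3$ and then simply asserts the general pattern, so your closed row-formula is a genuine addition rather than a departure.

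One concrete slip to fix: the formula $m-1+\max(P_{u-1},P_u)$ is correct only after the zeros have been deleted from $\varepsilon_{\mathcal{O}}$ (so that $|\varepsilon_u|=1$ throughout). With zeros present the thread from the bottom vertex at column $s$ actually occupies row $m+\min(P_{u-1},P_u)$ at column $u$; the two expressions differ by one exactly when $\varepsilon_u=0$. For instance, for the level-$2$ segment $<-1,-1,0,1,1>$ at $m=2$ your formula gives minimum row $-1$ and would declare the segment non-free, whereas the actual thread $b_1\to a_2\to a_3\to a_4\to b_5$ stays in rows $0$ and $1$ and is free. Either replace $\max$ by $\min$ (and $m-1$ by $m$), or apply your formula only after the zero-deletion step you already flag as necessary; with that adjustment the freeness criterion $n\le m$ follows at once from $\min_u P_u=-n$, and the rest of your argument goes through unchanged.
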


As for the number of connected components, we note that each circular orbit of level $0$ produces $m$ circular graphs in $\Gamma_\mathcal{O}$, each circular orbits of level $1$ produces $m-1$ circular graphs, and each circular orbits of level $m-1$ produces $1$ circular graph. Circular orbits of level $m$ or above don't produce any circular graphs. Therefore, we have the following

\begin{thm}\label{main2}
	Let $\pi$, $H_\pi$ be as before, then the number of connected components of $\Ends(H_\pi[p^m])$ is $p^{c_m}$ where $$c_m=\sum_{n=0}^{m-1}\sum_{\mathcal{O}\in\mathcal{C}_\pi(n)}(m-n)|\mathcal{O}|.$$
	\qed
\end{thm}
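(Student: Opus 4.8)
The plan is to realise $\Ends(H_\pi[p^m])$ as the solution scheme of the system \eqref{eq1}, break it up over the orbits of $(\pi,\pi)$ on $J^2$, and reduce the count of connected components to counting the ``circular'' pieces of a weighted graph $\Gamma_\mathcal{O}$ attached to each orbit; the formula then follows from two ingredients, namely that each circular piece contributes a factor $p^{|\mathcal{O}|}$ and that a circular orbit of level $n$ produces exactly $m-n$ of them.

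First I would invoke Lemma \ref{keylemma} and the results of \cite{Vasiu:levelm} to identify the number of connected components of $\Ends(H_\pi[p^m])$ with that of $\Ends(H_\pi[p^m])_{\textrm{crys}}$, and the latter with the number of connected components of the affine $k$-scheme with coordinate ring $k[a_{i,j}^{(r)} : (i,j)\in J^2,\ 0\le r<m]$ modulo the relations obtained by writing \eqref{eq1} out in Witt components, where $a_{i,j}^{(r)}$ denotes the $r$-th component of $\underline{x}_{i,j}\in W_m(k)$. Because \eqref{eq1} couples the variable at $(i,j)$ only to the one at $(\pi(i),\pi(j))$, this scheme is a direct product over the orbits $\mathcal{O}$ of $(\pi,\pi)$ on $J^2$; its number of connected components is therefore $p^{c_m}$ with $c_m=\sum_{\mathcal{O}}c_m(\mathcal{O})$, where $p^{c_m(\mathcal{O})}$ counts the connected components of the orbit-$\mathcal{O}$ factor.

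Next I would extend the graph $\Gamma_\mathcal{O}$ from the cases $m=2$ (and the $m=3$ sketch) to arbitrary $m$: for each $s$ the pair $(\varepsilon_s,\varepsilon_{s+1})$ determines, via the Witt-vector identities $p\underline{x}=(0,x_0^p,\dots)$ and $\sigma\underline{x}=(x_0^p,\dots)$, a local diagram on the $m$ components of $\underline{x}_s$ and $\underline{x}_{s+1}$ made of Frobenius arrows of weight $0$, $1$ or $2$ (a weight-$w$ arrow imposes $y'=y^{p^w}$; weight $0$ shifts the data one row up, weight $2$ one row down, weight $1$ leaves the row fixed) together with forced zeros in the boundary rows. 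Concatenating these cyclically produces $\Gamma_\mathcal{O}$ on the $m|\mathcal{O}|$ components, whose connected components, just as for $m=1,2$, are of three types: linear graphs containing a zero vertex, which force all their variables to vanish and so contribute a single point; free linear graphs, each a free affine line — hence connected, and (as in Theorem \ref{main1} and Definition \ref{fls}) contributing toward $\dim\Auts(H_\pi[p^m])$ rather than toward the component count; and circular graphs. For a circular graph, composing the arrows once around gives an equation $y^{p^w}=y$ with $w=\#(\text{weight-}1)+2\#(\text{weight-}2)$; since around a closed loop the numbers of weight-$0$ (upward) and weight-$2$ (downward) arrows coincide and the loop meets each of the $|\mathcal{O}|$ columns exactly once, $w=|\mathcal{O}|$, so over the algebraically closed $k$ each circular graph contributes the factor $p^{|\mathcal{O}|}$ to the number of connected components (the value of $y$ determining the rest of the loop), exactly as in Lemma \ref{dc}(a). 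Hence $c_m(\mathcal{O})=|\mathcal{O}|\cdot(\text{number of circular graphs in }\Gamma_\mathcal{O})$; the degenerate case $|\mathcal{O}|=1$ is consistent, since $\varepsilon_1=0$ gives $m$ one-arrow loops while $\varepsilon_1=\pm1$ forces $\underline{x}_1=0$.

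The main step — and the part I expect to be the real obstacle — is to prove that $\Gamma_\mathcal{O}$ has exactly $m-n$ circular graphs when $\mathcal{O}\in\mathcal{C}_\pi(n)$ for $0\le n\le m-1$, and none otherwise. I would track a strand of $\Gamma_\mathcal{O}$ by the row index of the data it carries: this index is unchanged along a weight-$1$ arrow, drops by one along a weight-$0$ arrow (occurring inside a run of $-1$'s in $\varepsilon_\mathcal{O}$) and rises by one along a weight-$2$ arrow (inside a run of $1$'s), so the net change around a full loop is $0$; moreover the strand is destroyed (it meets a forced zero) exactly when its index would be pushed below $0$, and it terminates in an open end exactly when its index would be pushed above $m-1$. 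A strand thus survives as a circular graph precisely when this row-walk stays inside $\{0,1,\dots,m-1\}$ throughout one loop, and — using that for $\mathcal{O}\in\mathcal{C}_\pi(n)$ the partial sums of $\varepsilon_\mathcal{O}$ all lie in $[-n,n]$ and attain $\pm n$ (Definition \ref{co}), together with the reduction already exploited for $m=2$ that deleting interior $0$'s of $\varepsilon_\mathcal{O}$ and cancelling adjacent $\{-1,1\}$ pairs does not change the graph type — one shows there are exactly $m-n$ surviving strands; if $\sum_i\varepsilon_i\neq0$ or some partial sum has absolute value $\ge m$, every strand meets a zero vertex, which matches $\mathcal{C}_\pi(n)=\varnothing$ for $n\ge m$. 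Making this row-walk argument clean and uniform in $m$ is the crux; once it is done, summing $c_m(\mathcal{O})=(m-n)|\mathcal{O}|$ over $\mathcal{O}\in\mathcal{C}_\pi(n)$ and $0\le n\le m-1$ yields $c_m=\sum_{n=0}^{m-1}\sum_{\mathcal{O}\in\mathcal{C}_\pi(n)}(m-n)|\mathcal{O}|$.
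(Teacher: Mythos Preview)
Your proposal is correct and follows essentially the same approach as the paper: decompose over orbits, build the $m$-row graph $\Gamma_{\mathcal{O}}$ from the local diagrams, invoke Lemma~\ref{dc}(a) for the $p^{|\mathcal{O}|}$ contribution of each circular piece, and then count that a circular orbit of level $n$ yields exactly $m-n$ circular graphs. In fact your row-walk formulation of this last count is more explicit than what the paper provides --- the paper simply asserts, just before the theorem, that ``each circular orbit of level $0$ produces $m$ circular graphs, each circular orbit of level $1$ produces $m-1$, \dots'' without spelling out the tracking argument you outline.
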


\begin{cor}\label{cor}
	Let $\gamma_{H_\pi}(m)=\dim\Auts(H_\pi[p^m])$ be as before. Then
	
	(1) The sequence $(\gamma_{H_\pi}(m))_{m\geq 1}$ is increasing.
	
	(2) The sequence $(\gamma_{H_\pi}(m+1)-\gamma_{H_\pi}(m))_{m\geq 1}$ is decreasing.
\end{cor}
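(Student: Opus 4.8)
The statement to prove is Corollary \ref{cor}: the sequence $(\gamma_{H_\pi}(m))_{m\geq 1}$ is increasing, and the difference sequence $(\gamma_{H_\pi}(m+1)-\gamma_{H_\pi}(m))_{m\geq 1}$ is decreasing. Let me sketch how I would prove this.

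\medskip

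The plan is to derive both assertions directly from the explicit formula of Theorem \ref{main1}, namely
\[
\gamma_{H_\pi}(m)=\sum_{\mathcal{O}}\sum_{n=1}^{m}a_n(\mathcal{O}),
\]
so that everything reduces to understanding the nonnegative integer sequence $(a_n(\mathcal{O}))_{n\geq 1}$ attached to a single orbit $\mathcal{O}$, and then summing over orbits. For the first assertion this is immediate: $\gamma_{H_\pi}(m+1)-\gamma_{H_\pi}(m)=\sum_{\mathcal{O}}a_{m+1}(\mathcal{O})\geq 0$ since each $a_{m+1}(\mathcal{O})$ counts a set of segments. For the second assertion, since $\gamma_{H_\pi}(m+1)-\gamma_{H_\pi}(m)=\sum_{\mathcal{O}}a_{m+1}(\mathcal{O})$, it suffices to show that for each fixed orbit $\mathcal{O}$ the sequence $(a_n(\mathcal{O}))_{n\geq 1}$ is \emph{nonincreasing} in $n$; summing nonincreasing sequences preserves the property.

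\medskip

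So the crux is the combinatorial claim: \textbf{for each orbit $\mathcal{O}$, $a_{n+1}(\mathcal{O})\leq a_n(\mathcal{O})$ for all $n\geq 1$.} Here I would argue as follows. Recall from Definition \ref{fls} that a free linear segment of level $n$ is a segment $<\varepsilon_s,\dots,\varepsilon_t>$ of $\varepsilon_{\mathcal{O}}$ with $\varepsilon_s=-1$, $\varepsilon_t=1$, total sum $0$, all partial sums $\sum_{i=s}^{j}\varepsilon_i$ lying in $[-n,-1]$ for $s\le j<t$, and the minimum partial sum equal to exactly $-n$. The key structural observation is that the starting index $s$ determines the segment: given that a segment starts at $s$ with $\varepsilon_s=-1$, the end index $t$ is forced to be the first index $>s$ at which the running partial sum returns to $0$ (equivalently, the first return of the associated lattice path to height $0$), provided this happens before the path ever drops below some threshold; the level $n$ is then simply the depth (negative of the minimum) reached by that path on $[s,t)$. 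Hence each $s$ with $\varepsilon_s=-1$ gives rise to \emph{at most one} free linear segment starting at $s$, and that segment has a well-defined level, call it $n(s)\in\NN^\ast\cup\{\infty\}$ (set $n(s)=\infty$ if the path never returns to $0$, or returns only after the cyclic indexing wraps past the whole orbit in a degenerate way — one must be a little careful about the circular indexing here). With this, $a_n(\mathcal{O})=\#\{s:\varepsilon_s=-1,\ n(s)=n\}$ — wait, that is not quite monotone by itself. The cleaner route: show instead that a free linear segment of level $n+1$ starting at $s$ \emph{contains, as a sub-segment sharing the same left endpoint $s$ or strictly inside,} a free linear segment of level $n$; more precisely, I would show there is an injection from level-$(n+1)$ segments to level-$n$ segments. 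Given a level-$(n+1)$ segment $<\varepsilon_s,\dots,\varepsilon_t>$, its partial-sum path reaches $-(n+1)$; let $j_0$ be the first index with partial sum $-(n+1)$, so $\varepsilon_{j_0}=-1$ and the partial sum at $j_0-1$ is $-n$. Then I claim the segment starting at the appropriate index (either the first return-to-$-n$ point after $s$, or a canonically chosen shift) is a level-$n$ free linear segment; distinct level-$(n+1)$ segments are disjoint (their index ranges do not overlap, since each such segment is a ``first return to $0$'' excursion), so the associated level-$n$ segments are distinct. This yields the injection and hence $a_{n+1}(\mathcal{O})\le a_n(\mathcal{O})$.

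\medskip

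The main obstacle I anticipate is making the injection in the last paragraph genuinely well-defined and injective, which requires pinning down the right canonical choice of the level-$n$ segment inside (or adjacent to) a level-$(n+1)$ segment, and handling the cyclic nature of $\varepsilon_{\mathcal{O}}$ (indices mod $l$) so that distinct level-$(n+1)$ excursions really do map to distinct level-$n$ excursions without collision — the ``first return to $0$'' decomposition of the cyclic word into excursions, together with the fact that within one excursion the deeper-level sub-excursions are themselves disjoint first-return excursions at the shifted baseline, is the technical heart. An alternative, possibly cleaner, packaging is to prove it via the already-known Theorem \ref{gamma}(b) of Gabber--Vasiu, which states exactly that $(\gamma_H(m+l)-\gamma_H(m))_{m}$ is decreasing for general $p$-divisible groups $H$ (take $l=1$); and Theorem \ref{gamma}(a) gives that $(\gamma_H(m))_m$ is increasing. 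Since $H_\pi$ is a $p$-divisible group, Corollary \ref{cor} is then an immediate special case, and one need not reprove anything combinatorially. I would present the short deduction from Theorem \ref{gamma} as the primary proof, and optionally remark that it can also be seen directly from the formula in Theorem \ref{main1} via the monotonicity of $(a_n(\mathcal{O}))_n$ sketched above.
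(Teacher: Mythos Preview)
Your proposal is correct and follows essentially the same approach as the paper: both derive (1) immediately from Theorem \ref{main1} via $\gamma_{H_\pi}(m+1)-\gamma_{H_\pi}(m)=\sum_{\mathcal{O}}a_{m+1}(\mathcal{O})\ge 0$, and both reduce (2) to the orbit-wise monotonicity $a_{m+1}(\mathcal{O})\le a_m(\mathcal{O})$, proved by first showing the level-$(m+1)$ segments are pairwise disjoint and then constructing an injection into level-$m$ segments by extracting, from each level-$(m+1)$ excursion, the sub-excursion at the shifted baseline containing a point of minimum depth. The only difference is one of emphasis: you suggest presenting the deduction from the Gabber--Vasiu Theorem \ref{gamma} as the primary proof with the combinatorial argument as a remark, whereas the paper does the reverse---it gives the combinatorial injection as the proof and then notes (in the remark following the corollary) that Theorem \ref{gamma} already covers the statement, precisely because the point of the corollary is that for the special $H_\pi$ the result admits this explicit combinatorial refinement.
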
	
\begin{proof}
	By Theorem \ref{main1} we have \[\gamma_{H_\pi}(m+1)-\gamma_{H_\pi}(m)=\sum_{\mathcal{O}}a_{m+1}(\mathcal{O}).\]
	Therefore, $\gamma_{H_\pi}(m+1)-\gamma_{H_\pi}(m)\geq 0$, which proves (1).

	For each orbit $\mathcal{O}$, let $\mathcal{L}_\mathcal{O}(m)$ be the set of free linear segments of level $m$ in $\varepsilon_\mathcal{O}$. Then by definition  $a_m(\mathcal{O})=|\mathcal{L}_\mathcal{O}(m)|$. We show that there is an injective map $\mathcal{L}_\mathcal{O}(m+1)\hookrightarrow \mathcal{L}_\mathcal{O}(m)$. 
	
	First we claim that the (indices of) segments in $\mathcal{L}_\mathcal{O}(m+1)$ are disjoint. If this is not the case, then since all indices are taken modulo $|\mathcal{O}|$ and no two free linear segments start or end at the same index, we can assume that there are two distinct segments $<\varepsilon_{s_1},\dots,\varepsilon_{t_1}>$ and $<\varepsilon_{s_2},\dots,\varepsilon_{t_2}>$ in $\mathcal{L}_\mathcal{O}(m+1)$ such that $s_1<s_2<t_2< t_1$ or $s_1< s_2<t_1<t_2$.
	
	Assume $s_1<s_2<t_2< t_1$. Since $<\varepsilon_{s_2},\dots,\varepsilon_{t_2}>$ is a segment in $\mathcal{L}_\mathcal{O}(m+1)$, there is $s_2<j_0<t_2$ such that $\sum_{i=s_2}^{j_0}\varepsilon_i=-(m+1)$. Then $\sum_{i=s_1}^{j_0}\varepsilon_i=\sum_{i=s_1}^{s_2-1}\varepsilon_i+\sum_{i=s_2}^{j_0}\varepsilon_i<0-(m+1)\le -(m+2)$, a contradiction to the fact that $<\varepsilon_{s_1},\dots,\varepsilon_{t_1}>$ is in $\mathcal{L}_\mathcal{O}(m+1)$.
	
	 Assume $s_1< s_2<t_1<t_2$. Since $\sum_{i=s_1}^{t_1}\varepsilon_i=0$ and  $\sum_{i=s_1}^{s_2-1}\varepsilon_i<0$, we must have $\sum_{i=s_2}^{t_1}\varepsilon_i>0$, which is not allowed. This proves the claim. 
	 
	 Now let $m\ge 2$ and let $<\varepsilon_{s},\varepsilon_{s+1},\dots,\varepsilon_{t-1},\varepsilon_{t}>$ be a segment in $\mathcal{L}_\mathcal{O}(m+1)$. We may assume $<\varepsilon_{s},\varepsilon_{s+1},\dots,\varepsilon_{t-1},\varepsilon_{t}> = <-1,-1,\dots,1,1>$. Remove $\varepsilon_s=-1$, $\varepsilon_t=1$ and consider the segment $<\varepsilon_{s+1},\dots,\varepsilon_{t-1}>$. This segment may decompose into several free linear segments. Let $s<j_0<t$ be such that $\sum_{i=s}^{j_0}\varepsilon_i=-(m+1)$. Since $\varepsilon_s=-1$, we have $\sum_{i=s+1}^{j_0}\varepsilon_i=-m$. Let $u$ be the maximum of the integers $s<j<j_0$ such that $\sum_{i=s+1}^{j}\varepsilon_i=0$ or $s$ if such $j$ does not exist.  Let $v$ be the minimum of the integers $j_0<j<t$ such that $\sum_{i=j}^{t-1}\varepsilon_i=0$ or $t$ if such $j$ does not exist. We claim that the segment $<\varepsilon_{u+1},\dots,\varepsilon_{v-1}>$ is a segment in $\mathcal{L}_\mathcal{O}(m)$.
	 
	 1. We check that $\sum_{i=u+1}^{v-1}\varepsilon_i=0$. Indeed, this follows from the facts $\sum_{i=s+1}^{t-1}\varepsilon_i=0$, $\sum_{i=s+1}^{u}\varepsilon_i=0$ and $\sum_{i=v}^{t-1}\varepsilon_i=0$.
	 
	 2. We check that for $u+1\le j<v-1$, we have $\sum_{i=u+1}^{j}\varepsilon_i<0$. Assume by contradiction that there is a $j$ such that $\sum_{i=u+1}^{j}\varepsilon_i\ge 0$. If $\sum_{i=u+1}^{j}\varepsilon_i> 0$, then $\sum_{i=s}^{j}\varepsilon_i =\varepsilon_s+ \sum_{i=s+1}^{u}\varepsilon_i+\sum_{i=u+1}^{j}\varepsilon_i=-1+0+\sum_{i=u+1}^{j}\varepsilon_i\ge 0$, which is not allowed. Now assume $\sum_{i=u+1}^{j}\varepsilon_i=0$. By the maximality of $u$ we must have $j\ge j_0$. Similarly, by the minimality of $v$ we must have $j+1\le j_0$. Such $j$ does not exists.
	 
	 3. Clearly, for $u+1\le j<v-1$, we have $\sum_{i=u+1}^{j}\varepsilon_i\ge -m$ as $\sum_{i=s}^{j}\varepsilon_i=\varepsilon_s+\sum_{i=s+1}^{u}\varepsilon_i+\sum_{i=u+1}^{j}\varepsilon_i=-1+0+\sum_{i=u+1}^{j}\varepsilon_i\ge -(m+1)$. Moreover, we have $\sum_{i=u+1}^{j_0}\varepsilon_i=-m$.
	 
	 Therefore, we conclude that $<\varepsilon_{u+1},\dots,\varepsilon_{v-1}>$ is a segment in $\mathcal{L}_\mathcal{O}(m)$. Hence we get a map $\mathcal{L}_\mathcal{O}(m+1)\rightarrow \mathcal{L}_\mathcal{O}(m)$. This map is injective because the segments in $\mathcal{L}_\mathcal{O}(m+1)$ are disjoint. Thus we have $a_{m+1}(\mathcal{O})=|\mathcal{L}_\mathcal{O}(m+1)|\le |\mathcal{L}_\mathcal{O}(m)|=a_m(\mathcal{O}) $. Therefore, \[\gamma_{H_\pi}(m+1)-\gamma_{H_\pi}(m)=\sum_{\mathcal{O}}a_{m+1}(\mathcal{O})\le \sum_{\mathcal{O}}a_{m}(\mathcal{O}) = \gamma_{H_\pi}(m)-\gamma_{H_\pi}(m-1).\]
	
\end{proof}	

\begin{rmk}
	Corollary \ref{cor} is a particular case of Theorem \ref{gamma} (a) and (b). But for the particular case of $H_\pi$'s, Corollary \ref{cor} is a refinement of Theorem \ref{gamma} (a) and (b) and its proof.
\end{rmk}

	Since $\End(H)\cong\End(H^\vee)$, we have $\gamma_H(m)=\gamma_{H^\vee}(m)$. Therefore in the following examples we can assume $d\leq c$. For simplicity we write $\gamma(m)=\gamma_{H_\pi}(m)$ and $\beta(m)=\beta_{H_\pi}(m)$ for the number of connected components of $\Ends(H_\pi[p^m])$.

\begin{exam}
	Let $c=d=2$ and $\pi=(1234)$. There are $4$ orbits:
	\begin{itemize}
	\item $\mathcal{O}_1 = [(1,1)]=((1,1),(2,2),(3,3),(4,4)),\quad \varepsilon_{\mathcal{O}_1}=(0,0,0,0)$.
	
	This is a circular orbit of level $0$. It has no free linear segments.
	
	\item $\mathcal{O}_2 = [(1,2)]=((1,2),(2,3),(3,4),(4,1)),\quad \varepsilon_{\mathcal{O}_2}=(0,1,0,-1)$.
	
	This is a circular orbit of level $1$. It has one free linear segment $<-1,0,1>$ of level $1$.

	\item $\mathcal{O}_3 = [(1,3)]=((1,3),(2,4),(3,1),(4,2)),\quad \varepsilon_{\mathcal{O}_3}=(1,1,-1,-1,)$;
	
	This is a circular orbit of level $2$. It has one free linear segment $<-1,1>$ of level $1$ and one free linear segment $<-1,-1,1,1>$ of level $2$.
	
	\item $\mathcal{O}_4 = [(1,4)]=((1,4),(2,1),(3,2),(4,3)),\quad \varepsilon_{\mathcal{O}_4}=(1,0,-1,0)$.
	
	This is a circular orbit of level $1$. It has one free linear segment $<-1,0,1>$ of level $1$.
	\end{itemize}
	
	Therefore we have	
	\begin{align*}
		\gamma(1) &=\sum_{\mathcal{O}}a_1(\mathcal{O})=0+1+1+1=3,\\
		\gamma(m) &=\sum_{\mathcal{O}}(a_1(\mathcal{O})+a_2(\mathcal{O}))=0+1+2+1=4,\quad \forall m\ge 2.\\
		\beta(1) &=p^4,\\
		\beta(m) &=p^{(m+2(m-1)+ (m-2))4}=p^{16(m-1)}, \quad \forall m\ge 2.
		\end{align*}
	Since $\gamma(m)$ stops at $m=2$, we get that $n_{H_\pi}=2$ by Theorem \ref{gamma}.
		
\end{exam}

\begin{exam}
	Let $d=1$ and $c\geq 1$. Let $h=c+d$ and $\pi=(12\cdots h)$. There are $h$ orbits of size $h$:
	
		\begin{align*}
			\mathcal{O}_1 &= [(1,1)], \quad \varepsilon_{\mathcal{O}_1}=(0,0,\dots,0,0);\\
			\mathcal{O}_2 &= [(1,2)], \quad \varepsilon_{\mathcal{O}_2}=(1,0,\dots,0,-1);\\
			\mathcal{O}_3 &= [(1,3)],\quad \varepsilon_{\mathcal{O}_3}=(1,0,
			\dots,-1,0);\\
			&\vdots\\
			\mathcal{O}_h &= [(1,h)], \quad \varepsilon_{\mathcal{O}_h}=(1,-1,\dots,0,0).
		\end{align*} 

	All orbits except $\mathcal{O}_1$ are circular of level $1$ with one free linear segment. Simple calculation shows that we have
	\[\gamma(m) =h-1=c,\quad \forall m\ge 1\]
	and
	\[\beta(m) =p^{mh^2-h^2+h}=p^{m(c+1)^2-c(c+1)}, \quad \forall m\ge 1.\]
	We also get that $n_{H_\pi}=1$.
\end{exam}

\begin{exam}\label{h}
	Let $d\leq c$ be arbitrary positive integers. Let $h=c+d$ and $\pi=(12\cdots h)$. As before there are $h$ orbits of size $h$. Choose the order of the orbits and the starting element in each orbit as follows:
	
	\begin{center}
		\begin{tikzpicture}[scale=0.5]
		\begin{scope}
		%\clip (0,0) rectangle (10cm,10cm); % Clips the picture...
		%\pgftransformcm{1}{0.6}{0.7}{1}{\pgfpoint{3cm}{3cm}} % This is actually the transformation
		%  matrix entries that gives the slanted
		% unit vectors. You might check it on
		% MATLAB etc. . I got it by guessing.
		
		%\draw[style=help lines,dashed] (-14,-14) grid[step=2cm] (14,14); % Draws a grid in the new coordinates.
		%\filldraw[fill=gray, draw=black] (0,0) rectangle (2,2); % Puts the shaded rectangle
		\foreach \x in {-3,-2,...,3}{                           % Two indices running over each
			\foreach \y in {-3,-2,...,3}{                       % node on the grid we have drawn 
				\node[draw,circle,inner sep=1pt,fill] at (2*\x,2*\y) {}; % Places a dot at those points
			}
		}
		
		\draw[dashed] (-6,3) -- (6,3);
		\draw[dashed] (-3,-6) -- (-3,6);
		
		\filldraw[black] (-6,-6) circle (5pt) node[anchor=east] {$\mathcal{O}_1$};
		\filldraw[black] (-6,-4) circle (5pt) node[anchor=east] {$\mathcal{O}_2$};
		\filldraw[black] (-6,-2) circle (5pt);
		\filldraw[black] (-6,0) circle (5pt);
		\filldraw[black] (-6,2) circle (5pt) node[anchor=east] {$\mathcal{O}_c$};
		\filldraw[black] (-4,2) circle (5pt);
		\filldraw[black] (-2,2) circle (5pt) node[anchor=west] {$\mathcal{O}_h$};
		
		\filldraw[black] (1,-1) circle (0pt) node {$J_0$};
		\filldraw[black] (-5,5) circle (0pt) node {$J_0$};
		\filldraw[black] (1,5) circle (0pt) node {$J_+$};
		\filldraw[black] (-5,-1) circle (0pt) node {$J_-$};
		
		\draw[dashed, thick,->] (-6,-5.5) -- (-6,-4.5);
		\draw[dashed, thick,->] (-6,-3.5) -- (-6,-2.5);
		\draw[dashed, thick,->] (-6,-1.5) -- (-6,-0.5);
		\draw[dashed, thick,->] (-6,0.5) -- (-6,1.5);
		\draw[dashed, thick,->] (-5.5,2) -- (-4.5,2);
		\draw[dashed, thick,->] (-3.5,2) -- (-2.5,2);
		
		\end{scope}
		\end{tikzpicture}
	\end{center}
	
	Then we have the following:

\[\mathcal{O}_1 = [(h,1)], \quad \varepsilon_{\mathcal{O}_1}=(-1,\underbrace{0,\dots,0}_{d-1},1,\underbrace{0,\dots,0}_{c-1});\]	

\[\mathcal{O}_2 = [(h-1,1)], \quad \varepsilon_{\mathcal{O}_2}=(-1,-1,\underbrace{0,\dots,0}_{d-2},1,1,\underbrace{0,\dots,0}_{c-2});\]	
\[\vdots\]
\[\mathcal{O}_{d-1} = [(c+2,1)], \quad \varepsilon_{\mathcal{O}_{d-1}}=(\underbrace{-1,\dots,-1}_{d-1},0,\underbrace{1,\dots,1}_{d-1},\underbrace{0,\dots,0}_{c-d+1});\]

\[\mathcal{O}_{d} = [(c+1,1)], \quad \varepsilon_{\mathcal{O}_d}=(\underbrace{-1,\dots,-1}_{d},\underbrace{1,\dots,1}_{d},\underbrace{0,\dots,0}_{c-d});\]

\[\mathcal{O}_{d+1} = [(c,1)], \quad \varepsilon_{\mathcal{O}_{d+1}}=(\underbrace{-1,\dots,-1}_{d},0,\underbrace{1,\dots,1}_{d},\underbrace{0,\dots,0}_{c-d-1});\]
\[\vdots\]
\[\mathcal{O}_{c} = [(d+1,1)], \quad \varepsilon_{\mathcal{O}_c}=(\underbrace{-1,\dots,-1}_{d},\underbrace{0,\dots,0}_{c-d},\underbrace{1,\dots,1}_{d});\]

\[\mathcal{O}_{c+1} = [(d+1,2)], \quad \varepsilon_{\mathcal{O}_{c+1}}=(\underbrace{-1,\dots,-1}_{d-1},\underbrace{0,\dots,0}_{c-d+1},\underbrace{1,\dots,1}_{d-1},0);\]

\[\mathcal{O}_{c+2} = [(d+1,3)], \quad \varepsilon_{\mathcal{O}_{c+2}}=(\underbrace{-1,\dots,-1}_{d-2},\underbrace{0,\dots,0}_{c-d+2},\underbrace{1,\dots,1}_{d-2},0,0);\]
\[\vdots\]
\[\mathcal{O}_{h-1} = [(d+1,d)], \quad \varepsilon_{\mathcal{O}_{h-1}}=(-1,\underbrace{0,\dots,0}_{c-1},1,\underbrace{0,\dots,0}_{d-1});\]
\[\mathcal{O}_{h} = [(d+1,d+1)], \quad \varepsilon_{\mathcal{O}_h}=(0,\dots,0).\]

Ignoring the zeros, we have 

\[\varepsilon_{\mathcal{O}_i}=\varepsilon_{\mathcal{O}_{h-i}}=(\underbrace{-1,\dots,-1}_{i},\underbrace{1,\dots,1}_{i}),\quad 1\le i < d,\]
and 
\[\varepsilon_{\mathcal{O}_i}=(\underbrace{-1,\dots,-1}_{d},\underbrace{1,\dots,1}_{d}),\quad d\le i \le c.\]
				 
From this we have $a_n(\mathcal{O}_h)=0$, $\forall n >0$;
\[a_n(\mathcal{O}_i)=\left\{\begin{array}{ll}
1,\quad & n\le i,\\
0 & n > i,
\end{array}\right. \quad \textrm{when } 1\le i<d \textrm{ or } c<i\le h-1;\]
and 
\[a_n(\mathcal{O}_i)=\left\{\begin{array}{ll}
1,\quad & n\le d,\\
0 & n > d,
\end{array}\right. \quad \textrm{when } d\le i \le c.\]
Therefore we have
	\begin{align*}
		\gamma(1) &=\sum_{i=1}^{h}a_1(\mathcal{O}_i)=(h-1),\\
		\gamma(2) &=\gamma(1)+\sum_{i=1}^{h}a_2(\mathcal{O}_i)= (h-1)+(h-3),\\
		\gamma(3) &=\gamma(2)+\sum_{i=1}^{h}a_3(\mathcal{O}_i)= (h-1)+(h-3)+(h-5),\\
		&\vdots\\
		\gamma(d) &=\gamma(d-1)+\sum_{i=1}^{h}a_d(\mathcal{O}_i)= (h-1)+(h-3)+\cdots+(h-2d+1)\\
		\gamma(m) &=\gamma(d), \quad \forall m > d.
	\end{align*}
	
	Equivalently,
		 
	\[\gamma(m)=\left\{\begin{array}{ll}
		\overset{m}{\underset{i=1}{\sum}}(h-2i+1)=m(h-m),\quad & 1\le m \le d;\\
		cd, & m > d.
	\end{array}\right.\]

	For the number of connected components, note that there is $1$ circular orbit of level $0$, namely $\mathcal{O}_h$. There are $2$ circular orbits of level $i$ for each $1\le i\le d-1$, namely, $\mathcal{O}_i$ and $\mathcal{O}_{h-i}$. There are $c-d+1$ circular orbits of level $d$, namely, $\mathcal{O}_d,\mathcal{O}_{d+1},\dots,\mathcal{O}_c$. Some calculation shows that
	
		\[\beta(m)=\left\{\begin{array}{ll}
		p^{mh+2(m-1)h+2(m-2)h+\dots+2h},\quad & 1\le m \le d;\\
		p^{mh+2(m-1)h+2(m-2)h+\dots+2(m-d+1)h+(c-d+1)(m-d)h}, & m > d.
		\end{array}\right.\]
Equivalently,
\[\beta(m)=\left\{\begin{array}{ll}
p^{m^2h},\quad & 1\le m \le d;\\
p^{mh^2-cdh}, & m > d.
\end{array}\right.\]
From the formula for $\gamma(m)$, we get that $n_{H_\pi}=d=\min\{c,d\}$.
\end{exam}

\begin{rmk}
	By Theorem \ref{gamma} or Corollary \ref{cor} we have $\gamma(2)-\gamma(1)\le \gamma(1)$ and thus $\gamma(2)\le 2\gamma(1)$. Also, we have $\gamma(3)-\gamma(2)\le \gamma(2)-\gamma(1)$ and therefore
	\[\gamma(3)\le 2\gamma(2)-\gamma(1)\le2\gamma(2)-\frac{1}{2}\gamma(2)=\frac{3}{2}\gamma(2).\]
	By induction we have 
	\[\gamma(m)\le \frac{m}{m-1}\gamma(m-1), \quad \forall m >1,\]
	and more generally
	\[\gamma(m)\le \frac{m}{n}\gamma(n), \quad \forall m > n \ge 1.\]
	
	The formula
	\[\gamma(m)=\left\{\begin{array}{ll}m(h-m),\quad & 1\le m \le d,\\
	cd, & m > d,
	\end{array}\right.\]
	in Example \ref{h} show that we have 
	\[\lim_{h\rightarrow \infty}\frac{\gamma(m)}{\gamma(n)}=\frac{m}{n}, \quad \forall 1\le m,n \le d.\]
	It is not clear if this equality can be obtained at finite heights.
	
\end{rmk}	

\fontsize{11}{11pt} \selectfont % bibs need to be single-spaced
\cleardoublepage
\addcontentsline{toc}{chapter}{Bibliography}

\bibliographystyle{amsalpha}
\bibliography{references}

\end{document}